\documentclass[11pt]{amsart}

\usepackage{amsfonts,amsmath,amscd,amssymb,amsbsy,amsthm,amstext,amsopn}
\usepackage{fullpage,mathrsfs,subfigure}
\usepackage{pstricks,pst-node,pst-text,pst-tree,pst-plot,pstricks-add}
\usepackage{tikz} 
\usepackage{stmaryrd}  
\usepackage{extarrows}
\usepackage[hmargin=3cm,vmargin=3.5cm]{geometry}
\usepackage{graphicx}
\usepackage{mathdots}
\usepackage{color}
\usepackage{url}
\usepackage[capitalize]{cleveref}
\usepackage{longtable}
\usepackage{pdflscape}
\usepackage{rotating}
\usepackage{array}

\numberwithin{equation}{section}
\addtolength{\topmargin}{5mm}
\addtolength{\textheight}{-15mm}
\setcounter{tocdepth}{1}

\newtheorem{theorem}{Theorem}[section]
\newtheorem{proposition}[theorem]{Proposition}
\newtheorem{lemma}[theorem]{Lemma}

\newtheorem{corollary}[theorem]{Corollary}
\newtheorem{conjecture}[theorem]{Conjecture}
\newtheorem{acknowledgements}[theorem]{Acknowledgments}

\theoremstyle{definition}
\newtheorem{definition}[theorem]{Definition}
\newtheorem{example}[theorem]{Example}

\theoremstyle{remark}
\newtheorem{remark}[theorem]{Remark}
\newtheorem{remarks}[theorem]{Remarks}

\newcommand{\CC}{\ensuremath{\mathbb{C}}}
\newcommand{\NN}{\ensuremath{\mathbb{N}}} 
\newcommand{\PP}{\ensuremath{\mathbb{P}}} 
 
\newcommand{\RR}{\ensuremath{\mathbb{R}}} 
\newcommand{\ZZ}{\ensuremath{\mathbb{Z}}}

\renewcommand{\div}{\operatorname{div}}

\newcommand{\git}{\ensuremath{/\!\!/\!}}

\newcommand{\inc}{\operatorname{inc}}

\newcommand{\supp}{\operatorname{supp}}

\newcommand{\tot}{\operatorname{tot}}

 \newcommand{\Cl}{\operatorname{Cl}}  

\newcommand{\Cone}{\operatorname{Cone}}

\newcommand{\End}{\operatorname{End}}
 
\newcommand{\GL}{\operatorname{GL}}  
  
\newcommand{\Hom}{\operatorname{Hom}} 

\newcommand{\im}{\operatorname{im}}

\newcommand{\Pic}{\operatorname{Pic}}
\newcommand{\pic}{\operatorname{pic}}
\newcommand{\Proj}{\operatorname{Proj}}

\newcommand{\Wt}{\operatorname{Wt}}

\newcommand{\Bcal}{\mathcal{B}}
\newcommand{\Ccal}{\mathcal{C}}
\newcommand{\Dcal}{\mathcal{D}}
\newcommand{\Ecal}{\mathcal{E}}
\newcommand{\Fcal}{\mathcal{F}}

\newcommand{\Mcal}{\mathcal{M}}
\newcommand{\Ocal}{\mathcal{O}}

\newcommand{\Tcal}{\mathcal{T}}
\newcommand{\Lcal}{\mathcal{L}}
\newcommand{\Scal}{\mathcal{S}}
\newcommand{\Dx}{\mathcal{D}^b(X)}
\newcommand{\mkD}{\mathfrak{D}}

\newcommand\Qcoh{\operatorname{Qcoh}}

%%%%%%%%%% Start TeXmacs macros

%%%%%%%%%% End TeXmacs macros

\begin{document}
\title{Tilting Bundles on Toric Fano Fourfolds}
\author{Nathan Prabhu-Naik}
\address{Nathan Prabhu-Naik\\Department of Mathematical Sciences\\University of Bath\\Claverton Down\\Bath\\Somerset\\BA2 7AY\\United Kingdom}
\email{N.Prabhu-Naik@bath.ac.uk}
\subjclass[2010]{14A22 (primary); 13F60,22F30 (secondary)}
\keywords{full strong exceptional collection, quiver algebra, derived category}
\date{\today}
\begin{abstract}
This paper constructs tilting bundles obtained from full strong exceptional collections of line bundles on all smooth $4$-dimensional toric Fano varieties. The tilting bundles lead to a large class of explicit Calabi-Yau-$5$ algebras, obtained as the corresponding rolled-up helix algebra. A database of the full strong exceptional collections can be found in the package \emph{QuiversToricVarieties} for the computer algebra system \emph{Macaulay2}.
\end{abstract}

\maketitle

\section{Introduction}
\noindent Let $X$ be a smooth variety over $\CC$ and let $\Dx$ be the bounded derived category of coherent sheaves on $X$. A tilting object $\Tcal \in \Dx$ is an object such that $\Hom^i(\Tcal,\Tcal) = 0$ for $i \neq 0$ and $\Tcal$ generates $\Dx$. If such a $\Tcal$ exists, then tilting theory provides an equivalence of triangulated categories between $\Dx$ and the bounded derived category $\Dcal^b(A)$ of finitely generated right modules over the algebra $A = \End (\Tcal)$ via the adjoint functors

\begin{center}
\[
\begin{tikzpicture} [bend angle=20, looseness=1]
\node (C1) at (0,0)  {$\Dx$};
\node (C2) at (4,0)  {$\Dcal^b(A)$};
\draw [->,bend left] (C1) to node[above]  {\scriptsize{$\mathbf{R}\Hom_{X}(\Tcal,-)$}} (C2);
\draw [->,bend left] (C2) to node[below]  {\scriptsize{$ (-)\stackrel{\mathbb{L}}{\otimes}_{A}\Tcal$}} (C1);
\end{tikzpicture}
\]
\end{center}

\noindent If $X$ is also projective then one can use a full strong exceptional collection to obtain a tilting object; a full strong exceptional collection of sheaves $\{E_i \}_{i \in I}$ defines a tilting sheaf $\Tcal := \bigoplus_{i \in I} E_i$ and conversely, the non-isomorphic summands in a tilting sheaf determine a full strong exceptional collection. The classical example of a tilting sheaf was provided by Be\u\i linson \cite{Beil}, who showed that $\Ocal \oplus \Ocal(1) \oplus \ldots \oplus \Ocal(n)$ is a tilting bundle for $\mathbb{P}^n$. 

The combinatorial nature of toric varieties makes it feasible to check whether a collection of line bundles on a smooth projective toric variety is full strong exceptional, in which case one can construct the resulting endomorphism algebra explicitly. Smooth toric Fano varieties are of particular interest; there are a finite number of these varieties in each dimension and they have been classified in dimension $3$ by Watanabe--Watanabe and Batyrev \cite{WaWa,Bat2}, dimension $4$ by Batyrev and Sato \cite{Bat1,Sato}, dimension $5$ by Kreuzer--Nill \cite{KrNi}, whilst \O bro \cite{Obro} provided a general classification algorithm. King \cite{King} has exhibited full strong exceptional collections of line bundles for the $5$ smooth toric Fano surfaces, and by building on work by Bondal \cite{Bond1}, Costa--Mir\'{o}-Roig \cite{CoMR1} and Bernardi--Tirabassi \cite{BeTi}, Uehara \cite{Ueha} provided full strong exceptional collections of line bundles for the $18$ smooth toric Fano threefolds. The main theorem of this paper is as follows:
\begin{theorem}
Let $X$ be one of the $124$ smooth toric Fano fourfolds. Then one can explicitly construct a full strong exceptional collection of line bundles on $X$, a database of which is contained in the computer package \emph{QuiversToricVarieties} \cite{Prna1} for \emph{Macaulay2} \cite{M2}.
\end{theorem}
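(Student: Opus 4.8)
\noindent The plan is to prove this uniformly by an explicit, computer-assisted construction carried out one variety at a time, taking the Batyrev--Sato classification as input. For a fixed smooth toric Fano fourfold $X$ with fan $\Sigma_X$ I would first record the combinatorial data: the rays, hence the Cox ring $S = \CC[x_1,\ldots,x_N]$ and its irrelevant ideal $B$; the class group $\Pic(X)\cong\ZZ^{r}$, read off from the rays; and the set of maximal cones. Since $X$ is smooth and complete, $K_0(X)$ is free of rank $\chi(X)$, equal to the number of maximal cones of $\Sigma_X$, so a full exceptional collection must have exactly this length $n = n(X)$; this pins down the object of the search.

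Next I would produce candidate lists of $n$ line bundles. Two sources are natural: first, the line-bundle summands of the pushforward $F_{*}\Ocal_X$ under a toric Frobenius, which are computable by Thomsen's algorithm and are known to classically generate $\Dx$ (Bondal, Thomsen), after which one prunes and twists to reach length $n$; second, a direct search over tuples $\Ocal(D_1),\ldots,\Ocal(D_n)$ with the $D_i$ in a bounded window of $\Pic(X)$, which is legitimate since translating a full strong exceptional collection by a fixed line bundle preserves all of its defining properties, so the search may be confined to a fundamental region.

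For each candidate I would then test the strong exceptional property, which for line bundles is pure combinatorics: $\Ext^{k}(\Ocal(D_i),\Ocal(D_j)) = H^{k}(X,\Ocal(D_j - D_i))$, and the sheaf cohomology of a line bundle on a smooth complete toric variety is computed directly from $\Sigma_X$ by the standard formula in terms of reduced cohomology of subcomplexes of the support of the fan (equivalently, via local cohomology of $S$ supported at $B$). One checks that all higher cohomology of the differences $\Ocal(D_j - D_i)$ vanishes (strongness) and that, after a suitable reordering of the $D_i$, the full cohomology of the backward differences vanishes as well (exceptionality). This is a finite, effective check and is the easy part.

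The main obstacle is certifying fullness, i.e.\ that $\Tcal = \bigoplus_{i} \Ocal(D_i)$ generates $\Dx$. Line bundles are known to generate $\Dx$ for a smooth complete toric variety (via the Cox quotient presentation), so it suffices to show that the thick subcategory $\langle\Tcal\rangle$ contains $\Ocal(D)$ for every $D \in \Pic(X)$. The device here is that the Koszul-type (cellular) resolutions associated with the irrelevant ideal $B$ express any $\Ocal(D)$ as a finite complex of sums of $\Ocal(D')$ with $D'$ confined to a bounded neighbourhood of $D$; so if $\langle\Tcal\rangle$ is closed, modulo such resolutions, under twisting by a set of line bundles generating $\Pic(X)$ --- a condition that unwinds into finitely many linear-algebra computations over $S$, terminating by Hilbert's syzygy theorem --- then $\langle\Tcal\rangle = \Dx$. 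For those $X$ carrying an iterated projective-bundle or toric-blow-up structure I would, alternatively, deduce fullness from Orlov's semiorthogonal decompositions together with the known collections on the lower-dimensional toric Fano varieties of Be\u\i linson, King and Uehara. The difficulty is making this fullness argument effective and uniform over all $124$ cases --- bounding the twists and resolution lengths that must be examined, and arranging the search of the previous step so that a strong exceptional candidate of the correct length which is also full actually appears. I would implement all of this in \emph{Macaulay2}, recording for each fan one certified collection in the package \emph{QuiversToricVarieties}, from which the endomorphism algebra $\End(\Tcal)$ and its quiver with relations are read off.
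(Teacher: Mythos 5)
Your outline of the easy half matches the paper: strong exceptionality is reduced to toric cohomology vanishing of the differences $\Ocal(D_j-D_i)$, which the paper implements via the Eisenbud--Musta\c{t}\u{a}--Stillman non-vanishing cohomology cones; and your candidate pools (Frobenius summands, bounded windows in $\Pic(X)$) are essentially the ones used. The genuine gap is in your fullness certificate. First, you take as input that the summands of $F_{m*}\Ocal_X$ classically generate $\Dx$, citing Thomsen and Bondal; Thomsen only computes the splitting, and the paper deliberately does not rely on Bondal's generation claim. Instead it uses Van den Bergh's lemma: since $\omega_X^{-1}$ is ample, $\bigoplus_{i=0}^{4}\omega_X^{-i}$ generates, and pushing forward under the proper Frobenius gives the explicit finite target set $\mkD_m^{gen}=\bigcup_{0\le i\le 4}\mkD_m(\omega_X^{-i})$, which one then places inside $\langle\Lcal\rangle$ using the Koszul sequences attached to primitive collections (Method~1). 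Second, your replacement for this step --- ``$\langle\Tcal\rangle$ is closed, modulo cellular resolutions of the irrelevant ideal, under twisting by generators of $\Pic(X)$, terminating by Hilbert's syzygy theorem'' --- is not an argument: thick subcategories are not closed under twists, no bound on the twists or resolution lengths is produced, and nothing guarantees the procedure succeeds; fullness of a maximal-length strong exceptional collection of line bundles is exactly the point that needs proof, and existence/fullness can genuinely fail (Hille--Perling, Efimov), so a case-by-case certificate is unavoidable. Your fallback via Orlov's blow-up semiorthogonal decompositions also does not deliver the statement, because the exceptional objects it produces on a blow-up are pushforwards from the exceptional divisor, not line bundles on $X$.

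You also miss the two structural devices that make the paper's verification feasible over all $124$ cases. One is a second fullness method: a resolution of $\Ocal_\Delta$ on $X\times X$ whose terms are the bundles $\bigoplus L_i\boxtimes L_i^{-1}$, $\bigoplus L_{\mathbf{t}(a)}\boxtimes L_{\mathbf{h}(a)}^{-1}$, etc., guessed from the toric cell complex of the CY$5$ algebra on $\tot(\omega_X)$, checked exact up to saturation by computer, and whose cokernel is identified with $\Ocal_\Delta$ by embedding $X$ into the quiver moduli space $\Mcal_\theta(Q,J)$ (multigraded linear series for nef collections, a different generic GIT chamber for the non-nef ones). The other is the propagation step: Proposition~\ref{prop:picMapFull} shows that a full collection chosen inside $\mkD_m\cup\mkD(\omega_X)_m$ on a birationally maximal fourfold pushes forward along any chain of torus-invariant divisorial contractions to a generating set, so downstream one only rechecks strong exceptionality via the pulled-back cohomology cones; this handles the bulk of the $124$ varieties without re-certifying fullness, with products of lower-dimensional Fanos dispatched by the tilting-on-products lemma and a handful of non-maximal exceptions treated separately. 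Without either an explicit finite generating set to hit (as in Method~1) or a diagonal resolution (as in Method~2), your proposal does not yet contain a proof of fullness for any single fourfold.
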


\noindent In addition to low-dimensional smooth toric Fano varieties, other classes of toric varieties have been shown to have full strong exceptional collections of line bundles -- for example, see \cite{CoMR1,DLMi,LaMi}. Kawamata \cite{Kawa} showed that every smooth toric Deligne-Mumford stack has a full exceptional collection of sheaves, but we note that these collections are not shown to be strong, nor do they consist of bundles. It is important to note that the existence of full strong exceptional collections of line bundles is rare; Hille--Perling  \cite{HiPe} constructed smooth toric surfaces that do not have such collections. Even when only considering smooth toric Fano varieties, there exist examples in dimensions $\geq 419$ that do not have full strong exceptional collections of line bundles, as demonstrated by Efimov \cite{Efim}.

Using the tilting bundles constructed on smooth toric Fano varieties as above, we construct a tilting bundle on the total space of its canonical bundle $\omega_X$:   

\begin{theorem}
Let $X$ be an $n$-dimensional smooth toric Fano variety for $n \leq 4$, $\Lcal = \{L_0, \ldots , L_r\}$ be the full strong exceptional collection on $X$ from the database and $\pi \colon Y:= \tot(\omega_X) \rightarrow X$ be the bundle map. Then $Y$ has a tilting bundle that decomposes as a sum of line bundles, given by $\bigoplus_{L_i \in \Lcal} \pi^*(L_i)$.
\end{theorem}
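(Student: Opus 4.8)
The plan is to pull everything back to $X$ along the affine bundle map $\pi\colon Y=\tot(\omega_X)\to X$, for which $\pi_*\Ocal_Y\cong\Sym^\bullet(\omega_X^\vee)=\bigoplus_{m\ge 0}\omega_X^{-m}$ and $\Rd\pi_*=\pi_*$. Write $T=\bigoplus_{L_i\in\Lcal}L_i$ for the given tilting bundle on $X$, so that the bundle in the statement is $\pi^*T=\bigoplus_i\pi^*L_i$, visibly a sum of line bundles. Adjunction together with the projection formula identifies, for every $k$,
\[
\Ext^k_Y(\pi^*T,\pi^*T)\;\cong\;\Ext^k_X\big(T,\pi_*\pi^*T\big)\;\cong\;\Ext^k_X\big(T,\,T\otimes\pi_*\Ocal_Y\big)\;\cong\;\bigoplus_{m\ge 0}\Ext^k_X\big(T,\,T\otimes\omega_X^{-m}\big),
\]
so it remains to show that $\pi^*T$ generates $\Dcal^b(\coh Y)$ and that the right-hand side vanishes for all $k\ne 0$; in degree $k=0$ the same computation identifies $\End_Y(\pi^*T)$ with the rolled-up helix algebra from the introduction, while $\omega_Y\cong\pi^*(\omega_X\otimes\omega_X^{-1})\cong\Ocal_Y$ accounts for the Calabi--Yau-$(n{+}1)$ property.

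For generation, I would use that $\pi^*$ is left adjoint to the conservative functor $\pi_*$ (conservative because $\pi$ is affine) and preserves perfect complexes: if $F\in\Dcal^b(\coh Y)$ has $\Hom^\bullet_Y(\pi^*T,F)=0$, then $\Hom^\bullet_X(T,\pi_*F)=0$, hence $\pi_*F=0$ since $T$ is a compact generator of $\Dcal(\Qcoh X)$ (here fullness of $\Lcal$ is used), hence $F=0$. Thus $\pi^*T$ is a compact generator of $\Dcal(\Qcoh Y)$, and as $Y$ is smooth it classically generates $\operatorname{Perf}(Y)=\Dcal^b(\coh Y)$; granting the higher-Ext vanishing this makes $\pi^*T$ a tilting bundle, with $\End_Y(\pi^*T)$ Noetherian as it is non-negatively graded over the finite-dimensional algebra $\End_X(T)$.

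The heart of the matter is the vanishing $\Ext^k_X(T,T\otimes\omega_X^{-m})=\bigoplus_{i,j}H^k\big(X,L_i^\vee\otimes L_j\otimes\omega_X^{-m}\big)=0$ for all $k\ne 0$ and all $m\ge 0$. The case $m=0$ is exactly the strong exceptionality of $\Lcal$ on $X$. For $m\ge 1$ I would exploit that $X$ is Fano: since $\omega_X^{-1}$ is ample, $L_i^\vee\otimes L_j\otimes\omega_X^{-(m+1)}=(L_i^\vee\otimes L_j\otimes\omega_X^{-1})\otimes\omega_X^{-m}$ is ample whenever $L_i^\vee\otimes L_j\otimes\omega_X^{-1}$ is nef, and then Kodaira--Akizuki--Nakano vanishing yields $H^k(X,L_i^\vee\otimes L_j\otimes\omega_X^{-m})=0$ for $k>0$. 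The theorem is thereby reduced to the positivity statement that $-K_X+L_i-L_j$ is nef for all $i,j$ --- equivalently, that the helix generated by $\Lcal$ via $L_{i+r+1}:=L_i\otimes\omega_X^{-1}$ is geometric --- and for large $m$ this is automatic by Serre vanishing, leaving only finitely many nef-ness conditions per variety (which could alternatively be replaced by a direct check of the cohomology vanishings $H^{>0}(X,L_i^\vee\otimes L_j\otimes\omega_X^{-m})=0$ via the standard toric combinatorics). I expect the main obstacle to be precisely the verification of this finite list for each of the smooth toric Fano varieties of dimension $\le 4$ in the classification, which is what the \emph{QuiversToricVarieties} package \cite{Prna1} does.
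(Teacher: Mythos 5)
Your proposal is correct and follows essentially the same route as the paper: adjunction plus the projection formula reduce everything to the vanishing of $H^{k}(X,L_i^{-1}\otimes L_j\otimes\omega_X^{-m})$ for $k>0$ and all $m\ge 0$, large $m$ is handled by positivity of $\omega_X^{-1}$, and the remaining finitely many twists are checked combinatorially (the paper does this with the non-vanishing cohomology cones in \emph{QuiversToricVarieties}, using Demazure vanishing once $L_i^{-1}\otimes L_j\otimes\omega_X^{-t}$ is nef); your generation argument via the affine map $\pi$ and conservativity of $\pi_*$ is the same argument the paper imports by citing the proof of Bridgeland--Stern. One caution: your primary reduction to ``$-K_X+L_i-L_j$ nef for all $i,j$'' plus Kodaira vanishing is not available for the actual database collections --- the paper's closing remark records that the minimal $T$ with $L_i^{-1}\otimes L_j\otimes\omega_X^{-T}$ nef is $2$ for the threefolds and $3$ for the fourfolds --- so it is precisely your hedged fallback (a finite computational check of the low anticanonical twists) that carries the proof, exactly as in the paper.
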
 

To show that a given collection on a toric Fano variety $X$ is strong exceptional, we utilise the construction of the non-vanishing cohomology cones in the Picard lattice for $X$ as introduced by Eisenbud--Musta{\c{t}}{\v{a}}--Stillman \cite{EiMuSt}. The strong exceptional condition then becomes a computational exercise, which has been implemented into \emph{QuiversToricVarieties} \cite{Prna1}. 

The procedure to check whether a given strong exceptional collection $\Lcal$ on $X$ generates $\Dx$ is less straightforward. We use one of two methods to show that $\Lcal$ is full, the first of which is similar to the method used by Uehara on the toric Fano threefolds \cite{Ueha}. This approach uses the Frobenius pushforward to obtain a set of line bundles that are known to generate $\Dx$, and then we show that $\Lcal$ generates this set by using exact sequences of line bundles. The second method uses the line bundles to obtain a resolution of  $\Ocal_\Delta$, the structure sheaf of the diagonal. We achieve this by utilising the idea of a \emph{toric cell complex}, introduced by Craw--Quintero-V\'{e}lez \cite{CrQV} to guess what a minimal projective $A,A$-bimodule resolution of the endomorphism algebra $A = \End(\bigoplus_i L_i)$ is. In particular, by considering the pullback of $\Lcal$ on $Y$, we obtain a CY$5$ algebra for which we know the $0^{th}$, $1^{st}$ and $2^{nd}$ terms of its minimal projective bimodule resolution. The natural duality inherent in the CY$5$ algebra then gives clues as to what the $3^{rd}$, $4^{th}$ and $5^{th}$ terms are. We sheafify the result, restrict to $X$ and then check that the resulting exact sequence of sheaves $S^\bullet$ is indeed a resolution of $\Ocal_\Delta$ by using quiver moduli. Our calculations lead us to the following conjecture: 

\begin{conjecture}
Let $X$ be a smooth toric Fano threefold or one of the $88$ smooth toric Fano fourfolds such that the given full strong exceptional collection $\Lcal$ in the database \cite{Prna1} has a corresponding exact sequence of sheaves $S^\bullet \in \Dcal^b(X \times X)$. Let $B$ denote the rolled up helix algebra of $A$. Then the toric cell complex of $B$ exists and is supported on a real five-dimensional torus. Moreover,
\begin{itemize}
\item the cellular resolution exists in the sense of \cite{CrQV}, thereby producing the minimal projective bimodule resolution of $B$;
\item the object $S^\bullet$ is quasi-isomorphic to $\Tcal \stackrel{\mathbf{L}}{\boxtimes}_A \Tcal^\vee \in \Dcal^b(X \times X)$, where $\Tcal := \bigoplus_{L \in \Lcal} L$ and $\Tcal \stackrel{\mathbf{L}}{\boxtimes}_A \Tcal^\vee$ is the exterior tensor product over $A$ of $\Tcal$ and $\Tcal^\vee$. 
\end{itemize}
\end{conjecture}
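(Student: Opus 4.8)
\emph{Strategy.} The plan is to argue case by case over the finite list in the statement, with the combinatorics carried out in the package \cite{Prna1}. Fix such an $X$, write $A=\End(\Tcal)=\kk Q/I$ for $\Tcal=\bigoplus_{L\in\Lcal}L$, and let $B$ be its rolled up helix algebra, a Calabi--Yau-$5$ algebra whose underlying ungraded algebra is $\End_{Y}\bigl(\bigoplus_{L\in\Lcal}\pi^{*}L\bigr)$ for $Y=\tot(\omega_{X})$. The first task is to write down a candidate toric cell complex for $B$: a labelled regular CW decomposition of the $5$-torus $T=\RR^{5}/\ZZ^{5}$ whose $0$-cells are the vertices of $Q$, whose $1$-cells are the arrows, and whose $2$-cells form a minimal set of relations, each cell carrying as its label the path in $B$ read off the superpotential of $B$ around its boundary. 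The $3$-, $4$- and $5$-cells, their labels, and all incidence numbers are then forced by demanding that the complex be stable under the order-reversing duality coming from the Calabi--Yau-$5$ pairing, so that $k$-cells match $(5-k)$-cells. One then checks that these choices are consistent, i.e.\ that the prescribed attaching maps really do glue up to a CW structure on $T$ rather than on a non-manifold or a $5$-ball; I expect this, together with the next step, to be where no uniform argument is available, which is why the assertion is a conjecture and is checked machine by machine.

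Granting the cell complex, the second step is to prove that its cellular chain complex is exact. Here a $k$-cell labelled by a path $p$ from $i$ to $j$ contributes the projective $B$-bimodule $Be_{j}\otimes_{\kk}e_{i}B$, the differential is assembled from the incidence numbers and the boundary labels, and one must show the resulting complex $P^{\bullet}\to B$ of projective bimodules is a resolution; it is automatically minimal because every label lies in the arrow ideal. Exactness at the outer spots -- the terms indexed by $0$-, $1$- and $2$-cells, and dually by $3$-, $4$- and $5$-cells -- follows from the presentation of $B$ and the Calabi--Yau symmetry, so the real content is exactness in the middle, equivalently that the first syzygies among the relations of $B$ are exactly the duals of those relations. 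For each algebra on the list this is a finite computation of ranks of bimodule maps, but the absence of a structural reason valid in every case is the main obstacle: in the language of \cite{CrQV} one wants a consistency condition for toric cell complexes on the $5$-torus playing the role of the known consistency results in dimensions two and three.

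The third step turns this into the statement about $S^{\bullet}$. Using the relation between $A$ and $B$ and sheafifying via $\Tcal$ -- replacing each bimodule $Be_{j}\otimes e_{i}B$ by the line bundle $L_{j}\boxtimes L_{i}^{\vee}$ -- the resolution $P^{\bullet}$ becomes a bounded complex $S^{\bullet}$ of sheaves on $X\times X$, and exactness of $P^{\bullet}$ forces $S^{\bullet}$ to have a single nonzero cohomology sheaf, concentrated in degree $0$. Since $\Tcal$ is a tilting bundle, the composite of $\mathbf{R}\Hom_{X}(\Tcal,-)\colon\Dx\to\Dcal^{b}(A)$ with $(-)\stackrel{\mathbf{L}}{\otimes}_{A}\Tcal\colon\Dcal^{b}(A)\to\Dx$ is the identity on $\Dx$, so its Fourier--Mukai kernel $\Tcal\stackrel{\mathbf{L}}{\boxtimes}_{A}\Tcal^{\vee}$ is $\Ocal_{\Delta}$; it then remains to certify that the hand-built $S^{\bullet}$ genuinely represents this kernel rather than a twist or shift of it. For that I would use the realisation of $X$ as the fine moduli space of $\theta$-stable representations of $Q$ of dimension vector $(1,\dots,1)$ -- available, as part of the same verification, for the collections in \cite{Prna1} -- so that the tautological family exhibits $\Tcal$ as a sheaf on $X\times X$ flat over $A$, and comparison of $S^{\bullet}$ with the complex computing $\Tcal\stackrel{\mathbf{L}}{\boxtimes}_{A}\Tcal^{\vee}$ from a projective $A$-bimodule resolution yields $S^{\bullet}\simeq\Tcal\stackrel{\mathbf{L}}{\boxtimes}_{A}\Tcal^{\vee}\simeq\Ocal_{\Delta}$; this comparison is again done per case. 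The conceptual obstruction throughout is the one flagged above: a dimension-five analogue of the consistency theory for cellular resolutions of toric algebras.
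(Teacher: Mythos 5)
The statement you are trying to prove is stated in the paper as a \emph{conjecture}, and the paper offers no proof of it: what the paper actually establishes, case by case with \emph{Macaulay2}, is (i) that a complex $S^\bullet$ guessed via the toric-cell-complex heuristic on $\tot(\omega_X)$ and restricted to $X$ is exact up to saturation by the irrelevant ideal, and (ii) that the cokernel of $d_1$ is $\Ocal_\Delta$, via the embedding of $X$ into a quiver moduli space (Propositions \ref{prop:EmbeddingGivesCokernel}, \ref{prop:SumPolytopeNefEmbedding}, \ref{prop:NonNefEmbedding}); these two facts yield fullness of $\Lcal$ (Proposition \ref{prop:GenerationResolutionDiagonal}), not the conjecture. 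Your proposal is a reasonable roadmap, and to your credit you flag its open points honestly, but as written it is not a proof of anything: the existence and consistency of the cell decomposition of the $5$-torus, exactness of the resulting cellular bimodule complex in the middle degrees, and the identification of $S^\bullet$ with $\Tcal \stackrel{\mathbf{L}}{\boxtimes}_A \Tcal^\vee$ are exactly the assertions of the conjecture, and you do not supply an argument for any of them, uniform or case-by-case.

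Two concrete gaps beyond the ones you acknowledge. First, in your third step you claim that exactness of the bimodule resolution $P^\bullet$ ``forces'' the sheafified complex to be a resolution; this does not follow formally (sheafification via \eqref{eq:FunctorGradedSModulesQCohSheaves} is exact on graded modules, but your $S^\bullet$ is built from a \emph{guessed} $P^\bullet$, and in the paper exactness of $S^\bullet$ is checked directly at the level of $S_{X\times X}$-modules up to $B_{X\times X}$-saturation, not deduced). Second, your comparison of $S^\bullet$ with the Fourier--Mukai kernel is circular: only the final term of the minimal $A$-bimodule resolution is known (Lemma \ref{lem:QuiverResn}), so the complex computing $\Tcal \stackrel{\mathbf{L}}{\boxtimes}_A \Tcal^\vee$ cannot be written down, even machine by machine, without first proving the cellular resolution exists --- which is the first bullet of the conjecture. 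Knowing that $S^\bullet$ is exact with $\operatorname{coker}(d_1)\cong\Ocal_\Delta$ identifies $S^\bullet$ with a resolution of $\Ocal_\Delta$ agreeing with the kernel in homological degrees $0$ and $1$, but that alone does not give the asserted quasi-isomorphism of kernels. So the statement remains, as in the paper, a conjecture supported by computation.
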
  

By considering the birational geometry of the toric Fano fourfolds and choosing collections $\Lcal$ from a special set of line bundles as Uehara did on the toric Fano threefolds, the pushforward of $\Lcal$ onto a torus-invariant divisorial contraction is automatically full (see Proposition \ref{prop:picMapFull}). Using this result, we obtain full strong exceptional collections on many of the toric Fano fourfolds from the the pushforward of collections on the birationally maximal examples. A database of the full strong exceptional collections on $n$-dimensional smooth toric Fano varieties, $1 \leq n \leq 4$, as well as many of the computational tools used in the proofs of the theorems above, is contained in the \emph{QuiversToricVarieties} package.  

\begin{acknowledgements}
The author wishes to thank his Ph.D. supervisor Alastair Craw for suggesting the problem and for guidance during the research. He also wishes to thank Greg Smith who developed the foundations of the \emph{Macaulay2} package \emph{QuiversToricVarieties}, as well as Alastair King, Jesus Tapia Amador, Ziyu Zhang and Matthew Pressland for useful conversations. The author's Ph.D. research is supported by an EPSRC studentship.
\end{acknowledgements}

\subsection*{Notation}
We work throughout over the field $\CC$ of complex numbers.
\section{Background}\label{1stSect}
\subsection{Toric Geometry}\label{subsect:ToricGeometry}
For $n \geq 0$, let $M$ be a rank $n$ lattice and define $N := \Hom_\ZZ (M, \ZZ)$ to be its dual lattice. The realification $M_\RR := M \otimes_\ZZ \RR$ and $N_\RR := N \otimes_\ZZ \RR$ are real vector spaces which contain the underlying lattices and there exists a natural pairing $\langle \ , \ \rangle \colon M_\RR \times N_\RR \rightarrow \RR$. The convex hull of a finite set of lattice points in $M$ defines a \emph{lattice polytope} $P \subset M_\RR$ and its \emph{facets} are the codimension $1$ faces of $P$. We will assume that the dimension of $P$ is equal to the rank of $M$. The theory of polytopes (see for example Cox--Little--Schenck \cite{CoLiSc}) states that every facet $F$ in $P$ has an inward-pointing normal $n_F$ that defines a one-dimensional cone $\{\lambda n_F \mid \lambda \in \RR_{\geq 0}\}$ in $N_\RR$. The cone is rational as $P$ is a lattice polytope, so it has a unique generator $u_F \in N$. Given $a \in \RR$ and a non-zero vector $u \in N_\RR$ we have the \emph{affine hyperplane} $H_{u,a} := \{ m \in M_\RR \ \vert \ \langle m,u \rangle = a \}$ and the \emph{closed half-space} $H^+_{u,a} := \{ m \in M_\RR \ \vert \ \langle m,u \rangle \geq a \}$. As $P$ is full-dimensional, each facet $F$ defines a unique number $a_F \in \RR$ such that $F = H_{u_F , a_F} \cap P$ and $P \subset H^+_{u_F , a_F}$. We can therefore use the generators to completely describe $P$, using its unique \emph{facet presentation}  
\begin{center}
$P = \{ m \in M_\RR \ \vert \ \langle m , u_F \rangle \geq -a_F \text{ for all facets } F \text{ in } P \}$.
\end{center}
If the origin of $M_\RR$ is an interior lattice point of $P$, then $P$ has a dual polytope $P^\circ$ which is defined to be the convex hull of the generators for the inward-pointing normal rays of $P$:
\begin{center}
$P^\circ = \text{Conv}(u_F \ | \ F \text{ is a facet of } P) \subset N_\RR$
\end{center}
The dual polytope determines a \emph{fan} in $N_\RR$:
\begin{definition}
For $1 \leq m \leq n$, let $F$ be an ($m-1$)-dimensional face of $P^ \circ$ with vertices $\lbrace u_{i_1}, \ldots , u_{i_m} \rbrace$. The $m$-dimensional cone $\sigma(F)$ is given by
\begin{equation}
\sigma(F) := \{ \lambda_1 u_{i_1} + \ldots + \lambda_m u_{i_m} \in N \ | \ \lambda_j \geq 0, 1 \leq j \leq m \}. 
\end{equation}
The fan $\Sigma(P^\circ) \subset N$ associated to $P^\circ$ is given by the collection of cones
\begin{equation}
\Sigma = \Sigma(P^\circ) := \{0 \} \cup \{ \sigma (F) \}_{F \subsetneq P^\circ} 
\end{equation}
where $F$ runs over all proper faces of $P^\circ$.  
\end{definition}

Let $\Sigma(k)$ denote the set of $k$-dimensional cones in a fan $\Sigma$. The \emph{rays} of $\Sigma$ are the one-dimensional cones which, by construction, are generated by the vectors $u_F$ for each facet $F \subset P$. We can use the ray generators to define \emph{primitive collections} and \emph{primitive relations} which describe $\Sigma$ combinatorially.    
\begin{definition}
A subset $\mathcal{P} = \{u_{i_1}, \ldots , u_{i_k} \}$ of the set of ray generators $\mathcal{V} = \{u_F \in N \mid F \text{ is a facet of } P\}$ for $\Sigma$ is a \emph{primitive collection} if

\begin{enumerate}
\item[(i)] there does not exist a cone in $\Sigma$ that contains every element of $\mathcal{P}$ and
\item[(ii)] any proper subset of $\mathcal{P}$ is contained in some cone of $\Sigma$.
\end{enumerate}
The integral element $s(\mathcal{P}) = u_{i_1} + \ldots + u_{i_k}$ is contained in some cone $\sigma \subset \Sigma$ with ray generators $\{ u_{j_1}, \ldots, u_{j_m} \}$ and so can be uniquely written as a sum of the generators:
\begin{equation*}
s(\mathcal{P}) = c_1 u_{j_1} + \ldots + c_m u_{j_m}, c_i > 0, c_i \in \ZZ.
\end{equation*}   
The linear relation
\begin{equation*}
u_{i_1} + \ldots + u_{i_m} - (c_1 u_{j_1} + \ldots + c_m u_{j_m}) = 0
\end{equation*} 
between the ray generators of $\Sigma$ is the \emph{primitive relation} associated to the primitive collection $\mathcal{P}$.
\end{definition}
\noindent Note that primitive relations and collections can alternatively be defined for polytopes.

Toric geometry (see e.g.\ Fulton \cite{Fult} or Cox--Little--Schenck \cite{CoLiSc}) associates to each fan $\Sigma$ a toric variety $X_\Sigma$ such that $M$ is the character lattice of the dense torus $T \cong \Hom_\ZZ (M,\CC^*)$ in $X_\Sigma$. If the fan $\Sigma$ is constructed from a polytope $P \subset M_\RR$ as above, then we use $X_P$ to denote the corresponding toric variety. For a cone $\sigma \subset \Sigma$ define $\sigma^\perp := \{ m \in M \ \vert \ \langle m,u \rangle = 0 \text{ for all } u \in \sigma \}$. The Orbit-Cone Correspondence implies that for each ray $\rho \in \Sigma(1)$, the closure of the $T$-orbit $\Hom_\ZZ (\rho^\perp \cap M, \CC^* )$ is a torus-invariant divisor $D_\rho$ in $X_\Sigma$. The lattice of torus-invariant divisors in $X_\Sigma$ will therefore be denoted $\ZZ^{\Sigma(1)}$ and the class group will be denoted $\Cl(X_\Sigma)$. We now have an exact sequence    
\begin{equation}\label{exseq}
 \begin{CD}   
    0@>>> M @>>> \ZZ^{\Sigma(1)} @>{\deg}>> \Cl(X_\Sigma)@>>> 0,
\end{CD}
\end{equation}
where the injective map is $m \mapsto \Sigma_{\rho \in \Sigma(1)} \langle m, u_\rho \rangle D_\rho$ and the map \emph{deg} sends the divisor $D$ to the rank one reflexive sheaf $\Ocal_{X_\Sigma} (D)$. If $X_\Sigma$ is smooth then every rank one reflexive sheaf is invertible, so the class group $\Cl (X_{\Sigma})$ is isomorphic to the Picard group $\Pic(X_\Sigma)$. Note that $X_\Sigma$ is smooth if and only if for every cone $\sigma \subset \Sigma$, the minimal generators for $\sigma$ form part of a $\ZZ$-basis for $N$.

The \emph{Cox ring} for $X_\Sigma$ is the semigroup ring $S_X := \CC [x_\rho \mid \rho \in \Sigma(1)]$ of $\NN^{\Sigma(1)} \subset \ZZ^{\Sigma(1)}$. The map \emph{deg} induces a $\Cl(X_\Sigma)$-grading on $S_X$, where the degree of a monomial $\prod_{\rho \in \Sigma(1)} x_\rho^{a_\rho} \in S_X$ is $\Ocal_{X_\Sigma} (\sum_{\rho \in \Sigma(1)} a_\rho D_\rho )$. For a divisor $D$ and integer $l \in \ZZ$, we let $\CC [x_\rho \mid \rho \in \Sigma(1)]_{lD}$ denote the $\Ocal_X(lD)-$graded piece. Cox \cite[Proposition 3.1]{Cox} defines an exact functor from the category of $\Cl(X_\Sigma)$-graded $S_X$-modules to the category of quasi-coherent sheaves on $X_\Sigma$:
\begin{equation}\label{eq:FunctorGradedSModulesQCohSheaves}
\{ \Cl(X_\Sigma) \text{-graded } S_X \text{-modules} \} \longrightarrow \Qcoh (X_\Sigma) \ \colon M \mapsto \widetilde{M}. 
\end{equation}   
When $X_\Sigma$ is smooth, every coherent sheaf on $X_\Sigma$ is isomorphic to $\widetilde{M}$ for some finitely generated $\Pic(X_\Sigma)$-graded $S_X$-module $M$ and two finitely generated $\Pic (X_\Sigma )$-graded $S_X$-modules determine isomorphic coherent sheaves if and only if they agree up to saturation by the \emph{irrelevant ideal} $B :=  \left( \prod_{\rho \nsubseteq \sigma} x_\rho \ \vert \ \sigma \in \Sigma \right)$ \cite[Propositions 3.3, 3.5]{Cox}. 

Morphisms between two toric varieties can be described by maps between their associated fans that preserve the cone structure. For example, consider the blowup of a torus-invariant subvariety. By the Orbit-Cone Correspondence, a $k$-codimensional torus-invariant subvariety of a toric variety $X_\Sigma$ corresponds to a cone $\sigma \in \Sigma(k)$, and the blowup of this subvariety is the toric variety whose fan is the \emph{star subdivision} of $\sigma$. The star subdivision is a combinatorial process that introduces a new ray $x$ with generator $u_\sigma = \sum_{\rho \in \sigma(1)} u_\rho$ and replaces $\Sigma$ with 
\begin{equation}
\Sigma^*_{\sigma,x} := \{ \tau \subset \Sigma \ \vert \ \sigma \nsubseteq \tau \} \cup \bigcup_{\sigma \subseteq \tau} \Sigma^*_\tau (\sigma) 
\end{equation}     
where $\Sigma^*_\tau (\sigma) := \{ \text{Cone}(A) \ \vert \ A \subseteq \{u_\sigma \} \cup \tau(1), \sigma(1) \nsubseteq A \}$. The map between fans $\Sigma^*_{\sigma,x} \rightarrow \Sigma$ determines the blowup 
\begin{equation}\label{blowup}
\varphi \colon X_1 := X_{\Sigma^*_{\sigma,x}} \longrightarrow X_2:= X_\Sigma
\end{equation}
and induces a commutative diagram between the corresponding exact sequences \eqref{exseq} for the varieties:
\begin{equation}\label{gamma}
 \begin{CD}   
    0@>>> M @>>> \ZZ^{\Sigma^*_{\sigma,x}(1)} @>{\deg_{X_1}}>> \Cl(X_1)@>>> 0 \\
     @.   @|            @V{\beta}VV   @V{\gamma}VV      @.          \\
0 @>>> M @>>> \ZZ^{\Sigma(1)}  @>{\deg_{X_2}}>> \Cl(X_2)   @>>> 0 \\
\end{CD}
\end{equation}
where $\beta$ projects away from the coordinate corresponding to the exceptional divisor and $\gamma$ is such that $\gamma \circ \deg_{X_1} = \deg_{X_2} \circ \beta$.

We now restrict our attention to the class of \emph{smooth reflexive} lattice polytopes in $M_\RR$. A lattice polytope $P$ is \emph{reflexive} if its facet presentation is
\begin{center}
$P = \{ m \in M_\RR \ \vert \ \langle m , u_F \rangle \geq -1 \text{ for all facets } F \text{ in } P\}$.
\end{center}If $P$ is reflexive then the origin of $M_\RR$ is the only interior lattice point of $P$ and its dual polytope
is also a reflexive polytope. A polytope if \emph{smooth} if its dual polytope determines a smooth fan, and two reflexive polytopes $P_1,P_2 \subset M_\RR$  are \emph{lattice equivalent} if $P_1$ is the image of $P_2$ under an invertible linear map of $M_\RR$ induced by an isomorphism of $M$. Batyrev \cite{Bat1} uses smooth reflexive polytopes to classify smooth toric Fano varieties:

\begin{theorem}{\cite[Theorem 2.2.4]{Bat1}}
If $P$ is an $n$-dimensional smooth reflexive polytope then $X_P$ is an $n$-dimensional smooth toric Fano variety. Conversely, If $X$ is an $n$-dimensional smooth toric Fano variety then there exists an $n$-dimensional smooth reflexive polytope $P$ such that $X_P \cong X$. Moreover, if $P_1$ and $P_2$ are two smooth reflexive polytopes then $X_{P_1} \cong X_{P_2}$ if and only if $P_1$ and $P_2$ are lattice equivalent.
\end{theorem}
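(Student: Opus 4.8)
The plan is to recast the statement via the anticanonical divisor and the normal fan, and then invoke the standard dictionary between ample torus-invariant divisors and lattice polytopes. I would begin by recording two facts. First, a complete toric variety $X_\Sigma$ is Fano if and only if the anticanonical divisor $-K_{X_\Sigma}=\sum_{\rho\in\Sigma(1)}D_\rho$ is ample and Cartier, and on a smooth variety every Weil divisor is automatically Cartier. Second, for a lattice polytope $P\subset M_\RR$ with the origin in its interior, the face fan $\Sigma(P^\circ)$ is exactly the normal fan of $P$: vertices $u_{F_1},\dots,u_{F_k}$ span a face of $P^\circ$ precisely when $F_1\cap\dots\cap F_k$ is a face of $P$, and the cone they generate is the normal cone of that face. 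Thus $X_P=X_{\Sigma(P^\circ)}$ is the projective toric variety attached to $P$ by its normal fan, and the polytope $\{m\in M_\RR\mid\langle m,u_\rho\rangle\geq-1\ \text{for all }\rho\}$ associated to $\sum_\rho D_\rho$ recovers $P$ exactly when $P$ is reflexive.

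For the first assertion, let $P$ be smooth reflexive. Smoothness of $X_P$ is immediate from the definitions: the condition that $P$ be smooth means that the fan $\Sigma(P^\circ)$ is smooth, i.e.\ the minimal generators of each cone extend to a $\ZZ$-basis of $N$, which is precisely the smoothness criterion for $X_{\Sigma(P^\circ)}$ recorded in Section~\ref{subsect:ToricGeometry}. For the Fano property, the reflexive facet presentation identifies $P$ with the polytope of $-K_{X_P}$; since by the first paragraph the normal fan of this full-dimensional lattice polytope is $\Sigma(P^\circ)$, the support function of $-K_{X_P}$ is strictly convex, so $-K_{X_P}$ is ample, and Cartierness is free by smoothness. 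Hence $X_P$ is a smooth toric Fano $n$-fold. Conversely, let $X=X_\Sigma$ be a smooth toric Fano $n$-fold. Then $-K_X=\sum_\rho D_\rho$ is ample Cartier, so its polytope $P:=\{m\in M_\RR\mid\langle m,u_\rho\rangle\geq-1\ \text{for all }\rho\in\Sigma(1)\}$ is a full-dimensional lattice polytope whose normal fan is $\Sigma$, whence $X_P\cong X$. The rays of $\Sigma$ biject with the facets of $P$, each $u_\rho$ being the primitive inner facet normal, so this is the reduced facet presentation and $P$ is reflexive. Finally $P^\circ=\conv(u_\rho\mid\rho\in\Sigma(1))$ has face fan $\Sigma$, which is smooth because $X$ is, so $P$ is smooth reflexive as required.

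It remains to prove the last clause. If $P_1$ is the image of $P_2$ under a lattice isomorphism of $M$, the dual isomorphism of $N$ carries $P_2^\circ$ to $P_1^\circ$ and hence $\Sigma(P_2^\circ)$ to $\Sigma(P_1^\circ)$, giving a torus-equivariant, in particular an abstract, isomorphism $X_{P_2}\cong X_{P_1}$. For the converse one must turn an abstract isomorphism $f\colon X_{P_1}\to X_{P_2}$ into an isomorphism of fans. I would use the structure theory of automorphism groups of complete toric varieties (Demazure, Cox): the big torus of a complete toric variety without torus factors is a maximal torus of its automorphism group, so $f$ carries the torus of $X_{P_1}$ onto a maximal torus of $\operatorname{Aut}(X_{P_2})$, and after composing with a suitable automorphism of $X_{P_2}$ conjugating this back to the standard torus we may assume $f$ is equivariant. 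Then $f$ is induced by a lattice isomorphism $\psi\colon N\to N$ with $\psi(\Sigma(P_1^\circ))=\Sigma(P_2^\circ)$; as $\psi$ sends ray generators to ray generators it sends $P_1^\circ$ to $P_2^\circ$, and dualizing shows $P_1$ and $P_2$ are lattice equivalent.

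The main obstacle is precisely this reduction to the equivariant case: it is the one place where an isomorphism of varieties must be upgraded to an isomorphism of toric varieties, and it rests on the description of $\operatorname{Aut}(X)$ rather than on elementary polytope combinatorics. Everything else — the equivalences between ampleness, strict convexity of support functions and normal fans, and the face-fan/normal-fan identification — is routine toric bookkeeping, and the existence part is essentially a reinterpretation of reflexivity as the statement that $P$ is its own anticanonical polytope.
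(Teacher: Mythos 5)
The paper offers no proof of this statement: it is quoted directly from Batyrev \cite{Bat1}, so there is no in-paper argument to compare against, and your proposal should be judged as a standalone proof. As such it is essentially correct and follows the standard route. The existence half is, as you say, bookkeeping: smoothness of $X_P$ is the definition of a smooth polytope, reflexivity identifies $P$ with the lattice polytope of $-K_{X_P}$ whose normal fan is the face fan $\Sigma(P^\circ)$, so $-K_{X_P}$ is ample; conversely, for a smooth toric Fano $X_\Sigma$ the anticanonical polytope $P=\{m\mid \langle m,u_\rho\rangle\ge -1\}$ is a full-dimensional lattice polytope (its vertices are the Cartier data $m_\sigma\in M$) with normal fan $\Sigma$, hence reflexive, and smooth because $\Sigma$ is, with $X_P\cong X$. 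The only genuinely non-elementary step is the one you correctly isolate: upgrading an abstract isomorphism $X_{P_1}\cong X_{P_2}$ to an equivariant one. Your appeal to Demazure--Cox works, but to make it airtight you should record why it applies: for a smooth complete toric variety the automorphism group is an affine algebraic group, completeness of the fan rules out torus factors so the dense torus is a maximal torus of $\operatorname{Aut}^0$, and conjugacy of maximal tori lets you correct $f$ by an automorphism of $X_{P_2}$; the resulting equivariant isomorphism comes from a lattice isomorphism of $N$ carrying one fan to the other, and since reflexivity makes the vertices of $P_i^\circ$ exactly the primitive ray generators, it carries $P_1^\circ$ to $P_2^\circ$, whence dualising gives lattice equivalence of $P_1$ and $P_2$. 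This is the same strategy as Batyrev's original argument, so no further repair is needed.
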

\noindent We therefore refer to smooth reflexive lattice polytopes as \emph{Fano polytopes} and as Batyrev observed, there are finitely many Fano polytopes up to lattice equivalence in each dimension \cite{Bat3}. There are five corresponding smooth toric Fano varieties in dimension 2 that were known classically, whilst Watanabe-Watanabe \cite{WaWa} and Batyrev \cite{Bat2} classified the 18 smooth toric Fano varieties in dimension 3. In dimension 4, Batyrev \cite{Bat1} used primitive collections and relations to classify the Fano polytopes and Sato \cite{Sato} completed the classification using toric blowups, bringing the total number of 4-dimensional smooth toric Fano varieties to 124. Kreuzer and Nill \cite{KrNi} calculated that there are 866 5-dimensional Fano polytopes up to lattice equivalence, while {\O}bro \cite{Obro} presented an algorithm that has classified Fano polytopes in dimensions up to 9. 

Sato \cite{Sato} records the birational geometry between the smooth toric Fano fourfolds by computing toric divisorial contractions in terms of the primitive relations for each variety. Figure \ref{fig:poset} in Appendix A is a diagram of the divisorial contractions between the smooth toric Fano fourfolds. There are 29 maximal toric Fano fourfolds with regard to these divisorial contractions, and we call these varieties \emph{birationally maximal}. A diagram showing the divisorial contractions between the smooth toric Fano threefolds can be found in \cite[page 92]{Oda} \cite{WaWa}.

\begin{remark}
The contraction from the variety $K_2$ to the variety $H_{10}$ stated in \cite{Sato} should be a contraction from $K_3$ to $H_{10}$.
\end{remark}

\subsection{Full Strong Exceptional Collections and Tilting Objects}

For a set of objects $\Scal = \{ \Scal_i \}$ in a triangulated category $\Dcal$, define $\langle \Scal \rangle$ to be the smallest triangulated subcategory of $\Dcal$ containing $\Scal$, closed under isomorphisms, taking cones of morphisms and direct summands, and $\langle \Scal \rangle^\perp$ to be the full triangulated subcategory of $\Dcal$ containing objects $\Fcal$ such that $\Hom (S, \Fcal) = 0$ for all $S \in \Scal$.  

\begin{definition}
For a set of objects $\Scal = \{ \Scal_i \}$ in $\Dcal$,
\begin{itemize}
\item[(i)] $\Scal$ \emph{classically generates} $\Dcal$ if $\langle \Scal \rangle = \Dcal$,
\item[(ii)] $\Scal$ \emph{generates} $\Dcal$ if $\langle \Scal \rangle^\perp = 0$. 
\end{itemize}  
\end{definition}

\noindent Let $\Dx$ be the bounded derived category of coherent sheaves on a variety $X$.

\begin{definition}
\noindent \begin{itemize}
\item[(i)] An ordered set of objects $( \Ecal_0 , \ldots , \Ecal_r )$ in $\Dx$ is a \emph{strong exceptional collection} if $\Hom(\Ecal_k, \Ecal_k) = \CC$ for all $k \in \{ 0, \ldots , r \}$ and
\begin{equation*}
 \Hom^i (\Ecal_k , \Ecal_j ) = 0 \text{ when } \begin{cases}
k>j, & i = 0, \\
\forall \ k, j, & i \neq 0.
\end{cases}
\end{equation*}

\item[(ii)] A strong exceptional collection $(\Ecal_0, \ldots , \Ecal_r)$ in $\Dx$ is \emph{full} if $\langle \Ecal_0, \ldots , \Ecal_r \rangle = \Dx$.
\end{itemize} 
\end{definition}

\begin{remark}
The distinction between classical generation and generation becomes irrelevant when using strong exceptional collections. To show that a strong exceptional collection $(\Ecal_0, \ldots , \Ecal_r )$ is full, it is enough to show that $\langle \Ecal_0, \ldots, \Ecal_r \rangle^\perp = 0$ as observed by Bridgeland--Stern \cite[Lemma C.1]{BrSt}.
\end{remark}

\noindent Given a full strong exceptional collection $(\Ecal_1, \ldots , \Ecal_r)$ of non-isomorphic objects in $\Dx$, its sum $\bigoplus \Ecal_i$ is a \emph{tilting object}:
\begin{definition}
An object $\Tcal$ in $\Dx$ is a \emph{tilting object} if $\Hom^i (\Tcal, \Tcal) = 0$ for $i \neq 0$ and $\langle \Tcal \rangle = \Dx$. If additionally $\Tcal$ is a sheaf or vector bundle, then it is called a \emph{tilting sheaf} or \emph{tilting bundle} respectively.
\end{definition}
\noindent For a tilting object $\Tcal$, let $A = \End(\Tcal)$ and $\Dcal^b(A)$ be the bounded derived category of finitely generated right $A$-modules. It was shown by Baer \cite{Baer} and Bondal \cite{Bond} that in the case when $X$ is a smooth projective variety, if the tilting object $\Tcal$ exists then we obtain an equivalence of categories
\begin{equation}
 \mathbf{R}\Hom_X (\Tcal, - ) \colon \Dx \longrightarrow \Dcal^b(A).
\end{equation}   

\noindent Note that when $\Tcal = \bigoplus_0^r \Ecal_i$ is the direct sum of a full strong exceptional collection, the Grothendieck group of $X$ is isomorphic to a rank $r+1$ lattice.

For two smooth projective varieties $Y$ and $Z$, let $\Ecal \in \Dcal^b(Y)$ and $\Fcal \in \Dcal^b(Z)$. Define
\begin{equation*}
\Ecal \stackrel{\mathbf{L}}{\boxtimes} \Fcal := \mathbf{L} p_1^* (\Ecal) \stackrel{\mathbf{L}}{\otimes} \mathbf{L}p_2^* (\Fcal) \in \Dcal^b(Y \times Z)
\end{equation*}
where $p_1$ and $p_2$ are the natural projections of $Y \times Z$ onto its components.
\begin{lemma}\label{lem:FSECproducts}\cite[Lemma 5.2]{Ueha}
Let $Y$ and $Z$ be as above. If $\Ecal \in \Dcal^b (Y)$ and $\Fcal \in \Dcal^b (Z)$ are tilting objects, then $\Ecal \stackrel{\mathbf{L}}{\boxtimes} \Fcal$ is a tilting object for $\Dcal^b (Y \times Z)$.   
\end{lemma}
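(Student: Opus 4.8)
The plan is to reduce the statement to the three defining properties of a tilting object—vanishing of higher self-extensions, generation, and (implicitly, to get an algebra) finiteness of $\Hom$—and verify each one separately for $\Ecal \stackrel{\mathbf{L}}{\boxtimes} \Fcal$ using the K\"unneth-type behaviour of $\mathbf{R}p_{i*}$ and the derived pullbacks $\mathbf{L}p_i^*$. First I would record the key computational input: for $\Ecal, \Ecal' \in \Dcal^b(Y)$ and $\Fcal, \Fcal' \in \Dcal^b(Z)$ one has a functorial isomorphism
\begin{equation*}
\mathbf{R}\Hom_{Y\times Z}\bigl(\Ecal \stackrel{\mathbf{L}}{\boxtimes} \Fcal,\ \Ecal' \stackrel{\mathbf{L}}{\boxtimes} \Fcal'\bigr) \;\cong\; \mathbf{R}\Hom_Y(\Ecal,\Ecal') \stackrel{\mathbf{L}}{\otimes}_{\CC} \mathbf{R}\Hom_Z(\Fcal,\Fcal'),
\end{equation*}
which follows from the projection formula, flat base change along the fibre square defining $Y \times Z$, and the fact that $\mathbf{L}p_1^*$ and $\mathbf{L}p_2^*$ are fully faithful up to this tensor decomposition; over $\CC$ everything is flat so the derived tensor product is just the graded tensor product of cohomology. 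Taking $\Ecal = \Ecal'$ and $\Fcal = \Fcal'$ and passing to cohomology in each degree gives $\Hom^i_{Y\times Z}(\Ecal \stackrel{\mathbf{L}}{\boxtimes} \Fcal, \Ecal \stackrel{\mathbf{L}}{\boxtimes}\Fcal) \cong \bigoplus_{p+q=i} \Hom^p_Y(\Ecal,\Ecal)\otimes_\CC \Hom^q_Z(\Fcal,\Fcal)$; since $\Ecal$ and $\Fcal$ are tilting, all terms with $p\neq 0$ or $q\neq 0$ vanish, so this is $0$ for $i\neq 0$ and finite-dimensional for $i=0$, giving the $\Ext$-vanishing half of the statement.

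Next I would handle generation. Here I would use the standard fact (as recorded in the Bridgeland–Stern remark cited in the excerpt, or directly) that it suffices to show $\langle \Ecal \stackrel{\mathbf{L}}{\boxtimes} \Fcal \rangle^\perp = 0$ in $\Dcal^b(Y\times Z)$. Let $\Gcal \in \Dcal^b(Y \times Z)$ be right-orthogonal to $\Ecal \stackrel{\mathbf{L}}{\boxtimes} \Fcal$. The idea is to "integrate out" one factor at a time: for fixed behaviour in the $Z$-direction, the object $\mathbf{R}p_{1*}\bigl(\Gcal \stackrel{\mathbf{L}}{\otimes} \mathbf{L}p_2^*\Fcal^\vee\bigr) \in \Dcal^b(Y)$ is right-orthogonal to $\Ecal$ by adjunction and the projection formula, hence vanishes because $\Ecal$ generates $\Dcal^b(Y)$. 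Since this holds after twisting $\Fcal^\vee$ by an ample line bundle pulled back from $Z$ (or, more robustly, since a tilting object on $Z$ generates and one can test against a spanning class built from $\Fcal$), one concludes $\Gcal$ is orthogonal to the whole of $\Dcal^b(Y) \stackrel{\mathbf{L}}{\boxtimes} \Dcal^b(Z)$, and the latter generates $\Dcal^b(Y\times Z)$ by a theorem of the "external product generates" type. Therefore $\Gcal = 0$.

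I expect the main obstacle to be making the generation argument fully rigorous rather than the $\Ext$-vanishing, which is essentially formal. The subtlety is that $\Ecal \stackrel{\mathbf{L}}{\boxtimes} \Fcal$ is a single object, not the external product of the whole categories, so one must genuinely use that $\Ecal$ (resp. $\Fcal$) classically generates $\Dcal^b(Y)$ (resp. $\Dcal^b(Z)$)—equivalently that these are tilting—to propagate orthogonality from the single external product to all external products, and then invoke that $\{\Ucal \stackrel{\mathbf{L}}{\boxtimes} \Vcal\}$ generates $\Dcal^b(Y \times Z)$. For smooth projective $Y, Z$ over $\CC$ this last fact is known, but it should be cited (it is part of the same circle of ideas as the derived category of a product being generated by external products of generators; in the presence of tilting objects it also follows from the fact that $A \otimes_\CC B = \End(\Ecal \stackrel{\mathbf{L}}{\boxtimes}\Fcal)$ has finite global dimension and the equivalence one is trying to establish). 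In the write-up I would simply cite Uehara's Lemma 5.2 as the result being restated and either reproduce this two-step argument or point to the reference, since the excerpt already attributes the lemma to \cite{Ueha}.
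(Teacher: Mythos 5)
The paper does not prove this lemma at all: it is quoted verbatim from Uehara \cite[Lemma 5.2]{Ueha} and used as a black box, so there is no in-paper argument to compare against. Your sketch is the standard proof and is essentially sound: the K\"unneth isomorphism $\Hom^i_{Y\times Z}(\Ecal\boxtimes\Fcal,\Ecal\boxtimes\Fcal)\cong\bigoplus_{p+q=i}\Hom^p_Y(\Ecal,\Ecal)\otimes_\CC\Hom^q_Z(\Fcal,\Fcal)$ disposes of the $\Ext$-vanishing, and the generation half is correctly reduced to showing that an object $\Gcal$ right-orthogonal to all shifts of $\Ecal\boxtimes\Fcal$ vanishes. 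Two refinements are needed to make the second half airtight. First, the sentence about twisting $\Fcal^\vee$ by an ample line bundle pulled back from $Z$ does not follow from orthogonality to the single object $\Ecal\boxtimes\Fcal$; the correct propagation mechanism (which you in fact state afterwards) is that the full subcategory of $\Fcal'\in\Dcal^b(Z)$ with $\Hom(\Ecal\boxtimes\Fcal'[k],\Gcal)=0$ for all $k$ is triangulated and thick, hence equals $\Dcal^b(Z)$ because $\Fcal$ classically generates; repeating in the $Y$-variable gives $\Gcal\perp\Ocal_y\boxtimes\Ocal_z=\Ocal_{(y,z)}$ for all closed points, and Serre duality plus the spanning-class property of skyscrapers forces $\Gcal=0$, so no separate ``external products generate'' theorem need be invoked. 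Second, since the paper's definition of tilting demands classical generation $\langle\Tcal\rangle=\Dx$ rather than $\langle\Tcal\rangle^\perp=0$, you must close the loop as you indicate: either via the Bridgeland--Stern-type criterion (legitimate here because the $\Ext$-vanishing is proved first), or by noting that $\End(\Ecal\boxtimes\Fcal)\cong\End(\Ecal)\otimes_\CC\End(\Fcal)$ has finite global dimension and running the tilting equivalence. With those two points made explicit, your argument is a complete and correct substitute for the citation.
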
 

\section{Strong Exceptional Collections on Smooth Toric Varieties}

\noindent The combinatorics of a toric variety $X$ allow us to computationally determine whether a collection of line bundles on $X$ is strong exceptional. We can then consider how these strong exceptional collections behave under torus-invariant divisorial contractions. 

\subsection{Non-vanishing Cohomology Cones}\label{subsect:EMS-StrExc}

To check that a collection of effective line bundles $\{ L_0 := \Ocal_X, L_1 , \ldots , L_r \}$ on a smooth toric variety $X$ is strong exceptional, one needs to check that $H^i(X, L^{-1}_s \otimes L_t) = \Hom^i (L_s, L_t) = 0$ for $i > 0,\  0 \leq s,t \leq r$. Eisenbud, Musta{\c{t}}{\u{a}} and Stillman \cite{EiMuSt} introduced a method to determine when the cohomology of a line bundle on $X$ vanishes by considering if the line bundle avoids certain affine cones constructed in $\Pic(X)_\RR$. We recall the construction of these non-vanishing cohomology cones below.

For a $n$-dimensional toric variety $X$ with fan $\Sigma$, fix an enumeration $I_\Sigma$ of $\Sigma(1)$ and for $I \subset I_\Sigma$, let $Y_I$ be the union of the cones in $\Sigma$ having all edges in the complement of $I$. Using reduced cohomology with coefficients in $\CC$ we have
\begin{equation}
H^i_{Y_I} (\vert \Sigma \vert ) := H^i ( \vert \Sigma \vert, \vert \Sigma \vert \backslash Y_I) = H^{i-1} (\vert \Sigma \vert \backslash Y_I), 
\end{equation}
where the last equality holds for $i > 0$ as $\vert \Sigma \vert$ is contractible.

An element of
\begin{equation}
H_\Sigma := \lbrace I \subseteq I_\Sigma \phantom{.}\vert\phantom{.} H^i_{Y_I} (\vert \Sigma \vert ) \neq \emptyset \text{ for some } i>0 \rbrace  
\end{equation}
is called a \emph{forbidden set}. Define
\begin{equation}
\mathbf{p}_I \in \ZZ^{\Sigma(1)}, \text{ where } (\mathbf{p}_I)_\rho =
\begin{cases}
-1 \text{ if } \rho \in I \\
\phantom{-}0 \text{ if } \rho \notin I
\end{cases}
\end{equation}
and
\begin{equation}
C_I = \left\lbrace \mathbf{x} = (x_\rho) \in \ZZ^{\Sigma(1)} \phantom{.}\vert\phantom{.} x_\rho \leq 0 \text{ if } \rho \in I, x_\rho \geq 0 \text{ if } \rho \notin I \right\rbrace.   
\end{equation}

Setting $L_I := C_I + \mathbf{p}_I \subseteq \ZZ^{\Sigma(1)}$ we see that $L_I \subset C_I$ and $L_I = \lbrace \mathbf{x} \in \ZZ^{\Sigma(1)} \phantom{.}\vert\phantom{.} neg(\mathbf{x}) = I \rbrace$, where 
\begin{equation}
neg \colon \ZZ^{\Sigma(1)} \rightarrow I_\Sigma, \phantom{---} \mathbf{x} = (x_\rho) \mapsto \begin{cases}
\rho \text{  if } x_\rho < 0, \\
\emptyset \text{ else.} 
\end{cases}   
\end{equation}  
For each $I \in H_\Sigma$, the image of $L_I$ under the map $deg$ is an affine cone $\beta \subset \Pic(X)_\RR$. If a rank one reflexive sheaf $\mathcal{O}_{X_\Sigma} (D)$ lies in $\beta$ then \cite[Theorem 2.7]{EiMuSt} shows that
\begin{equation}
H^i (X , \mathcal{O}_{X} (D) ) \neq \emptyset, \text{ for some } i > 0. 
\end{equation}

\subsection{Cones Affected by Blow Ups}\label{subsect:CohConesBlowUps}
Assume that we have a chain of torus-invariant divisorial contractions $ X := X_0 \rightarrow X_1 \rightarrow \cdots \rightarrow  X_t$ between smooth toric varieties and let $\Lcal = \{ L_0, L_1, \ldots,  L_r \}$ be a collection of non-isomorphic line bundles on $X$ with corresponding vectors $\{ v_0, \ldots, v_r \}$ in $\Pic(X)_\RR$. By \eqref{gamma} we have maps between the Picard lattices
\begin{equation}\label{eq:ChainsOfPicMaps}
\Pic(X_0) \stackrel{\gamma_1}{\longrightarrow} \Pic (X_1) \stackrel{\gamma_2}{\longrightarrow} \cdots \stackrel{\gamma_{t}}{\longrightarrow} \Pic(X_t).
\end{equation}

For ease of notation we set:
\begin{itemize}
\item $\gamma_{(i \rightarrow j)}$ to be the composition of maps $\gamma_{j}\circ \gamma_{j-1}\circ \cdots \circ \gamma_{i+1}$ for $0 \leq i < j \leq t$;
\item $\Lcal_{X_k}$ to be the set of non-isomorphic line bundles on $X_k$ in the image of $\gamma_{(0 \rightarrow k)}(\Lcal)$, for $1 \leq k \leq t$;
\item $\mathfrak{C}_k \subset \Pic(X_0)_\RR$ to be the preimage of all non-vanishing cohomology cones for $X_k$ under the map $\gamma_{(0 \rightarrow k)}$ for $1 \leq k \leq t$, and $\mathfrak{C}_0 \subset \Pic(X_0)_\RR$ to be the non-vanishing cohomology cones for $X_0$.     
\end{itemize} 
By the construction of the sets $\mathfrak{C}_k$, we have the following result:
\begin{lemma}
If \[v_i - v_j \notin \bigcup_{k = 0}^t \mathfrak{C}_k\] for all $0 \leq i,j \leq r$ then $\Lcal$ is strong exceptional on $X$ and $\Lcal_{X_k}$ is strong exceptional on $X_k$, for $1 \leq k \leq t$.
\end{lemma}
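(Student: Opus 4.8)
The plan is to unwind the definitions so that the statement reduces, uniformly in $k$, to the non-vanishing cohomology cone criterion recalled in Section \ref{subsect:EMS-StrExc}, followed by a routine argument producing a valid ordering of the line bundles. Set $\gamma_{(0\rightarrow 0)} := \id$, so that for every $0 \le k \le t$ the set $\mathfrak{C}_k \subset \Pic(X_0)_\RR$ is the preimage under $\gamma_{(0\rightarrow k)}$ of the union of the non-vanishing cohomology cones of $X_k$; the hypothesis then reads $\gamma_{(0\rightarrow k)}(v_i - v_j)$ avoids those cones, for all $i,j$ and all $k$.

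Fix $k$ with $0 \le k \le t$. Every bundle in $\Lcal_{X_k}$ is of the form $\gamma_{(0\rightarrow k)}(L_i)$, hence has class $w_i := \gamma_{(0\rightarrow k)}(v_i) \in \Pic(X_k)_\RR$; since $\gamma_{(0\rightarrow k)}$ is linear, for any two such bundles the difference $\gamma_{(0\rightarrow k)}(L_i)^{-1} \otimes \gamma_{(0\rightarrow k)}(L_j)$ has class $w_j - w_i = \gamma_{(0\rightarrow k)}(v_j - v_i)$. By hypothesis $v_j - v_i \notin \mathfrak{C}_k$, so $w_j - w_i$ lies in none of the non-vanishing cohomology cones of $X_k$, and hence by \cite[Theorem 2.7]{EiMuSt} we get $H^p(X_k, \gamma_{(0\rightarrow k)}(L_i)^{-1} \otimes \gamma_{(0\rightarrow k)}(L_j)) = 0$ for all $p > 0$ and all $i,j$. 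Taking $i = j$ also gives $H^p(X_k, \Ocal_{X_k}) = 0$ for $p > 0$, while $\Hom$ of any bundle in $\Lcal_{X_k}$ with itself is $H^0(X_k, \Ocal_{X_k}) = \CC$ as $X_k$ is complete; passing from $\gamma_{(0\rightarrow k)}(\Lcal)$ to its non-isomorphic representatives changes nothing, since the vanishing holds for every pair. (When $k = 0$ this is just the statement that $\Lcal$ has the required $\Hom$-vanishing on $X$ itself.)

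It remains to order the (non-isomorphic) line bundles so as to obtain an honest strong exceptional collection. Declaring $L \preceq L'$ exactly when $\Hom(L, L') \neq 0$ defines a partial order on isomorphism classes of line bundles on a complete integral variety: transitivity holds because the product of two nonzero sections of line bundles is nonzero, and antisymmetry because if $\Hom(L, L')$ and $\Hom(L', L)$ are both nonzero then the product of the corresponding sections is a nonzero element of $H^0(\Ocal) = \CC$, hence a unit, forcing $L \cong L'$. Ordering $\Lcal$ (resp.\ $\Lcal_{X_k}$) by any linear extension of $\preceq$ makes $\Hom^0$ from a later bundle to an earlier one vanish, which together with the higher-$\Hom$ vanishing above is exactly the strong exceptional condition. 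The one place that needs care is the bookkeeping in the second step: making sure each family of cones is compared in the correct Picard space, and noting that $\gamma_{(0\rightarrow k)}$ is not a pullback of line bundles but the map induced by forgetting the exceptional-divisor coordinates in \eqref{gamma}, so that, being linear, it commutes with taking differences of classes. With that in hand the lemma is immediate from the construction of the sets $\mathfrak{C}_k$.
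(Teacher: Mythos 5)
Your proof is correct and follows exactly the route the paper intends: the paper states this lemma without proof as an immediate consequence of the construction of the sets $\mathfrak{C}_k$ together with the Eisenbud--Musta\c{t}\u{a}--Stillman cone criterion, which is precisely what you spell out via the linearity of the maps $\gamma_{(0\rightarrow k)}$. Your additional partial-order argument for arranging the bundles so that $\Hom^0$ vanishes in the required direction is a routine detail the paper leaves implicit, and it is handled correctly.
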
  It will be shown in this section that the preimage of the non-vanishing cohomology cones for $X_k$ under $\gamma_{(0 \rightarrow k)}$ is closely related to the non-vanishing cohomology cones for $X$.

\begin{lemma}[Forbidden sets duality]\label{lem:ForbiddenSetsDuality}
Let $I \subsetneq I_\Sigma$ and set $I^\vee  = I_\Sigma \backslash I$. If $I \in H_\Sigma$, then $I^\vee \in H_\Sigma$.  
\end{lemma}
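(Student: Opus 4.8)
The plan is to unwind the definition of $H_\Sigma$ in terms of the reduced (co)homology of the space $|\Sigma| \setminus Y_I$ and to produce, via Alexander duality on the sphere, a nonzero reduced cohomology group for $I^\vee$ out of a nonzero one for $I$. Recall that $Y_I$ is the union of cones of $\Sigma$ all of whose rays lie in $I_\Sigma \setminus I$; equivalently, after intersecting with a small sphere $S^{n-1}$ around the origin, $Y_I \cap S^{n-1}$ is a subcomplex $K_{I^c}$ of the simplicial sphere $K := |\Sigma| \cap S^{n-1}$ spanned by the vertices outside $I$. Since $|\Sigma|$ is contractible and $|\Sigma| \setminus Y_I$ deformation retracts onto $K \setminus K_{I^c}$, the condition $I \in H_\Sigma$ is, by the displayed equation $H^i_{Y_I}(|\Sigma|) = \widetilde H^{i-1}(|\Sigma| \setminus Y_I)$, equivalent to $\widetilde H^{j}(K \setminus K_{I^c}) \neq 0$ for some $j \ge 0$.

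First I would make the combinatorial observation that $K \setminus K_{I^c}$ deformation retracts onto the subcomplex $K_I$ of $K$ spanned by the vertices in $I$: a point in the geometric realization lies outside $K_{I^c}$ exactly when its barycentric coordinates on some vertex of $I$ is positive, and pushing all $I^c$-coordinates to zero gives the retraction (this uses that $X$ is smooth, so every face of $\Sigma$ together with the ambient cone structure behaves like a genuine simplicial complex on $S^{n-1}$; for a general smooth complete toric variety $K$ is the boundary of the dual polytope, a simplicial $(n-1)$-sphere). Dually, $K \setminus K_I \simeq K_{I^c} = K_{I^\vee}$. So the forbidden-set condition becomes: $I \in H_\Sigma$ iff $\widetilde H^*(K_I) \neq 0$, and $I^\vee \in H_\Sigma$ iff $\widetilde H^*(K_{I^\vee}) \neq 0$.

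The heart of the argument is then Alexander duality in the sphere $S^{n-1} \cong K$: for a (nonempty, proper) subcomplex $Z \subset S^{n-1}$ one has $\widetilde H^{j}(Z) \cong \widetilde H_{n-2-j}(S^{n-1} \setminus Z)$. Applying this with $Z = K_I$ and combining with the homotopy equivalence $S^{n-1} \setminus K_I \simeq K_{I^\vee}$ from the previous step yields $\widetilde H^{j}(K_I) \cong \widetilde H_{n-2-j}(K_{I^\vee})$. Hence $\widetilde H^*(K_I) \neq 0$ forces $\widetilde H_*(K_{I^\vee}) \neq 0$, i.e. $\widetilde H^*(K_{I^\vee}) \neq 0$ as well (reduced homology and cohomology of a finite complex vanish simultaneously, e.g. over the field $\CC$), and therefore $I^\vee \in H_\Sigma$. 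One must also check the degenerate cases $I = \emptyset$ or $I = I_\Sigma$ are excluded by the hypothesis $I \subsetneq I_\Sigma$ together with the fact that $\emptyset \notin H_\Sigma$ (the complement $Y_{I_\Sigma} = \{0\}$ contributes nothing, so neither extreme is forbidden), so no boundary case breaks the symmetric statement.

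The main obstacle I anticipate is purely bookkeeping rather than conceptual: correctly identifying $|\Sigma| \setminus Y_I$ up to homotopy with the spanned subcomplex $K_I$ of the link, and pinning down the precise shift in Alexander duality so that the degree $i>0$ appearing in the definition of a forbidden set is matched on both sides. Once the translation "$I$ forbidden $\iff$ $K_I$ has nonvanishing reduced homology" is established, the duality $K_I \leftrightarrow S^{n-1}\setminus K_I \simeq K_{I^\vee}$ closes the proof immediately and symmetrically.
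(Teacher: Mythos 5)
Your proposal is correct, but it proves the lemma by a genuinely different route from the paper. The paper stays on the algebro-geometric side: it takes the torus-invariant divisor $D$ whose coefficient vector is $\mathbf{p}_I$, uses the Eisenbud--Musta\c{t}\u{a}--Stillman dictionary to get nonvanishing higher cohomology of $\Ocal_X(D)$, and then applies Serre duality together with $K_X=-\sum_{\rho}D_\rho$, observing that $K_X-D$ has coefficient vector exactly $\mathbf{p}_{I^\vee}$, so that the nonvanishing of $H^{n-i}$ of that bundle is read back as $I^\vee\in H_\Sigma$. You instead work entirely with the topological definition of $H_\Sigma$: radial retraction identifies $\lvert\Sigma\rvert\setminus Y_I$ with the complement in the simplicial sphere $K=\lvert\Sigma\rvert\cap S^{n-1}$ of the full subcomplex on $I^{c}$, the standard full-subcomplex retraction identifies that complement with $K_I$, and Alexander duality $\widetilde{H}^{j}(K_I)\cong\widetilde{H}_{n-2-j}(K_{I^\vee})$ transfers nonvanishing from $I$ to $I^\vee$. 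Your route is more self-contained: it never leaves the definition of forbidden sets, needs no Serre duality, and avoids the question of which $M$-graded piece of the sheaf cohomology is responsible for the nonvanishing (a point the paper's argument passes over, since it is really the weight-zero pieces that are paired under equivariant duality); the cost is that you explicitly use completeness and simpliciality of $\Sigma$, so that $K$ is a genuine simplicial $(n-1)$-sphere, hypotheses that hold throughout the paper and are equally implicit in the Serre-duality proof. The bookkeeping you flag does close: since $K_I$ is a proper subcomplex of the $(n-1)$-sphere, $\widetilde{H}^{n-1}(K_I)=0$, so any nonvanishing degree $j$ satisfies $j\le n-2$, the dual degree $n-2-j$ is nonnegative, and one lands back in the range $i>0$ required by the definition of $H_\Sigma$; together with your exclusion of the degenerate cases $I=\emptyset$ and $I=I_\Sigma$, the argument is complete.
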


\begin{proof}
It is enough to show that the line bundle corresponding to $\mathbf{p}_{I^\vee}$ has non-vanishing higher cohomology. Let $D := \sum_{\rho \in \Sigma(1)} a_\rho D_\rho$ be the torus-invariant Weil divisor corresponding to $\mathbf{p}_I = (a_\rho)$. By assumption, $H^i ( X ,\Ocal ( D ) ) \neq \emptyset$ for some $0<i<n$. By Serre duality and the fact that on a toric variety the canonical divisor is $K_X = -\sum_{\rho \in \Sigma(1)} D_\rho$,
\begin{equation}
\emptyset \neq H^i (X, \mathcal{O} ( D ))^\vee \cong H^{n-i} \left( X ,\Ocal ( K_X - D ) \right)
= H^j (X ,\mathcal{O} (\sum_{\rho \in \Sigma(1)} b_\rho D_\rho ))
\end{equation}
where $b_\rho = -1-a_\rho$. But
\begin{equation}
a_\rho = \begin{cases}
-1 \text{ if } \rho \in I \\
\ \ 0 \text{ if } \rho \notin I
\end{cases} \Rightarrow \ b_\rho = \begin{cases}
\ \ 0 \text{ if } \rho \in I \\
-1 \text{ if } \rho \notin I.
\end{cases} 
\end{equation}
Therefore, $(b_\rho) = \mathbf{p}_{I^\vee}$ and as $H^j (X ,\mathcal{O}(\sum_{\rho \in \Sigma(1)} b_\rho D_\rho )) \neq \emptyset$ for some $0<j<n$, we have $I^\vee \in H_\Sigma$. 
\end{proof}

\begin{remarks}
\item[(i)] The cone in $\Pic(X)_\RR$ corresponding to $I = I_\Sigma \in H_\Sigma$ is the non-vanishing $n^{th}$-cohomology cone. Its dual $I^\vee = \emptyset$ gives the non-vanishing $0^{th}$-cohomology cone. The chosen line bundles for a strong exceptional collection need to be effective but must have no higher cohomology, so for our purposes $\lbrace \emptyset \rbrace \notin H_\Sigma$ and the duality statement above does not hold for $I_\Sigma$. 

\item[(ii)] By Lemma \ref{lem:ForbiddenSetsDuality}, $Y_I$ can be redefined as the union of the cones in $\Sigma$ having all edges in $I$. 
   
\end{remarks}

Continuing with the notation in (\ref{blowup}), extend the enumeration $I_\Sigma$ of $\Sigma(1)$ to $I_{\Sigma^*} := I \cup \{x \}$, an enumeration of the rays of $\Sigma^* := \Sigma^*_{\sigma,x}$. It is clear that for
\begin{equation}
C_\sigma := \bigcup_{\sigma \subseteq \tau \subseteq \Sigma} \tau
\end{equation}
we have 
\begin{equation}
\Sigma \backslash \vert C_\sigma \vert = \Sigma^* \backslash \vert \bigcup_{\sigma \subseteq \tau} \Sigma^*_\tau (\sigma) \vert
\end{equation}
and so we only need to consider $C_\sigma$ when determining how the cones of $\Sigma$ change after the blow up of $\sigma \subset \Sigma$.

\begin{lemma}\label{lem:BlowupForbiddenSet}
Let $\emptyset \neq I \subseteq I_\Sigma \backslash C_\sigma (1)$. Then $I \cup \lbrace x\rbrace \in H_{\Sigma^*}$.
\end{lemma}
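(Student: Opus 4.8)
The plan is to exhibit a line bundle on $X_{\Sigma^*}$ with non-vanishing higher cohomology whose divisor class is exactly $\mathbf{p}_{I\cup\{x\}}$. By the construction recalled in Section \ref{subsect:EMS-StrExc} (and \cite[Theorem 2.7]{EiMuSt}), it suffices to show that $I\cup\{x\}\in H_{\Sigma^*}$, i.e.\ that $H^i_{Y_{I\cup\{x\}}}(|\Sigma^*|)\neq\emptyset$ for some $i>0$, which by contractibility of $|\Sigma^*|$ amounts to $\widetilde{H}^{i-1}(|\Sigma^*|\setminus Y_{I\cup\{x\}})\neq\emptyset$ for some $i>0$. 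Here, using Remark (ii) after Lemma \ref{lem:ForbiddenSetsDuality}, $Y_{I\cup\{x\}}$ is the union of cones of $\Sigma^*$ all of whose edges lie in $I\cup\{x\}$.

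First I would compare the relevant subcomplexes for $\Sigma$ and $\Sigma^*$. Since $I\subseteq I_\Sigma\setminus C_\sigma(1)$, no ray of $I$ is an edge of $\sigma$, so every cone of $\Sigma$ destroyed by the star subdivision (those $\tau$ with $\sigma\subseteq\tau$) contains an edge outside $I$; hence none of them is a face of $Y_I$ inside $\Sigma$, and the cones of $\Sigma$ with all edges in $I$ coincide with the cones of $\Sigma^*$ with all edges in $I$. Thus $Y_I$ is the same polyhedral set whether formed in $\Sigma$ or in $\Sigma^*$. Next, the new ray $x$ with generator $u_\sigma=\sum_{\rho\in\sigma(1)}u_\rho$ is disjoint from the cones making up $Y_I$, since $\sigma(1)\cap I=\emptyset$ and every cone through $x$ in $\Sigma^*$ has some $u_\rho$ ($\rho\in\sigma(1)$) among its edges or else is the ray $x$ itself. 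So in $\Sigma^*$ the set $Y_{I\cup\{x\}}$ is the disjoint union (as a subspace, meeting only at the origin $0\in|\Sigma^*|$) of $Y_I$ and the ray spanned by $u_\sigma$; equivalently $Y_{I\cup\{x\}}$ deformation retracts onto $Y_I\cup\{0\}$, or one can say $|\Sigma^*|\setminus Y_{I\cup\{x\}}$ differs from $|\Sigma|\setminus Y_I$ only by the excision of a contractible neighbourhood around the new ray together with the subdivided cones $\bigcup_{\sigma\subseteq\tau}\Sigma^*_\tau(\sigma)$, which is a contractible piece of $|\Sigma^*|$ glued along a contractible piece of its boundary.

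With that comparison in hand I would argue by a Mayer–Vietoris / excision argument that $\widetilde{H}^{i-1}(|\Sigma^*|\setminus Y_{I\cup\{x\}})\cong\widetilde{H}^{i-1}(|\Sigma|\setminus Y_I)$ for all $i>0$. The point is that $|\Sigma^*|\setminus Y_{I\cup\{x\}}$ is obtained from $|\Sigma|\setminus Y_I$ by removing, and gluing in, contractible sets along contractible intersections, so the reduced cohomology is unchanged; the hypothesis $I\neq\emptyset$ guarantees we are in the range $i>0$ where reduced cohomology of the complement computes the local cohomology $H^i_{Y_\bullet}(|{-}|)$. Since $I\in H_\Sigma$ by Lemma \ref{lem:BlowupForbiddenSet}'s hypothesis — wait, more precisely, one needs $I$ itself to be forbidden or to produce non-vanishing cohomology; if $I\in H_\Sigma$ we conclude immediately, and in general the claim is that $I\cup\{x\}$ is forbidden in $\Sigma^*$ precisely because the added ray does not disturb the topology of the complement. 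Finally, translating back: the vector $\mathbf{p}_{I\cup\{x\}}\in\ZZ^{\Sigma^*(1)}$ has a $-1$ in the $x$-coordinate and agrees with $\mathbf{p}_I$ (padded by $0$) elsewhere, and its image under $\deg_{X_1}$ lands in the affine cone $\beta\subset\Pic(X_1)_\RR$ attached to $I\cup\{x\}$; so the corresponding reflexive sheaf has non-vanishing higher cohomology, giving $I\cup\{x\}\in H_{\Sigma^*}$.

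The main obstacle I anticipate is making the topological comparison rigorous: one must check carefully that the subdivided region $\bigcup_{\sigma\subseteq\tau}\Sigma^*_\tau(\sigma)$ and the new ray really do attach to $|\Sigma|\setminus Y_I$ along a contractible set and contribute nothing to reduced cohomology — in particular that removing $Y_{I\cup\{x\}}$ instead of $Y_I$ does not accidentally disconnect or create a cycle in the complement. This is where one should use that $I$ avoids $C_\sigma(1)$ entirely, so the whole "blown-up neighbourhood" of $\sigma$ sits inside the complement of $Y_I$, as one contractible blob, before and after subdivision. Once that is pinned down via a standard Mayer–Vietoris sequence, the rest is bookkeeping with the $\mathbf{p}$-vectors and the $\deg$ maps in diagram \eqref{gamma}.
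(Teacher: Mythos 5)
Your geometric setup is essentially the paper's: using $I\cap C_\sigma(1)=\emptyset$ you correctly identify $Y_{I\cup\{x\}}$ (formed in $\Sigma^*$, with the redefinition of Remark after Lemma \ref{lem:ForbiddenSetsDuality}) as $Y_I$ together with the ray spanned by $u_\sigma$, meeting the rest only at the origin and contained in no larger cone. The gap is in what you then do with this. The lemma does not assume $I\in H_\Sigma$; its whole point is that $I\cup\{x\}$ is forbidden for $\Sigma^*$ for \emph{every} nonempty $I$ disjoint from $C_\sigma(1)$, whether or not $I$ was forbidden for $\Sigma$ (in Example \ref{ex:E1Fourfold}, $\{0,6\}$ is forbidden for $E_1$ although $\{0\}$ is not forbidden for $B_1$). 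Consequently your key step, the isomorphism $\widetilde{H}^{i-1}(|\Sigma^*|\setminus Y_{I\cup\{x\}})\cong\widetilde{H}^{i-1}(|\Sigma|\setminus Y_I)$ for all $i>0$, cannot hold: if it did, any non-forbidden $I$ would force $I\cup\{x\}\notin H_{\Sigma^*}$, contradicting the (true) statement. The heuristic that deleting a contractible set attached along a contractible piece leaves reduced cohomology unchanged fails here because the ray is closed and unbounded, so deleting it changes the complement's topology at infinity: compactifying $|\Sigma^*|\cong\RR^n$ to $S^n$, the ray $x$ together with any ray of $I$ (this is where $I\neq\emptyset$ is really used) closes up through $0$ and $\infty$ into a circle, and Alexander duality (or Mayer--Vietoris applied to $Y_I\cup x\cup\{\infty\}$) gives nonzero reduced cohomology of the complement in degree $n-2$, hence $H^{n-1}_{Y_{I\cup\{x\}}}(|\Sigma^*|)\neq 0$; removing only $Y_I$ need not produce anything.

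So the fix is to run your comparison in the opposite direction, which is exactly what the paper does: it first records the observation that if $Y_J$ contains a ray that lies in no larger cone of $Y_J$ and meets every other cone of $Y_J$ only at the origin, then $J$ is forbidden (phrased loosely there as a non-contractible ``loop around'' that ray, more precisely a nontrivial cycle detected as above), and then verifies, as you did, that the exceptional ray $x$ is such an isolated ray in $Y_{I\cup\{x\}}$. The isolated ray \emph{creates} new cohomology of the complement rather than leaving it undisturbed, and this alone yields $I\cup\{x\}\in H_{\Sigma^*}$ with no hypothesis on $I$ beyond $I\neq\emptyset$ and $I\cap C_\sigma(1)=\emptyset$.
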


\begin{proof}
Firstly, assume for some $I \subset I_\Sigma$ that there exists a ray $\tau \subsetneq Y_I$ such that there is no cone $\sigma \subset Y_I$ with $\tau \subset \sigma$, and $\tau \cap \varsigma = \{ 0 \}$ for all cones $\tau \neq \varsigma \subset Y_I$. By considering $Y_I \subset \vert \Sigma \vert \cong \RR^n$, we can construct a loop around $\tau$ that is not contractible in $\vert \Sigma \vert \backslash Y_I$. Thus $I \in H_\Sigma$.

Now assume $\emptyset \neq I \subseteq I_\Sigma \backslash C_\sigma (1)$. By the construction of $\Sigma^*$ we have $x \cap \varsigma = \{ 0 \}$ for any cone $x \neq \varsigma \subset Y^*_{I \cup \{x \}}$ and there is not a cone $x \neq \sigma \subset Y^*_{I \cup \{x \}}$ that contains $x$, so $I \cup \{x\} \in H_{\Sigma^*}$ by the observation above.   
\end{proof}

\noindent By Lemma \ref{lem:ForbiddenSetsDuality}, $(I \cup \lbrace x \rbrace )^\vee \in H_{\Sigma^*}$ for $\emptyset \neq I \subseteq I_\Sigma \backslash C_\sigma (1)$. But $(I \cup \lbrace x \rbrace )^\vee = J \cup C_\sigma (1)$ for some $J \subsetneq I_\Sigma \backslash C_\sigma(1)$, so we have the corollary: 

\begin{corollary}
If $I \subsetneq I_\Sigma \backslash C_\sigma (1)$, then $I \cup C_\sigma (1) \in H_{\Sigma^*}$.
\end{corollary}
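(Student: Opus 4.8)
The plan is to deduce the corollary from Lemma~\ref{lem:BlowupForbiddenSet} together with the forbidden-sets duality of Lemma~\ref{lem:ForbiddenSetsDuality}, applied this time to the subdivided fan $\Sigma^*$ rather than to $\Sigma$. The key observation is that taking complements inside the enlarged index set $I_{\Sigma^*} = I_\Sigma \cup \{x\}$ interchanges a forbidden set that contains $x$ and is disjoint from $C_\sigma(1)$ with a forbidden set that contains all of $C_\sigma(1)$ but omits $x$ — which is exactly the shape of the set appearing in the corollary.

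Concretely, I would first fix $I \subsetneq I_\Sigma \backslash C_\sigma(1)$ and set $J := (I_\Sigma \backslash C_\sigma(1)) \backslash I$. Since the inclusion $I \subsetneq I_\Sigma \backslash C_\sigma(1)$ is proper, $J$ is nonempty, and by construction $J \subseteq I_\Sigma \backslash C_\sigma(1)$, so Lemma~\ref{lem:BlowupForbiddenSet} applies and yields $J \cup \{x\} \in H_{\Sigma^*}$. Next I would record the bookkeeping identity $I_{\Sigma^*} \backslash (J \cup \{x\}) = I \cup C_\sigma(1)$: indeed $I$ and $J$ partition $I_\Sigma \backslash C_\sigma(1)$, and the rays of $\Sigma^*$ lying outside $I_\Sigma \backslash C_\sigma(1)$ are precisely those in $C_\sigma(1)$ together with the new ray $x$. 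I would also check that $J \cup \{x\}$ is a proper subset of $I_{\Sigma^*}$, which holds because $C_\sigma(1) \neq \emptyset$ (the cone $\sigma$ has dimension $\geq 1$, hence at least one ray, and $C_\sigma(1) \supseteq \sigma(1)$) and $C_\sigma(1)$ is disjoint from $J \cup \{x\}$; this properness is what is needed to invoke the duality lemma.

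Finally I would apply Lemma~\ref{lem:ForbiddenSetsDuality} to the fan $\Sigma^*$, with enumeration $I_{\Sigma^*}$ and forbidden set $J \cup \{x\}$, to conclude that its complement $I \cup C_\sigma(1)$ also lies in $H_{\Sigma^*}$, which is the statement of the corollary. I do not expect a genuine obstacle here: the real content has already been absorbed into Lemma~\ref{lem:BlowupForbiddenSet} and Lemma~\ref{lem:ForbiddenSetsDuality}, and the corollary is simply the composition "apply Lemma~\ref{lem:BlowupForbiddenSet} in $\Sigma^*$, then dualise''. The one point that must not be skipped is verifying the properness hypothesis of the duality lemma — recall, as noted in the remarks following Lemma~\ref{lem:ForbiddenSetsDuality}, that duality genuinely fails for the full index set $I_{\Sigma^*}$ itself — so the short check that $C_\sigma(1)$ is nonempty and disjoint from $J \cup \{x\}$ is the step that carries the argument.
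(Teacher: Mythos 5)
Your proposal is correct and follows the paper's own route exactly: apply Lemma \ref{lem:BlowupForbiddenSet} to the nonempty complement $J$ of $I$ inside $I_\Sigma \backslash C_\sigma(1)$, and then dualise in $\Sigma^*$ via Lemma \ref{lem:ForbiddenSetsDuality}, noting $(J \cup \{x\})^\vee = I \cup C_\sigma(1)$. Your explicit check of the properness hypothesis for the duality lemma is a welcome (if minor) addition to what the paper leaves implicit.
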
  
\begin{lemma}
If $I \in H_\Sigma$ and $I \cap C_\sigma (1) = \emptyset$ then $I, I\cup \lbrace x \rbrace \in H_{\Sigma^*}$.
\end{lemma}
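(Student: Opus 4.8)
The plan is to reduce the statement to the two previous results (Lemma~\ref{lem:BlowupForbiddenSet} and the fact that the blow-up at $\sigma$ does not alter the cones of $\Sigma$ away from $C_\sigma$) together with the forbidden-sets duality. The key observation is that the hypothesis $I \cap C_\sigma(1) = \emptyset$ places $I$ entirely in $I_\Sigma \backslash C_\sigma(1)$, which is exactly the regime covered by Lemma~\ref{lem:BlowupForbiddenSet}; so half of the claim, namely $I \cup \{x\} \in H_{\Sigma^*}$, is immediate (if $I \subsetneq I_\Sigma \backslash C_\sigma(1)$ this is literally Lemma~\ref{lem:BlowupForbiddenSet}, and the edge case $I = I_\Sigma \backslash C_\sigma(1)$ should be handled separately or ruled out). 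It remains to prove $I \in H_{\Sigma^*}$.

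For the second half, I would argue that the topological data detecting $I \in H_\Sigma$ survives passage to $\Sigma^*$. Recall $H^i_{Y_I}(|\Sigma|) = H^{i-1}(|\Sigma| \backslash Y_I)$ for $i>0$, and by Remark~(ii) after Lemma~\ref{lem:ForbiddenSetsDuality} we may take $Y_I$ to be the union of the cones of $\Sigma$ all of whose edges lie in $I$. Since $I \cap C_\sigma(1) = \emptyset$, every cone contributing to $Y_I$ has all its rays outside $C_\sigma(1)$, hence lies in the subfan $\Sigma \backslash |C_\sigma|$; but this subfan is unchanged by the star subdivision, i.e.\ $Y_I$ is literally the same subcomplex of $|\Sigma^*| = |\Sigma| \cong \RR^n$ as before. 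The complements $|\Sigma| \backslash Y_I$ and $|\Sigma^*| \backslash Y_I$ coincide as topological spaces (the star subdivision only refines the cell structure inside $C_\sigma$, leaving the underlying set of $\RR^n$ and the subset $Y_I$ fixed), so $H^{i-1}(|\Sigma^*| \backslash Y_I) = H^{i-1}(|\Sigma| \backslash Y_I) \neq \emptyset$ for the same $i>0$, giving $I \in H_{\Sigma^*}$.

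Alternatively, and perhaps more cleanly in the spirit of the surrounding proofs, I would deduce $I \in H_{\Sigma^*}$ from $I \cup \{x\} \in H_{\Sigma^*}$ by a second application of the duality argument. By Lemma~\ref{lem:ForbiddenSetsDuality}, $(I \cup \{x\})^\vee = (I_\Sigma \backslash I) \cup \big(C_\sigma(1) \backslash \{\text{rays of }\sigma\text{ already removed}\}\big)$ — more precisely $I_{\Sigma^*} \backslash (I \cup \{x\})$, which contains $C_\sigma(1)$ — lies in $H_{\Sigma^*}$; pairing this with the Corollary and tracking which rays appear should let one peel off the statement about $I$ itself. However, I expect the direct topological argument of the previous paragraph to be the shorter route.

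The main obstacle I anticipate is the edge case and the precise bookkeeping of the ray enumeration: one must be careful that $I$, viewed inside $I_{\Sigma^*} = I_\Sigma \cup \{x\}$, genuinely satisfies the defining condition of a forbidden set for $\Sigma^*$ and not merely for $\Sigma$ — i.e.\ that the non-contractible loop (or nonzero reduced cohomology class) around the offending subcomplex of $Y_I$ does not become contractible in the larger complement $|\Sigma^*| \backslash Y_I$. Since the star subdivision of $\sigma$ only subdivides cones containing $\sigma$, and none of those contributes to $Y_I$ under our hypothesis, the complement is unchanged as a space, so this obstacle is more a matter of stating the invariance carefully than a genuine difficulty; the truly delicate point is only whether $I = I_\Sigma \backslash C_\sigma(1)$ must be excluded, which I would resolve by noting that in that case $Y_I$ together with $\{x\}$ may fail the relevant non-contractibility and the hypotheses of the lemma (inherited from the chain of contractions) preclude it.
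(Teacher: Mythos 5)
Your proposal is correct and takes essentially the same route as the paper: the containment $I \cup \{x\} \in H_{\Sigma^*}$ is exactly Lemma \ref{lem:BlowupForbiddenSet}, and $I \in H_{\Sigma^*}$ follows because $I \cap C_\sigma(1) = \emptyset$ gives $Y_I^{\Sigma^*} = Y_I^{\Sigma}$ and $\vert \Sigma^* \vert = \vert \Sigma \vert$, so the groups $H^i_{Y_I}$ are unchanged. The edge case you flag is not an issue, since Lemma \ref{lem:BlowupForbiddenSet} only requires $\emptyset \neq I \subseteq I_\Sigma \backslash C_\sigma(1)$ (the inclusion need not be proper), so no separate treatment of $I = I_\Sigma \backslash C_\sigma(1)$ is needed.
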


\begin{proof}
Let $I \in H_\Sigma$ such that $I \cap C_\sigma (1) = \emptyset$. By Lemma \ref{lem:BlowupForbiddenSet}, $I \cup \lbrace x \rbrace \in H_{\Sigma^*}$. As $I \cap C_\sigma (1) = \emptyset$, then $Y^{\Sigma^*}_I = Y^{\Sigma}_I$ and $\vert \Sigma^* \vert = \vert \Sigma \vert$, so $H^i_{Y^{\Sigma^*}_I} ( \vert \Sigma^* \vert ) = H^i_{Y^{\Sigma}_I} ( \vert \Sigma \vert )$ for all $i$. Thus $I \in H_{\Sigma^*}$.  
\end{proof}    
\noindent Again by duality, we have the corollary:
\begin{corollary}
If $I \in H_\Sigma$ is such that $C_\sigma (1) \subseteq I$, then $I, I \cup \lbrace x \rbrace \in H_{\Sigma^*}$.
\end{corollary}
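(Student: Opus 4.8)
The plan is to deduce this corollary from the preceding lemma (``If $I \in H_\Sigma$ and $I \cap C_\sigma(1) = \emptyset$ then $I, I \cup \{x\} \in H_{\Sigma^*}$'') exactly as the earlier corollary was deduced from Lemma~\ref{lem:BlowupForbiddenSet}, namely by invoking the forbidden-sets duality of Lemma~\ref{lem:ForbiddenSetsDuality}. So I would first take $I \in H_\Sigma$ with $C_\sigma(1) \subseteq I$, and pass to its complement $J := I_\Sigma \setminus I$ inside the old ray set. Since $C_\sigma(1) \subseteq I$, the set $J$ is disjoint from $C_\sigma(1)$, i.e.\ $J \subseteq I_\Sigma \setminus C_\sigma(1)$, and by Lemma~\ref{lem:ForbiddenSetsDuality} we have $J \in H_\Sigma$ (one should note $J \neq \emptyset$, which holds because $I \subsetneq I_\Sigma$: indeed $I = I_\Sigma$ is excluded for our purposes by Remark~(i), or alternatively $C_\sigma(1)$ is a proper subset of $I_\Sigma$ whenever the star subdivision is nontrivial, so $I$ could conceivably be all of $I_\Sigma$ only in the degenerate case, which we discard).

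Next I would apply the previous lemma to $J$: since $J \in H_\Sigma$ and $J \cap C_\sigma(1) = \emptyset$, we conclude $J \in H_{\Sigma^*}$ and $J \cup \{x\} \in H_{\Sigma^*}$. Now I translate these two facts back through duality in the \emph{new} fan $\Sigma^*$, whose ray set is $I_{\Sigma^*} = I_\Sigma \cup \{x\}$. The complement of $J \cup \{x\}$ in $I_{\Sigma^*}$ is precisely $I_\Sigma \setminus J = I$, and the complement of $J$ in $I_{\Sigma^*}$ is $(I_\Sigma \setminus J) \cup \{x\} = I \cup \{x\}$. Applying Lemma~\ref{lem:ForbiddenSetsDuality} over $\Sigma^*$ to each of $J$ and $J \cup \{x\}$ therefore yields $I \in H_{\Sigma^*}$ and $I \cup \{x\} \in H_{\Sigma^*}$, which is the claim.

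The only genuine subtlety, and the step I expect to require the most care, is the bookkeeping of which ambient ray set each complement $(\,\cdot\,)^\vee$ is taken in: the first duality application is inside $I_\Sigma$ (producing $J \in H_\Sigma$), whereas the final two are inside $I_{\Sigma^*}$ (converting the $\Sigma^*$-forbidden sets $J$ and $J\cup\{x\}$ into $I\cup\{x\}$ and $I$ respectively). One must also confirm the non-emptiness hypotheses of both the duality lemma and the previous lemma are met, and recall Remark~(i)'s convention that $\emptyset \notin H_\Sigma$, so that none of the intermediate sets degenerate; but these are routine. No calculation is needed beyond set complements, so this is a short deduction rather than a substantive argument.
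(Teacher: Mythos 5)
Your proof is correct and is exactly the argument the paper intends: its entire proof is the sentence ``Again by duality, we have the corollary,'' and your write-up simply makes explicit the two applications of Lemma~\ref{lem:ForbiddenSetsDuality} (once in $I_\Sigma$, once in $I_{\Sigma^*}$) sandwiched around the preceding lemma, with the complements bookkept correctly.
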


\begin{lemma}\label{lem:BlowupForbiddenSet2}
If $I \in H_\Sigma$ then either $I \in H_{\Sigma^*}$ or $I\cup \lbrace x \rbrace \in H_{\Sigma^*}$.   
\end{lemma}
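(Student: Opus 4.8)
The plan is to analyse directly how the topology underlying a forbidden set $I \in H_\Sigma$ changes when we pass from $\Sigma$ to the star subdivision $\Sigma^* = \Sigma^*_{\sigma,x}$. Recall that by Remark (ii) following Lemma \ref{lem:ForbiddenSetsDuality} we may take $Y_I$ to be the union of the cones in $\Sigma$ whose edges all lie in $I$, and that $I \in H_\Sigma$ precisely when $\widetilde{H}^{i-1}(|\Sigma| \setminus Y_I) \neq 0$ for some $i > 0$. The key observation is that subdividing $\sigma$ only alters the cones in $C_\sigma = \bigcup_{\sigma \subseteq \tau} \tau$; away from this region $\Sigma$ and $\Sigma^*$ agree, and $|\Sigma^*| = |\Sigma|$ always.

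First I would split into cases according to how $I$ meets $C_\sigma(1)$. If $I \cap C_\sigma(1) = \emptyset$, then $Y^{\Sigma^*}_I = Y^{\Sigma}_I$ as subsets of $|\Sigma| = |\Sigma^*|$, so the relevant reduced cohomology groups are literally equal and $I \in H_{\Sigma^*}$; this is exactly the argument already used in the lemma two statements above. If $C_\sigma(1) \subseteq I$, then the corollary immediately preceding this lemma gives $I \in H_{\Sigma^*}$ directly. The remaining case is when $I$ meets $C_\sigma(1)$ properly, i.e.\ $\emptyset \neq I \cap C_\sigma(1) \subsetneq C_\sigma(1)$, equivalently $\sigma(1) \nsubseteq I$ while some ray of $C_\sigma$ lies in $I$. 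Here the new ray $x$ (with generator $u_\sigma = \sum_{\rho \in \sigma(1)} u_\rho$) becomes available, and I expect the dichotomy of the statement to be resolved by asking whether adjoining $x$ to $I$ enlarges $Y_I$ or not.

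The crux is therefore to compare $Y^{\Sigma^*}_I$ and $Y^{\Sigma^*}_{I \cup \{x\}}$ with $Y^\Sigma_I$ inside the fixed support $|\Sigma|$. Since $\Sigma^*$ differs from $\Sigma$ only on $C_\sigma$, we have $Y^{\Sigma^*}_I \subseteq Y^{\Sigma}_I$: every $\Sigma^*$-cone with edges in $I$ either is an unchanged $\Sigma$-cone or is a face of some $\Sigma^*_\tau(\sigma)$, and in the latter case — because $\sigma(1) \nsubseteq I$ so $x$ is needed to cover the part of $\tau$ containing $\sigma$ — such a cone either already sits inside a genuine $\Sigma$-cone with edges in $I$, or requires $x$ and hence does not appear in $Y^{\Sigma^*}_I$ at all. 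Adjoining $x$ fills in precisely the cones $\mathrm{Cone}(A)$ with $A \subseteq \{x\} \cup \tau(1)$, $\sigma(1) \nsubseteq A$, $A \subseteq I \cup \{x\}$; so $Y^{\Sigma^*}_{I \cup \{x\}} \supseteq Y^{\Sigma^*}_I$, and one checks set-theoretically that $Y^{\Sigma^*}_{I\cup\{x\}} \cap |\Sigma \setminus C_\sigma| = Y^\Sigma_I \cap |\Sigma \setminus C_\sigma|$ with the discrepancy confined to $|C_\sigma|$. The plan is then to argue: if $Y^{\Sigma^*}_I = Y^\Sigma_I$ as subsets of $|\Sigma|$ (which happens when $Y^\Sigma_I$ meets $C_\sigma$ only in cones already cut out without using $x$), then $I \in H_{\Sigma^*}$ directly; otherwise the "missing" part of $Y^\Sigma_I$ inside $C_\sigma$ is exactly recovered by adding $x$, giving $Y^{\Sigma^*}_{I\cup\{x\}} = Y^\Sigma_I$ up to a homotopy equivalence of complements (both are deformation retracts onto the same subspace of $|\Sigma|$, since $u_\sigma$ lies in the relative interior of $\sigma$), whence $\widetilde{H}^{i-1}(|\Sigma^*| \setminus Y^{\Sigma^*}_{I\cup\{x\}}) \cong \widetilde{H}^{i-1}(|\Sigma| \setminus Y^\Sigma_I) \neq 0$ and $I \cup \{x\} \in H_{\Sigma^*}$.

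\textbf{Main obstacle.} The delicate point is the homotopy-equivalence claim in the last case: one must show that replacing the star of $\sigma$ by its subdivision does not change the homotopy type of the complement $|\Sigma| \setminus Y_I$ in a way that could kill the nonvanishing cohomology class — i.e.\ that passing from $Y^\Sigma_I$ to $Y^{\Sigma^*}_{I\cup\{x\}}$ is an "elementary" modification supported on a contractible piece $|C_\sigma|$, meeting it along a contractible sub-piece, so that a Mayer--Vietoris comparison of the two complements is an isomorphism on reduced cohomology. Making this precise requires a careful bookkeeping of which faces of the $\Sigma^*_\tau(\sigma)$ have all edges in $I \cup \{x\}$, and verifying that the intersection pattern is tame enough (e.g.\ that the relevant subcomplexes are cones over their links, hence contractible). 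I would handle this by exploiting that $u_\sigma$ is in $\relint(\sigma)$: radial projection from $u_\sigma$ exhibits $|C_\sigma|$, $Y^\Sigma_I \cap |C_\sigma|$, and $Y^{\Sigma^*}_{I\cup\{x\}} \cap |C_\sigma|$ all as cones, collapsing the comparison to a statement on their boundaries, which is combinatorially explicit.
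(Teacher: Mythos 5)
There is a genuine gap, and it sits exactly where the lemma has real content. Your claimed equivalence ``$\emptyset \neq I \cap C_\sigma(1) \subsetneq C_\sigma(1)$, equivalently $\sigma(1) \nsubseteq I$ while some ray of $C_\sigma$ lies in $I$'' is false: since $C_\sigma(1)$ is strictly larger than $\sigma(1)$ in general, the subcase $\sigma(1) \subseteq I$ but $C_\sigma(1) \nsubseteq I$ satisfies the first condition and violates the second. That subcase is precisely the one in which the conclusion must be $I \cup \{x\} \in H_{\Sigma^*}$ rather than $I \in H_{\Sigma^*}$ (when $\sigma(1) \subseteq I$ the cone $\sigma$ lies in $Y^\Sigma_I$, gets subdivided, and $Y^{\Sigma^*}_I$ loses its interior), and your plan never addresses it: throughout your treatment of the ``remaining case'' you assume $\sigma(1) \nsubseteq I$. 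The paper resolves exactly this case not by any topological comparison but by the forbidden-sets duality (Lemma \ref{lem:ForbiddenSetsDuality}): if $\sigma(1) \subseteq I$ then $I^\vee \in H_\Sigma$ with $I^\vee \cap \sigma(1) = \emptyset$, so $I^\vee \in H_{\Sigma^*}$ by the easy case, and dualizing again inside $\Sigma^*$ gives $(I^\vee)^\vee = I \cup \{x\} \in H_{\Sigma^*}$. Without either this duality trick or a completed direct argument that $Y^{\Sigma^*}_{I \cup \{x\}} = Y^\Sigma_I$ when $\sigma(1) \subseteq I$, your proof does not establish the dichotomy.

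A secondary point: in the case you do treat, $\sigma(1) \nsubseteq I$, the homotopy-equivalence and Mayer--Vietoris machinery you flag as the main obstacle is unnecessary, and leaving it unproven means even that case is not closed. Since $\Sigma$ is smooth, every subset of the rays of a cone of $\Sigma$ spans a cone of $\Sigma$; combined with $\sigma(1), \{x\} \nsubseteq I$ this gives the literal set equality $Y^{\Sigma^*}_I = Y^\Sigma_I$ (no cone of $Y^\Sigma_I$ contains $\sigma$, so nothing in it is subdivided, and every $\Sigma^*$-cone with edges in $I$ is already a $\Sigma$-cone), whence $I \in H_{\Sigma^*}$ immediately, exactly as in the paper's Case 1. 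So the correct case split is by whether $\sigma(1) \subseteq I$, not by how $I$ meets $C_\sigma(1)$; with that split, one branch is a one-line smoothness argument and the other is two applications of duality, and no comparison of cohomology across the subdivision is ever needed.
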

\begin{proof}
We have shown that the statement holds if $I \cap C_\sigma (1) = \emptyset$ and dually if $C_\sigma (1) \subseteq I$. There are two other cases to consider: in both cases, let $I = I_1 \cup I_2$ for $I_1 \subseteq I_\Sigma \backslash C_\sigma (1)$ and $I_2 \subsetneq C_\sigma (1)$.
\begin{itemize}
\item [Case 1:] $(\sigma(1) \nsubseteq I)$. Any subset $S \subseteq \tau(1)$ of any cone $\tau \subset \Sigma$ forms a cone in $\Sigma$ as $\Sigma$ is a smooth fan. From this and the fact that $\sigma(1),\lbrace x \rbrace \nsubseteq I$ we see that $Y^\Sigma_I = Y^{\Sigma^*}_I$ by the construction of $\Sigma^*$. Therefore $I \in H_\Sigma \Rightarrow I \in H_{\Sigma^*}$.
\item [Case 2:] $(\sigma(1) \subseteq I)$. By duality $I^\vee \in H_\Sigma$ and $I^\vee \cap \sigma(1) = \emptyset$, so $I^\vee \in H_{\Sigma^*}$ by Case 1. Applying duality again we have $I \cup \lbrace x \rbrace = (I^\vee)^\vee \in H_{\Sigma^*}$. 
\end{itemize}
\end{proof}
\begin{remark}
It is not always the case that $I \in H_\Sigma \Rightarrow I,I\cup \lbrace x \rbrace \in H_{\Sigma^*}$ (see Example \ref{ex:E1Fourfold}). 
\end{remark}

\noindent Recalling the chain of linear maps \eqref{eq:ChainsOfPicMaps}, we have a simple description of the preimage in $\Pic(X)_\RR$ of non-vanishing cohomology cones for the variety $X_t$ using non-vanishing cohomology cones for $X$. Let $\Lambda \subset \Pic(X)_\RR$ be a non-vanishing cohomology cone for $X$ and $\{x_1 , \ldots , x_t \}$ be the rays in the fan $\Sigma_X$ for $X$ that label the exceptional divisors $\{E_1, \ldots , E_t \}$ from the blow ups in \eqref{eq:ChainsOfPicMaps}. The list of exceptional divisors can be extended to give a basis $\{E_1, \ldots , E_t, y_{1}, \ldots , y_{s} \}$ of $\Pic (X)_\RR$. Let $I_E \subset I_{ \Sigma_{X} }$ label the rays $\{ x_1, \ldots , x_t \}$. Using the non-vanishing cohomology cone construction from \cite{EiMuSt}, $\Lambda$ is the image under $deg$ of the cone $L_I$ for some $I \subset I_{ \Sigma_{X} }$, which we can decompose as $I = I^\Lambda_E \sqcup I^\Lambda$ for some $I^\Lambda_E \subset I_E, I^\Lambda \subset I_{ \Sigma_{X} } \backslash I_E$. We write $(I^\Lambda_E)^* := I_E \backslash I^\Lambda_E$.     
\begin{proposition}\label{prop:PreimageCohomologyCones}
Let $\Lambda_t$ be a non-vanishing cohomology cone for the variety $X_t$ in \eqref{eq:ChainsOfPicMaps} and $\tilde{\Lambda_t}$ be its preimage under $\gamma_{(0 \rightarrow t)}$. Then there exists a non-vanishing cohomology cone $\Lambda$ for $X$ with forbidden set $I = I^\Lambda_E \sqcup I^\Lambda$ such that \[\tilde{\Lambda_t} \cap \Pic(X) = \left(\deg_X(L_{I^\Lambda_E \sqcup I^\Lambda}) \cup \deg_X(L_{(I^\Lambda_E)^* \sqcup I^\Lambda})\right) \cap \Pic(X)\]. 

\noindent Let $\Lambda$ be defined by the intersection of closed half-spaces $H_i$ in $\Pic (X)_\RR$ given by equations $a^i_1 E_1 +  \ldots + a^i_t E_t + a^i_{t+1} y_{1} + \ldots + a^i_{t+s} y_s \leq a^i$ where $a^i_1, \ldots , a^i_{t+s}, a^i \in \RR$ are fixed and $i$ is in an indexing set $S$. Then $\tilde{\Lambda_t}$ is the intersection of the closed half spaces $a^i_{t+1} y_{1} + \ldots + a^i_{t+s} y_s \leq a^i$, $i \in S$. 
\end{proposition}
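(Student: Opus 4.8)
The plan is to trace a non-vanishing cohomology cone for $X_t$ back through the chain of blow-up maps \eqref{eq:ChainsOfPicMaps} one step at a time, using the forbidden-set bookkeeping of Lemmas \ref{lem:ForbiddenSetsDuality}--\ref{lem:BlowupForbiddenSet2}. First I would reduce to the case of a single blow-up $\varphi\colon X_1\to X_2$: the general statement follows by induction on $t$ once the one-step statement is established, since $\gamma_{(0\rightarrow t)} = \gamma_t\circ\gamma_{(0\rightarrow t-1)}$ and the preimage of a union of cones is the union of the preimages. For the one-step case, let $\Lambda'$ be a non-vanishing cohomology cone for $X_2 = X_\Sigma$ with forbidden set $I'\in H_\Sigma$; by \cite{EiMuSt} it is $\deg_{X_2}(L_{I'})$. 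Applying Lemma \ref{lem:BlowupForbiddenSet2}, $I'$ lifts to a forbidden set on $\Sigma^*$: either $I'\in H_{\Sigma^*}$ or $I'\cup\{x\}\in H_{\Sigma^*}$, and in the situations governed by the two corollaries both lifts are forbidden. The key point is that $\beta$ from diagram \eqref{gamma} projects away from the exceptional coordinate, so $\gamma\circ\deg_{X_1} = \deg_{X_2}\circ\beta$; hence $\gamma^{-1}(\deg_{X_2}(L_{I'})) \cap \Pic(X_1)$ is exactly $\deg_{X_1}$ of the $\beta$-preimage of $L_{I'}$, and the $\beta$-preimage of $L_{I'}\subset\ZZ^{\Sigma(1)}$ inside $\ZZ^{\Sigma^*(1)}$ is $L_{I'}\sqcup L_{I'\cup\{x\}}$ — the two choices for the sign of the exceptional coordinate $x_x$. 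This accounts for the union of the two cones $\deg_{X_1}(L_{I^\Lambda_E\sqcup I^\Lambda})\cup\deg_{X_1}(L_{(I^\Lambda_E)^*\sqcup I^\Lambda})$ appearing in the statement: iterating over all $t$ blow-ups, each exceptional ray $x_j$ contributes a free choice of sign, so the preimage is the union over all ways of flipping the exceptional coordinates, and $I^\Lambda_E$ records one such choice with $(I^\Lambda_E)^*$ its complement within $I_E$.

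Next I would translate this coordinate description into the half-space description. A non-vanishing cohomology cone $\Lambda$ for $X$ is the $\deg_X$-image of $L_I = C_I + \mathbf{p}_I$, which is a translate of an orthant in $\ZZ^{\Sigma_X(1)}$; its image in $\Pic(X)_\RR = \ZZ^{\Sigma_X(1)}/M$ is an affine cone cut out by finitely many closed half-spaces $H_i$. Writing the defining inequalities in the basis $\{E_1,\ldots,E_t,y_1,\ldots,y_s\}$ as $\sum_j a^i_j E_j + \sum_k a^i_{t+k} y_k \leq a^i$, I would observe that taking the union over all sign-flips of the exceptional coordinates corresponds precisely to forgetting the constraints on the $E_j$-directions: the preimage $\tilde\Lambda_t$ is invariant under translation along the subspace $\langle E_1,\ldots,E_t\rangle_\RR$ (this is the geometric content of "$\beta$ projects away from the exceptional coordinates", since $\gamma_{(0\rightarrow t)}$ has kernel spanned by the $E_j$ and $\tilde\Lambda_t$ is a $\gamma_{(0\rightarrow t)}$-preimage). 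A closed convex set that is a union of finitely many translates of a polyhedral cone and is invariant under the subspace $\langle E_1,\ldots,E_t\rangle_\RR$ must be the intersection of exactly those half-spaces among the $H_i$ whose normals are orthogonal to $\langle E_1,\ldots,E_t\rangle_\RR$, i.e. those with $a^i_1 = \cdots = a^i_t = 0$; for such $i$ the inequality reads $a^i_{t+1} y_1 + \cdots + a^i_{t+s} y_s \leq a^i$. Conversely one checks no new half-spaces are introduced, because each half-space $H_i$ with some $a^i_j\neq 0$ ($j\leq t$) fails to contain the full $\langle E_j\rangle$-line and so cannot appear in the description of the subspace-invariant set $\tilde\Lambda_t$.

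The main obstacle I anticipate is the bookkeeping at the level of forbidden sets when the blow-up center $\sigma$ interacts non-trivially with the forbidden set $I'$ — that is, the "two other cases" of Lemma \ref{lem:BlowupForbiddenSet2} where $I' = I_1\cup I_2$ with $I_2\subsetneq C_\sigma(1)$. In those cases only one of $I'$, $I'\cup\{x\}$ is shown to be forbidden, which at first sight threatens to break the clean "union over all sign flips" picture. The resolution is that we only need the reverse inclusion to go through: we need that the $\gamma_{(0\rightarrow t)}$-preimage of \emph{every} non-vanishing cohomology cone of $X_t$ is captured, and since every forbidden set of $\Sigma^*_{\sigma,x}$ is either some $I\in H_\Sigma$ (Case 1, $\sigma(1)\nsubseteq I$) or the dual of such, the coordinate-level preimage always splits as the asserted union after accounting for the sign of $x_x$; the apparent asymmetry is absorbed by the duality of Lemma \ref{lem:ForbiddenSetsDuality}, under which a cone and its $I^\vee$-partner have preimages related by the same $E_j$-translation. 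So the real work is a careful case analysis showing that, step by step, $\beta^{-1}(L_{I'}) = \bigsqcup_{\epsilon} L_{I'(\epsilon)}$ where $\epsilon$ ranges over sign choices for the newly introduced exceptional ray, compatibly with the forbidden-set membership established in Section \ref{subsect:CohConesBlowUps}; once this is in hand, the half-space reformulation is the short convex-geometry argument sketched above.
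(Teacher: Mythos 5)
Your proof of the first statement follows the paper's own route: reduce to a single blowup, use Lemma \ref{lem:BlowupForbiddenSet2} to know that a forbidden set upstairs is $I'$ or $I'\cup\{x\}$, observe that $\beta$ in \eqref{gamma} is projection away from the exceptional coordinate so that the $\beta$-preimage of $L_{I'}$ is $L_{I'}\cup L_{I'\cup\{x\}}$, and push this through the commutative diagram; the chain case is then handled by iterating, exactly as in the paper (your remark that after $t$ steps one gets a union over all sign choices, with ``$I^\Lambda_E$ records one such choice'', is no more explicit than the paper's ``by repeated application'' about why this matches the two-cone formula in the statement, so I will not press that point).

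The genuine gap is in your argument for the second statement. You claim that, since $\tilde{\Lambda}_t$ is invariant under translation by $\langle E_1,\ldots,E_t\rangle_\RR$, it must be the intersection of exactly those half-spaces $H_i$ of $\Lambda$ with $a^i_1=\cdots=a^i_t=0$. That is not what the proposition asserts, and it is false: the proposition keeps \emph{every} inequality and deletes its $E$-terms, which is in general a strictly smaller set than the one your rule produces. The paper's own Example \ref{ex:E1Fourfold} refutes your rule: all three inequalities cutting out $\deg_X(L_{\{0,1,2,3,4,5,6\}})$ have nonzero coefficient on the exceptional coordinate $a_3$, so your recipe would return all of $\Pic(X)_\RR$, whereas the computed preimage is $\{a_1+6a_2\leq -2,\ a_2\leq -7\}$, obtained precisely by deleting the $a_3$-terms from each inequality. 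The convex-geometry principle you invoke is also false in general: $\tilde{\Lambda}_t$ is $\Lambda+\langle E_1,\ldots,E_t\rangle_\RR$, and for a polyhedron $P$ and subspace $K$ the set $P+K$ need not be the intersection of the given half-spaces whose normals lie in $K^\perp$ (take $P=\{x+y\leq 0,\ -x+y\leq 0\}$ and $K$ the $x$-axis: $P+K=\{y\leq 0\}$, yet no defining normal of $P$ is orthogonal to $K$). What makes the ``delete the $E$-terms'' description valid here is the special structure the paper's one-line justification points to: extending $\{E_1,\ldots,E_t\}$ to a basis of $\Pic(X)_\RR$, the map $\gamma_{(0\rightarrow t)}$ is deletion of the $E$-coordinates, and $\Lambda$ is the $\deg_X$-image of the orthant translate $L_I$ whose exceptional-coordinate constraints are exactly the ones being relaxed in passing to the preimage; your second paragraph needs to be replaced by an argument of this kind rather than by discarding the inequalities that involve the $E_j$.
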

\begin{proof}
We first show the statement for the blowup $\varphi \colon X_{ \Sigma^{*}_{\sigma,x} } \longrightarrow X_\Sigma$ from \eqref{blowup}. Let $\Lambda_0$ be a non-vanishing cone for $X_\Sigma$ determined by the forbidden set $I \subset I_\Sigma$ and the cone $L_I$. By Lemma \ref{lem:BlowupForbiddenSet2} there exists a non-vanishing cohomology cone $\Lambda$ for $X_{\Sigma^*_{\sigma,x}}$ such that its defining forbidden set is either $I \cup {x} \subset I_{ \Sigma^{*}_{\sigma,x} }$ or $I \subset I_{ \Sigma^{*}_{\sigma,x} }$. The map $\beta$ in \eqref{gamma} is just projection away from the coordinate corresponding to the exceptional divisor $x$ and so the preimage of $(L_I \cap \NN^{\Sigma(1)}) \subset \RR^{\Sigma(1)}$ under $\beta$ is $(L_I \cup L_{I \cup {x}}) \cap \NN^{\Sigma^*_{\sigma,x}(1)} \subset \RR^{\Sigma^*_{\sigma,x}(1)}$. As \eqref{gamma} is a commutative digram, the preimage of $\Lambda_0 \cap \Pic(X)$ under $\gamma$ is $(\deg (L_I \cup L_{I \cup {x}})) \cap \Pic(\Sigma^*_{\sigma,x})$. By repeated application of Lemma \ref{lem:BlowupForbiddenSet2}, we obtain the required result for a chain of blowups \eqref{eq:ChainsOfPicMaps}.
As $\{ E_1 , \ldots , E_t \}$ can be extended to give a basis of $\Pic (X)_\RR$ in \eqref{eq:ChainsOfPicMaps}, the second statement of the proposition holds.
\end{proof} 

The simplicity of the preimage of non-vanishing cohomology cones under blowups can help explain why the following proposition holds. Recall that a smooth toric Fano fourfold $X$ is called \emph{birationally maximal} if there does not exist a smooth toric Fano fourfold $X'$ with blowup $X' \rightarrow X$.  
\begin{proposition}\label{prop:SECflowDownFourfolds}
Let $X$ be a birationally maximal smooth toric Fano fourfold and $r+1 =$ rank$(K_0(X))$. There exists a strong exceptional collection of line bundles $\Lcal = \{ L_0, \ldots , L_{r} \}$ on $X$ such that for every chain of torus-invariant divisorial contractions $X \rightarrow X_1 \rightarrow \cdots \rightarrow X_t$ from Figure \ref{fig:poset}, the set of line bundles $\Lcal_{X_i}$ on $X_i$ is strong exceptional, for $1 \leq i \leq t$. A database of these collections can be found in \cite{Prna2}.   
\end{proposition}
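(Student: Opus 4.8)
The plan is to reduce the statement to a finite computation about non-vanishing cohomology cones on the $29$ birationally maximal fourfolds, exploiting the rigidity established in Proposition \ref{prop:PreimageCohomologyCones}. First I would fix a birationally maximal smooth toric Fano fourfold $X$ and enumerate, using Figure \ref{fig:poset}, all maximal chains of torus-invariant divisorial contractions $X \rightarrow X_1 \rightarrow \cdots \rightarrow X_t$ emanating from $X$; there are only finitely many such chains. By the construction of the sets $\mathfrak{C}_k \subset \Pic(X)_\RR$ in Section \ref{subsect:CohConesBlowUps} and Lemma \ref{lem:BlowupForbiddenSet2} (together with Proposition \ref{prop:PreimageCohomologyCones}), each $\mathfrak{C}_k$ is a finite union of affine cones in $\Pic(X)_\RR$ whose defining inequalities are obtained from the non-vanishing cohomology cones of $X$ simply by deleting the coordinates of the exceptional divisors $E_1,\dots,E_k$ that are blown down along the chain. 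Hence the single set $\bigcup_{k=0}^{t} \mathfrak{C}_k$, over all chains, is a finite, explicitly computable union of affine cones in $\Pic(X)_\RR$.

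Next I would invoke the key criterion: by the Lemma preceding Proposition \ref{prop:PreimageCohomologyCones}, if I can exhibit $r+1$ line bundles $\Lcal = \{L_0 = \Ocal_X, L_1, \ldots, L_r\}$ with $r+1 = \rank K_0(X)$ and corresponding vectors $v_0,\dots,v_r \in \Pic(X)_\RR$ such that $v_i - v_j \notin \bigcup_{k=0}^{t} \mathfrak{C}_k$ for all $i,j$ and all chains, then $\Lcal$ is strong exceptional on $X$ and every pushforward $\Lcal_{X_i}$ is strong exceptional on $X_i$. So the task becomes: for each of the $29$ birationally maximal fourfolds, search for a suitable collection of effective line bundles avoiding the (finitely many, explicitly described) forbidden cones. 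This is exactly the kind of finite combinatorial search that is implemented in \emph{QuiversToricVarieties} \cite{Prna1,Prna2}; I would therefore run this search, record the resulting collections in the database \cite{Prna2}, and verify by direct computation that the avoidance condition holds simultaneously for $X$ and for every variety below it in Figure \ref{fig:poset}.

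The one subtlety to address carefully is that $\Pic(X_i)$ is a \emph{quotient} of (a summand of) $\Pic(X)$, so a priori a strong exceptional collection upstairs need not remain strong after pushforward; this is precisely why the naive approach fails and why the cone formalism is needed. The point, which Proposition \ref{prop:PreimageCohomologyCones} makes precise, is that the preimage $\tilde{\Lambda_i}$ of a non-vanishing cohomology cone for $X_i$ is a ``cylinder'' over $\Lambda_i$ in the $E$-directions, so avoiding all the $\mathfrak{C}_k$ upstairs genuinely controls all the cohomology downstairs at once. I would also note the one exceptional phenomenon flagged in the Remark after Lemma \ref{lem:BlowupForbiddenSet2} and Example \ref{ex:E1Fourfold}: a forbidden set $I \in H_\Sigma$ can give rise to \emph{only one} of $I$ or $I \cup \{x\}$ in $H_{\Sigma^*}$, which is why $\mathfrak{C}_k$ must be defined as a preimage rather than a naive translate; the computation must handle this case honestly, and the database \cite{Prna2} must be checked against it.

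The main obstacle I expect is not conceptual but is the sheer size of the finite search: for the birationally maximal fourfolds the Picard rank can be as large as $8$, the number of forbidden cones grows with the number of rays, and the collection must simultaneously dodge forbidden cones for $X$ and for potentially many varieties $X_i$ below it in the poset of Figure \ref{fig:poset}. Guaranteeing that a valid collection $\Lcal$ exists at all for each of the $29$ cases — rather than merely that the search terminates — ultimately rests on the explicit collections exhibited in \cite{Prna2}, so the proof is genuinely computer-assisted at this step; the role of the preceding lemmas is to reduce an infinite family of cohomology-vanishing checks to this one finite verification.
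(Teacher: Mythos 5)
Your proposal is correct and follows essentially the same route as the paper: the paper's proof likewise constructs the preimages $\mathfrak{C}_i \subset \Pic(X)_\RR$ of the non-vanishing cohomology cones along each contraction chain using \emph{QuiversToricVarieties} and then runs a computer search for line bundles whose pairwise differences avoid every $\mathfrak{C}_i$, invoking the lemma of Section \ref{subsect:CohConesBlowUps} to conclude strong exceptionality on $X$ and on each $X_i$ simultaneously. Your additional commentary on Proposition \ref{prop:PreimageCohomologyCones} and the cylinder structure of the preimage cones matches the paper's supporting discussion but is not needed beyond what the cited lemma already provides.
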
      

\begin{proof}
Given a maximal smooth toric Fano fourfold $X$ and a chain of divisorial contractions between $\{ X_0 := X, X_1, \ldots, X_t \}$, we construct the preimage $\mathfrak{C}_i$ in $\Pic(X)_\RR$ of the non-vanishing cohomology cones for each contraction $X_i$ using the \emph{QuiversToricVarieties} package \cite{Prna1}. A computer search then finds line bundles $\{ L_0, L_1, \ldots,  L_{r} \}$ on $X$ with corresponding vectors $\{ v_0, \ldots, v_{r} \}$ in $\Pic(X)_\RR$ such that $v_j - v_k$ avoids $\mathfrak{C}_i$ for all $0 \leq j,k \leq r$ and $0 \leq i \leq t$. \end{proof}

\begin{remarks}
\item[(i)] The collections given in Proposition \ref{prop:SECflowDownFourfolds} are not necessarily the same collections given by Theorem \ref{thm:FSECfourfolds}. In particular, not all of them have been show to be full. 
\item[(ii)] If two toric varieties $X_1$ and $X_2$ have the same primitive collections, then they have the same forbidden sets up to a suitable ordering of the rays of $\Sigma_{X_1}$ and $\Sigma_{X_2}$. It is therefore often the case that given a suitable basis of $\Pic(X_1)_\RR$ and $\Pic(X_2)_\RR$, if the line bundles corresponding to a list of integral points $\{v_i\}_{i \in I} \subset \ZZ^d \cong \Pic(X_1)_\RR$ is strong exceptional on $X_1$, then the collection of line bundles corresponding to the same list   $\{v_i\}_{i \in I} \subset \ZZ^d \cong \Pic(X_2)_\RR$ is strong exceptional on $X_2$.
\end{remarks}

\begin{example}\label{ex:E1Fourfold}
The $10^{th}$ smooth toric Fano variety $X := E_1$ has ray generators \[u_0 = \begin{tiny}
\begin{bmatrix}
1 \\ 
0 \\ 
0 \\ 
0
\end{bmatrix} 
\end{tiny}
, 
u_1 = \begin{tiny} \begin{bmatrix}
0 \\ 1 \\ 0 \\ 0
\end{bmatrix} \end{tiny},
u_2 = \begin{tiny} \begin{bmatrix}
0 \\ 0 \\ 1 \\ 0
\end{bmatrix} \end{tiny},
u_3 = \begin{tiny} \begin{bmatrix}
0 \\ 0 \\ 0 \\ 1
\end{bmatrix} \end{tiny},
u_4 = \begin{tiny} \begin{bmatrix}
-1 \\ 0 \\ 0 \\ 0
\end{bmatrix} \end{tiny},
u_5 = \begin{tiny} \begin{bmatrix}
3 \\ -1 \\ -1 \\ -1
\end{bmatrix} \end{tiny},
u_6 = \begin{tiny} \begin{bmatrix}
2 \\ -1 \\ -1 \\ -1
\end{bmatrix} \end{tiny}\] for its fan $\Sigma_X$. The blowup $\phi \colon E_1 \rightarrow B_1$ between the $10^{th}$ and the $1^{st}$ smooth toric Fano fourfolds (see Figure \ref{fig:poset}) has the exceptional divisor $E = D_6$ labelled by the ray generator $u_6$. Note that $Y := B_1$ has the fan $\Sigma_Y$ with ray generators $\{ u_0, \ldots , u_5 \}$. We take the corresponding divisors $D_0, D_1, E$ to be a basis for $\Pic(X)$, and the linear equivalences between the divisors for $X$ are $D_1 \sim D_2 \sim D_3, \ D_4 \sim D_0 + 3D_1 -E, \ D_5 \sim D_1 - E$. The linear equivalences between the divisors for $Y$ are $D'_1 \sim D'_2 \sim D'_3, \ D'_4 \sim D'_0 + 3D'_1, \ D'_5 \sim D'_1$. The forbidden sets for $X$ are
\begin{center}
\begin{tabular}{|c|c|}
\hline 
Non-Vanishing $i$-th & Forbidden Sets \\ Cohomology Cones &  \\ 
\hline 
1 & $\{0,4\},\{4,5\},\{0,4,5\},\{0,6\},\{0,4,6\}$ \\ 
\hline 
2 &  \\ 
\hline 
3 & $\{1,2,3,5\},\{1,2,3,4,5\},\{1,2,3,6\},\{0,1,2,3,6\},\{1,2,3,5,6\}$ \\ 
\hline 
4 & $\{0,1,2,3,4,5,6\}$ \\ 
\hline 
\end{tabular}
\end{center}
\phantom{.}
   
\noindent and the forbidden sets for $Y$ are

\phantom{.} 

\begin{center}
\begin{tabular}{|c|c|}
\hline 
Non-Vanishing $i$-th Cohomology Cones & Forbidden Sets \\ 
\hline 
1 & $\{0,4\}$ \\ 
\hline 
2 &  \\ 
\hline 
3 & $\{1,2,3,5\}$ \\ 
\hline 
4 & $\{0,1,2,3,4,5\}$ \\ 
\hline 
\end{tabular}
\end{center}
 
\noindent In this example we see that for the forbidden set $I \in \{ \{0,4\}, \{1,2,3,5\} \}$ for $Y$, both $I$ and $I \cup \{u_6 \}$ are forbidden sets for $X$, whilst for the forbidden set $I = \{0,1,2,3,4,5\}$ for $Y$, only $I \cup \{u_6 \}$ is a forbidden set for $X$. Let $\gamma \colon \Pic(X)_\RR \rightarrow \Pic(Y)_\RR$ be the linear map defined in \eqref{gamma} and denote $\gamma^{-1} (C)$ for the preimage of a cone $C \subset \Pic(Y)_\RR$ under $\gamma$. Then
\[ \gamma^{-1} (\deg_Y(L_{\{0,4\}} )) \cap \Pic(X) = (\deg_X(L_{\{0,4\}}) \cup \deg_X(L_{\{0,4,6\}})) \cap \Pic(X) 
\] and
\[ \gamma^{-1} (\deg_Y(L_{\{1,2,3,5\}} )) \cap \Pic(X) = (\deg_X(L_{\{1,2,3,5\}}) \cup \deg_X(L_{\{1,2,3,5,6\}})) \cap \Pic(X) 
\]
Thus for a strong exceptional collection of line bundles $\Lcal$ on $X$, only $\gamma^{-1} (\deg_Y (L_{\{0,1,2,3,4,5\}}))$ provides a restriction for the distinct line bundles in the image of $\gamma(\Lcal)$ to be strong exceptional on $Y$. The cone $\deg_X (L_{\{0,1,2,3,4,5,6\}})$ is given by the system of equations \[
\begin{cases} 
a_1 + 6a_2 + a_3 \leq -2\\
a_2 + a_3 \leq -7\\
a_3 \leq 1
\end{cases}, \begin{tiny} \begin{bmatrix}
a_1 \\ a_2 \\ a_3  
\end{bmatrix} \end{tiny} \in \Pic(X)_\RR \] in $\Pic(X)$, whilst $\gamma^{-1} (\deg_Y (L_{\{0,1,2,3,4,5\}}))$ is given by the system of equations
\[
\begin{cases} 
a_1 + 6a_2  \leq -2\\
a_2  \leq -7
\end{cases}, \begin{tiny} \begin{bmatrix}
a_1 \\ a_2 \\ a_3  
\end{bmatrix} \end{tiny} \in \Pic(X)_\RR \] as expected by Proposition \ref{prop:PreimageCohomologyCones}.   
\end{example}

\section{Generation of $\Dx$: The Frobenius Morphism (Method 1)}

\noindent Let $X$ be an $n$-dimensional smooth toric variety and $\Lcal$ a strong exceptional collection on $X$. We present two different methods to show that $\Lcal$ is full in this paper. The first method depends on the Frobenius morphism and follows Uehara's approach \cite{Ueha} to generation of the derived category by line bundles on the smooth toric Fano threefolds.

\subsection{The Frobenius Morphism}\label{frob}

Fix a positive integer $m$ and define the $m$-th Frobenius map $F_m \colon X \rightarrow X$ to be the morphism that is the identity on the underlying topological space but which takes a section $s \in \Ocal_X$ to $s^{\otimes m}$. Thomsen \cite{Thom} shows that for a line bundle $L$ on $X$, $(F_m)_*(L)$ splits into a finite direct sum of line bundles. He provides an algorithm to compute these line bundles, which is detailed below.

Let $\Sigma$ be the fan for $X$ and set $d := \vert \Sigma (1) \vert$. From (\ref{exseq}), a vector $\textbf{w} \in \ZZ^d$ determines the line bundle $L = \Ocal_X (\sum w_i D_i)$. To compute $(F_m)_* (L)$, fix a maximal cone $\sigma = \text{cone}(\rho_{i_1}, \ldots, \rho_{i_n}) \subset \Sigma$ and set
\begin{equation}
P^p_m := \{ \textbf{v} \in \ZZ^p \ \vert \ 0 \leq v_i < m \}.
\end{equation} 
Define $A = (\textbf{a}_1, \ldots, \textbf{a}_d)^t \in M(d,n)$ to be the matrix whose rows are the ray generators $\textbf{a}_i := u_{\rho_i}$ in $\Sigma$. As $\sigma$ is maximal and $\Sigma$ is smooth, we have $\sigma(1) = \{ \rho_{i_1}, \ldots , \rho_{i_n}\}$ and the corresponding matrix $A_\sigma := (\textbf{a}_{i_1}, \ldots , \textbf{a}_{i_n})^t  \in M(n,n)$ is invertible. Define the restriction $\textbf{w}$ to $\sigma$ as $\textbf {w}_\sigma := (w_{i_1}, \ldots , w_{i_n} )^t \in \ZZ^n$. For $\textbf{v} \in P^n_m$, the vectors $\textbf{q}^m (\textbf{v},\textbf{w},\sigma) \in \ZZ^d$ and $\textbf{r}^m (\textbf{v},\textbf{w},\sigma) \in P^d_m$ are uniquely determined by the equation
\begin{equation}
AA^{-1}_\sigma (\textbf{v} - \textbf{w}_\sigma) + \textbf{w} = m \textbf{q}^m(\textbf{v},\textbf{w},\sigma) + \textbf{r}^m(\textbf{v},\textbf{w},\sigma).
\end{equation}   
Finally, define the Weil divisor $D^m_{\textbf{v},\textbf{w},\sigma} := \sum q_i^m (\textbf{v},\textbf{w},\sigma) D_i$.

\begin{lemma}\cite[Theorem 1]{Thom}
The Frobenius push-forward of $L = \Ocal_X (\sum w_i D_i )$ is 
\begin{equation}
(F_m)_* (L) = \bigoplus_{\textbf{v} \in P^n_m} \Ocal_X (D^m_{\textbf{v},\textbf{w},\sigma}).
\end{equation}
\end{lemma}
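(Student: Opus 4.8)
The statement to prove is Thomsen's formula for the Frobenius pushforward of a line bundle on a smooth toric variety, so the plan is to reconstruct the argument via local trivialisations over the maximal cones of $\Sigma$. First I would fix a line bundle $L = \Ocal_X(\sum w_i D_i)$ and observe that $(F_m)_* L$ is a coherent sheaf whose rank is $m^n$, since $F_m$ is finite flat of degree $m^n$ (on the dense torus $T \cong (\CC^*)^n$ the map is the $m$-th power map $t \mapsto t^m$, with fibre $(\mu_m)^n$). The claim is that this rank-$m^n$ sheaf splits as a sum of the $m^n$ line bundles indexed by $\textbf{v} \in P^n_m$. The natural strategy is to check the splitting affine-locally on the torus-invariant open cover $\{U_\sigma\}_{\sigma \in \Sigma(n)}$ and then verify the patching data agrees with the line bundles $\Ocal_X(D^m_{\textbf{v},\textbf{w},\sigma})$.

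The key steps, in order: (1) On a fixed maximal cone $\sigma$, the coordinate ring of $U_\sigma$ is the semigroup ring $\CC[\sigma^\vee \cap M]$, and because $\Sigma$ is smooth $\sigma^\vee \cap M$ is a free monoid on a dual basis; identify $L|_{U_\sigma}$ with a rank-one free module and describe the $\ZZ$-grading twist coming from $\textbf{w}_\sigma$. (2) The Frobenius map corresponds on $U_\sigma$ to the inclusion of semigroup rings induced by multiplication by $m$ on $M$, so $(F_m)_*(\Ocal_{U_\sigma})$ decomposes as a direct sum over the $m^n$ cosets $M/mM$, i.e. over $\textbf{v} \in P^n_m$; tensoring with $L$ twists each summand. (3) For each coset representative $\textbf{v}$, compute which torus-invariant divisor the corresponding summand of $(F_m)_*L$ restricts to over $U_\sigma$ — this is precisely where the equation $AA^{-1}_\sigma(\textbf{v}-\textbf{w}_\sigma) + \textbf{w} = m\textbf{q}^m + \textbf{r}^m$ enters: the integral part $m\textbf{q}^m$ records the divisor $D^m_{\textbf{v},\textbf{w},\sigma}$ and the remainder $\textbf{r}^m \in P^d_m$ records the residual torus-character grading within that coset. (4) Check that as $\sigma$ varies, the local divisors $D^m_{\textbf{v},\textbf{w},\sigma}$ glue: on overlaps $U_\sigma \cap U_{\sigma'}$ the two expressions $AA^{-1}_\sigma(\textbf{v}-\textbf{w}_\sigma)$ and $AA^{-1}_{\sigma'}(\textbf{v}-\textbf{w}_{\sigma'})$ differ by an element of $M$ (a principal divisor), so they define the same line bundle globally; conclude $(F_m)_*L = \bigoplus_{\textbf{v}} \Ocal_X(D^m_{\textbf{v},\textbf{w},\sigma})$.

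The main obstacle I expect is step (4): verifying that the naive cocycle matching works, i.e. that the combinatorially-defined divisors $D^m_{\textbf{v},\textbf{w},\sigma}$ for different maximal cones patch to a genuine global line bundle and that the choice of indexing $\textbf{v} \in P^n_m$ can be made coherently across charts. One has to track carefully that the remainder vector $\textbf{r}^m(\textbf{v},\textbf{w},\sigma)$ transforms compatibly and that changing $\sigma$ permutes the set of local summands in a way consistent with a single global direct-sum decomposition; this is essentially a bookkeeping argument with the matrices $A$, $A_\sigma$, but it is the step where the smoothness of $\Sigma$ is genuinely used (to guarantee $A_\sigma$ is unimodular, so that $A A_\sigma^{-1}$ has integer entries and the integral/fractional splitting is well-defined). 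Since this is Thomsen's theorem quoted verbatim from \cite{Thom}, in the paper itself I would simply cite the reference rather than reproduce the gluing computation; the sketch above is how one would reconstruct it if needed.
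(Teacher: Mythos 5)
Your proposal is consistent with how the paper handles this statement: the paper gives no proof at all, simply citing \cite[Theorem 1]{Thom}, and your closing remark that one would cite the reference matches that. Your sketch is moreover an accurate reconstruction of Thomsen's actual argument (coset decomposition of $(F_m)_*$ over the affine charts $U_\sigma$, extraction of the divisor via the integral part $\textbf{q}^m$, and gluing using smoothness to ensure $AA_\sigma^{-1}$ is integral), so there is nothing to correct.
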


\noindent Thomsen \cite{Thom} also observes that $(F_m)_* (L)$ does not depend on the choice of the maximal cone $\sigma$. We can assume the primitive ray generators of $\sigma$ forms the standard basis of $\ZZ^n$, in which case

\begin{equation}\label{frobeq}
q_i^m (\textbf{v}, \textbf{w}) = \lfloor \frac{u^t_{\rho_i} (\textbf{v}-\textbf{w}_{\sigma})- w_i}{m} \rfloor
\end{equation} 
where $\lfloor x \rfloor \in \ZZ$ is the round-down $x-1 < \lfloor x \rfloor \leq x$.
Set
\begin{equation}
\mkD (D)_m := \{ L \in \Pic (X) \ \vert \ L \text{ is a direct summand of } (F_m)_* (\Ocal_X (D)) \}.
\end{equation}
Thomsen \cite[Proposition 1]{Thom} shows that the set
\begin{equation}
\mkD(D) := \bigcup_{m > 0} \mkD (D)_m, 
\end{equation} 
is finite. For brevity, we denote $\mkD_m := \mkD(0)_m$, the set of line bundles in $(F_m)_* (\Ocal_X)$. Note that we can use $\mkD_m$ to find strong exceptional collections of line bundles on $X$:

\begin{lemma}\cite[Lemma 3.8(i)]{Ueha}\label{nefsec}
For any fixed positive integer $m$, the set of line bundles $\{ L \in \mkD_m \ \vert \ L^{-1} \text{ is nef}\} \subseteq \mkD_m$ is a strong exceptional collection on $X$.
\end{lemma}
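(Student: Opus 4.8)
\textbf{Proof proposal for Lemma \ref{nefsec}.}

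The plan is to reduce the vanishing statement $\Hom^i(L,L') = 0$ for $i>0$ and $L,L' \in \{M \in \mkD_m \mid M^{-1} \text{ nef}\}$ to the vanishing of higher cohomology of a nef line bundle on the smooth toric Fano variety $X$, which is a standard consequence of Demazure vanishing. First I would observe that the claim $\Hom^k(L_s,L_s)=\CC$ is automatic since the $L_s$ are line bundles on an irreducible projective variety. For the off-diagonal part, fix two line bundles $L,L'$ in the given subset of $\mkD_m$; we must show $H^i(X, L^{-1}\otimes L') = 0$ for all $i>0$. The key point is that $L^{-1}$ is nef by hypothesis, so we would like to conclude that $L^{-1}\otimes L'$ is nef, whence its higher cohomology vanishes because $X$ is toric (Demazure vanishing: nef line bundles on complete toric varieties have no higher cohomology). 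This however is \emph{not} immediate, since $L'$ itself is only the inverse of a nef bundle, not nef; so the real content is to control $L'$.

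The hard part, and the step I expect to be the main obstacle, is showing that every $L'$ with $(L')^{-1}$ nef and $L' \in \mkD_m$ is itself effective in a strong enough sense — more precisely, that $L^{-1}\otimes L'$ is nef whenever both $L^{-1}$ and $(L')^{-1}$ are. The way I would handle this is to go back to Thomsen's explicit description of the summands of $(F_m)_*\Ocal_X$ via equation \eqref{frobeq}: the divisor $D^m_{\mathbf{v},\mathbf{0},\sigma}$ has coefficients $q_i^m(\mathbf{v},\mathbf{0}) = \lfloor u_{\rho_i}^t \mathbf{v}_\sigma / m \rfloor$ (taking $\mathbf{w}=0$), and one checks directly from the floor-function inequalities that these coefficients are bounded: each summand $L$ of $(F_m)_*\Ocal_X$ satisfies $0 \le -L \le$ (something depending only on the fan), or rather that $-L = \sum(-q_i) D_i$ is effective and bounded, which is exactly the content behind the finiteness of $\mkD$ recorded above. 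Combined with $L^{-1}$ nef this pins $L$ into a region where $L \otimes (L')^{-1}$, equivalently $L^{-1}\otimes L'$ written additively as a difference of the corresponding divisors, can be shown nef; concretely one uses that on a toric variety a line bundle is nef iff its class lies in the nef cone, and that the nef cone is cut out by the torus-invariant curves, so it suffices to pair $L^{-1}\otimes L'$ with each such curve and use the bounds on the Frobenius summands together with nefness of the two inverses.

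Finally I would assemble the pieces: having established $L^{-1}\otimes L'$ is nef, Demazure vanishing (see e.g.\ Cox--Little--Schenck \cite{CoLiSc}, or the non-vanishing cohomology cone machinery of Section \ref{subsect:EMS-StrExc}, which identifies precisely when higher cohomology of a torus-invariant divisor is nonzero) gives $H^i(X, L^{-1}\otimes L')=0$ for all $i>0$. Since $L,L'$ were arbitrary in the subset, and the required $\Hom$-vanishing for a strong exceptional collection only asks for $\Hom^i(L_s,L_t)=0$ for $i\ne 0$ (the ordering condition on $i=0$ is vacuous here because all the relevant $\Hom^0$'s may be nonzero), this shows the subset is a strong exceptional collection once equipped with any order refining the partial order by the nef cone; in fact one typically orders by $-\deg$ or by the partial order $L \preceq L'$ iff $L^{-1}\otimes L'$ effective. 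This completes the argument, with the genuine work concentrated in the nefness step controlled by Thomsen's formula \eqref{frobeq}.
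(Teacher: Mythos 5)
The paper does not actually reprove this statement (it is quoted from Uehara's Lemma 3.8(i)), but your proposed route breaks down at exactly the step you identify as the main content. The claim that $L^{-1}\otimes L'$ is nef whenever $L,L'\in\mkD_m$ both have nef inverses is false. Already on $X=\PP^2$ with $m\geq 3$ one has $\mkD_m=\{\Ocal,\Ocal(-1),\Ocal(-2)\}$, all with nef inverses; taking $L=\Ocal$ and $L'=\Ocal(-2)$ gives $L^{-1}\otimes L'=\Ocal(-2)$, which is not nef (its higher cohomology vanishes, but because $-2>-3$, i.e.\ the class lies in the gap outside the non-vanishing cohomology cones, not because it lies in the nef cone). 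More structurally: a nef line bundle on a complete toric variety is globally generated, hence has a nonzero section; since for distinct line bundles at most one of $\Hom(L,L')$, $\Hom(L',L)$ can be nonzero, at least half of the ordered differences $L^{-1}\otimes L'$ occurring in the collection have no sections and therefore cannot be nef. So no bookkeeping with Thomsen's floor formula \eqref{frobeq} can rescue nefness of the differences, and Demazure vanishing cannot be applied to $L^{-1}\otimes L'$ directly.

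The vanishing does hold, but via a projection-formula argument that uses nefness of $L^{-1}$ alone, which is how the cited proof goes. Since $F_m$ is the identity on the underlying space and $F_m^*N\cong N^{\otimes m}$ for a line bundle $N$, the projection formula gives $L^{-1}\otimes (F_m)_*\Ocal_X\cong (F_m)_*\bigl(F_m^*(L^{-1})\bigr)\cong (F_m)_*(L^{-m})$. As $L'\in\mkD_m$ is a direct summand of $(F_m)_*\Ocal_X$, the bundle $L^{-1}\otimes L'$ is a direct summand of $(F_m)_*(L^{-m})$, and because $F_m$ is finite, $H^i(X,L^{-1}\otimes L')$ is a direct summand of $H^i(X,L^{-m})$. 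Now $L^{-m}=(L^{-1})^{\otimes m}$ is nef because $L^{-1}$ is, so Demazure vanishing gives $H^i(X,L^{-m})=0$ for $i>0$; nefness enters only through the source bundle, and no control of $L'$ beyond its membership in $\mkD_m$ is needed (this also explains why the hypothesis is imposed on every member of the subset: each member occurs as a source). The peripheral points of your write-up are fine: $\Hom(L,L)=\CC$ since $X$ is projective and irreducible, and the $i=0$ condition is arranged by any total order refining the partial order defined by effectivity of $L^{-1}\otimes L'$, which is antisymmetric on a projective variety.
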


\subsection{Method 1}\label{subsect:Method 1} 
We can use the Frobenius morphism to find sets of line bundles that generate $\Dx$. 

\begin{lemma}\label{pushgen}\cite[Lemma 5.1]{Ueha}
Let $f \colon X \rightarrow Y$ be a proper morphism between smooth varieties. Assume that $\Ecal$ generates $\Dx$ and $\Ocal_Y$ is a direct summand of $\RR f_* \Ocal_X$ Then $\RR f_* \Ecal$ generates $\Dcal^b (Y)$. 
\end{lemma}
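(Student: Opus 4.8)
We must show that $\langle \Rd f_* \Ecal \rangle^\perp = 0$. The plan is to dualise the hypothesis across $f$ using Grothendieck--Verdier duality and then transport the resulting vanishing back down via the projection formula. Concretely, take $\Fcal \in \langle \Rd f_* \Ecal \rangle^\perp$, so that $\Hom^\bullet_Y(\Rd f_* E, \Fcal) = 0$ for every $E \in \Ecal$ (the vanishing is in all degrees since $\langle - \rangle$ is closed under shifts). Since $f$ is proper and $Y$ is smooth, hence carries a dualizing complex, $\Rd f_*$ admits a right adjoint $f^! \colon \Dcal^b(Y) \to \Dx$, and the adjunction gives $\Hom^\bullet_X(E, f^!\Fcal) \cong \Hom^\bullet_Y(\Rd f_* E, \Fcal) = 0$. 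Thus $f^!\Fcal \in \langle \Ecal \rangle^\perp$, and as $\Ecal$ generates $\Dx$ by hypothesis, $f^!\Fcal = 0$.

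Next I would convert $f^!\Fcal = 0$ into $\Ld f^*\Fcal = 0$. Because $X$ and $Y$ are smooth, $f$ is a local complete intersection morphism, so $f^!\Ocal_Y \cong \omega_X \otimes f^*\omega_Y^{-1}[\dim X - \dim Y]$ is an invertible object of $\Dx$; moreover every object of $\Dcal^b(Y)$ is perfect, so the duality isomorphism $f^!\Fcal \cong \Ld f^*\Fcal \otimes^{\Ld}_{\Ocal_X} f^!\Ocal_Y$ holds. Tensoring with $(f^!\Ocal_Y)^\vee$ then yields $\Ld f^*\Fcal \cong f^!\Fcal \otimes^{\Ld} (f^!\Ocal_Y)^\vee = 0$. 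Finally, the projection formula gives
\[
\Rd f_*\Ld f^*\Fcal \;\cong\; \Fcal \otimes^{\Ld}_{\Ocal_Y} \Rd f_*\Ocal_X ;
\]
the left-hand side is $0$, while on the right, $\Ocal_Y$ being a direct summand of $\Rd f_*\Ocal_X$ exhibits $\Fcal \cong \Fcal \otimes^{\Ld}\Ocal_Y$ as a direct summand of $\Fcal \otimes^{\Ld}\Rd f_*\Ocal_X$. Hence $\Fcal = 0$, so $\langle \Rd f_*\Ecal\rangle^\perp = 0$, which is precisely the assertion that $\Rd f_*\Ecal$ generates $\Dcal^b(Y)$.

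The only non-formal ingredients are the Grothendieck duality package — existence of $f^!$ as a right adjoint to $\Rd f_*$ on bounded coherent complexes, the adjunction isomorphism, and the identification of $f^!\Ocal_Y$ as a shifted line bundle — and I expect the implication $f^!\Fcal = 0 \Rightarrow \Ld f^*\Fcal = 0$ to be the point where smoothness of $X$ and $Y$ is genuinely used; everything else is just the projection formula and the splitting hypothesis on $\Rd f_*\Ocal_X$. In the application $f = F_m$ is finite flat with $\Ocal_X$ a literal summand of $(F_m)_*\Ocal_X$, so $\Rd f_* = f_*$ and these inputs become elementary, but the argument above works uniformly for any proper morphism of smooth varieties.
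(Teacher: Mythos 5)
Your argument is correct, and it is essentially the argument behind the result as cited: the paper gives no proof of its own here but refers to Uehara's Lemma 5.1, whose proof is exactly this Grothendieck--Verdier duality argument (adjunction with $f^!$, invertibility of $f^!\Ocal_Y$ for a morphism of smooth varieties to pass to $\Ld f^*\Fcal=0$, then the projection formula together with the splitting of $\Rd f_*\Ocal_X$ to force $\Fcal=0$). So there is nothing to correct.
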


\begin{proposition}\label{prop:Method1}
Let $X$ be a smooth toric Fano variety of dimension $n$ and $\Lcal$ be a strong exceptional collection of line bundles on $X$. If the set of line bundles
\begin{equation}
\mkD_m^{gen} := \bigcup_{0 \leq i \leq n} \mkD_m (\omega_X^{-i})   
\end{equation}
is contained in $\langle \Lcal \rangle$ for some positive integer $m$, then $\Lcal$ generates $\Dx$.
\end{proposition}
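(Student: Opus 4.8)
The plan is to reduce the generation statement to the already-established fact (Lemma~\ref{pushgen}) that Frobenius pushforward preserves generation, combined with the observation that the twisted pushforwards $(F_m)_*(\omega_X^{-i})$ collectively see enough line bundles. First I would recall that for any smooth projective variety the structure sheaf $\Ocal_X$ is a direct summand of $(F_m)_*\Ocal_X$: the composite $\Ocal_X \to (F_m)_*\Ocal_X \to \Ocal_X$ (unit followed by the trace, or more concretely using that $F_m$ is finite flat of degree $m^n$ and $\mathrm{char}=0$ so that $\tfrac{1}{m^n}\mathrm{tr}$ splits the unit) is the identity. Since $F_m$ is a finite morphism, $\mathbf{R}(F_m)_* = (F_m)_*$, so Lemma~\ref{pushgen} applies with $f = F_m$: because $\Ocal_X = \Ocal_{\mathbf{P}}$... more precisely, because $\mathcal{D}^b(X)$ is generated by $\mathcal{D}^b(X)$ trivially, pushing forward any generating set along $F_m$ gives a generating set.

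Next I would feed a \emph{known} generating set into this machine. The key input is a strong generation/resolution-of-the-diagonal style statement on smooth toric varieties: the line bundles obtained from the Cox-ring/Koszul presentation, or equivalently the "Bondal--Thomsen" collection, generate $\Dx$; concretely, on a smooth toric Fano variety the twists $\{\omega_X^{i}\}_{0\le i\le n}$ (equivalently $\{\Ocal_X(-iK_X)\}$) already generate $\Dcal^b(X)$ — this follows from the standard fact that $\mathcal{O}, \omega^{-1},\dots,\omega^{-n}$ is a (full, though not necessarily strong) generating sequence for a Fano via the Koszul resolution of the diagonal, or alternatively one may invoke that $\Dx$ is generated by any ample line bundle together with its powers up to $\dim X$. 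Pushing the generating set $\{\omega_X^{-i}\}_{0 \le i \le n}$ forward along $F_m$ and using Lemma~\ref{pushgen} shows that $\bigcup_{0 \le i \le n}(F_m)_*(\omega_X^{-i})$ — whose summands are precisely the line bundles in $\mkD_m^{gen}$ — generates $\Dx$. (Here I use Thomsen's lemma that $(F_m)_*$ of a line bundle splits as the direct sum over $\mkD_m(D)$ of the corresponding line bundles, and that $F_m^*\omega_X = \omega_X^{\otimes m}$ so that the twisted pushforwards are governed by the sets $\mkD_m(\omega_X^{-i})$.)

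Finally, the hypothesis says $\mkD_m^{gen} \subset \langle \Lcal\rangle$. Since $\langle \Lcal\rangle$ is by definition a triangulated subcategory closed under cones, shifts, direct summands and isomorphisms, and $\mkD_m^{gen}$ generates $\Dx$, we get $\Dx = \langle \mkD_m^{gen}\rangle \subseteq \langle \Lcal\rangle \subseteq \Dx$, hence $\langle\Lcal\rangle = \Dx$, i.e.\ $\Lcal$ generates (indeed classically generates) $\Dx$. As noted in the remark following the definition of full exceptional collections, for a strong exceptional collection classical generation and the vanishing condition $\langle\Lcal\rangle^\perp = 0$ coincide, so this is exactly the desired conclusion.

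I expect the main obstacle — or at least the only non-bookkeeping step — to be pinning down precisely why $\{\omega_X^{-i}\}_{0\le i \le n}$ generates $\Dx$ on a toric Fano: one wants to cite the right version (Koszul resolution of $\Ocal_\Delta$ on $X\times X$ built from $\Ocal\boxtimes\omega^{-1},\dots$, or the more robust statement that on any smooth projective $X$ the powers $\Ocal(j)$, $0\le j\le \dim X$, of a very ample $\Ocal(1)$ generate, applied to $\Ocal(1) = \omega_X^{-1}$ which is ample since $X$ is Fano — though very ampleness of $-K_X$ needs the Fano hypothesis to be used carefully, or one passes to a multiple and absorbs it into $m$). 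Everything else is a formal consequence of Lemma~\ref{pushgen}, Thomsen's splitting lemma, and the closure properties of $\langle\Lcal\rangle$.
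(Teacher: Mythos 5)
Your argument follows the paper's proof essentially verbatim: the paper likewise deduces that $\bigoplus_{i=0}^{n}\omega_X^{-i}$ generates $\Dx$ from ampleness of $\omega_X^{-1}$ (citing Van den Bergh's Lemma 3.2.2, which is exactly the ``powers of an ample line bundle up to $\dim X$ generate'' statement you gesture at, so neither a Koszul resolution of the diagonal nor very ampleness of $-K_X$ is needed), then applies Lemma \ref{pushgen} to the proper morphism $F_m$ together with Thomsen's splitting, and concludes from the hypothesis $\mkD_m^{gen}\subset\langle\Lcal\rangle$. The only cosmetic difference is that you spell out the splitting of $\Ocal_X$ off $(F_m)_*\Ocal_X$ and the closure properties of $\langle\Lcal\rangle$, which the paper leaves implicit.
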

\begin{proof}
As $X$ is Fano, the anticanonical bundle $\omega_X^{-1}$ is ample and so a result by Van den Bergh \cite[Lemma 3.2.2]{VdBe} implies that $\bigoplus_{i = 0}^{n} \omega_X^{-i}$ is a generator for $\Dx$. The Frobenius morphism $F_m$ is proper so $\bigcup_{0 \leq i \leq n} \mkD_m (\omega^{-i})$ generates $\Dx$ by Lemma \ref{pushgen}.  
\end{proof}

To show that $\mkD_m^{gen} \subset \langle \Lcal \rangle$ for some $m > 0$, we use exact sequences of line bundles to generate objects in $\langle \Lcal \rangle$; for examples of these calculations on the toric Fano threefolds see \cite{Ueha} or \cite{BeTi}. This process is easier when the line bundles in $\mkD_m^{gen}$ are close together in $\Pic(X)$, which occurs when the value of $m$ is large. However, the larger the value of $m$, the longer it takes to compute $\mkD_m^{gen}$, so in practice $m$ is often chosen by trial and error.  

\begin{example} 
We use \emph{Method 1} to show that a given collection of line bundles generates $\Dx$ when $X$ is $I_1$, the $61^{st}$ smooth toric Fano fourfold. The variety $X$ has ray generators \[u_0 = \begin{tiny}
\begin{bmatrix}
1 \\ 
0 \\ 
0 \\ 
0
\end{bmatrix} \end{tiny} ,
u_1 = \begin{tiny}\begin{bmatrix}
0 \\ 1 \\ 0 \\ 0
\end{bmatrix} \end{tiny},
u_2 = \begin{tiny}\begin{bmatrix}
0 \\ 0 \\ 1 \\ 0
\end{bmatrix} \end{tiny},
u_3 = \begin{tiny}\begin{bmatrix}
0 \\ 0 \\ 0 \\ 1
\end{bmatrix} \end{tiny},
u_4 = \begin{tiny}\begin{bmatrix}
0 \\ -1 \\ 1 \\ 0
\end{bmatrix} \end{tiny},
u_5 = \begin{tiny}\begin{bmatrix}
2 \\ 0 \\ -1 \\ -1
\end{bmatrix} \end{tiny},
u_6 = \begin{tiny}\begin{bmatrix}
-1 \\ 0 \\ 0 \\ 0
\end{bmatrix} \end{tiny}
u_7 = \begin{tiny}\begin{bmatrix}
-1 \\ 0 \\ 1 \\ 0
\end{bmatrix} \end{tiny} \]
and the linear equivalences between the toric divisors are $D_0 \sim -2D_5 + D_6 +D_7, \ D_1 \sim D_4, \ D_2 \sim -D_4 +D_5 -D_7 , \ D_3 \sim D_5$. Set $m = 10$ and let $\mathbf{v} = \begin{tiny} \begin{bmatrix}
x \\ y \\ z \\ w
\end{bmatrix} \end{tiny} \in P^4_m$. The anticanonical divisor $-K_X$ is equal to $\sum_{i \in \Sigma(1)} D_i$, so in order to calculate $\mkD_m (\omega^{-1})$ we set $\mathbf{w} = \begin{tiny} \begin{bmatrix}
1 \\ \vdots \\ 1 
\end{bmatrix} \end{tiny} \in \ZZ^8$. By \eqref{frobeq}, the solution to \[ \mathbf{q}^m(\mathbf{v}, \mathbf{w}) =  \begin{bmatrix}
\lfloor \frac{(x-1)+1}{m} \rfloor \\ 
\lfloor \frac{(y-1)+1}{m} \rfloor \\
\lfloor \frac{(z-1)+1}{m} \rfloor \\
\lfloor \frac{(w-1)+1}{m} \rfloor \\
\lfloor \frac{(-y+z)+1}{m} \rfloor \\
\lfloor \frac{(2x-z-w)+1}{m} \rfloor \\
\lfloor \frac{(-x+1)+1}{m} \rfloor \\
\lfloor \frac{(-x+z)+1}{m} \rfloor \\
\end{bmatrix}  = 
\begin{bmatrix}
0 \\ 
0 \\
0 \\
0 \\
\lfloor \frac{-y+z+1}{m} \rfloor \\
\lfloor \frac{2x-z-w+1}{m} \rfloor \\
\lfloor \frac{-x+2}{m} \rfloor \\
\lfloor \frac{-x+z+1}{m} \rfloor \\
\end{bmatrix} \]   
for $\mathbf{v} \in P^4_m$ is an element of $\mkD(\omega^{-1})_m$ and similarly we can calculate $\mkD(\omega^{-i})_m$ by determining $\mathbf{q}^m(\mathbf{v}, i\mathbf{w})$, for $0 \leq i \leq 4$. It follows that $\vert \mkD_m \vert = 18, \ \vert \mkD (\omega^{-1})_m \vert = 18$ and $\vert \mkD^{gen}_m \vert = 46$. For each line bundle $L$ in the collection
\[ \Lcal = \left\lbrace
\begin{array}{c|}
\Ocal_X (-iD_4 - jD_5 - kD_6), \Ocal_X ( - D_6 - D_7),\ \Ocal_X (-D_4 - D_6 - D_7),  \\
\Ocal_X (-D_5 - D_6 - D_7), \ \Ocal_X (-D_4 - D_5 - D_6 - D_7) \\
\end{array} \ i,j,k = 0,1 \right\rbrace \subset \mkD_m, \] 
$L^{-1}$ is nef, so $\Lcal$ is a strong exceptional collection by Lemma \ref{nefsec}. A list of rays $\{ \rho_{i_1} , \ldots , \rho_{i_j} \}$ forms a cone in $\Sigma$ if and only if $D_{i_1} \cap \ldots \cap D_{i_j} \neq \emptyset$, so we can use the primitive collections of $X$ to determine which divisors do not intersect. For example, the primitive collection $\{ u_0,u_7 \}$ for $X$ implies that $D_0 \cap D_7 = \emptyset$ and so we obtain the exact sequence
\[ 0 \rightarrow \Ocal_X(-D_0 - D_7) \rightarrow \Ocal_X(-D_0) \oplus \Ocal_X(- D_7) \rightarrow \Ocal_X \rightarrow 0. \] Using the basis $\{D_4, D_5, D_6, D_7 \}$ for $\Pic(X)$, rewrite the exact sequence as \begin{equation}\label{lbexseq} 0 \rightarrow \Ocal_X(2D_5 - D_6 - 2D_7) \rightarrow \Ocal_X(2D_5 -D_6 - D_7) \oplus \Ocal_X(- D_7) \rightarrow \Ocal_X \rightarrow 0. \end{equation}
We can use the exact sequences determined by the primitive collections to show that $\mkD^{gen}_m \subset \langle \Lcal \rangle$. For example, the tensor of $\Ocal_X(-2D_5 + D_7) \in \mkD_m^{gen} \backslash \Lcal$ with \eqref{lbexseq} gives the exact sequence 
\begin{equation}\label{tlbexseq} 0 \rightarrow \Ocal_X( - D_6 - D_7) \rightarrow \Ocal_X( -D_6 ) \oplus \Ocal_X(- 2D_5) \rightarrow \Ocal_X(-2D_5 + D_7) \rightarrow 0. \end{equation}
All of the line bundles in \eqref{tlbexseq} except $\Ocal_X(-2D_5 + D_7)$ are in $\langle \Lcal \rangle$, hence so is $\Ocal_X(-2D_5 + D_7)$. By the same method and using the exact sequences of line bundles determined by the primitive collections for $X$, every line bundle in $\mkD_{gen}$ is contained in $\langle \Lcal \rangle$ and so $\Lcal$ is full by Proposition \ref{prop:Method1}. 
\end{example}

\section{Quiver Moduli and the Structure Sheaf of the Diagonal}\label{sect:d1Map}

\noindent Let $X$ be a smooth toric variety and $\Lcal = \{ L_0, \ldots , L_r \}$ be a collection of line bundles on $X$. In this section we use the \emph{quiver of sections} to encode the endomorphism algebra $A = \End\left(\bigoplus_{i = 0}^r L_i \right)$. We also introduce the map $d_1$ between certain vector bundles on $X \times X$ and give conditions as to when the cokernel of $d_1$ is the structure sheaf $\Ocal_\Delta$ of the diagonal embedding into $X \times X$.  

\subsection{Quivers of Sections and Moduli Spaces of Quiver Representations}

A quiver $Q$ consists of a vertex set $Q_0$, an arrow set $Q_1$ and maps $\textbf{h,t} \colon Q_1 \rightarrow Q_0$ giving the vertices at the head and tail of each arrow. We assume that $Q$ is connected, acyclic and rooted at a unique source. A non-trivial path in $Q$ is a sequence of arrows $p = a_1 \ldots a_k$ such that $\textbf{h} (a_i) = \textbf{t} (a_{i+1})$ for $1 \leq i \leq k-1$, in which case $\textbf{t}(p) := \textbf{t}(a_1)$, $\textbf{h}(p) := \textbf{h}(a_k)$ and $\supp (p) = \{a_1, \ldots, a_k \}$. Each vertex $i \in Q_0$ gives a trivial path $e_i$ with $\textbf{h}(e_i) = \textbf{t}(e_i) = i$. By taking the paths as a generating set and defining multiplication to be concatenation of paths when possible and zero otherwise, we obtain the path algebra $\CC Q$. If $J$ is a two-sided ideal of relations in $\CC Q$ then we obtain the quotient algebra $\CC Q/J$, and we will use the notation $(Q,J)$ when we are considering the quiver with relations.   

A \emph{representation} $W$ of a quiver $Q$ assigns a $\CC$-vector space $W_i$ to each vertex $i \in Q_0$ and a $\CC$-linear map $w_a \colon W_{\textbf{t}(a)} \rightarrow W_{\textbf{h}(a)}$ to each arrow $a \in Q_1$. We will assume that the dimension of every vector space $W_i$ is equal to 1 and so the \emph{dimension vector} is $\textbf{v} := (1, \ldots , 1)^t \in \ZZ^{Q_0}$. A morphism $\phi$ between two representations $W$ and $V$ is a collection of $\CC$-linear maps $\phi_i \colon W_i \rightarrow V_i$ such that for any arrow $a \in Q_1$, the following square commutes:
\begin{equation}
 \begin{CD}   
     W_{\textbf{t}(a)} @>{w_a}>> W_{\textbf{h}(a)} \\
     @V{\phi_{\textbf{t}(a)}}VV   @VV{\phi_{\textbf{h}(a)}}V  \\
     V_{\textbf{t}(a)} @>>{v_a}> V_{\textbf{h}(a)} \\
  \end{CD}
\end{equation}
For the quiver with relations $(Q,J)$, we can consider representations of $Q$ that respect the relations in $J$. More precisely, a representation of $(Q,J)$ is a representation $W$ of $Q$ such that for any two paths $p_1 = a_1 a_2 \ldots a_t, \  p_2 = b_1 b_2 \ldots b_s$ in $Q$ with $p_1 - p_2 \in J$, we have \[ w_{a_t} \circ \cdots \circ w_{a_2} \circ w_{a_1} = w_{b_s} \circ \cdots \circ w_{b_2} \circ w_{b_1}.\]  

Define $\Wt(Q) := \{ \theta \in (\ZZ^{Q_0})^\vee \mid \theta(\textbf{v}) = 0 \}$ to be the \emph{weight space} for $Q$. Each $\theta \in \Wt(Q)$ determines a stability parameter, where a representation $W$ is $\theta$-(semi)stable if for every non-zero proper subrepresentation $W' \subset W$ we have $\theta (W') := \sum_{\{i \mid W'_i \neq 0\}} \theta_i > (\geq) \ 0$. A parameter $\theta$ is \emph{generic} if every $\theta$-semistable representation is $\theta$-stable. In particular, the \emph{special parameter} $\vartheta := (-r,1,1,\ldots,1)$ is generic, where $r = \vert Q_0 \vert-1$. A generic stability parameter $\theta$ can then be used to construct the \emph{fine moduli space of $\theta$-stable representations} $\Mcal_\theta (Q)$, as introduced by King \cite{King1}. The space $\Mcal_\theta (Q)$ is a projective variety as $Q$ is acyclic \cite[Proposition 4.3]{King1} and Hille \cite[Section 1.3]{Hill} has shown that $\Mcal_\theta (Q)$ is a smooth toric variety due to our choice of dimension vector \textbf{v}. To construct $\Mcal_\theta (Q)$ explicitly, let the characteristic functions $\chi_i \colon Q_0 \rightarrow \ZZ$ for $i \in Q_0$ and $\chi_a \colon Q_1 \rightarrow \ZZ$ for $a \in Q_1$ form bases for $\ZZ^{Q_0}$ and $\ZZ^{Q_1}$ respectively. The incidence map $\inc \colon \ZZ^{Q_1} \rightarrow \ZZ^{Q_0}$ defined by $\inc(\chi_a) = \chi_{\textbf{h}(a)} - \chi_{\textbf{t}(a)}$ determines the exact sequence
\begin{equation}\label{eq:IncidenceExactSequence}
 \begin{CD}   
    0@>>> \tilde{M} @>>> \ZZ^{Q_1} @>{\text{inc}}>> \text{Wt}(Q) @>>> 0.
\end{CD}
\end{equation}
For a fixed generic stability parameter $\theta \in \Wt(Q)$, let $\CC[y_a \mid a \in Q_1]_\theta = \CC[\NN^{Q_1} \cap \text{inc}^{-1}(\theta)]$ denote the $\theta$-graded piece. Then $\Mcal_\theta (Q)$ is the GIT quotient
\begin{equation}
\Mcal_\theta(Q) = \CC^{Q_1} \git_\theta T = \Proj \left( \bigoplus_{j \geq 0} \CC \left[  y_a \mid a \in Q_1 \right]_{j\theta} \right) 
\end{equation} 
where the action of $T := \Hom_\ZZ (\text{Wt}(Q), \CC^*)$ is induced from the action of $(\CC^*)^{Q_0} \cong \prod_{i \in Q_0} \GL(W_i)$ on $\CC^{Q_1}$ determined by the incidence map. By choosing a group isomorphism between $T$ and $\{ (g_1, \ldots , g_k) \in (\CC^*)^{Q_0} \mid g_0 = 1 \}$ we obtain a $T$-equivariant vector bundle $\bigoplus_{i \in Q_0} \Ocal_{\CC^{Q_1}}$ on $\CC^{Q_1}$ which descends to the universal family $\bigoplus_{i \in Q_0} F_i$ on $\Mcal_\theta (Q)$ \cite[Proposition 5.3]{King1}. The summands $F_i$ are called the \emph{tautological line bundles} on $\Mcal_\theta (Q)$ and $F_0$ is the trivial line bundle, where $0 \in Q_0$ labels the source of $Q$, as $T$ acts trivially on the summand given by $i=0$ in $\bigoplus_{i \in Q_0} \Ocal_{\CC^{Q_1}}$. The dimension of $\Mcal_\theta (Q)$ is $\vert Q_1 \vert - \vert Q_0 \vert + 1$ and $\Pic(\Mcal_\theta (Q)) \cong \Wt(Q)$ \cite[Theorem 2.3]{Hill}. 

If we are considering a quiver with ideal of relations $J$, we denote the fine moduli space of $\theta$-stable representations of $Q$ that respect the relations in $J$ by $\Mcal_\theta (Q,J)$. By sending a path $p = a_1 \ldots a_k$ to the monomial $y_{a_1}\cdots y_{a_k} \in \CC  \left[  y_a \mid a \in Q_1 \right]$ and extending linearly, we obtain a $\CC$-linear map from $\CC Q$ to $\CC  \left[  y_a \mid a \in Q_1 \right]$. We let $I_J$ be the ideal in $\CC  \left[  y_a \mid a \in Q_1 \right]$ generated by the image of $J$ under this map, in which case $\Mcal_\theta (Q,J)$ is given by the GIT quotient 
\begin{equation}
\Mcal_\theta(Q,J) = \mathbb{V}(I_J) \git_\theta T = \Proj \left( \bigoplus_{j \geq 0} \left( \CC  \left[  y_a \mid a \in Q_1 \right]/I_J \right)_{j\theta} \right). 
\end{equation} 

Let $\Lcal = \{ L_0 , \ldots , L_r\}$ be a collection of non-isomorphic effective line bundles on a projective toric variety $X$ with $L_0 := \Ocal_X$. As $X$ is projective, if $\Hom(L_i , L_j) \neq 0$ then $\Hom(L_j , L_i) = 0$ and so we can assume $\Lcal$ is ordered such that $i < j$ when $\Hom(L_i , L_j) \neq 0$. The endomorphism algebra $\End (\bigoplus_i L_i)$ can be conveniently described by its \emph{quiver of sections} $Q$, whose vertices $Q_0 = \{ 0 , \ldots , r \}$ are the line bundles in $\Lcal$ and the number of arrows from vertex $i$ to $j$ for $i < j$ is given by the dimension of the cokernel of the map
\begin{equation}
\bigoplus_{i<k<j} \Hom(L_i , L_k) \otimes \Hom(L_k , L_j) \longrightarrow \Hom (L_i , L_j).
\end{equation}
A torus-invariant section $s \in \Hom(L_i , L_j)$ is \emph{irreducible} if it lies in this cokernel. Each section in a basis of the irreducible sections determines a divisor of zeroes, and these divisors label the arrows between vertex $i$ and $j$; we therefore denote $\div (a)$ for the divisor that labels the arrow $a \in Q_1$, and $\div(p) := \sum_{a \in \supp(p)} \div(a)$ for a path $p$. The corresponding labelling monomial is $x^{\div(p)} := \prod_{a \in \supp(p)} x^{\div(a)} \in \CC [x_\rho \mid \rho \in \Sigma(1)]$. Note that the quiver is acyclic and as the collection is effective, the quiver is connected and rooted at $0$. The arrow labels determine the two-sided ideal of relations $J$, generated by the set
\begin{equation}
\{ p_i - p_j \mid p_i, p_j \text{ paths in } Q, \  \textbf{t}(p_i) = \textbf{t}(p_j), \textbf{h}(p_i) = \textbf{h}(p_j), \ \div(p_i) = \div(p_j)\}. 
\end{equation}

\begin{lemma}\cite[Proposition 3.3]{CrSm}\label{lem:PathAlgEqualsEndomorphismAlg}
Let $Q$ be the quiver of sections for the collection $\Lcal$ above, with ideal of relations $J$. Then $\CC Q/J \cong \End(\bigoplus_i L_i )$. 
\end{lemma}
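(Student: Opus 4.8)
The plan is to show that the quiver of sections $Q$ with its relation ideal $J$ is exactly the combinatorial shadow of the endomorphism algebra $\End(\bigoplus_i L_i)$, by exhibiting a surjective $\CC$-algebra homomorphism $\CC Q \to \End(\bigoplus_i L_i)$ whose kernel is precisely $J$. First I would fix, for each arrow $a \colon i \to j$ of $Q$, the torus-invariant irreducible section $s_a \in \Hom(L_i, L_j)$ whose divisor of zeroes is $\div(a)$; since $X$ is projective and the collection is effective, such a section is determined up to nonzero scalar by $\div(a)$, so fix the scalars once and for all. Sending a path $p = a_1 \cdots a_k$ to the composite $s_{a_k} \circ \cdots \circ s_{a_1} \in \Hom(L_{\textbf{t}(p)}, L_{\textbf{h}(p)})$ and the trivial path $e_i$ to $\id_{L_i}$ extends $\CC$-linearly to an algebra map $\Phi \colon \CC Q \to \End(\bigoplus_i L_i)$, because concatenation of paths corresponds to composition of the associated maps (and non-composable paths map to $0$, matching the algebra structure on $\End(\bigoplus_i L_i) = \bigoplus_{i,j} \Hom(L_i, L_j)$).

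The next step is surjectivity of $\Phi$. Here the key point is that $\Hom(L_i, L_j)$ is spanned by torus-invariant sections (it is a finite-dimensional space of sections of the line bundle $L_i^{-1} \otimes L_j$ on the toric variety $X$, hence graded by the torus), and any torus-invariant section factors as a product of irreducible ones by the very definition of irreducibility via the cokernel of $\bigoplus_{i<k<j}\Hom(L_i,L_k)\otimes\Hom(L_k,L_j) \to \Hom(L_i,L_j)$: one argues by induction on, say, the total degree of the divisor of zeroes that every torus-invariant $s \in \Hom(L_i,L_j)$ either is irreducible (hence equals $\Phi(a)$ for an arrow $a$, up to scalar) or factors through some intermediate $L_k$, so it lies in the image of a composite and hence in $\Image \Phi$. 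Since the irreducible sections are by construction in bijection (up to scalar) with the arrows, $\Phi$ hits a spanning set of each $\Hom$-space.

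Then I would identify the kernel. Clearly $J \subseteq \Ker\Phi$: if $p_i - p_j \in J$ then $\textbf{t}(p_i)=\textbf{t}(p_j)$, $\textbf{h}(p_i)=\textbf{h}(p_j)$ and $\div(p_i)=\div(p_j)$, so the two composite sections have the same divisor of zeroes between the same two line bundles and therefore agree up to a nonzero scalar; rescaling the chosen $s_a$ consistently (a compatibility one must check can be arranged — e.g. by normalising the section on a fixed affine torus chart) makes the scalar $1$, so $\Phi(p_i)=\Phi(p_j)$. For the reverse inclusion, work vertex-pair by vertex-pair: fixing $i,j$, the space $e_j (\CC Q/J) e_i$ has a basis indexed by the distinct divisors $\div(p)$ arising from paths $p \colon i \to j$ (two paths being identified in $\CC Q/J$ precisely when their divisors coincide), and $\Phi$ sends these basis elements to the torus-invariant sections in $\Hom(L_i,L_j)$ with those divisors, which are linearly independent because sections with distinct torus-weights are linearly independent. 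Hence $\Phi$ restricted to $e_j(\CC Q/J)e_i$ is injective onto a spanning set of torus-invariant sections, so it is bijective, and summing over all $i,j$ gives the isomorphism $\CC Q/J \cong \End(\bigoplus_i L_i)$.

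The main obstacle is the bookkeeping of nonzero scalars: establishing that one can choose the representatives $s_a$ of the irreducible sections simultaneously so that \emph{every} commuting-square relation $p_i - p_j \in J$ holds on the nose (not merely up to a scalar depending on the pair of paths) requires a coherent normalisation — the cleanest route is to pick a distinguished maximal cone $\sigma_0 \in \Sigma$ and normalise each $s_a$ to take value $1$ at the corresponding torus-fixed chart, after which equality of divisors forces equality of the associated Laurent monomials and hence of the sections. Once that normalisation is in place the rest is the routine linear-algebra argument sketched above; this is precisely the content of \cite[Proposition 3.3]{CrSm}, which we invoke.
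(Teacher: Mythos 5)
Your argument is correct and is essentially the standard proof of this result; the paper itself offers no proof but simply cites \cite[Proposition 3.3]{CrSm}, whose argument runs along exactly the lines you sketch (torus-invariant sections span the Hom-spaces, factor into irreducibles, and are determined by their divisors). Your scalar-normalisation worry evaporates if one identifies each torus-invariant section of $\Hom(L_i,L_j)$ with the monomial $x^{\div}$ in the $\Cl(X)$-graded Cox ring, under which composition is literally monomial multiplication, so equality of divisors gives equality of sections on the nose.
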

\noindent Each line bundle $L_i$ is equal to $\Ocal_X(D'_i)$ for some Cartier divisor $D'_i$ and we can construct $Q$ explicitly by computing the vertices of the polyhedron conv$(\NN^d \cap \deg^{-1} (D'_i - D'_j))$ for each $i \neq j \in Q_0$. The vertices correspond to the torus-invariant generators of $\Hom(L_i , L_j)$, from which we pick the irreducible sections. 

\begin{example}\label{ex:E1FourfoldQuiver}
Let $X$ be the smooth toric Fano fourfold $E_1$ in Example \ref{ex:E1Fourfold} and fix $m \gg 0$. Choose $\{D_4,D_5,D_6\}$ to be the basis of $\Pic(X)$; the exact sequence \eqref{exseq} for $X$ is \[ 
\begin{CD}   
    0@>>> M @>{\left[ \begin{smallmatrix}
1 & 0 & 0 & 0 \\
0 & 1 & 0 & 0 \\
0 & 0 & 1 & 0 \\
0 & 0 & 0 & 1 \\
-1& 0 & 0 & 0 \\
3 &-1 &-1 &-1 \\
2 &-1 &-1 &-1
\end{smallmatrix} \right]}>> \ZZ^7 @>{
 \left[ \begin{smallmatrix}
1 & 0 & 0 & 0 &1&0&0 \\
-3 & 1 & 1 & 1 &0&1&0 \\
-2 & 1 & 1 & 1 &0&0&1
\end{smallmatrix} \right]}>> \Pic(X)@>>> 0.
\end{CD}  \] 
Every line bundle $L_i$ in the collection \[ \Lcal = \{ \Ocal_X(iD_5+iD_6),\ \Ocal_X(D_4 +iD_5+iD_6), \ \Ocal_X(D_4 +jD_5 + (j+1)D_6) \mid 0 \leq i  \leq 3, \ 0 \leq j \leq 2 \}  \] is nef and $L_i^{-1} \in \mkD_m$, so $\Lcal$ is a strong exceptional collection by Lemma \ref{nefsec}. The quiver of sections $Q$ for this collection is given in Fig. \ref{fig:4fold10}.
$ \\ $

\begin{figure}[!ht]
\centering
  \psset{unit=1cm}
   \begin{pspicture}(0,-0.5)(9,4)
        \cnodeput(0,0){A}{\small{0}}
        \cnodeput(3,0){B}{\small{1}} 
        \cnodeput(6,0){C}{\small{2}}
        \cnodeput(9,0){D}{\small{3}} 
        \cnodeput(0,2.4){E}{\small{4}}
        \cnodeput(1.15,3.95){F}{\small{5}} 
        \cnodeput(3,2.4){G}{\small{6}} 
        \cnodeput(4.15,3.95){H}{\small{7}} 
        \cnodeput(6,2.4){I}{\small{8}}
        \cnodeput(7.15,3.95){J}{\small{9}}  
        \cnodeput(9,2.4){K}{\small{10}}   
    \psset{nodesep=0pt}
\nccurve[angleA=25,angleB=155]{->}{A}{B}\lput*{:U}{\tiny{$x_1$}}
\nccurve[angleA=8,angleB=172]{->}{A}{B}\lput*{:U}{\tiny{$x_2$}}
\nccurve[angleA=352,angleB=188]{->}{A}{B}\lput*{:U}{\tiny{$x_3$}}
\nccurve[angleA=335,angleB=205]{->}{A}{B}\lput*{:U}{\tiny{$x_5x_6$}}
\ncline{->}{A}{E}\lput*{:270}{\tiny{$x_4$}}
    
\nccurve[angleA=25,angleB=155]{->}{B}{C}\lput*{:U}{\tiny{$x_1$}}
\nccurve[angleA=8,angleB=172]{->}{B}{C}\lput*{:U}{\tiny{$x_2$}}
\nccurve[angleA=352,angleB=188]{->}{B}{C}\lput*{:U}{\tiny{$x_3$}}
\nccurve[angleA=335,angleB=205]{->}{B}{C}\lput*{:U}{\tiny{$x_5x_6$}}   
\ncline{->}{B}{G}\lput*{:270}{\tiny{$x_4$}} 
   
\nccurve[angleA=25,angleB=155]{->}{C}{D}\lput*{:U}{\tiny{$x_1$}}
\nccurve[angleA=8,angleB=172]{->}{C}{D}\lput*{:U}{\tiny{$x_2$}}
\nccurve[angleA=352,angleB=188]{->}{C}{D}\lput*{:U}{\tiny{$x_3$}}
\nccurve[angleA=335,angleB=205]{->}{C}{D}\lput*{:U}{\tiny{$x_5x_6$}}   
\ncline{->}{C}{I}\lput*{:270}{\tiny{$x_4$}}  
\nccurve[angleA=150,angleB=300]{->}{C}{E}\lput*{:180}{\tiny{$x_0x_5$}}

\ncline{->}{D}{K}\lput*{:270}{\tiny{$x_4$}}  
\nccurve[angleA=150,angleB=280]{->}{D}{F}\lput*{:180}{\tiny{$x_0$}}

\nccurve[angleA=17,angleB=163]{->}{E}{G}\lput*{:U}{\tiny{$x_1$}}
\ncline{->}{E}{G}\lput*{:U}{\tiny{$x_2$}}
\nccurve[angleA=343,angleB=197]{->}{E}{G}\lput*{:U}{\tiny{$x_3$}}
\ncline{->}{E}{F}\lput*{:U}{\tiny{$x_6$}}

\nccurve[angleA=17,angleB=163]{->}{F}{H}\lput*{:U}{\tiny{$x_1$}}
\ncline{->}{F}{H}\lput*{:U}{\tiny{$x_2$}}
\nccurve[angleA=343,angleB=197]{->}{F}{H}\lput*{:U}{\tiny{$x_3$}}
\ncline{->}{F}{G}\lput*{:U}{\tiny{$x_5$}}

\nccurve[angleA=17,angleB=163]{->}{G}{I}\lput*{:U}{\tiny{$x_1$}}
\ncline{->}{G}{I}\lput*{:U}{\tiny{$x_2$}}
\nccurve[angleA=343,angleB=197]{->}{G}{I}\lput*{:U}{\tiny{$x_3$}}
\ncline{->}{G}{H}\lput*{:U}{\tiny{$x_6$}}

\nccurve[angleA=17,angleB=163]{->}{H}{J}\lput*{:U}{\tiny{$x_1$}}
\ncline{->}{H}{J}\lput*{:U}{\tiny{$x_2$}}
\nccurve[angleA=343,angleB=197]{->}{H}{J}\lput*{:U}{\tiny{$x_3$}}
\ncline{->}{H}{I}\lput*{:U}{\tiny{$x_5$}}

\nccurve[angleA=17,angleB=163]{->}{I}{K}\lput*{:U}{\tiny{$x_1$}}
\ncline{->}{I}{K}\lput*{:U}{\tiny{$x_2$}}
\nccurve[angleA=343,angleB=197]{->}{I}{K}\lput*{:U}{\tiny{$x_3$}}
\ncline{->}{I}{J}\lput*{:U}{\tiny{$x_6$}}

\ncline{->}{J}{K}\lput*{:U}{\tiny{$x_5$}}     
\end{pspicture}
\caption{A quiver of sections on the smooth toric Fano fourfold $E_1$}
  \label{fig:4fold10}
  \end{figure}

\end{example} 

\subsection{Quiver Moduli and the Structure Sheaf of the Diagonal} Let $\iota \colon \Delta \hookrightarrow X \times X$ be the diagonal embedding and for two line bundles $L_1$ and $L_2$ on $X$ define \[ L_1 \boxtimes L_2 := p_1^*(L_1) \otimes p_2^*(L_2) \]
where $p_1$ and $p_2$ are the projections from $X \times X$ onto the first and second component respectively. We define the map $d_1$ of vector bundles on $X \times X$ as follows. Let $d_1$ have domain and codomain:
\begin{equation}\label{eq:d1Map}
d_1 \colon \bigoplus_{a \in Q_1} L_{\textbf{t}(a)} \boxtimes  L^{-1}_{\textbf{h}(a)}  \longrightarrow \bigoplus_{i \in Q_0} L_i \boxtimes L^{-1}_i. 
\end{equation}
The summands of the vector bundles are line bundles and so are given by twists of the $\Pic(X\times X)$-graded module $S_{X \times X}$. We write $S_{X\times X} = \CC [x_1, \ldots, x_{d}, w_1, \ldots , w_{d}]$ where $d = \vert \Sigma_X(1)\vert$ to distinguish sections $x_i$ on the first copy of $X$ in $X \times X$ from sections $w_i$ on the second copy. For line bundles $L_i$ and $L_j$ on $X$, denote $S_{X \times X}(L_i,L_j)$ to be the free $S_{X \times X}$-module generated by $\mathbf{e}_{L_i,L_j}$ corresponding to the line bundle $L_i \boxtimes L_j$ on $X \times X$ by \eqref{eq:FunctorGradedSModulesQCohSheaves}. Then our map $d_1$ sends
\begin{align*} S_{X\times X}(L_{\textbf{t}(a)},L^{-1}_{\textbf{h}(a)})  &\rightarrow S_{X\times X}(L_{\textbf{h}(a)},L^{-1}_{\textbf{h}(a)}) \oplus S_{X\times X}(L_{\textbf{t}(a)},L^{-1}_{\textbf{t}(a)})\\
\\
\mathbf{e}_{L_{\textbf{t}(a)},L^{-1}_{\textbf{h}(a)}} &\mapsto x^{\div(a)}\mathbf{e}_{L_{\textbf{h}(a)},L^{-1}_{\textbf{h}(a)}} -w^{\div(a)}\mathbf{e}_{L_{\textbf{t}(a)},L^{-1}_{\textbf{t}(a)}}.
\end{align*}

\noindent The following proposition provides a condition as to when the cokernel of $d_1$ is $\Ocal_\Delta$. Note that our choice of $\theta$ in the proposition will depend on our collection $\Lcal$, as explained in the following subsection.

\begin{proposition}\label{prop:EmbeddingGivesCokernel}
Suppose that there exists a generic stability parameter $\theta$ and a closed immersion $\phi\colon X \hookrightarrow \Mcal_\theta (Q,J)$ such that $L_i \cong \phi^*(F_i)$ for $\ 0 \leq i \leq r$. Then the cokernel of $d_1$ in \eqref{eq:d1Map} is $\Ocal_\Delta$.
\end{proposition}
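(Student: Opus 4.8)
The plan is to identify the map $d_1$ in \eqref{eq:d1Map} with a globally defined ``tautological'' complex on $X \times X$ built from the universal family on $\Mcal_\theta(Q,J)$, and then to recognise the cokernel as $\Ocal_\Delta$ by a local computation in Cox coordinates. First I would consider the analogous map on the moduli space $M := \Mcal_\theta(Q,J)$ itself: writing $F_i$ for the tautological line bundles and using the presentation $\CC Q/J \cong \End(\bigoplus_i L_i)$ from Lemma \ref{lem:PathAlgEqualsEndomorphismAlg}, there is a natural map $\widetilde{d_1}\colon \bigoplus_{a \in Q_1} F_{\head(a)}^{-1}\boxtimes F_{\tail(a)} \to \bigoplus_{i \in Q_0} F_i^{-1}\boxtimes F_i$ on $M \times M$ (or, more precisely, the twist matching \eqref{eq:d1Map}) whose entries are the arrow-labelling sections $x^{\div(a)}$ on one factor and $w^{\div(a)}$ on the other. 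Because $\phi\colon X \hookrightarrow M$ is a closed immersion with $\phi^*(F_i)\cong L_i$, pulling $\widetilde{d_1}$ back along $\phi \times \phi$ gives exactly $d_1$, so it suffices to understand the cokernel of $\widetilde{d_1}$ on $M \times M$ and then pull back (using right-exactness of $(\phi\times\phi)^*$, noting $\Ocal_{\Delta_M}$ pulls back to $\Ocal_{\Delta_X}$ since $\Delta_X = (\phi\times\phi)^{-1}(\Delta_M)$ scheme-theoretically, as $\phi$ is a closed immersion).

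Next I would compute $\Coker(d_1)$ locally. Cover $X$ (or $M$) by the torus-invariant affine opens $U_\sigma$ indexed by maximal cones $\sigma$; on $U_\sigma \times U_\tau$ the line bundles $L_i \boxtimes L_i^{-1}$ trivialise, and the matrix of $d_1$ becomes an explicit matrix over the coordinate ring whose entries are monomials $x^{\div(a)}$ and $w^{\div(a)}$ in the local coordinates. The key structural fact I would invoke is that $Q$ is connected and rooted at the unique source $0$, so for every vertex $i$ there is a path $p_i$ from $0$ to $i$; the corresponding products of arrow-labels give, after the identification with $\End(\bigoplus L_i)$, precisely the transition data relating the trivialisations. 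Using the path-algebra description and the relations $J$, the cokernel presentation collapses: one shows that modulo the image of $d_1$, the generator $\mathbf{e}_{L_i, L_i^{-1}}$ is identified with $x^{\div(p_i)}w^{-\div(p_i)}\mathbf{e}_{L_0,L_0}$ (interpreted correctly on the relevant chart), so the cokernel is cyclic, generated by $\mathbf{e}_{L_0,L_0}$, and the annihilator ideal is generated by the elements $x^{\div(a)} - w^{\div(a)}$ for $a \in Q_1$. The content is then that this ideal cuts out exactly the diagonal. For that I would use the hypothesis that $\phi$ is a \emph{closed immersion} (equivalently, by the theory of quivers of sections, that the irreducible sections labelling the arrows generate enough of the Cox ring): the sections $\{x^{\div(a)}\}_{a\in Q_1}$, together with $1$, generate the Cox ring $S_X$ as an algebra in the appropriate multigraded sense needed to separate points and tangent vectors, so the vanishing of all $x^{\div(a)} - w^{\div(a)}$ forces $x_\rho = w_\rho$ up to the torus action defining $X$, i.e.\ cuts out $\Delta$ scheme-theoretically.

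The main obstacle, and the step deserving the most care, is precisely this last point: showing that the ideal generated by the $x^{\div(a)} - w^{\div(a)}$ is \emph{reduced and radical} and equals $I_\Delta$, rather than merely containing it or cutting out $\Delta$ set-theoretically. This is exactly where the closed-immersion hypothesis on $\phi$ is essential and cannot be weakened to a bijective morphism: one needs that $\phi$ separates tangent directions, which translates into the arrow-labels generating the conormal bundle of $\Delta$. I would handle this by working on the moduli space $M$ first — where $d_1$ is the restriction to the diagonal-adjacent locus of the standard tautological presentation, and where the identification $\Coker(\widetilde{d_1}) \cong \Ocal_{\Delta_M}$ is either known or follows from King's construction of $\Mcal_\theta$ as a GIT quotient (the diagonal in $M\times M$ being cut out by the obvious equations in the Cox/homogeneous coordinates of the quotient) — and only then pull back along the closed immersion $\phi\times\phi$, where right-exactness of pullback gives $\Coker(d_1) = (\phi\times\phi)^*\Coker(\widetilde{d_1}) = (\phi\times\phi)^*\Ocal_{\Delta_M} = \Ocal_{\Delta_X}$. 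A secondary technical point to verify is that the twists in \eqref{eq:d1Map} match the universal-family twists on $M\times M$ under $\phi^* F_i \cong L_i$, which is a bookkeeping check using $\Pic(M)\cong\Wt(Q)$ and $\phi^*$ on Picard groups.
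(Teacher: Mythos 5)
Your strategy — pull back a universal complex from $\Mcal_\theta(Q,J) \times \Mcal_\theta(Q,J)$ and identify its cokernel by an ideal computation in Cox coordinates — is genuinely different from the paper's, but it has a gap at exactly the point you yourself flag as the main obstacle, and your proposed resolution of that obstacle is circular. You dispose of it by asserting that $\Coker(\widetilde{d_1}) \cong \Ocal_{\Delta_M}$ on $M \times M$ is ``known or follows from King's construction'' because the diagonal of the GIT quotient is ``cut out by the obvious equations in homogeneous coordinates.'' It is not known: that assertion is precisely the proposition in the special case $X = M$, $\phi = \mathrm{id}$, and nothing in King's construction of $\Mcal_\theta$ identifies the ideal generated by the entries $x^{\div(a)} - w^{\div(a)}$ with the diagonal ideal (even up to saturation by the irrelevant ideal, and even taking the relations $J$ into account). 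So your argument reduces the statement to an equally hard unproved claim. Relatedly, your intermediate step that the cokernel is cyclic on $\mathbf{e}_{L_0,L_0}$ with annihilator generated by the elements $x^{\div(a)} - w^{\div(a)}$ only makes sense chart by chart after trivialising, since these differences are not homogeneous for the $\Pic(X\times X)$-grading; and even granting a local cyclic presentation, showing the resulting ideal equals $I_\Delta$ scheme-theoretically (rather than merely containing it or cutting out $\Delta$ set-theoretically) is the entire content, which ``the sections separate points and tangent vectors'' does not by itself deliver.

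The paper's proof avoids all of this and, crucially, uses the genericity of $\theta$, which your proposal never exploits. It argues fiberwise: a point $(x_1,x_2) \in X \times X \hookrightarrow \Mcal_\theta(Q,J) \times \Mcal_{-\theta}(Q^{op},J^{op})$ parametrises a pair of $\theta$-stable representations $V$ and $W$; the restriction of $d_1$ to this point is dual to the map $\bigoplus_{i} \Hom_\CC(V_i,W_i) \to \bigoplus_{a} \Hom_\CC(V_{\mathbf{t}(a)},W_{\mathbf{h}(a)})$, whose kernel is the space of morphisms of representations $V \to W$. Since $\theta(V)=\theta(W)=0$, stability forces any nonzero such morphism to be an isomorphism, so this kernel vanishes off the diagonal and is canonically $\CC$ on it, whence $\Coker(d_1) = \Ocal_\Delta$. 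If you want to rescue your route, you would have to supply exactly this stability (Schur-lemma) input to establish the cokernel statement on $M \times M$ — at which point the pullback step becomes superfluous, because the same fiberwise argument applies directly on $X \times X$ using the hypotheses $L_i \cong \phi^*(F_i)$ and the closed immersion.
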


\begin{proof}
Assume that $\theta$ and $\phi$ satisfy the conditions in the proposition. For the opposite quiver with relations $(Q^{op},J^{op})$, the stability parameter $-\theta$ is generic and $\Mcal_\theta (Q,J) \cong \Mcal_{-\theta} (Q^{op},J^{op})$. In addition, we have a closed immersion of $X$ into $\Mcal_{-\theta} (Q^{op},J^{op})$ such that the tautological line bundles on $\Mcal_{-\theta} (Q^{op},J^{op})$ restrict to the line bundles $L_i^{-1}$ on $X$. A $\theta$-stable representation $W = (W_i, \psi_i)$ of $(Q,J)$ determines a $( -\theta )$-stable representation $W^* = (W^*_i,\psi^*_i)$ of $(Q^{op},J^{op})$ and so for a point $(x_1,x_2) \in X \times X \hookrightarrow \Mcal_\theta (Q,J) \times \Mcal_{-\theta} (Q^{op},J^{op})$, $x_1$ parametrises a isomorphism class of a $\theta$-stable representation $V := ( V_i,\phi_i )$, whilst $x_2$ parametrises the isomorphism class of a $(-\theta)$-stable representation $W^*$. Therefore, the map $d_1$ of vector bundles on $X \times X$ from \eqref{eq:d1Map} restricted to a point $(x_1,x_2) \in X \times X$ is given by
\[D \colon \bigoplus_{a \in Q_1} V_{\mathbf{t}(a)} \otimes  W^*_{\mathbf{h}(a)} \longrightarrow \bigoplus_{i \in Q_0} V_i \otimes W^*_i.\]

The map $D$ is dual to the map:
\begin{equation}\label{eq:MapDualToD1}
D^* \colon \bigoplus_{i \in Q_0} \Hom_\CC (V_i , W_i ) \longrightarrow \bigoplus_{a \in Q_1} \Hom_\CC (V_{\mathbf{t}(a)} , W_{\mathbf{h}(a)} )
\end{equation} 
given by $( \beta_i ) \mapsto (\beta_{\mathbf{h}(a)} \phi_a - \psi_a \beta_{\mathbf{t}(a)})$. The kernel $\ker(D^*)$ of this map is precisely the morphisms from $V$ to $W$. As $V$ and $W$ are $\theta$-stable, we have $\theta(V) = \theta(W) = 0$. If $f$ is a morphism in $\ker(D^*)$ then the image $\im (f)$ of $f$ is a quotient of $V$, hence $\theta (\im(f)) \leq 0$. However, $\im (f)$ also injects into $W$ implying that $\theta (\im(f)) \geq 0$, so $\theta (\im(f)) = 0$. Therefore, $f$ is either an isomorphism or the zero morphism. It follows that when $W = V$, the kernel of $D^*$ is canonically a copy of $\CC$. Using this observation, we see that away from the diagonal of $X \times X$ the cokernel of $d_1$ is rank zero as the representations $V$ and $W$ are not isomorphic, whilst at each point on the diagonal the cokernel restricts to a canonical copy of $\CC$. Therefore the cokernel of $d_1$ is $\Ocal_\Delta$.

\end{proof} 

\subsection{Nef And Non-Nef Collections} Our choice of the generic stability parameter used in Proposition \ref{prop:EmbeddingGivesCokernel} will depend on whether our chosen line bundles are nef or not. Firstly, assume that $\Lcal$ is a collection of nef line bundles on $X$ and recall the special stability parameter $\vartheta = (-r,1,1,\ldots,1)$. Craw and Smith \cite{CrSm} associate to $Q$ a projective toric variety $\vert \Lcal \vert \cong \Mcal_\vartheta (Q)$ called the \emph{multigraded linear series} of $\Lcal$. They define the morphism $\phi_\Lcal \colon X \rightarrow \vert \Lcal \vert$ which factors into
\begin{equation}
X \stackrel{\phi_\Lcal}{\longrightarrow} \Mcal_\vartheta (Q,J) \hookrightarrow \vert \Lcal \vert
\end{equation}
and \cite[Corollary 4.10]{CrSm} present criteria as to when $\phi_\Lcal$ is a closed embedding. For a line bundle $L$ on $X_\Sigma$, there is a natural bijection between $\deg^{-1}(L) \cap \NN^{\Sigma(1)}$ and the generators of $\Gamma(X_\Sigma,L)$. Define $P_L$ to be the polytope in $\RR^{\Sigma(1)}$ that is the convex hull of the lattice points $\deg^{-1}(L) \cap \NN^{\Sigma(1)}$.

\begin{proposition}\label{prop:SumPolytopeNefEmbedding}
Let $\Lcal$ be a collection of nef line bundles. If $L := \bigotimes_{L_i \in \Lcal} L_i$ is very ample and the Minkowski sum of the polytopes $\{P_{L_i} \mid L_i \in \Lcal \}$ is equal to $P_L$, then the morphism $\phi_\Lcal \colon X \rightarrow \vert \Lcal \vert$ is a closed embedding. In this case, we can recover the line bundles in $\Lcal$ as the restriction of the tautological bundle on $\vert \Lcal \vert$ to $X$.  
\end{proposition}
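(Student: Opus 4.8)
The plan is to reduce the claim to the closed-immersion criterion of Craw--Smith \cite[Cor.~4.10]{CrSm}. First recall that $\phi_\Lcal$ factors as $X \to \Mcal_\vartheta(Q,J) \hookrightarrow \vert \Lcal \vert = \Mcal_\vartheta(Q)$, where the second arrow is a closed immersion since $\Mcal_\vartheta(Q,J)$ is the closed subscheme cut out by the relations $J$; so it suffices to show the composite $\phi_\Lcal \colon X \to \vert \Lcal \vert$ is a closed immersion. By construction of the moduli morphism \cite{King1,CrSm} one has $\phi_\Lcal^{*}F_i \cong L_i$ for $0 \le i \le r$, which already proves the last sentence of the proposition; it remains to prove $\phi_\Lcal$ is a closed immersion.

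Next I would pin down the polarisation on $\vert \Lcal \vert$. Under $\Pic(\vert \Lcal \vert) \cong \Wt(Q)$ the special parameter $\vartheta = (-r,1,\dots,1)$ corresponds to the very ample line bundle $\Ocal_{\vert \Lcal \vert}(1) := F_1 \otimes \cdots \otimes F_r$, and since $F_0 \cong \Ocal$ is trivial we get $\phi_\Lcal^{*}\Ocal_{\vert \Lcal \vert}(1) \cong L_1 \otimes \cdots \otimes L_r = L$. Hence, up to the closed immersion $\vert \Lcal \vert \hookrightarrow \PP(\Gamma(\vert \Lcal \vert, \Ocal_{\vert \Lcal \vert}(1))^{\vee})$, the morphism $\phi_\Lcal$ is the morphism $X \to \PP^N$ defined by the linear subsystem $V \subseteq \Gamma(X,L)$ that is the image of the restriction map $\Gamma(\vert \Lcal \vert, \Ocal_{\vert \Lcal \vert}(1)) \to \Gamma(X,L)$; so $\phi_\Lcal$ is a closed immersion once $L$ is very ample on $X$ (which is assumed) and $V = \Gamma(X,L)$. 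I would then identify $V$: a global section of $\Ocal_{\vert \Lcal \vert}(1)$ is a linear combination of degree-$\vartheta$ monomials in the arrow variables, equivalently of ``multi-paths'' in $Q$ given by one path $p_i$ from the source $0$ to each vertex $i$, and such a multi-path maps to the product $\prod_{i=1}^{r} x^{\div(p_i)}$ with $x^{\div(p_i)} \in \Gamma(X,L_i)$. Because by definition of the quiver of sections every element of $\Gamma(X,L_i)$ is a product of irreducible sections, every product $s_1 \cdots s_r$ with $s_i \in \Gamma(X,L_i)$ arises this way, so $V$ is precisely the image of the multiplication map $\mu \colon \Gamma(X,L_1) \otimes \cdots \otimes \Gamma(X,L_r) \to \Gamma(X,L)$.

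It therefore remains to show that the Minkowski hypothesis forces $\mu$ to be surjective. On the toric side $\Gamma(X,L_i)$ has the monomial basis indexed by $P_{L_i} \cap \NN^{\Sigma(1)}$, and $\mu$ sends $x^{m_1} \otimes \cdots \otimes x^{m_r}$ to $x^{m_1 + \cdots + m_r}$; hence $\im(\mu)$ is spanned by the monomials indexed by the set-sum $(P_{L_1} \cap \NN^{\Sigma(1)}) + \cdots + (P_{L_r} \cap \NN^{\Sigma(1)})$, while $\Gamma(X,L)$ has basis indexed by $P_L \cap \NN^{\Sigma(1)}$. So $\mu$ is surjective if and only if every lattice point of $P_L$ is a sum of lattice points, one from each $P_{L_i}$, and the task is to deduce this from the equality $P_{L_1} + \cdots + P_{L_r} = P_L$ of polytopes in $\RR^{\Sigma(1)}$. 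I expect this to be the crux: in general, equality of Minkowski sums of lattice polytopes does not imply the corresponding lattice-point decomposition, and here one must use that the $P_{L_i}$ are the lattice-point polytopes of \emph{nef} (hence globally generated) line bundles, so that the decomposition of a lattice point of $P_L$ is controlled by the cone structure of $\Sigma_X$ — this is exactly the combinatorial content packaged in \cite[Cor.~4.10]{CrSm}, which I would invoke to conclude that $\phi_\Lcal$ is a closed immersion. Finally, the identification $\phi_\Lcal^{*}F_i \cong L_i$ noted above records that each $L_i$ is recovered as the restriction of the tautological bundle $F_i$ on $\vert \Lcal \vert$, completing the proof.
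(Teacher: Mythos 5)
Your proposal is correct and, at the decisive step, rests on exactly the same input as the paper: the paper's entire proof is the citation of \cite[Corollary 4.10, Theorem 4.15]{CrSm}, and your reduction to surjectivity of the multiplication map $\Gamma(X,L_1)\otimes\cdots\otimes\Gamma(X,L_r)\to\Gamma(X,L)$ together with your final appeal to \cite[Corollary 4.10]{CrSm} for the Minkowski-sum crux, plus the universal-family property giving $\phi_\Lcal^*F_i \cong L_i$, is precisely the content of those cited results. So this is essentially the paper's argument with the Craw--Smith criterion unpacked rather than quoted wholesale.
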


\begin{proof}
This is immediate from \cite[Corollary 4,10,Theorem 4.15]{CrSm}.
\end{proof}
\noindent Note that as the varieties that we are considering are smooth and toric, then any ample line bundle is very ample. 

If the collection $\Lcal$ contains a line bundle that is not nef then we cannot using the multigraded linear series construction to show that $X_\Sigma \subset \Mcal_\vartheta(Q,J)$. In the examples of toric Fano varieties where this is the case, we show that $X$ is a closed subvariety of $\Mcal_\theta(Q,J)$ for a generic stability parameter $\theta$ different to $\vartheta$, such that the tautological bundle on $\Mcal_\theta (Q,J)$ restricts to $\bigoplus_{i \in Q_0} L_i$ on $X$. To achieve this, we recall the construction of the toric variety $Y_\theta \subset \Mcal_\theta(Q,J)$ from \cite{CrSm}, (see also \cite{CrMT} and \cite{CrQV}).

Define the map
\[ \pi := (\inc,\div) \colon \ZZ^{Q_1} \rightarrow \Wt(Q) \oplus \ZZ^{\Sigma(1)} \]
with image $\ZZ(Q) := \pi (\ZZ^{Q_1})$ and subsemigroup $\NN(Q) := \pi(\NN^{Q_1})$. The projections $\pi_1 \colon \ZZ(Q) \rightarrow \Wt(Q)$ and $\pi_2 \colon \ZZ(Q) \rightarrow \ZZ^{\Sigma(1)}$ fit in to the commutative diagram
\begin{center}
\begin{tikzpicture}[scale=1.8]
\node (A) at (-1,2) {$\ZZ^{Q_1}$};
\node (B) at (0,1) {$\ZZ(Q)$};
\node (C) at (1,1) {$\Wt(Q)$};
\node (D) at (0,0) {$\ZZ^{\Sigma(1)}$};
\node (E) at (1,0) {$\Pic(X)$};
\path[->,font=\tiny]
(A) edge node[right]{$\pi$} (B)
(A) edge node[above right]{$\inc$} (C)
(A) edge node[below left]{$\div$} (D)
(B) edge node[below]{$\pi_1$} (C)
(B) edge node[right]{$\pi_2$} (D)
(C) edge node[right]{$\pic$} (E)
(D) edge node[below]{$\deg$} (E);
\end{tikzpicture}
\end{center}
where $\pic(\chi_i) := L_i, \ i \in Q_0$ is a group homomorphism. Let $\CC[\NN(Q)]$ and $\CC[\NN^{Q_1}]$ be the semigroup algebra defined by $\NN(Q)$ and $\NN^{Q_1}$ respectively. The surjective map of semigroup algebras $\pi_* \colon \CC[\NN^{Q_1}] \rightarrow \CC [\NN(Q)]$ induced by $\pi$ has kernel $I_Q$ that defines an affine toric subvariety $\mathbb{V}(I_Q) \subset \CC^{Q_1}$. We obtain a $T$-action on $\mathbb{V}(I_Q)$ via restriction of the $T$-action on $\CC^{Q_1}$. For a generic weight $\theta \in \Wt(Q)$, we have the categorical quotient
\[ Y_\theta := \mathbb{V}(I_Q) /\!/\!_\theta T = \Proj \left( \bigoplus_{j \geq 0 } \CC[\NN(Q)]_{j\theta} \right) \]
where $\CC[\NN(Q)]_\theta$ is the $\theta$-graded piece. The variety $Y_\theta$ is toric and is a closed subvariety of $\Mcal_\theta (Q,J)$.   

\begin{proposition}\label{prop:NonNefEmbedding}
Fix a generic $\theta \in \Wt(Q)$ such that $L := \pic(\theta)$ is an ample line bundle on $X$. If 
\begin{equation}\label{eq:NonNefSurjectivePolytope}
\deg^{-1}(L) \cap \NN^{\Sigma(1)} \subset \pi_2 \left(\pi_1^{-1}(\theta) \cap \NN(Q)\right)
\end{equation} then $X$ is a closed subvariety of $Y_\theta$. Furthermore, if $\theta$ and $\vartheta$ are in the same open GIT-chamber for the $T$-action on $\mathbb{V}(I_Q)$, then the tautological bundles on $\Mcal_\theta(Q)$ restricts to the line bundles $L_i$ on $X$.  
\end{proposition}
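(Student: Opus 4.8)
The plan is to present $X$ and $Y_\theta$ as $\Proj$ of explicit graded rings and to extract the closed immersion from a surjection between those rings supplied by \eqref{eq:NonNefSurjectivePolytope}. Since $L=\pic(\theta)$ is ample on the smooth projective toric variety $X$ it is very ample, so $X\cong\Proj\bigl(\bigoplus_{j\ge 0}\Gamma(X,L^{\otimes j})\bigr)$, and for a toric variety $\Gamma(X,L^{\otimes j})=(S_X)_{jL}$ has basis the monomials $x^{b}$ with $b\in\deg^{-1}(jL)\cap\NN^{\Sigma(1)}$. By construction $Y_\theta=\Proj\bigl(\bigoplus_{j\ge 0}\CC[\NN(Q)]_{j\theta}\bigr)$, with $\CC[\NN(Q)]_{j\theta}$ spanned by the monomials indexed by $a\in\NN(Q)$ satisfying $\pi_1(a)=j\theta$.

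First I would build the graded ring homomorphism $\Phi\colon\bigoplus_{j\ge 0}\CC[\NN(Q)]_{j\theta}\to\bigoplus_{j\ge 0}(S_X)_{jL}$ sending the monomial attached to $a\in\NN(Q)$ with $\pi_1(a)=j\theta$ to $x^{\pi_2(a)}$. It is well defined because $\pi_2(a)\in\NN^{\Sigma(1)}$ (divisor labels are effective) and $\deg(\pi_2(a))=\pic(\pi_1(a))=jL$ by commutativity of the square preceding the proposition; concretely $\Phi$ is the restriction to $\mathbb V(I_Q)$ of the quiver-to-Cox-ring map $y_a\mapsto x^{\div(a)}$. In degree one the image of $\Phi$ is spanned by $\{x^{b}\mid b\in\pi_2(\pi_1^{-1}(\theta)\cap\NN(Q))\}$; the inclusion $\pi_2(\pi_1^{-1}(\theta)\cap\NN(Q))\subseteq\deg^{-1}(L)\cap\NN^{\Sigma(1)}$ holds automatically, and \eqref{eq:NonNefSurjectivePolytope} is exactly the reverse inclusion, so $\Phi$ is surjective in degree one. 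Equivalently, the restriction map $\Gamma(Y_\theta,\Ocal_{Y_\theta}(1))\to\Gamma(X,L)$ associated to the tautological morphism $X\to Y_\theta$ is surjective.

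Next I would upgrade this to a scheme-theoretic closed immersion. The semigroup inclusion $\pi_2\bigl(\pi_1^{-1}(d\theta)\cap\NN(Q)\bigr)\supseteq d\cdot\pi_2\bigl(\pi_1^{-1}(\theta)\cap\NN(Q)\bigr)$ lets surjectivity of $\Phi$ propagate into degree $d$ for every $d$ with the property that each lattice point of $dP_L$ is a sum of $d$ lattice points of $P_L$; choosing $d\gg 0$ so that, in addition, $L^{\otimes d}$ is projectively normal and $\bigoplus_{j}\CC[\NN(Q)]_{jd\theta}$ is generated in degree one (the latter is possible since $Y_\theta$ is projective, hence this ring is finitely generated), I would obtain a surjection of graded rings generated in degree one whose $\Proj$'s are $Y_\theta$ and $X$ respectively, and hence a closed immersion $X\hookrightarrow Y_\theta$. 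One can also phrase the conclusion via the factorisation $X\to Y_\theta\hookrightarrow\PP$ of the very ample embedding of $X$ given by $L^{\otimes d}$: a morphism out of a proper scheme that becomes a closed immersion after composing with a further morphism is itself a closed immersion. I expect this bridging step --- going from degree-$\theta$ surjectivity of $\Phi$ to an honest closed immersion while controlling the failure of both section rings to be generated in degree one --- to be the main obstacle, and it is precisely the point where I would invoke the Craw--Smith toric-quotient machinery \cite{CrSm} (their Corollary~4.10 and the surrounding results) rather than reprove it.

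Finally, for the remaining assertion, if $\theta$ and $\vartheta$ lie in the same open GIT chamber for the $T$-action on $\mathbb V(I_Q)$ then the $\theta$- and $\vartheta$-semistable loci coincide, so $Y_\theta=Y_\vartheta$, and the tautological bundles $F_i$ on $\Mcal_\theta(Q)$ restrict to the same bundles on $Y_\theta$ because they descend from the chamber-independent $T$-equivariant decomposition $\bigoplus_{i\in Q_0}\Ocal_{\CC^{Q_1}}$. The embedding $X\hookrightarrow Y_\theta$ produced above intertwines the $\Wt(Q)$-grading on the homogeneous coordinates of $Y_\theta$ with the $\Pic(X)$-grading on $S_X$ via $\pic$; since $F_i$ has $\Wt(Q)$-weight $\chi_i$ and $\pic(\chi_i)=L_i$, it follows that $F_i$ restricts to $L_i$ on $X$.
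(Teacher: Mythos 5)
Your proposal is correct and follows essentially the same route as the paper: present $X$ as $\Proj$ of its section ring for the ample $L=\pic(\theta)$ and $Y_\theta$ as $\Proj\bigl(\bigoplus_{j\ge 0}\CC[\NN(Q)]_{j\theta}\bigr)$, observe that $\pi_2$ induces a graded ring homomorphism whose degree-one surjectivity is exactly \eqref{eq:NonNefSurjectivePolytope}, and handle the tautological-bundle claim by the same open GIT-chamber argument together with the Craw--Smith identification of the restricted tautological bundles. The only divergence is that the paper concludes surjectivity of the whole ring map by asserting that the target is generated in its first graded piece (justified there by very ampleness of $L$ on the smooth toric $X$), whereas you pass to a Veronese multiple $d\gg 0$; this is a refinement of the same argument rather than a different proof.
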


\begin{proof}
The morphism $(\pi_2)_*$ is equivariant under the action of $T$ and $\Hom_\ZZ (\Pic(X),\CC^\times)$ on $\mathbb{V}(I_Q)$ and $\CC^{\Sigma(1)}$ respectively as the diagram of lattice maps
\begin{equation}\label{YthetaLattices} \begin{CD}
\ZZ(Q) @>{\pi_1}>> \Wt(Q) \\
@V{\pi_2}VV @VV{\pic}V     \\
\ZZ^{\Sigma(1)} @>>{\deg}> \Pic(X) \\
\end{CD} \end{equation}
commutes, hence we obtain a rational map from $X$ to $Y_\theta$. As $L$ is ample, we have $X = \Proj\left(\bigoplus_{j \geq 0} \CC [x_\rho \mid \rho \in \Sigma_X(1) ]_{jL} \right)$ and so $X$ is a closed subvariety of $Y_\theta$ when the homomorphism of graded rings \[ (\pi_2)_* \colon \bigoplus_{j \geq 0} \CC [\NN (Q)]_{j\theta} \rightarrow \bigoplus_{j \geq 0} \CC [x_\rho \mid \rho \in \Sigma_X(1) ]_{jL}\]
induced from $\pi_2$ is surjective. The bundle $L$ is very ample as $X$ is smooth and toric, so $\bigoplus_{j \geq 0} \CC [x_\rho \mid \rho \in \Sigma_X(1) ]_{jL}$ is generated in the first graded piece and thus it is enough to check surjectivity on this piece, which follows from \eqref{eq:NonNefSurjectivePolytope}. 

Now assume that $\theta$ and $\vartheta$ are in the same open GIT-chamber for the $T$-action on $\mathbb{V}(I_Q)$. Then $Y_\theta \cong Y_\vartheta$, so $X$ is a closed subvariety of $Y_\vartheta$ and by the arguments in the proof of \cite[Theorem 4.15]{CrSm}, the tautological bundles on $\Mcal_\vartheta (Q)$ restrict to the line bundles $L_i$ on $X$. As the chosen weight $\theta'$ varies, the restriction of the tautological line bundles will change if and only if the $\theta'$-stable representations parametrised by points of $\mathbb{V}(I_Q)$ change; as $\theta$ and $\vartheta$ are in the same open GIT-chamber this is not the case, so the tautological bundles on $\Mcal_\theta(Q)$ restrict to the line bundles $L_i$ on $X$.
\end{proof}

\section{Generation of $\Dx$: Resolution of $\Ocal_\Delta$  (Method 2)}

\noindent Section \ref{sect:d1Map} introduced a map of vector bundles $d_1$ and gave methods to determine if the cokernel of $d_1$ is $\Ocal_\Delta$. This section uses $d_1$ to determine if $\Lcal$ generates $\Dx$.  

\subsection{Resolution of $\Ocal_\Delta$}

\noindent Let $X$ be a smooth projective toric variety and $\Lcal = \{L_0, \ldots, L_r\}$ be a collection of line bundles on $X$. For $\mathcal{E} \in \Dcal^b(X \times X)$, denote $\Phi^{\mathcal{E}}(-) := \mathbf{R}(p_1)_*(\mathcal{E} \stackrel{\mathbf{L}}{\otimes} p_2^* (-)) \colon \Dx \rightarrow \Dx$ to be the Fourier-Mukai transform with kernel $\mathcal{E}$.  
\begin{proposition}\label{prop:GenerationResolutionDiagonal}
If there exists an exact sequence of sheaves on $X \times X$ of the form: \[ 0 \rightarrow \Ecal_k \rightarrow \cdots \rightarrow \Ecal_1 \stackrel{d_1}{\rightarrow} \Ecal_0 \rightarrow \Ocal_\Delta \rightarrow 0 \]
where 
\begin{align*}
\Ecal_0 & = \bigoplus_{i \in Q_0} L_i \boxtimes L^{-1}_i, \\
\Ecal_1 & = \bigoplus_{a \in Q_1} L_{\textbf{t}(a)} \boxtimes L^{-1}_{\textbf{h}(a)}
\end{align*}
and\[\Ecal_t = \bigoplus_{L_i, L_j \in \Lcal} L_i^{r_{i,t}} \boxtimes L_j^{-s_{j,t}}, \text{ for }  2 \leq t \leq k \text{ and some fixed } r_{i,t}, s_{j,t} \in \ZZ_{\geq 0},\]
then $\Lcal$ classically generates $\Dx$.
\end{proposition}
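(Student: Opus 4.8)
The plan is to run the classical resolution-of-the-diagonal argument of Be\u\i linson type. The given locally free resolution of $\Ocal_\Delta$ exhibits the identity Fourier--Mukai transform $\Phi^{\Ocal_\Delta}=\id_{\Dx}$ as built, by finitely many shifts and cones, out of the Fourier--Mukai transforms whose kernels are the line-bundle summands $L_i\boxtimes L_j^{-1}$ of the $\Ecal_t$; each such transform carries an arbitrary object of $\Dx$ into $\langle\Lcal\rangle$, and since $\langle\Lcal\rangle$ is a triangulated subcategory this forces every object of $\Dx$ into $\langle\Lcal\rangle$.

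In more detail, I would first recall that $\Phi^{\Ocal_\Delta}(\Fcal)\cong\Fcal$ for all $\Fcal\in\Dx$: writing $\iota\colon\Delta\hookrightarrow X\times X$ for the diagonal, $\Ocal_\Delta=\iota_*\Ocal_X$, and the projection formula together with $p_1\circ\iota=p_2\circ\iota=\id_X$ gives $\mathbf{R}(p_1)_*(\iota_*\Ocal_X\stackrel{\mathbf{L}}{\otimes}p_2^*\Fcal)\cong\Fcal$. Next, the hypothesised exact sequence $0\to\Ecal_k\to\cdots\to\Ecal_0\to\Ocal_\Delta\to 0$ is a resolution by vector bundles, so the bounded complex $\Ecal^\bullet:=[\Ecal_k\to\cdots\to\Ecal_1\to\Ecal_0]$ (with $\Ecal_0$ in degree $0$) is quasi-isomorphic to $\Ocal_\Delta$ in $\Dcal^b(X\times X)$, whence $\Phi^{\Ecal^\bullet}(\Fcal)\cong\Phi^{\Ocal_\Delta}(\Fcal)\cong\Fcal$. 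Fixing $\Fcal\in\Dx$, the functor $\Phi^{(-)}(\Fcal)=\mathbf{R}(p_1)_*\bigl((-)\stackrel{\mathbf{L}}{\otimes}p_2^*\Fcal\bigr)$ is a composite of exact functors of triangulated categories, hence exact in the kernel variable, and applying it to the stupid-truncation triangles of $\Ecal^\bullet$ shows inductively that $\Phi^{\Ecal^\bullet}(\Fcal)$ is obtained from the objects $\Phi^{\Ecal_t}(\Fcal)$, $0\le t\le k$, by finitely many shifts and cones.

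It then suffices to check $\Phi^{\Ecal_t}(\Fcal)\in\langle\Lcal\rangle$ for each $t$, and since $\Ecal_t$ is a finite direct sum of line bundles $L_i\boxtimes L_j^{-1}$ with $L_i,L_j\in\Lcal$ it is enough to treat one such summand. By the projection formula for $p_1$ and flat base change for $\mathbf{R}(p_1)_*p_2^*$ applied to the square $X\times X=X\times_{\Spec\CC}X$,
\[ \Phi^{L_i\boxtimes L_j^{-1}}(\Fcal)=\mathbf{R}(p_1)_*\bigl(p_1^*L_i\otimes p_2^*(L_j^{-1}\stackrel{\mathbf{L}}{\otimes}\Fcal)\bigr)\cong\mathbf{R}\Gamma\bigl(X,L_j^{-1}\stackrel{\mathbf{L}}{\otimes}\Fcal\bigr)\otimes_\CC L_i. \]
Since $X$ is projective and $\Fcal$ has bounded coherent cohomology, $\mathbf{R}\Gamma(X,L_j^{-1}\stackrel{\mathbf{L}}{\otimes}\Fcal)$ is a bounded complex of finite-dimensional $\CC$-vector spaces, hence splits as a finite sum of shifts of $\CC$; therefore $\Phi^{L_i\boxtimes L_j^{-1}}(\Fcal)$ is a finite direct sum of shifts of $L_i$ and lies in $\langle\Lcal\rangle$. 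Combining with the previous paragraph, $\Fcal\cong\Phi^{\Ecal^\bullet}(\Fcal)\in\langle\Lcal\rangle$; as $\Fcal$ was arbitrary, $\langle\Lcal\rangle=\Dx$, i.e.\ $\Lcal$ classically generates $\Dx$.

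The one step deserving care is the stupid-truncation induction: one must argue precisely that if $\Gcal^\bullet$ is a bounded complex of objects of $\Dcal^b(X\times X)$ with $\Phi^{\Gcal^t}(\Fcal)\in\langle\Lcal\rangle$ for every term, then $\Phi^{\Gcal^\bullet}(\Fcal)\in\langle\Lcal\rangle$ — equivalently, that applying the exact functor $\Phi^{(-)}(\Fcal)$ to the brutal-truncation filtration of $\Ecal^\bullet$ realises $\Phi^{\Ecal^\bullet}(\Fcal)$ as an iterated cone of the $\Phi^{\Ecal_t}(\Fcal)[t]$. This is routine but is the place to spell out the induction on $k$ rather than wave at a spectral sequence; everything else is the projection formula, flat base change, and the splitting of bounded complexes of vector spaces over a field.
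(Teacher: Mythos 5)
Your proof is correct and follows essentially the same route as the paper: $\Phi^{\Ocal_\Delta}\cong\id$ by the projection formula, the resolution exhibits $\Fcal$ as an iterated cone of the $\Phi^{\Ecal_t}(\Fcal)$, and $\mathbf{R}(p_1)_*\circ p_2^*(-)\cong \mathbf{R}\Gamma(-)\otimes\Ocal_X$ places each of these in $\langle\Lcal\rangle$; you merely spell out the truncation induction and the splitting of bounded complexes of $\CC$-vector spaces that the paper leaves implicit. The only cosmetic slip is describing the summands of $\Ecal_t$ for $t\ge 2$ as $L_i\boxtimes L_j^{-1}$ rather than $L_i^{r_{i,t}}\boxtimes L_j^{-s_{j,t}}$, which does not affect the argument.
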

\begin{proof}
Assume that we have a resolution of $\Ocal_\Delta$ as given in the proposition. It follows from the projection formula that $\Phi^{\Ocal_\Delta}$ is naturally isomorphic to the identity functor on $\Dx$. Therefore for any object $\Fcal \in \Dx$, the object $\Phi^{\Ocal_\Delta}(\Fcal) \cong \Fcal$ is classically generated by $\{ \Phi^{\Ecal_0}(\Fcal), \ldots, \Phi^{\Ecal_k}(\Fcal) \}$. As $\mathbf{R}(p_1)_* \circ p^*_2 (-) \cong \mathbf{R} \Gamma (-) \otimes \Ocal_X$ \cite[page 86]{Huyb} %flat base change page 86
we have that \[\Phi^{\Ecal_t}(\Fcal) \cong \bigoplus_{L_i, L_j \in \Lcal}\mathbf{R} \Gamma (X, \Fcal \otimes L_j^{-s_{j,t}})\otimes L_i^{r_{i,t}}\] is an object in $\langle \bigoplus_{L_i \in \Lcal} L_i^{r_{i,t}} \rangle$ for all $0 \leq t \leq k$. As $\bigoplus_{L_i \in \Lcal} L_i^{r_{i,t}} \in \langle \Lcal \rangle$ for all $0 \leq t \leq k$, $\Lcal$ classically generates $\Dx$. 
\end{proof}

In order to find a resolution of the diagonal sheaf as in Proposition \ref{prop:GenerationResolutionDiagonal}, we first recall the approach taken by King \cite{King}. For the locally free sheaf $\Tcal = \bigoplus_{L_i \in \Lcal} L_i$ on $X$ such that $\Hom^i_X (\Tcal,\Tcal) = 0$ for $i \neq 0$, define $A := \End(\Tcal)$ and $\Tcal^\vee := \Hom_{\Ocal_X} (\Tcal,\Ocal_X)$.
Note that \[\pi_1^* (\Tcal) = \pi_1^*(\bigoplus_{L_i \in \Lcal} L_i) = \bigoplus_{L_i \in \Lcal}\pi_1^*( L_i)\] and \[\pi_2^*(\Tcal^\vee) = \pi_2^*(\bigoplus_{L_i \in \Lcal} L_i^{-1}) =  \bigoplus_{L_i \in \Lcal}\pi_2^*( L_i^{-1}).\] By Lemma \ref{lem:PathAlgEqualsEndomorphismAlg}, $A$ is isomorphic to $\CC Q/J$ for some quiver with relations $(Q,J)$. The following gives the final part of a minimal projective $A,A$-bimodule resolution of $A$ \cite{King}.

\begin{lemma}\label{lem:QuiverResn}
Let $A = \CC Q/J$ and $\{e_i \ \vert \ i \in Q_0 \}$ be the indecomposable orthogonal idempotents. The following complex of $A,A$-bimodules gives the final part of the minimal projective resolution of A.
\begin{equation}
\bigoplus_{a \in Q_1} Ae_{\textbf{t}(a)} \otimes [a] \otimes e_{\textbf{h}(a)} A \longrightarrow \bigoplus_{i \in Q_0} Ae_i \otimes [i] \otimes e_i A
\end{equation}    
where $[\rho], \ [a]$ and $[i]$ are formal symbols. The map in the sequence is determined by
\begin{center}
$\begin{array}{lll}
e_{\textbf{t}(a)} \otimes [a] \otimes e_{\textbf{h}(a)} &  \mapsto & a \otimes [\textbf{h}(a)] \otimes e_{\textbf{h}(a)} - e_{\textbf{t}(a)} \otimes [\textbf{t}(a)] \otimes a
\end{array}$
\end{center}
and the map onto $A$ is $e_i \otimes [i] \otimes e_i \mapsto e_i$.
\end{lemma}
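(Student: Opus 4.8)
The plan is to verify directly that the displayed two-term complex of $A,A$-bimodules is the tail end of the standard (bar-type) minimal projective bimodule resolution of $A = \CC Q/J$, which is a well-known construction for path algebras of quivers with relations (see Butler--King, or the general theory of Anick/Bardzell resolutions). First I would recall that for any quiver $Q$, the algebra $\CC Q$ has a projective bimodule resolution
\[
0 \longrightarrow \bigoplus_{a \in Q_1} \CC Q\, e_{\tail(a)} \otimes e_{\head(a)} \CC Q \longrightarrow \bigoplus_{i \in Q_0} \CC Q\, e_i \otimes e_i \CC Q \longrightarrow \CC Q \longrightarrow 0,
\]
with the left-hand map sending $e_{\tail(a)} \otimes e_{\head(a)}$ to $a \otimes e_{\head(a)} - e_{\tail(a)} \otimes a$ and the augmentation $e_i \otimes e_i \mapsto e_i$; for the quotient $A = \CC Q/J$ this gets completed on the left by terms indexed by (a minimal generating set of) the relations, and higher syzygies, but the two rightmost terms and the two rightmost maps are exactly the displayed ones after tensoring $\CC Q$ down to $A$. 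So the content of the lemma is just the identification of these last two terms.

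The key steps, in order, are as follows. First, I would check exactness at $A$: the augmentation $\mu\colon \bigoplus_i Ae_i \otimes e_i A \to A$, $e_i\otimes e_i \mapsto e_i$, is surjective because $\sum_i e_i = 1$, and its kernel is precisely the kernel of the multiplication map $A \otimes_{\CC Q_0} A \to A$ (using that $\bigoplus_i Ae_i \otimes e_i A \cong A \otimes_{\CC Q_0} A$, where $\CC Q_0 = \prod_i \CC e_i$ is the semisimple subalgebra on vertices). Second, I would check that the map $d$ in the lemma, $e_{\tail(a)} \otimes [a] \otimes e_{\head(a)} \mapsto a\otimes[\head(a)]\otimes e_{\head(a)} - e_{\tail(a)}\otimes[\tail(a)]\otimes a$, lands in $\ker \mu$: indeed $\mu(d(\cdots)) = a\cdot e_{\head(a)} - e_{\tail(a)}\cdot a = a - a = 0$. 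Third — the crux — I would show that the image of $d$ generates $\ker\mu$ as an $A,A$-bimodule: this uses that $\ker\mu$ is generated as a bimodule by the elements $p\otimes 1 - 1\otimes p$ for $p$ a path (equivalently an arrow $a$), together with the telescoping identity
\[
a_1\cdots a_k \otimes 1 - 1\otimes a_1\cdots a_k = \sum_{j=1}^{k} a_1\cdots a_{j-1}\,(a_j\otimes 1 - 1\otimes a_j)\,a_{j+1}\cdots a_k,
\]
so that $\ker\mu$ is the bimodule generated by the arrows' "commutators". Finally I would note that this generating set of the syzygy is minimal precisely because no arrow lies in $\rad(A)^2 \cdot e \otimes e A + Ae\otimes e\cdot\rad(A)^2$ appropriately — i.e. the number of arrows from $i$ to $j$ is exactly $\dim e_i(\rad A/\rad^2 A)e_j$ — which is the defining property of the quiver of $A$; hence the resolution is minimal at these two steps.

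The main obstacle I expect is the third step, namely proving that the displayed map's image is \emph{all} of $\ker\mu$ and not merely contained in it. The cleanest route is the telescoping identity above combined with the fact that $A$ is generated over $\CC Q_0$ by the arrows, so that any element of $A\otimes_{\CC Q_0} A$ in the kernel of multiplication is an $A,A$-linear combination of the $a\otimes 1 - 1\otimes a$; a slicker alternative is to exhibit an explicit contracting homotopy for the normalized bar complex of $A$ relative to $\CC Q_0$ and truncate, but that requires more bookkeeping. Since the statement only claims this is the \emph{final part} of a minimal resolution, I do not need to produce the higher terms (those indexed by relations in $J$ and beyond, which is exactly what the paper defers to the toric cell complex / CY-$5$ duality machinery); I only need exactness of $\bigoplus_a \to \bigoplus_i \to A \to 0$ together with minimality, and these follow from the three points above. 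I would therefore present the argument as: (i) identify $\bigoplus_i Ae_i\otimes e_iA$ with $A\otimes_{\CC Q_0}A$ and recognize $\mu$ as multiplication; (ii) telescoping identity to generate $\ker\mu$ by arrow-commutators; (iii) minimality from the quiver of $A$ having $\dim e_i(\rad A/\rad^2 A)e_j$ arrows $i\to j$, citing \cite{King} for the full construction.
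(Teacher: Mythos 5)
Your proposal is correct, and in fact it supplies an argument where the paper gives none: the paper simply quotes this statement from \cite{King} (the sentence preceding the lemma defers the proof there), so there is no in-text proof to compare against. Your route is the standard one found in the cited literature: identify $\bigoplus_i Ae_i\otimes e_iA$ with $A\otimes_S A$ for $S=\CC Q_0$, observe that the augmentation is the multiplication map, use the telescoping identity to see that the elements $a\otimes 1-1\otimes a$ for $a\in Q_1$ generate $\ker\mu$ as a bimodule (so the displayed two-term complex together with the augmentation is exact at $\bigoplus_i Ae_i\otimes e_iA$ and at $A$, which is all that ``final part'' requires), and then get minimality from the projective-cover criterion. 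The only point you should make explicit rather than assert is the minimality input: you need the arrows of $Q$ to map to a basis of $\rad A/\rad^2 A$, equivalently that the ideal $J$ contains no relation involving a path of length one. In the present setting this holds because arrows of a quiver of sections are labelled by \emph{irreducible} torus-invariant sections (so no arrow is congruent modulo $J$ to a combination of longer paths or of other arrows), and distinct arrows carry distinct divisors; granting this, your criterion ``number of arrows $i\to j$ equals $\dim e_i(\rad A/\rad^2 A)e_j$'' is exactly the right one, and both $P_0\to A$ and $P_1\to\ker\mu$ are projective covers since the images of the generators land in $\rad A\cdot P+P\cdot\rad A$. With that caveat spelled out, your proof is complete and self-contained, whereas the paper's treatment is by citation only.
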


\noindent Given a projective $A,A$-bimodule resolution $P^\bullet$ of $A$, define $\Tcal \stackrel{\mathbf{L}}{\boxtimes}_A \Tcal^\vee$ to be the object 
\begin{equation}\label{eq:TboxTdualasResolution}
p_1^*(\Tcal) \otimes_A P^\bullet \otimes_A p_2^*(\Tcal^\vee)
\end{equation} in $\Dcal^b(X \times X)$. Using Lemma \ref{lem:QuiverResn}, the final map in this chain complex is the map $d_1$ from \eqref{eq:d1Map}.
 
\begin{lemma}\label{lem:KingsResOfDiag}\cite[Theorem 1.2]{King} If the cokernel of the map $d_1$ in the chain complex $\Tcal \stackrel{\mathbf{L}}{\boxtimes}_A \Tcal^\vee$ is $\Ocal_\Delta$, then  $\Tcal$ is a classical generator of $\Dx$. 
\end{lemma}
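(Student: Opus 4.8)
The plan is to exhibit the chain complex $\Tcal \stackrel{\mathbf{L}}{\boxtimes}_A \Tcal^\vee$ from \eqref{eq:TboxTdualasResolution} as an honest resolution of $\Ocal_\Delta$ of the shape required by Proposition \ref{prop:GenerationResolutionDiagonal}, and then invoke that proposition. First I would fix a minimal projective $A,A$-bimodule resolution $P^\bullet \to A$. Since $A = \CC Q/J$ is a path algebra of a quiver with relations, the bottom two terms of $P^\bullet$ are the ones written out in Lemma \ref{lem:QuiverResn}, namely $\bigoplus_{a \in Q_1} Ae_{\mathbf{t}(a)} \otimes [a] \otimes e_{\mathbf{h}(a)}A \to \bigoplus_{i \in Q_0} Ae_i \otimes [i] \otimes e_i A \to A$, and in general each projective $A,A$-bimodule in $P^\bullet$ is a direct sum of summands of the form $Ae_i \otimes e_j A$. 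Applying the exact functor $p_1^*(\Tcal) \otimes_A (-) \otimes_A p_2^*(\Tcal^\vee)$ to such a summand yields $p_1^*(\Tcal e_i) \otimes p_2^*(e_j \Tcal^\vee) = L_i \boxtimes L_j^{-1}$, because $\Tcal e_i = L_i$ and $e_j\Tcal^\vee = L_j^{-1}$ under the identification of $A$-modules with the collection $\Lcal$. Thus the complex $\Tcal \stackrel{\mathbf{L}}{\boxtimes}_A \Tcal^\vee$ automatically has terms of the form $\Ecal_t = \bigoplus L_i^{r_{i,t}} \boxtimes L_j^{-s_{j,t}}$, and its last two terms are precisely $\Ecal_0 = \bigoplus_{i \in Q_0} L_i \boxtimes L_i^{-1}$ and $\Ecal_1 = \bigoplus_{a \in Q_1} L_{\mathbf{t}(a)} \boxtimes L_{\mathbf{h}(a)}^{-1}$ with the final differential equal to $d_1$ of \eqref{eq:d1Map}, by the formula for the map in Lemma \ref{lem:QuiverResn}.

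Next I would argue exactness. The key point is that $\Tcal$ is a tilting-type sheaf, i.e. $\Hom^i_X(\Tcal,\Tcal) = 0$ for $i \neq 0$ and $\End_X(\Tcal) = A$; under this hypothesis the functor $\Tcal \stackrel{\mathbf{L}}{\boxtimes}_A \Tcal^\vee$ computes the Fourier--Mukai kernel whose transform agrees with the identity up to the right adjunction. More concretely, since $P^\bullet \to A$ is a quasi-isomorphism of $A,A$-bimodules and the functor $p_1^*\Tcal \otimes_A (-) \otimes_A p_2^*\Tcal^\vee$ sends projective bimodules to locally free sheaves on $X \times X$, the only places where exactness can fail are governed by higher $\mathrm{Tor}$'s, which vanish by the $\Ext$-vanishing between the $L_i$'s. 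So $\Tcal \stackrel{\mathbf{L}}{\boxtimes}_A \Tcal^\vee$ has cohomology concentrated in degree $0$, equal to its $0$-th cohomology $\Coker(d_1)$. The hypothesis of the lemma is exactly that $\Coker(d_1) = \Ocal_\Delta$ (which, by Proposition \ref{prop:EmbeddingGivesCokernel}, holds when $X$ embeds in $\Mcal_\theta(Q,J)$ compatibly with the tautological bundles). Hence the whole complex is an exact sequence $0 \to \Ecal_k \to \cdots \to \Ecal_1 \xrightarrow{d_1} \Ecal_0 \to \Ocal_\Delta \to 0$ of the form demanded by Proposition \ref{prop:GenerationResolutionDiagonal}.

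Then I would simply apply Proposition \ref{prop:GenerationResolutionDiagonal}: its conclusion is that $\Lcal$ classically generates $\Dx$, and since $\Tcal = \bigoplus_{L_i \in \Lcal} L_i$ we get that $\langle \Tcal \rangle = \langle \Lcal \rangle = \Dx$, i.e. $\Tcal$ is a classical generator, which is the assertion of Lemma \ref{lem:KingsResOfDiag}.

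I expect the main obstacle to be the verification that $\Tcal \stackrel{\mathbf{L}}{\boxtimes}_A \Tcal^\vee$ is genuinely a (bounded, left-)resolution rather than merely a complex with $\Ocal_\Delta$ in degree zero --- that is, controlling the higher cohomology of the complex so that it really is an exact sequence of sheaves. This is where one must use that the bimodule resolution $P^\bullet$ is finite (which follows from $A$ having finite global dimension, $A$ being derived equivalent to $\Dx$ for a smooth $X$) and that applying $p_1^*\Tcal \otimes_A (-) \otimes_A p_2^*\Tcal^\vee$ is exact on the relevant complexes because the tilting hypothesis forces the relevant derived tensor products to have no higher terms. Everything else --- identifying the terms $\Ecal_t$, recognizing $d_1$ as the final differential, and citing Proposition \ref{prop:GenerationResolutionDiagonal} --- is bookkeeping once this exactness is in hand.
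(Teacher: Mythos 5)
Your identification of the terms of $\Tcal \stackrel{\mathbf{L}}{\boxtimes}_A \Tcal^\vee$, of $d_1$ as its last differential, and your final appeal to Proposition \ref{prop:GenerationResolutionDiagonal} are all fine, but the step you yourself flag as the main obstacle is a genuine gap, and your proposed fix does not close it. The higher cohomology sheaves of $\Tcal \stackrel{\mathbf{L}}{\boxtimes}_A \Tcal^\vee$ are the sheaves $\operatorname{Tor}_i^A\bigl(p_1^*(\Tcal), p_2^*(\Tcal^\vee)\bigr)$ for $i>0$, i.e.\ Tor groups over the algebra $A$, and the strong exceptionality hypothesis $\Ext^{>0}_X(L_i,L_j)=0$ gives no control over them: fibrewise at $(x_1,x_2)\in X\times X$ they compute (duals of) $\Ext^{>0}_A$ between the one-dimensional-per-vertex quiver representations attached to $x_1$ and $x_2$, which have no reason to vanish off the diagonal, and $p_1^*(\Tcal)$ is not flat over $A$. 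Your fallback, that ``the tilting hypothesis forces the relevant derived tensor products to have no higher terms,'' is circular: that $\Tcal$ is tilting (i.e.\ also a generator) is precisely the conclusion of Lemma \ref{lem:KingsResOfDiag}. A posteriori, once generation is known, $\Phi^{\Tcal \stackrel{\mathbf{L}}{\boxtimes}_A \Tcal^\vee}$ is isomorphic to the identity and uniqueness of Fourier--Mukai kernels yields $\Tcal \stackrel{\mathbf{L}}{\boxtimes}_A \Tcal^\vee \simeq \Ocal_\Delta$, but you cannot use this in the forward direction; the hypothesis of the lemma only controls $H^0$, namely $\Coker(d_1)\cong\Ocal_\Delta$. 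Indeed, if the exactness you assert were automatic, the computational verification in Section \ref{subsect:Method 2} that the explicit complexes are exact up to saturation by $B_{X\times X}$, and the conjecture that $S^\bullet$ is quasi-isomorphic to $\Tcal \stackrel{\mathbf{L}}{\boxtimes}_A \Tcal^\vee$, would be largely superfluous.

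Note also that the paper does not prove this statement at all: it is quoted from \cite{King}, and in its own arguments the paper deliberately sidesteps the issue you ran into. Method 2 never claims that $\Tcal \stackrel{\mathbf{L}}{\boxtimes}_A \Tcal^\vee$ is a resolution of $\Ocal_\Delta$; instead it constructs an explicit complex $S^\bullet$ whose last two terms and differential agree with $\Ecal_1 \stackrel{d_1}{\to} \Ecal_0$, checks its exactness directly, identifies the cokernel with $\Ocal_\Delta$ via the moduli embedding (Propositions \ref{prop:EmbeddingGivesCokernel}, \ref{prop:SumPolytopeNefEmbedding}, \ref{prop:NonNefEmbedding}), and only then applies Proposition \ref{prop:GenerationResolutionDiagonal}. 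A correct proof of Lemma \ref{lem:KingsResOfDiag} itself must extract generation from the single hypothesis $\Coker(d_1)\cong\Ocal_\Delta$ (as in King's original argument), for instance by analysing the cone of the induced map of kernels $\Tcal \stackrel{\mathbf{L}}{\boxtimes}_A \Tcal^\vee \to \Ocal_\Delta$ on objects of $\langle\Tcal\rangle^\perp$, rather than by assuming the quasi-isomorphism $\Tcal \stackrel{\mathbf{L}}{\boxtimes}_A \Tcal^\vee \simeq \Ocal_\Delta$.
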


Although the final part of the minimal projective $A,A$-bimodule resolution is given by Lemma \ref{lem:QuiverResn}, the full resolution is not known in general and so one cannot compute $\Tcal \stackrel{\mathbf{L}}{\boxtimes}_A \Tcal^\vee$. What we do instead is guess what the resolution of $A$ is and then consider the sheafified version of the resolution as a chain complex $S^\bullet$
\begin{equation}\label{eq:ExactSeqClSMod}
0 \rightarrow S_k \rightarrow \cdots \rightarrow S_2 \rightarrow S_1 \stackrel{d_1}{\rightarrow} S_0
\end{equation} of $\Pic(X \times X)$-graded $S_{X \times X}$-modules, where $S_0$ is the $S_{X \times X}$ module corresponding to \[\bigoplus_{i \in Q_0} L_i \boxtimes L^{-1}_i\]and $S_1$ is the $S_{X \times X}$ module corresponding to \[\bigoplus_{a \in Q_1} L_{\textbf{t}(a)} \boxtimes L^{-1}_{\textbf{h}(a)}.\]If $S^\bullet$ is exact up to saturation by the irrelevant ideal $B_{X \times X}$ then it determines an exact sequence of sheaves on $X \times X$ by \eqref{eq:FunctorGradedSModulesQCohSheaves}.

We guess the construction of $S^\bullet$ by using the concept of the \emph{toric cell complex} introduced by Craw--Quintero-V\'{e}lez \cite{CrQV}. This is a combinatorial geometric structure that encodes the minimal projective bimodule resolution for certain classes of algebras; in particular, Calabi-Yau algebras in dimension 3 obtained from consistent dimer models. Given a collection $\Ecal = \{E_0, \ldots, E_r\}$ of rank one reflexive sheaves on a Gorenstein affine toric variety $Y$, the associated \emph{toric algebra} is $\End(\bigoplus_{i = 0}^r E_i)$. Craw--Quintero-V\'{e}lez state the following conjecture for consistent toric algebras:

\begin{conjecture}\cite[Conjecture 6.4]{CrQV}
Assume that the toric algebra associated to $\Ecal$ is consistent. If the global dimension of the algebra equals the dimension of $Y$, then the toric cell complex exists and is constructed as in \cite{CrQV}, from which the minimal projective bimodule resolution of the toric algebra can be recovered.
\end{conjecture}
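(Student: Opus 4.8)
The plan is to separate the conjecture into two parts: (A) produce the candidate toric cell complex $\Ccal$ as a regular CW complex and identify the topological space it lives on, and (B) verify that the complex of projective $A,A$-bimodules attached to $\Ccal$ is the minimal projective resolution of $A \cong \CC Q/J$, where $(Q,J)$ is the quiver of sections of $\Ecal$ (Lemma~\ref{lem:PathAlgEqualsEndomorphismAlg}).

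For (A), I would build the cells from data already available: $0$-cells indexed by $Q_0$, $1$-cells by $Q_1$, $2$-cells by the minimal relations generating $J$, and in general the $k$-cells recording $k$-fold syzygies among the labelling monomials $x^{\div(p)}$, with a distinguished family of top-dimensional $n$-cells coming from the Gorenstein structure of $Y$. The crucial claim is that consistency of the toric algebra is exactly the hypothesis that makes these cells glue into a genuine CW complex: lifting along the incidence sequence \eqref{eq:IncidenceExactSequence} to the universal abelian cover — a $\ZZ^n$-cover indexed by the lattice $\widetilde M$ — the lifted cells should tile $\RR^n$, with $\widetilde M \cong \ZZ^n$ acting freely by deck transformations, so that the quotient is a CW structure on the real $n$-torus $\RR^n/\widetilde M$. (In the rolled-up CY$5$ examples of the paper this recovers the predicted five-dimensional torus.) Here consistency plays the role that algebraic/geometric consistency of a dimer plays when $n = 3$: it excludes the degeneracies — boundaries that fail to match, or ``short'' relations producing cells of the wrong dimension — that would violate the CW axioms.

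For (B), I would use that under the hypotheses ($Y$ Gorenstein affine toric, global dimension of $A$ equal to $\dim Y = n$, $A$ consistent) $A$ should be a twisted Calabi--Yau algebra of dimension $n$ — the statement playing the role of Broomhead's theorem on consistent dimers for $n = 3$. Twisted CY-ness gives a self-duality $P_k \cong {}^\vee\!P_{n-k}$ of any minimal projective bimodule resolution, up to the Nakayama twist, which is precisely the duality a cell complex on a torus carries (Poincar\'e duality of $(S^1)^n$). The terms $P_0 = \bigoplus_{i\in Q_0} Ae_i \otimes e_i A$ and $P_1 = \bigoplus_{a\in Q_1} Ae_{\textbf{t}(a)}\otimes e_{\textbf{h}(a)}A$ are given by Lemma~\ref{lem:QuiverResn} and the map $d_1$ of \eqref{eq:d1Map}, so duality pins down $P_{n-1}$ and $P_n$; the cell complex is designed to interpolate, and the remaining content is that its middle bimodules assemble into an exact complex. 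I would prove exactness by a graded local-to-global argument: filter by the $\ZZ^{\Sigma(1)}$-grading induced by $\div$, and check that each graded component computes the reduced cellular homology of a subcomplex of the contractible cover $\RR^n$, which vanishes except in the degree producing $A$ — this is the ``cellular resolution'' criterion of \cite{CrQV}, the noncommutative counterpart of the Bayer--Sturmfels acyclicity test for monomial resolutions. Minimality is automatic since every differential of $\Ccal$ lands in the arrow ideal, and the length of the resolution is then $n = \operatorname{gldim} A$, as required.

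The hard part will be (A): actually writing down the higher cells and proving the glued object is a closed manifold, in fact a torus. For $n = 3$ this rests on the mature theory of consistent dimer models; for $n \geq 4$ there is no comparable theory, so the real work is to show that ``consistent, with global dimension equal to $\dim Y$'' already forces the correct geometry — or to isolate the additional non-degeneracy input that does. Everything in (B) downstream of the torus claim (in particular the homology vanishing) collapses if the candidate complex is not CW, or is CW but not homeomorphic to $(S^1)^n$; so the conjecture stands or falls with that topological statement, which is its crux.
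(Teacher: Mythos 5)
This statement is an open conjecture quoted from \cite{CrQV}; the paper you are reading does not prove it and does not claim to. It only uses the conjectural toric cell complex heuristically, to \emph{guess} a complex $S^\bullet$ of $S_{X\times X}$-modules, and then verifies exactness and the identification of the cokernel with $\Ocal_\Delta$ case-by-case by explicit computation (saturation by $B_{X\times X}$, the quiver-moduli embeddings of Propositions \ref{prop:SumPolytopeNefEmbedding} and \ref{prop:NonNefEmbedding}, and Proposition \ref{prop:EmbeddingGivesCokernel}); it then records its own, still conjectural, statement about the cellular resolution. So there is no paper proof to compare against, and what you have written is a strategy outline rather than a proof: both of its pivotal claims are exactly the open content. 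For (A), no argument is given that consistency together with $\operatorname{gldim} = \dim Y$ forces the lifted cells to tile the universal cover and descend to a CW structure on a real torus once $\dim Y \geq 4$; in dimension $3$ this rests on the theory of consistent dimer models, and you acknowledge that no comparable theory exists in higher dimension, which is precisely why the statement is a conjecture. For (B), the assertion that such an algebra ``should be'' twisted Calabi--Yau of dimension $\dim Y$ is the higher-dimensional analogue of Broomhead's theorem and is likewise unproven; invoking it transfers the difficulty rather than resolving it.

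Two further points would need attention even if (A) and (B) were granted. First, a bookkeeping issue: in the paper's setting the toric algebra is $B = \End(\bigoplus_{L \in \Lcal_Y} L)$ on $Y = \tot(\omega_X)$, so the relevant lattice is $M'$ of rank $n+1$ and the expected support is $\mathbb{T}^{n+1} = \RR^{n+1}/M'$ (a five-torus for fourfolds), not $\RR^n/\widetilde M$ as in your sketch. Second, and more substantively, Remark \ref{rem:cellularMapsMultipleAppearances} shows that already in the fourfold examples a cell $P'$ can appear more than once inside a higher cell $P$, so the incidence-function formalism of \cite{CrQV} that your ``minimality is automatic'' and homology-vanishing steps implicitly rely on does not apply as stated; the paper has to redefine the differentials $\partial_{P'}P$ and insert signs by hand, and exactness is then checked computationally rather than deduced from a Bayer--Sturmfels-type acyclicity criterion. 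Any genuine proof of the conjecture would have to address this failure of the original cellular framework, not just the torus claim.
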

\noindent Although the endomorphism algebra of a tilting bundle $\Tcal$ on a toric Fano variety $X$ is not Calabi-Yau, the pullback $\pi^*(\Tcal)$ on the total space $\tot(\omega_X)$ of the canonical bundle is, so we guess the resolution of this bundle on $\tot(\omega_X)$ and then restrict it to $X$.
  
In what follows, we define a combinatorial method to guess the resolution of the diagonal sheaf by $\Lcal$ based on the construction in \cite{CrQV}. Although the calculations are lengthy and tedious, many of the steps can be achieved using a computer algorithm, the results of which are contained in \cite{Prna1}.

\subsection{The Toric Cell Complex}\label{sect:ToricCellComplex}

For a smooth $n$-dimensional Fano toric variety $X$, set $Y := \tot(\omega_X)$ to be the total space of the canonical bundle on $X$. A collection of line bundles $\Lcal$ on $X$ defines a collection of line bundles $\Lcal_Y$ on $Y$ by pulling back along $\tot(\omega_X) \rightarrow X$. The Picard lattice $\Pic(Y)$ is isomorphic to the quotient of $\Pic(X)$ by the subgroup generated by $\omega_X$. Let $Q'$ be the quiver of sections associated to $\Lcal_Y$ and $B = \End(\bigoplus_{L \in \Lcal_Y} L)$. The quiver $Q'$ is cyclic and naturally embeds into $\Pic(Y)_\RR$. As $Y$ is a toric variety, it has a fan $\Sigma'$ and we have the exact sequence 
\[ \begin{CD} 
0 @>>> M' @>>> \ZZ^{\Sigma'(1)} @>{\deg}>> \Pic(Y) @>>> 0. \end{CD} \]

\begin{definition}
Let $Q'$ be the quiver above. Define $\tilde{Q'_0}$ to be the set $\bigoplus_{i \in Q'_0} \deg^{-1} (i) \subset \ZZ^{\Sigma(1)}$ and for every arrow $a \in Q'_1$ from $i$ to $j$ and each vertex $u \in \deg^{-1} (i)$, define the arrow $\tilde{a}$ in the set $\tilde{Q'_1}$ to be the arrow from $u$ to $u + \div(a) \in \deg^{-1}(j)$. The \emph{covering quiver} $\tilde{Q'}$ is the quiver in $\ZZ^{\Sigma(1)}$ with vertex set $\tilde{Q'_0}$ and arrow set $\tilde{Q'_1}$.   
\end{definition}

The projection $f\colon \RR^{\Sigma'(1)} \rightarrow M'_\RR \cong \RR^{n+1}$ restricts to $f\vert_{\ZZ^{\Sigma'(1)}} \colon \ZZ^{\Sigma'(1)} \rightarrow \RR^{n+1}$ and so we have the diagram
\[ \begin{CD} 
0 @>>> M' @>>> \ZZ^{\Sigma'(1)} @>{\deg}>> \Pic(Y) @>>> 0
\\
@.   @|            @VV{f\vert_{\ZZ^{\Sigma'(1)}}}V   @.     @.          
\\
0 @>>> M' @>>> \RR^{n+1}  @>>> \mathbb{T}^{n+1}   @>>> 0 \\
\end{CD} \]
where $\mathbb{T}^{n+1} := \RR^{n+1} / M'$ is a real $(n+1)$-torus.   

If $\Lcal$ is a full strong exceptional collection, Craw--Quintero-V\'{e}lez \cite[Conjecture 6.4]{CrQV} conjecture that the image of the arrows $a \in \tilde{Q'}$ in $\mathbb{T}^{n+1}$ under the map $f$ decomposes $\mathbb{T}^{n+1}$ into a toric cell complex, comprising of \emph{$k$-cells} for $0 \leq k \leq n+1$. The minimal $B,B$-bimodule projective resolution of $B$ that is expected to be encoded by the toric cell complex has maps determined by differentiating $k$-cells with respect to $(k-1)$-cells, for $1 \leq k \leq n+1$. To any cell $\eta$ in the toric cell complex, there is a well-defined divisor $\div(\eta)$ and monomial $x^{\div(\eta)} \in S_{Y}$ associated to it. We attempt to construct the exact sequence \eqref{eq:ExactSeqClSMod} by considering what the image in $S_{Y \times Y}$ is of the maps determined by cell differentiation.

An \emph{anticanonical cycle} in $Q'$ is a path $p$ such that $x^{\div(p)} = \prod_{\rho \in \Sigma'(1)} x_\rho$. Define $W$ to be the sum of all anticanonical cycles in $Q'$. For two paths $p$ and $q$ in $Q'$, the partial left derivative of $p$ with respect to $q$ is
\[ \partial_q p := 
\begin{cases}
r & \text{if } p = rq, \\
0 & \text{otherwise}
\end{cases}\] which can be extended by $\CC$-linearity to determine partial derivatives in $\CC Q'$. Let 
\[\mathcal{P} := \left\lbrace
q \in Q' \phantom{a} 
\begin{array}{|c}
\partial_q W \text{ is the sum of precisely two paths} \\ 
\text{that share neither initial or final arrow}
\end{array}
\right\rbrace \]
and
\[\mathcal{J} := \{ (p^+,p^-) \mid p^\pm \in \CC Q', \exists q \in \mathcal{P} \text{ such that } \partial_qW = p^+ + p^- \}. \] 
Assume now that the dimension of $X$ is $4$. We define the following sets:
\begin{center}
$\begin{array}{lcr}
\Gamma'_0:= \tilde{Q'_0},
& \Gamma'_1:= \tilde{Q'_1},
& \Gamma'_2:= \mathcal{J}.
\end{array}$
\end{center}
For 
\begin{itemize}
\item  $(p^+,p^-)  \in \Gamma'_2$, define $D_{p^+p^-} := \{ p \text{ a path in } \tilde{Q} \mid p \text{ is a summand in } \partial_{p^+} W \text{ or } \partial_{p^-} W\}$,
\item $a  \in \Gamma'_1$, define $D_{a} := \{ p \text{ a path in } \tilde{Q} \mid p \text{ is a summand in } \partial_{a} W \}$,
\item $i  \in \Gamma'_0$, define $D_{i} := \{ p \text{ a path in } \tilde{Q} \mid p \text{ is a summand in } \partial_{e_i} W \}$.
\end{itemize}   
Then let
\begin{center}
$\begin{array}{lcr}
\Gamma'_3:= \{ D_{p^+p^-} \mid (p^+,p^-) \in \Gamma'_2\}, 
& \Gamma'_4:= \{ D_{a} \mid a \in \Gamma'_1\},
& \Gamma'_5:= \{ D_i \mid i \in \Gamma'_0\}.  
\end{array}$
\end{center}
\begin{remark}\label{rem:DualToricCells}
A set of paths $P \in \Gamma'_k$ is expected to be the $1$-skeleton contained in a $k$-cell in the toric cell complex for $\tilde{Q}'$, if the toric cell complex exists. The construction of $\Gamma'_3, \ \Gamma'_4$ and $\Gamma'_5$ follow from the conjecture on duality between $k$-cells and $(n-k)$-cells in \cite[Conjecture 6.5]{CrQV}. For brevity we will therefore refer to $P$ as a \emph{$k$-cell}.
\end{remark}

Let $P \in \Gamma'_k$ for $1 \leq k \leq 5$ and $p \in P$ be a path. We define the \emph{head}, \emph{tail} and \emph{label} of $P$ as $\mathbf{h}(P) := \mathbf{h}(p) \in \Gamma'_0$, $\mathbf{t}(P) := \mathbf{t}(p) \in \Gamma'_0$ and $\div(P) := \div(p)$, and note that the definitions do not depend on our choice of $p$. For $P' \in \Gamma'_{k-i}, P \in \Gamma'_k,\ 0 \leq i < k \leq 5$ we write $P' \subset P$ if for every path $p \in P'$, there is a path $q \in P$ such that $p\subset q$. If $P' \in \Gamma'_{k-1}, P \in \Gamma'_k$ and $P' \subset P$, then a path $q \in P$ containing a path $p \in P'$ defines a monomial $\overleftarrow{\partial}_{p}q = x^{\div(p')} \in \CC[x_0, \ldots, x_d] \cong S_Y$ given by the label of the subpath $p' \subset p$ from $\mathbf{t}(P)$ to $\mathbf{t}(P')$, and a monomial $\overrightarrow{\partial}_{p}q = w^{\div(p'')} \in \CC[w_0, \ldots, w_d] \cong S_Y$ given by the label of the subpath $p'' \subset p$ from $\mathbf{h}(P')$ to $\mathbf{h}(P)$. Let $\mathcal{R}_{P',P}$ be the set of equivalence classes \[ \{[(p,q)] \mid (p,q) \in P' \times P,\ p \subset q \} \] where \[ (p,q) \sim (p',q') \Leftrightarrow \overleftarrow{\partial}_{p}q = \overleftarrow{\partial}_{p'}q', \] and note that \[ \overleftarrow{\partial}_{p}q = \overleftarrow{\partial}_{p'}q' \Leftrightarrow \overrightarrow{\partial}_{p}q = \overrightarrow{\partial}_{p'}q'. \] For $P' \in \Gamma'_{k-1}$ and $P \in \Gamma'_k$, define
\[\partial_{P'}P := \sum_{[(p,q)] \in \mathcal{R}_{P',P}} (\overleftarrow{\partial}_{p}q, -\overrightarrow{\partial}_{p}q) \in S_Y  \times S_Y  \cong S_{Y \times Y} \] if $P' \subset P$ and $0$ otherwise. The definition of $\partial_{P'}P$ does not depend on the choice of representatives for the equivalence classes in $\mathcal{R}_{P',P}$. We now have the maps
\[
d'_k := (\partial_{P'}P)_{\left\lbrace \substack{P' \in \Gamma'_{k-1} \\ P \in \Gamma'_k} \right\rbrace} \colon  (S_{Y \times Y})^{\vert \Gamma'_k \vert}  \longrightarrow  (S_{Y \times Y})^{\vert \Gamma'_{k-1} \vert}, \ \ 0 < k \leq 5,\]\[
 \mathbf{e}_P   \mapsto \bigoplus_{P' \in \Gamma'_{k-1}} (\partial_{P'}P)\mathbf{e}_{P'}. 
\]

\begin{remark}\label{rem:cellularMapsMultipleAppearances}
The derivatives $\partial_{P'}P$ are defined differently to how they are defined in \cite{CrQV} as a cell $P'$ may `appear' more than once in $P$ (see Example \ref{ex:E1FanoFourfoldGeneration}). Consequently, the property \cite[(4.3)]{CrQV} does not hold and we do not immediately obtain an incidence function $\varepsilon$ (see \cite[(4.4)]{CrQV}) that determines signs in the differentiations of cells.
\end{remark}

The construction above can be restricted to the Fano $X$ as follows. For any cone $\sigma \subset \Sigma_X \subset N_\RR$, define 
\[\sigma' := \Cone ((0,1),(u_\rho,1) \mid u_\rho \in \sigma(1)) \subset N_\RR \times \RR.\]
The fan $\Sigma' \subset N_\RR \times \RR$ that has cones given by $\sigma'$ for all $\sigma \subset \Sigma_X$ is the fan for $Y$ \cite[Proposition 7.3.1]{CoLiSc}. The toric divisors for $X$ are in one-to-one correspondence with the divisors for $Y$ minus the divisor determined by the ray with ray generator $(0,1)$, which we label $\rho_{\tot}$. Define the subsets $\Gamma_k := \{P \in \Gamma'_k \mid \text{for all } p \in P, x^{\rho_{\tot}} \nmid x^{\div(p)} \} \subset \Gamma'_k, \ 0 \leq k \leq 5$. Then the maps $d'_k$ restrict to
\begin{equation}\label{eq:CellularMapsOnFano}
d_k := (\partial_{P'}P)_{\left\lbrace \substack{P' \in \Gamma_{k-1} \\ P \in \Gamma_k} \right\rbrace} \colon (S_{X \times X})^{\vert \Gamma_k \vert} \longrightarrow (S_{X \times X})^{\vert \Gamma_{k-1} \vert}, \ \ 0 < k \leq 4,
\end{equation}\[
 \mathbf{e}_P   \mapsto \bigoplus_{P' \in \Gamma_{k-1}} (\partial_{P'}P)\mathbf{e}_{P'}. 
\] 
The $(S_{X \times X})$-modules $(S_{X \times X})^{\vert \Gamma_0 \vert}$ and $(S_{X \times X})^{\vert \Gamma_1 \vert}$ are graded as follows: for $i \in \Gamma_0$ and $a \in \Gamma_1$, let $S_{X \times X}^i \subset (S_{X \times X})^{\vert \Gamma_0 \vert}$ be given by $S_{X \times X}(L_i,L_i^{-1})$ and $S_{X \times X}^a \subset (S_{X \times X})^{\vert \Gamma_1 \vert}$ be given by $S_{X \times X}(L_{\mathbf{t}(a)},L_{\mathbf{h}(a)}^{-1})$. Then $(S_{X \times X})^{\vert \Gamma_0 \vert}$ and $(S_{X \times X})^{\vert \Gamma_1 \vert}$ correspond to the bundles $\bigoplus_{i \in Q_0} L_i \boxtimes L^{-1}_i$ and $\bigoplus_{a \in Q_1} L_{\textbf{t}(a)} \boxtimes L^{-1}_{\textbf{h}(a)}$ respectively, so the map $d_1$ in \eqref{eq:CellularMapsOnFano} is the map given in \eqref{eq:d1Map}. Similarly for $2 \leq k \leq 4$, the modules $(S_{X \times X})^{\vert \Gamma_k \vert}$ are graded so that they correspond to $\bigoplus_{L_i, L_j \in \Lcal} L_i^{r_{i,k}} \boxtimes L_j^{-s_{j,k}}$, for some fixed $r_{i,k}, s_{j,k} \in \ZZ_{\geq 0}$. We attempt to add signs to entries in the maps $d_k$ for $2 \leq k \leq 4$ so that we get a $\Pic(X \times X)$-graded chain complex of $S_{X \times X}$-modules
\begin{equation}
0 \longrightarrow (S_{X \times X})^{\vert \Gamma_4 \vert} \stackrel{d_4}{\longrightarrow} (S_{X \times X})^{\vert \Gamma_3 \vert} \stackrel{d_3}{\longrightarrow} (S_{X \times X})^{\vert \Gamma_2 \vert} \stackrel{d_2}{\longrightarrow} (S_{X \times X})^{\vert \Gamma_1 \vert} \stackrel{d_1}{\longrightarrow} (S_{X \times X})^{\vert \Gamma_0 \vert}
\end{equation} 

\noindent It then needs to be checked that the chain complex is exact up to saturation by the irrelevant ideal $B_{X\times X}$ in order to show that the chain complex determines an exact sequence of sheaves on $X \times X$.

\begin{remark}
A similar construction can be obtained for a smooth toric Fano threefold $X$. In this case, $\tot(\omega_X)$ is of dimension $4$, and so the $4$-cells in $\Gamma'_4$ are given as the dual cells to the $0$-cells in $\Gamma'_0$, the $3$-cells in $\Gamma'_3$ are computed from the $1$-cells in $\Gamma'_1$ and the set of $2$-cells is self-dual (see Remark \ref{rem:DualToricCells}).  
\end{remark} 

Using the method above and for the database of full strong exceptional collections of line bundles in \cite{Prna1}, the exact sequence of sheaves $S^\bullet$ from \eqref{eq:ExactSeqClSMod} has been computed for all smooth toric Fano threefolds and $88$ of the $124$ smooth toric Fano fourfolds. These exact sequences are contained in a database in \cite{Prna1}, and the fact that they can be computed leads to the following conjecture:

\begin{conjecture}
Let $X$ be a smooth toric Fano threefold or one of the $88$ smooth toric Fano fourfolds such that the given full strong exceptional collection $\Lcal$ in the database \cite{Prna1} has a corresponding exact sequence of sheaves $S^\bullet \in \Dcal^b(X \times X)$. Let $B$ denote the rolled up helix algebra of $A$. Then the toric cell complex of $B$ exists and is supported on a real five-dimensional torus. Moreover,
\begin{itemize}
\item the cellular resolution exists in the sense of \cite{CrQV}, thereby producing the minimal projective bimodule resolution of $B$;
\item the object $S^\bullet$ is quasi-isomorphic to $\Tcal \stackrel{\mathbf{L}}{\boxtimes}_A \Tcal^\vee \in \Dcal^b(X \times X)$, for $\Tcal := \bigoplus_{L \in \Lcal} L$. 
\end{itemize}
\end{conjecture}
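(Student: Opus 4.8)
Here is a plan for establishing the conjecture.

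\medskip

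The plan is to verify the conjecture of Craw--Quintero-V\'elez \cite[Conjectures 6.4 and 6.5]{CrQV} for the rolled-up helix algebra $B$, and then to transport the conclusion from $Y:=\tot(\omega_X)$ back to $X$ by restriction along the zero section. To begin, the tilting bundle constructed on $Y$ identifies $B=\End(\bigoplus_{L\in\Lcal}\pi^*L)$ with $\CC Q'/J'$ (Lemma~\ref{lem:PathAlgEqualsEndomorphismAlg}) and gives $\Dcal^b(B)\simeq\Dcal^b(Y)$; since $Y$ is a smooth quasi-projective Calabi--Yau variety of dimension $n+1$ and $\pi^*\Tcal$ is a tilting bundle, $B$ is a Calabi--Yau algebra of dimension $n+1$, hence of global dimension $n+1$. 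One then has to check that $B$ is consistent in the sense of \cite{CrQV} --- which should follow formally from the presence of the tilting bundle --- and that $J'$ is generated by the partial derivatives of the superpotential $W$ (the sum of the anticanonical cycles of $Q'$); equivalently, that the set $\mathcal J$ of Section~\ref{sect:ToricCellComplex} is a minimal generating set of $J'$, so that $\Gamma'_2$ really records the second syzygies of $B$.

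The heart of the argument is to prove that the images under $f$ of the arrows of the covering quiver $\tilde{Q'}$ decompose the real torus $\mathbb{T}^{n+1}=\RR^{n+1}/M'$ (of dimension five in the fourfold case) into a regular CW complex whose $k$-cells have $1$-skeleta exactly the sets in $\Gamma'_k$, and that the associated cellular complex $(d'_k)$, after a suitable choice of signs, is the minimal projective $B,B$-bimodule resolution of $B$. Because $B$ is Calabi--Yau of dimension $n+1$, this resolution is self-dual of length $n+1$ with first two terms $\bigoplus_i Be_i\otimes e_iB$ and $\bigoplus_{a}Be_{\mathbf{t}(a)}\otimes e_{\mathbf{h}(a)}B$ (Lemma~\ref{lem:QuiverResn}) and, dually, prescribed last two terms; it therefore suffices to identify the middle term(s) --- the relations term $\Gamma'_2$ when $n=3$, and in addition $\Gamma'_3$ when $n=4$ --- and then to establish exactness. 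Exactness can be attacked locally on $\tilde{Q'}$: near a vertex $u\in\tilde{Q'_0}$ the incident cells should be governed by the smooth cone $\sigma'$ through which $u$ lies, so that the local contribution of $(d'_k)$ is a Koszul-type complex of a smooth affine chart and hence exact, and one glues these local statements over $\mathbb{T}^{n+1}$; minimality is automatic since every non-zero entry of $d'_k$ is a non-constant monomial.

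To descend to $X$, note that the zero section $i\colon X\hookrightarrow Y$ satisfies $i^*\pi^*\Tcal\cong\Tcal$ and that $X\times X\hookrightarrow Y\times Y$ is a regular codimension-two subvariety meeting $\Delta_Y$ properly, so $\mathbf{L}i^*_{X\times X}\Ocal_{\Delta_Y}\cong\Ocal_{\Delta_X}$. Restricting the bimodule resolution of the previous step along $i$ kills precisely the cells whose label is divisible by $x^{\rho_{\tot}}$, i.e.\ yields the maps $d_k$ of \eqref{eq:CellularMapsOnFano} and a projective $A,A$-bimodule resolution $P^\bullet_A$ of $A$, whose sheafification $p_1^*\Tcal\otimes_A P^\bullet_A\otimes_A p_2^*\Tcal^\vee$ is, comparing gradings as in Section~\ref{sect:ToricCellComplex}, exactly the complex $S^\bullet$ of \eqref{eq:ExactSeqClSMod}; that is, $S^\bullet=\Tcal\stackrel{\mathbf{L}}{\boxtimes}_A\Tcal^\vee$ as in \eqref{eq:TboxTdualasResolution}. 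Since $\pi^*\Tcal\stackrel{\mathbf{L}}{\boxtimes}_B(\pi^*\Tcal)^\vee\simeq\Ocal_{\Delta_Y}$ for the tilting bundle $\pi^*\Tcal$, restriction gives $S^\bullet\simeq\Ocal_{\Delta_X}$; alternatively one argues directly that $\Coker d_1\cong\Ocal_\Delta$ by Proposition~\ref{prop:EmbeddingGivesCokernel} (the required closed immersion into $\Mcal_\theta(Q,J)$ being supplied, for the database collections, by Proposition~\ref{prop:SumPolytopeNefEmbedding} or \ref{prop:NonNefEmbedding}) while the higher terms are exact because $P^\bullet_A$ is a resolution and the $p_i^*$ are exact. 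Combined with Lemma~\ref{lem:KingsResOfDiag} and Proposition~\ref{prop:GenerationResolutionDiagonal}, this would yield all three assertions of the conjecture.

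The main obstacle is the second step, which is exactly the still-open conjecture of \cite{CrQV} specialised to the algebras $B$ occurring here, and two genuine difficulties arise. First, one must actually produce the CW structure on $\mathbb{T}^{n+1}$ and prove the duality $\Gamma'_k\leftrightarrow\Gamma'_{n+1-k}$ rather than defining the higher cells by it. Second, as observed in Remark~\ref{rem:cellularMapsMultipleAppearances}, a cell may occur with multiplicity inside a larger cell, so the incidence function \cite[(4.4)]{CrQV} of the general theory is not available off the shelf, and one must construct by hand a system of signs making $d'_{k-1}\circ d'_k=0$ and show it is unique up to the evident equivalence; this is the key technical point. Failing a uniform argument, the conjecture can at least be reduced to a finite computation --- the cell complexes, sign conventions, and exactness checks for the finitely many algebras $B$ coming from the database \cite{Prna1}, of the kind already carried out there for the complexes $S^\bullet$.
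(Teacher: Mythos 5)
You are attempting to prove a statement that the paper itself does not prove: it is stated as a conjecture, and the only support offered in the paper is computational --- for the listed threefolds and $88$ fourfolds the complex $S^\bullet$ has been constructed by the toric-cell-complex heuristic, checked to be exact up to saturation by $B_{X\times X}$, and shown via quiver moduli to have cokernel $\Ocal_\Delta$ at $d_1$; the existence of the cellular resolution of $B$ and the identification $S^\bullet\simeq \Tcal\stackrel{\mathbf{L}}{\boxtimes}_A\Tcal^\vee$ are left open. Your submission is accordingly a plan rather than a proof, and to your credit it locates the crux correctly: everything hinges on the still-open Craw--Quintero-V\'elez conjecture for $B$ (producing the CW decomposition of $\mathbb{T}^{n+1}$, proving rather than imposing the duality $\Gamma'_k\leftrightarrow\Gamma'_{n+1-k}$, and constructing a sign/incidence function despite the multiplicities flagged in Remark \ref{rem:cellularMapsMultipleAppearances}). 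As it stands, this is a restatement of the difficulty, not a resolution of it; "consistency should follow formally from the presence of the tilting bundle" is likewise asserted, not argued.

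Beyond the openly acknowledged gap, two steps of your descent from $Y=\tot(\omega_X)$ to $X$ would fail as written. First, $X\times X\subset Y\times Y$ does \emph{not} meet $\Delta_Y$ properly: $\Delta_Y\cap(X\times X)=\Delta_X$ has dimension $n$, whereas the expected dimension is $\dim(X\times X)+\dim\Delta_Y-\dim(Y\times Y)=n-1$, so there is excess intersection and $\mathbf{L}i^*_{X\times X}\Ocal_{\Delta_Y}$ is not $\Ocal_{\Delta_X}$ --- it acquires a nontrivial $\mathrm{Tor}_1$ term twisted by the conormal data of the embedding. Second, even granting the cellular $B,B$-bimodule resolution, discarding the cells whose label is divisible by $x^{\rho_{\tot}}$ is not known to produce a projective $A,A$-bimodule resolution of $A$; that truncation is precisely the heuristic the paper uses to \emph{guess} $S^\bullet$, and the assertion that the result is $\Tcal\stackrel{\mathbf{L}}{\boxtimes}_A\Tcal^\vee$ is the second bullet of the conjecture itself, so your argument is circular at this point. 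A correct treatment would have to relate the Hochschild resolutions of $B$ and $A$ honestly (e.g.\ via the $\ZZ$-grading of $B$ by powers of $\omega_X$), or else do what the paper does and fall back on the finite verification: exactness of the truncated complex checked module-theoretically, and $\Coker d_1\cong\Ocal_\Delta$ via Propositions \ref{prop:SumPolytopeNefEmbedding}, \ref{prop:NonNefEmbedding} and \ref{prop:EmbeddingGivesCokernel} --- which establishes fullness of $\Lcal$, but not the conjecture.
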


\subsection{Method 2}\label{subsect:Method 2} 

Given a strong exceptional collection $\Lcal$ of line bundles on $X$ with associated quiver $Q$, we can thus proceed as follows to show that $\Lcal$ generates $\Dx$:
\begin{itemize}
\item[Step 1:] Using the method described in Section \ref{sect:ToricCellComplex}, construct the chain complex of $\Pic(X \times X)$-graded $S_{X \times X}$-modules \eqref{eq:ExactSeqClSMod} such that $S_t$ determines the sheaf $\Ecal_t$, where \[\Ecal_t = \bigoplus_{L_i, L_j \in \Lcal} L_i^{r_{i,t}} \boxtimes L_j^{-s_{j,t}}, \text{ for }  2 \leq t \leq 4 \text{ and some fixed } r_{i,t}, s_{j,t} \in \ZZ_{\geq 0}.\] Check that this chain complex is exact up to saturation by $B_{X \times X}$.

\item[Step 2:] If $\Lcal$ is a collection of nef line bundles then
\begin{itemize}
\item check that the line bundle $L = \bigotimes_{L_i \in \Lcal} L_i$ is ample;
\item show that the Minkowski sum of the polytopes $\{ P_{L_i} \mid L_i \in \Lcal\}$ is equal to $P_L$.
\end{itemize}
Then Propositions \ref{prop:SumPolytopeNefEmbedding} and \ref{prop:EmbeddingGivesCokernel} imply that the exact sequence of sheaves computed in \emph{Step 1} is a resolution of $\Ocal_\Delta$, so $\langle \Lcal \rangle = \Dx$ by Proposition \ref{prop:GenerationResolutionDiagonal}.
\\
If $\Lcal$ contains non-nef line bundles then
\begin{itemize}
\item choose a weight $\theta \in \text{Wt}(Q)$ such that $\pic(\theta)$ is ample and construct $Y_\theta$;
\item check that $\theta$ is generic. By the first step of the proof of \cite[Lemma 4.2]{BCQV}, it is enough to show that the representations corresponding to each torus-invariant point of $Y_\theta$ are $\theta$-stable. Confirm that $\theta$ and $\vartheta$ are in the same open GIT-chamber for $Y_\theta$;
\item show $\deg^{-1}(L) \cap \NN^{\Sigma(1)} \subset \pi_2 \left(\pi_1^{-1}(\theta) \cap \NN(Q)\right).$
\end{itemize} Then Propositions \ref{prop:NonNefEmbedding} and \ref{prop:EmbeddingGivesCokernel} imply that the exact sequence of sheaves computed in \emph{Step 1} is a resolution of $\Ocal_\Delta$, so $\langle \Lcal \rangle = \Dx$ by Proposition \ref{prop:GenerationResolutionDiagonal}.
\end{itemize}

\noindent An example of this construction for a collection of nef line bundles on the birationally maximal smooth toric Fano fourfold $E_1$ is given in Example \ref{ex:E1FanoFourfoldGeneration}, whilst a collection that contains non-nef line bundles on $J_1$ is shown to be full using this method in Example \ref{ex:J1FanoFourfoldGeneration}.

\begin{example}\label{ex:E1FanoFourfoldGeneration}
Using the variety $X$ and the collection of line bundles $\Lcal$ in Example \ref{ex:E1FourfoldQuiver}, let $Y = \tot(\omega_X)$ and $\Lcal_Y$ be the corresponding collection of line bundles on $Y$. The quiver of sections $Q'$ for $\Lcal_Y$ with and without arrow labels is given in Figure \ref{fig:CY4fold10}, where vertices with the same labels are identified.

\begin{figure}[]
\centering
\subfigure[Quiver of sections]{
  \psset{unit=0.7cm}
   \begin{pspicture}(0,-0.5)(9,4)
        \cnodeput(0,0){A}{\tiny{0}}
        \cnodeput(3,0){B}{\tiny{1}} 
        \cnodeput(6,0){C}{\tiny{2}}
        \cnodeput(9,0){D}{\tiny{3}} 
        \cnodeput(0,2.4){E}{\tiny{4}}
        \cnodeput(1.15,3.95){F}{\tiny{5}} 
        \cnodeput(3,2.4){G}{\tiny{6}} 
        \cnodeput(4.15,3.95){H}{\tiny{7}} 
        \cnodeput(6,2.4){I}{\tiny{8}}
        \cnodeput(7.15,3.95){J}{\tiny{9}}  
        \cnodeput(9,2.4){K}{\tiny{10}}
        \cnodeput(9,8){L}{\tiny{0}}
        \cnodeput(7.15,5.6){M}{\tiny{1}}
        \cnodeput(11,4.5){N}{\tiny{2}}   
    \psset{nodesep=0pt}
\nccurve[angleA=25,angleB=155]{->}{A}{B}\lput*{:U}{\tiny{$x_1$}}
\nccurve[angleA=0,angleB=180]{->}{A}{B}\lput*{:U}{\tiny{$x_2$}}
\nccurve[angleA=335,angleB=205]{->}{A}{B}\lput*{:U}{\tiny{$x_3$}}
\nccurve[angleA=310,angleB=230]{->}{A}{B}\lput*{:U}{\tiny{$x_5x_6$}}
\ncline{->}{A}{E}\lput*{:270}{\tiny{$x_4$}}
    
\nccurve[angleA=25,angleB=155]{->}{B}{C}\lput*{:U}{\tiny{$x_1$}}
\nccurve[angleA=0,angleB=180]{->}{B}{C}\lput*{:U}{\tiny{$x_2$}}
\nccurve[angleA=335,angleB=205]{->}{B}{C}\lput*{:U}{\tiny{$x_3$}}
\nccurve[angleA=310,angleB=230]{->}{B}{C}\lput*{:U}{\tiny{$x_5x_6$}}   
\ncline{->}{B}{G}\lput*{:270}{\tiny{$x_4$}} 
   
\nccurve[angleA=25,angleB=155]{->}{C}{D}\lput*{:U}{\tiny{$x_1$}}
\nccurve[angleA=0,angleB=180]{->}{C}{D}\lput*{:U}{\tiny{$x_2$}}
\nccurve[angleA=335,angleB=205]{->}{C}{D}\lput*{:U}{\tiny{$x_3$}}
\nccurve[angleA=310,angleB=230]{->}{C}{D}\lput*{:U}{\tiny{$x_5x_6$}}   
\ncline{->}{C}{I}\lput*{:270}{\tiny{$x_4$}}  
\nccurve[angleA=148,angleB=290]{->}{C}{E}\lput*[opacity = .1]{:180}{\tiny{$x_0x_5$}}

\ncline{->}{D}{K}\lput*{:270}{\tiny{$x_4$}}  
\nccurve[angleA=150,angleB=280]{->}{D}{F}\lput*{:180}{\tiny{$x_0$}}

\nccurve[angleA=23,angleB=157]{->}{E}{G}\lput*{:U}{\tiny{$x_1$}}
\ncline{->}{E}{G}\lput*{:U}{\tiny{$x_2$}}
\nccurve[angleA=337,angleB=203]{->}{E}{G}\lput*{:U}{\tiny{$x_3$}}
\ncline{->}{E}{F}\lput*{:U}{\tiny{$x_6$}}

\nccurve[angleA=23,angleB=157]{->}{F}{H}\lput*{:U}{\tiny{$x_1$}}
\ncline{->}{F}{H}\lput*{:U}{\tiny{$x_2$}}
\nccurve[angleA=337,angleB=203]{->}{F}{H}\lput*{:U}{\tiny{$x_3$}}
\ncline{->}{F}{G}\lput*{:U}{\tiny{$x_5$}}

\nccurve[angleA=23,angleB=157]{->}{G}{I}\lput*{:U}{\tiny{$x_1$}}
\ncline{->}{G}{I}\lput*{:U}{\tiny{$x_2$}}
\nccurve[angleA=337,angleB=203]{->}{G}{I}\lput*{:U}{\tiny{$x_3$}}
\ncline{->}{G}{H}\lput*{:U}{\tiny{$x_6$}}

\nccurve[angleA=23,angleB=157]{->}{H}{J}\lput*{:U}{\tiny{$x_1$}}
\ncline{->}{H}{J}\lput*{:U}{\tiny{$x_2$}}
\nccurve[angleA=337,angleB=203]{->}{H}{J}\lput*{:U}{\tiny{$x_3$}}
\ncline{->}{H}{I}\lput*{:U}{\tiny{$x_5$}}
\ncline{->}{H}{L}\lput*{:U}{\tiny{$x_4x_7$}}

\nccurve[angleA=23,angleB=157]{->}{I}{K}\lput*{:U}{\tiny{$x_1$}}
\ncline{->}{I}{K}\lput*{:U}{\tiny{$x_2$}}
\nccurve[angleA=337,angleB=203]{->}{I}{K}\lput*{:U}{\tiny{$x_3$}}
\ncline{->}{I}{J}\lput*{:U}{\tiny{$x_6$}}

\ncline{->}{J}{K}\lput*{:U}{\tiny{$x_5$}}

\nccurve[angleA=111,angleB=249]{->}{K}{L}\lput*{:180}{\tiny{$x_0x_1x_7$}}
\nccurve[angleA=97,angleB=263]{->}{K}{L}\lput*{:180}{\tiny{$x_0x_2x_7$}}     
\nccurve[angleA=83,angleB=277]{->}{K}{L}\lput*{:180}{\tiny{$x_0x_3x_7$}}
\nccurve[angleA=69,angleB=291]{->}{K}{L}\lput*{:180}{\tiny{$x_0x_5x_6x_7$}}

\ncline{->}{J}{M}\lput*{:270}{\tiny{$x_4x_7$}}
\ncline{->}{K}{N}\lput*{:U}{\tiny{$x_4x_6x_7$}}

\end{pspicture}
} \qquad \qquad
\subfigure[Listing the arrows]{
  \psset{unit=0.7cm}
   \begin{pspicture}(0,-0.5)(9,4)
        \cnodeput(0,0){A}{\tiny{0}}
        \cnodeput(3,0){B}{\tiny{1}} 
        \cnodeput(6,0){C}{\tiny{2}}
        \cnodeput(9,0){D}{\tiny{3}} 
        \cnodeput(0,2.4){E}{\tiny{4}}
        \cnodeput(1.15,3.95){F}{\tiny{5}} 
        \cnodeput(3,2.4){G}{\tiny{6}} 
        \cnodeput(4.15,3.95){H}{\tiny{7}} 
        \cnodeput(6,2.4){I}{\tiny{8}}
        \cnodeput(7.15,3.95){J}{\tiny{9}}  
        \cnodeput(9,2.4){K}{\tiny{10}}
        \cnodeput(9,8){L}{\tiny{0}}
        \cnodeput(7.15,5.6){M}{\tiny{1}}
        \cnodeput(11,4.5){N}{\tiny{2}}   
    \psset{nodesep=0pt}
\nccurve[angleA=25,angleB=155]{->}{A}{B}\lput*{:U}{\tiny{$a_1$}}
\nccurve[angleA=0,angleB=180]{->}{A}{B}\lput*{:U}{\tiny{$a_2$}}
\nccurve[angleA=335,angleB=205]{->}{A}{B}\lput*{:U}{\tiny{$a_3$}}
\nccurve[angleA=310,angleB=230]{->}{A}{B}\lput*{:U}{\tiny{$a_4$}}
\ncline{->}{A}{E}\lput*{:270}{\tiny{$a_5$}}
    
\nccurve[angleA=25,angleB=155]{->}{B}{C}\lput*{:U}{\tiny{$a_6$}}
\nccurve[angleA=0,angleB=180]{->}{B}{C}\lput*{:U}{\tiny{$a_7$}}
\nccurve[angleA=335,angleB=205]{->}{B}{C}\lput*{:U}{\tiny{$a_8$}}
\nccurve[angleA=310,angleB=230]{->}{B}{C}\lput*{:U}{\tiny{$a_9$}}   
\ncline{->}{B}{G}\lput*{:270}{\tiny{$a_{10}$}} 
   
\nccurve[angleA=25,angleB=155]{->}{C}{D}\lput*{:U}{\tiny{$a_{11}$}}
\nccurve[angleA=0,angleB=180]{->}{C}{D}\lput*{:U}{\tiny{$a_{12}$}}
\nccurve[angleA=335,angleB=205]{->}{C}{D}\lput*{:U}{\tiny{$a_{13}$}}
\nccurve[angleA=310,angleB=230]{->}{C}{D}\lput*{:U}{\tiny{$a_{14}$}}   
\ncline{->}{C}{I}\lput*{:270}{\tiny{$a_{16}$}}  
\nccurve[angleA=148,angleB=290]{->}{C}{E}\lput*[opacity = .1]{:180}{\tiny{$a_{15}$}}

\ncline{->}{D}{K}\lput*{:270}{\tiny{$a_{18}$}}  
\nccurve[angleA=150,angleB=280]{->}{D}{F}\lput*{:180}{\tiny{$a_{17}$}}

\nccurve[angleA=23,angleB=157]{->}{E}{G}\lput*{:U}{\tiny{$a_{20}$}}
\ncline{->}{E}{G}\lput*{:U}{\tiny{$a_{21}$}}
\nccurve[angleA=337,angleB=203]{->}{E}{G}\lput*{:U}{\tiny{$a_{22}$}}
\ncline{->}{E}{F}\lput*{:U}{\tiny{$a_{19}$}}

\nccurve[angleA=23,angleB=157]{->}{F}{H}\lput*{:U}{\tiny{$a_{24}$}}
\ncline{->}{F}{H}\lput*{:U}{\tiny{$a_{25}$}}
\nccurve[angleA=337,angleB=203]{->}{F}{H}\lput*{:U}{\tiny{$a_{26}$}}
\ncline{->}{F}{G}\lput*{:U}{\tiny{$a_{23}$}}

\nccurve[angleA=23,angleB=157]{->}{G}{I}\lput*{:U}{\tiny{$a_{28}$}}
\ncline{->}{G}{I}\lput*{:U}{\tiny{$a_{29}$}}
\nccurve[angleA=337,angleB=203]{->}{G}{I}\lput*{:U}{\tiny{$a_{30}$}}
\ncline{->}{G}{H}\lput*{:U}{\tiny{$a_{27}$}}

\nccurve[angleA=23,angleB=157]{->}{H}{J}\lput*{:U}{\tiny{$a_{32}$}}
\ncline{->}{H}{J}\lput*{:U}{\tiny{$a_{33}$}}
\nccurve[angleA=337,angleB=203]{->}{H}{J}\lput*{:U}{\tiny{$a_{34}$}}
\ncline{->}{H}{I}\lput*{:U}{\tiny{$a_{31}$}}

\ncline{->}{H}{L}\lput*{:U}{\tiny{$a_{40}$}}

\nccurve[angleA=23,angleB=157]{->}{I}{K}\lput*{:U}{\tiny{$a_{36}$}}
\ncline{->}{I}{K}\lput*{:U}{\tiny{$a_{37}$}}
\nccurve[angleA=337,angleB=203]{->}{I}{K}\lput*{:U}{\tiny{$a_{38}$}}
\ncline{->}{I}{J}\lput*{:U}{\tiny{$a_{35}$}}

\ncline{->}{J}{K}\lput*{:U}{\tiny{$a_{39}$}}

\nccurve[angleA=111,angleB=249]{->}{K}{L}\lput*{:180}{\tiny{$a_{44}$}}
\nccurve[angleA=97,angleB=263]{->}{K}{L}\lput*{:180}{\tiny{$a_{43}$}}     
\nccurve[angleA=83,angleB=277]{->}{K}{L}\lput*{:180}{\tiny{$a_{42}$}}
\nccurve[angleA=69,angleB=291]{->}{K}{L}\lput*{:180}{\tiny{$a_{41}$}}

\ncline{->}{J}{M}\lput*{:270}{\tiny{$a_{45}$}}
\ncline{->}{K}{N}\lput*{:U}{\tiny{$a_{46}$}}
\end{pspicture}
}
\caption{A quiver of sections on $\tot(\omega_X)$}
  \label{fig:CY4fold10}
  \end{figure}

\noindent The set $J$ is
\[ \mathcal{J} = \left\lbrace \begin{array}{c}
(a_1a_7,a_2a_6),  (a_1a_8, a_3a_6),  (a_1a_9,a_4a_6),  (a_1a_{10},a_5a_{20}),  (a_{40}a_1,a_{32}a_{45}), \\ 
(a_{41}a_1,a_{44}a_4),  (a_{42}a_1,a_{44}a_3),  (a_{43}a_1,a_{44}a_2),  (a_{2}a_8,a_3a_7),  (a_2a_9,a_4a_7), \\
 \vdots  \\ 
(a_{36}a_{42},a_{38}a_{44}),(a_{36}a_{43}, a_{37}a_{44}),(a_{37}a_{42}, a_{38}a_{43})
\end{array} \right\rbrace \]
As $\vert Q'_0 \vert = 11, \vert Q'_1 \vert = 46$ and $\vert \mathcal{J} \vert = 83$, we have $\vert \Gamma'_0 \vert = \vert \Gamma'_5 \vert = 11, \vert \Gamma'_1 \vert = \vert \Gamma'_4 \vert = 46$ and $\vert \Gamma'_2 \vert = \vert \Gamma'_3 \vert = 83$. Note that $P' = a_{17} \in \Gamma'_1$ appears twice in $P = (a_{11}a_{17}a_{25},a_{12}a_{17}a_{24}) \in \Gamma'_2$ (see Remark \ref{rem:cellularMapsMultipleAppearances}). In this case, $\partial_{P'}P = -(x_1w_2 + x_2w_1)$.

The monomial for the the extra divisor in $Y$ is $x^{\rho_{\tot}} = x_7$, so the sets $\Gamma_k$ are composed of sets of paths that do not contain the any of the arrows in $\{ a_{40}, a_{41}, \ldots , a_{46} \}$. Via this restriction, we obtain the chain complex of $\Pic(X \times X)$-graded $(S_{X \times X})$-modules
\begin{equation}\label{eq:E1ExactSequenceOfSheaves}
 0 \rightarrow (S_{X \times X})^{7} \stackrel{d_4}{\rightarrow} (S_{X \times X})^{31} \stackrel{d_3}{\rightarrow} (S_{X \times X})^{52} \stackrel{d_2}{\rightarrow} (S_{X \times X})^{39} \stackrel{d_1}{\rightarrow} (S_{X \times X})^{11}.
\end{equation}
This complex is exact up to saturation by $B_{X \times X}$ \cite{Prna1} and so is an exact sequence of sheaves on $X \times X$.   

We now check that the cokernel of this sequence is $\Ocal_\Delta$. For this collection, the line bundle $L = \bigotimes_{L_i \in \Lcal} L_i = \Ocal_X (7D_4 + 15D_5 + 18D_6)$ is ample. Each column of the matrices below is a vertex of the convex polytope in $\RR^7$ for the corresponding line bundle in $\Lcal$:
\[ \Ocal_X(iD_5 + iD_6) : \left[ \begin{smallmatrix} 
0 & 0 & 0 & 0 \\
i & 0 & 0 & 0 \\
0 & i & 0 & 0 \\
0 & 0 & i & 0 \\
0 & 0 & 0 & 0 \\
0 & 0 & 0 & i \\
0 & 0 & 0 & i \end{smallmatrix}\right], \ 
\Ocal_X(D_4+iD_5+iD_6): \left[ \begin{smallmatrix}
0&0&0&1&1&1&0&1\\
i&0&0&i+2&0&0&0&0\\
0&i&0&0&i+2&0&0&0\\
0&0&i&0&0&i+2&0&0\\
1&1&1&0&0&0&1&0\\
0&0&0&1&1&1&i&i+3\\
0&0&0&0&0&0&i&i+2 \end{smallmatrix}\right],\]\[
\Ocal_X(D_4+jD_5+(j+1)D_6): \left[ \begin{smallmatrix}
1&1&1&0&0&0&0&1\\
j+3&0&0&j&0&0&0&0\\
0&j+3&0&0&j&0&0&0\\
0&0&j+3&0&0&j&0&0\\
0&0&0&1&1&1&1&0\\
0&0&0&0&0&0&j&j+3\\
0&0&0&1&1&1&j+1&j+3 \end{smallmatrix}\right], \
i = 0,1,2,3, \ j = 0,1,2.\]
The vertices for the polytope corresponding to $L$ is 
\[ \Ocal_X(7D_4 +15D_5 + 18D_6): \left[ \begin{smallmatrix}
3&3&3&7&7&7&0&0&0&0&7\\
24&0&0&32&0&0&15&0&0&0&0\\
0&24&0&0&32&0&0&15&0&0&0\\
0&0&24&0&0&32&0&0&15&0&0\\
4&4&4&0&0&0&7&7&7&7&0\\
0&0&0&4&4&4&0&0&0&15&36\\
0&0&0&0&0&0&3&3&3&18&32 \end{smallmatrix}\right].\]
The Minkowski sum of the polytopes $\{P_{L_i} \mid L_i \in \Lcal\}$ is equal to the polytope corresponding to $L$, so \eqref{eq:E1ExactSequenceOfSheaves} is a resolution of $\Ocal_\Delta$ by Propositions \ref{prop:SumPolytopeNefEmbedding} and \ref{prop:EmbeddingGivesCokernel}. Therefore, $\Lcal$ is full by Proposition \ref{prop:GenerationResolutionDiagonal}.
\end{example}

\begin{example}\label{ex:J1FanoFourfoldGeneration}
Let $X$ be the birationally maximal smooth toric Fano fourfold $J_1$. The primitive generators for the rays of $X$ are
\[u_0 = \begin{tiny}
\begin{bmatrix}
1 \\ 0 \\ 0 \\ 0
\end{bmatrix} \end{tiny} ,
u_1 = \begin{tiny}\begin{bmatrix}
0 \\ 1 \\ 0 \\ 0
\end{bmatrix} \end{tiny},
u_2 = \begin{tiny}\begin{bmatrix}
-1 \\ -1 \\ -1 \\ 0
\end{bmatrix} \end{tiny},
u_3 = \begin{tiny}\begin{bmatrix}
0 \\ 0 \\ 1 \\ 0
\end{bmatrix} \end{tiny},
u_4 = \begin{tiny}\begin{bmatrix}
0 \\ 0 \\ 0 \\ 1
\end{bmatrix} \end{tiny},
u_5 = \begin{tiny}\begin{bmatrix}
0 \\ 0 \\ -1 \\ -1
\end{bmatrix} \end{tiny},
u_6 = \begin{tiny}\begin{bmatrix}
-1 \\ 0 \\ 0 \\ 0
\end{bmatrix} \end{tiny},
u_7 = \begin{tiny}\begin{bmatrix}
-1 \\ 0 \\ 1 \\ 0
\end{bmatrix} \end{tiny}
\]

\noindent The collection of line bundles on $X$ 
\[ \Lcal = \left\lbrace
\begin{array}{c|}
\Ocal_X((2+i)D_2 +(2+j-i)D_5 +2D_6 +(k+1)D_7), \\
\Ocal_X((1+i)D_2+ (k+i)D_5 +(1+j-i)D_6+(1+j-i)D_7), \\
\Ocal_X(kD_7),\ \Ocal_X(D_2+kD_5+D_6+D_7),\\
\Ocal_X(3D_2 +D_5 +2D_6 +2D_7)
\end{array}
\begin{array}{c}
\phantom{a} 1 \leq i \leq j \leq 2 \\
0 \leq k \leq 1
\end{array}
\right\rbrace \]
is strong exceptional and contains the non-nef line bundle $\Ocal_X(D_7)$.
We obtain the chain complex of $\Pic(X \times X)-$graded $(S_{X \times X})-$modules
\begin{equation}\label{eq:J1ExactSequenceOfSheaves}
0\rightarrow (S_{X \times X})^{12} \stackrel{d_4}{\rightarrow} (S_{X \times X})^{38} \stackrel{d_3}{\rightarrow} (S_{X \times X})^{59} \stackrel{d_2}{\rightarrow} (S_{X \times X})^{50} \stackrel{d_1}{\rightarrow} (S_{X \times X})^{17}
\end{equation}
from this collection, which is exact up to saturation by $B_{X \times X}$ \cite{Prna1}. The table in Figure \ref{fig:4fold81arrows} lists the arrows in the quiver of sections $Q$ corresponding to $\Lcal$.

\begin{figure}
\begin{small}
\begin{tabular}{|c|c|c| c |c|c|c| c |c|c|c| c |c|c|c|}
\cline{1-3}
a &  $\mathbf{t}(a),\mathbf{h}(a)$  & $\div(a)$ & 
\multicolumn{1}{c}{}  \\ \cline{1-3} \cline{5-7} \cline{9-11} \cline{13-15}
1 & 0,1 & $x_7$ & • & 14 & \phantom{aa}4,5\phantom{aa} & \phantom{a}$x_4$\phantom{a} & • & 27 & \phantom{aa}6,9\phantom{aa} & \phantom{a}$x_2$\phantom{a} & • & 40 & \phantom{aa}10,13\phantom{aa} & \phantom{a}$x_4$\phantom{a} \\ 
\cline{1-3} \cline{5-7} \cline{9-11} \cline{13-15} 
2 & 0,2 & $x_0$ & • & 15 & 4,5 & $x_5$ & • & 28 & 6,10 & $x_3$ & • & 41 & 10,13 & $x_5$ \\ 
\cline{1-3} \cline{5-7} \cline{9-11} \cline{13-15}
3 & 1,2 & $x_1x_6$ & • & 16 & 4,6 & $x_6x_7$ & • & 29 & 7,10 & $x_6$ & • & 42 & 10,14 & $x_1$ \\ 
\cline{1-3} \cline{5-7} \cline{9-11} \cline{13-15}
4 & 1,2 & $x_2x_6$ & • & 17 & 4,7 & $x_3x_7$ & • & 30 & 7,11 & $x_4$ & • & 43 & 10,14 & $x_2$ \\ 
\cline{1-3} \cline{5-7} \cline{9-11} \cline{13-15}
5 & 1,3 & $x_3x_6x_7$ & • & 18 & 4,9 & $x_0$ & • & 31 & 7,11 & $x_5$ & • & 44 & 11,13 & $x_6$ \\ 
\cline{1-3} \cline{5-7} \cline{9-11} \cline{13-15} 
6 & 1,4 & $x_0x_3$ & • & 19 & 5,7 & $x_1$ & • & 32 & 7,16 & $x_0$ & • & 45 & 12,15 & $x_4$ \\ 
\cline{1-3} \cline{5-7} \cline{9-11} \cline{13-15} 
7 & 2,3 & $x_4$ & • & 20 & 5,7 & $x_2$ & • & 33 & 8,12 & $x_1$ & • & 46 & 12,15 & $x_5$ \\ 
\cline{1-3} \cline{5-7} \cline{9-11} \cline{13-15} 
8 & 2,3 & $x_5$ & • & 21 & 5,8 & $x_6x_7$ & • & 34 & 8,12 & $x_2$ & • & 47 & 12,16 & $x_1$ \\ 
\cline{1-3} \cline{5-7} \cline{9-11} \cline{13-15}
9 & 2,4 & $x_3x_7$ & • & 22 & 5,11 & $x_3x_7$ & • & 35 & 8,13 & $x_3$ & • & 48 & 12,16 & $x_2$ \\ 
\cline{1-3} \cline{5-7} \cline{9-11} \cline{13-15} 
10 & 3,4 & $x_1$ & • & 23 & 5,12 & $x_0$ & • & 36 & 9,12 & $x_4$ & • & 49 & 13,15 & $x_7$ \\ 
\cline{1-3} \cline{5-7} \cline{9-11} \cline{13-15}
11 & 3,4 & $x_2$ & • & 24 & 6,8 & $x_4$ & • & 37 & 9,12 & $x_5$ & • & 50 & 14,16 & $x_7$ \\ 
\cline{1-3} \cline{5-7} \cline{9-11} \cline{13-15}
12 & 3,5 & $x_3x_7$ & • & 25 & 6,8 & $x_5$ & • & 38 & 9,14 & $x_3$ & • & • & • & • \\ 
\cline{1-3} \cline{5-7} \cline{9-11} \cline{13-15}
13 & 3,6 & $x_0$ & • & 26 & 6,9 & $x_1$ & • & 39 & 10,12 & $x_7$ & • & • & • & • \\ 
\cline{1-3} \cline{5-7} \cline{9-11} \cline{13-15}
\end{tabular} 
\end{small}
\caption{The arrows in a quiver of sections for the smooth toric Fano fourfold $J_1$}
\label{fig:4fold81arrows}
\end{figure}

\begin{figure}[!ht]
\subfigure[The quiver of sections]{
\centering
  \psset{unit=1.15cm}
\begin{pspicture}(0,-0.5)(9,4)
\rput(-1,-0.4){\rnode{0}{\pscirclebox[framesep=1pt]{\tiny{0}}}}
\rput(0.5,1){\rnode{1}{\pscirclebox[framesep=1pt]{\tiny{1}}}}
\rput(1.5,-0.4){\rnode{2}{\pscirclebox[framesep=1pt]{\tiny{2}}}}
\rput(2.7,0){\rnode{3}{\pscirclebox[framesep=1pt]{\tiny{3}}}}
\rput(2.7,2){\rnode{4}{\pscirclebox[framesep=1pt]{\tiny{4}}}}
\rput(3.9,2.4){\rnode{5}{\pscirclebox[framesep=1pt]{\tiny{5}}}}
\rput(8.2,0){\rnode{6}{\pscirclebox[framesep=1pt]{\tiny{6}}}}
\rput(3.9,4.4){\rnode{7}{\pscirclebox[framesep=1pt]{\tiny{7}}}}
\rput(9.4,0.4){\rnode{8}{\pscirclebox[framesep=1pt]{\tiny{8}}}}
\rput(8.2,2){\rnode{9}{\pscirclebox[framesep=1pt]{\tiny{9}}}}
\rput(6.5,3.8){\rnode{10}{\pscirclebox[framesep=0.3pt]{\tiny{10}}}}
\rput(5.1,4.8){\rnode{11}{\pscirclebox[framesep=0.3pt]{\tiny{11}}}}
\rput(9.4,2.4){\rnode{12}{\pscirclebox[framesep=0.3pt]{\tiny{12}}}}
\rput(7.7,4.2){\rnode{13}{\pscirclebox[framesep=0.3pt]{\tiny{13}}}}
\rput(6.5,5.8){\rnode{14}{\pscirclebox[framesep=0.3pt]{\tiny{14}}}}
\rput(10.6,2.8){\rnode{15}{\pscirclebox[framesep=0.3pt]{\tiny{15}}}}
\rput(9.4,4.4){\rnode{16}{\pscirclebox[framesep=0.3pt]{\tiny{16}}}}  
\psset{nodesep=0pt}

\ncline{->}{0}{1}\lput*{:U}{\tiny{$a_1$}}
\ncline{->}{1}{3}\lput*{:U}(0.4){\tiny{$a_5$}}

\ncline{->}{4}{6}\lput*{:U}{\tiny{$a_{16}$}}
\ncline{->}{5}{8}\lput*{:U}{\tiny{$a_{21}$}}

\nccurve[angleA=50,angleB=230]{->}{2}{4}\lput*{:U}{\tiny{$a_9$}}
\nccurve[angleA=50,angleB=230]{->}{3}{5}\lput*{:U}{\tiny{$a_{12}$}}
\nccurve[angleA=50,angleB=230]{->}{4}{7}\lput*{:U}{\tiny{$a_{17}$}}
\nccurve[angleA=50,angleB=230]{->}{5}{11}\lput*{:U}{\tiny{$a_{22}$}}

\nccurve[angleA=10,angleB=220]{->}{1}{4}\lput*{:U}{\tiny{$a_6$}}
\nccurve[angleA=300,angleB=150]{->}{10}{12}\lput*{:U}(0.8){\tiny{$a_{39}$}}
\nccurve[angleA=300,angleB=150]{->}{13}{15}\lput*{:U}{\tiny{$a_{49}$}}
\nccurve[angleA=300,angleB=150]{->}{14}{16}\lput*{:U}{\tiny{$a_{50}$}}
\nccurve[angleA=130,angleB=290]{->}{6}{10}\lput*{:180}{\tiny{$a_{28}$}}
\nccurve[angleA=130,angleB=290]{->}{8}{13}\lput*{:180}(0.2){\tiny{$a_{35}$}}
\nccurve[angleA=130,angleB=290]{->}{9}{14}\lput*{:180}(0.15){\tiny{$a_{38}$}}
\nccurve[angleA=330,angleB=180]{->}{7}{10}\lput*{:U}{\tiny{$a_{29}$}}
\nccurve[angleA=330,angleB=180]{->}{11}{13}\lput*{:U}(0.15){\tiny{$a_{44}$}}

\ncline{->}{0}{2}\lput*{:U}{\tiny{$a_2$}}
\ncline[offset=-0.03]{->}{1}{2}\bput*[0.08]{:U,border=-1.5pt}(0.45){\tiny{$a_{3,4}$}}
\ncline[offset=0.03]{->}{1}{2}
\ncline[offset=-0.03]{->}{2}{3}\bput*[0.08]{:U,border=-1.5pt}(0.45){\tiny{$a_{7,8}$}}
\ncline[offset=0.03]{->}{2}{3}
\ncline[offset=-0.03]{->}{3}{4}\bput*[0.1]{:U,border=-1.5pt}{\tiny{$a_{10,11}$}}
\ncline[offset=0.03]{->}{3}{4}
\ncline{->}{3}{6}\lput*{:U}{\tiny{$a_{13}$}}
\ncline[offset=-0.03]{->}{4}{5}
\ncline[offset=0.03]{->}{4}{5}\aput*[0.08]{:U,border=-1.5pt}(0.4){\tiny{$a_{14,15}$}}
\ncline{->}{4}{9}\lput*{:U}{\tiny{$a_{18}$}}
\ncline[offset=-0.03]{->}{6}{9}\bput*[0.1]{:U,border=-1.5pt}{\tiny{$a_{26,27}$}}
\ncline[offset=0.03]{->}{6}{9}
\ncline[offset=-0.03]{->}{6}{8}\bput*[0.08]{:U,border=-1.5pt}(0.45){\tiny{$a_{24,25}$}}
\ncline[offset=0.03]{->}{6}{8}
\ncline[offset=-0.03]{->}{5}{7}\bput*[0.1]{:U,border=-1.5pt}{\tiny{$a_{19,20}$}}
\ncline[offset=0.03]{->}{5}{7}
\ncline{->}{5}{12}\lput*{:U}{\tiny{$a_{23}$}}
\ncline[offset=-0.03]{->}{7}{11}
\ncline[offset=0.03]{->}{7}{11}\aput*[0.08]{:U,border=-1.5pt}(0.4){\tiny{$a_{30,31}$}}
\ncline{->}{7}{16}\lput*{:U}(0.85){\tiny{$a_{32}$}}
\ncline[offset=-0.03]{->}{9}{12}\bput*[0.08]{:U,border=-1.5pt}(0.45){\tiny{$a_{36,37}$}}
\ncline[offset=0.03]{->}{9}{12}
\ncline[offset=-0.03]{->}{8}{12}\bput*[0.1]{:U,border=-1.5pt}{\tiny{$a_{33,34}$}}
\ncline[offset=0.03]{->}{8}{12}
\ncline[offset=-0.03]{->}{10}{13}\bput*[0.08]{:U,border=-1.5pt}(0.45){\tiny{$a_{40,41}$}}
\ncline[offset=0.03]{->}{10}{13}
\ncline[offset=-0.03]{->}{10}{14}
\ncline[offset=0.03]{->}{10}{14}\aput*[0.1]{:U,border=-1.5pt}(0.7){\tiny{$a_{42,43}$}}
\ncline[offset=-0.03]{->}{12}{16}\bput*[0.1]{:U,border=-1.5pt}{\tiny{$a_{47,48}$}}
\ncline[offset=0.03]{->}{12}{16}
\ncline[offset=-0.03]{->}{12}{15}\bput*[0.08]{:U,border=-1.5pt}(0.45){\tiny{$a_{45,46}$}}
\ncline[offset=0.03]{->}{12}{15}

\end{pspicture}}

\subfigure[A representation of the quiver corresponding to a torus-invariant point in $Y_\theta$]{\centering
  \psset{unit=1cm}
   \begin{pspicture}(0,0)(9,6)
\rput(-0.5,0){\rnode{0}{\pscirclebox[framesep=1pt]{\tiny{0}}}}
\rput(0.5,1){\rnode{1}{\pscirclebox[framesep=1pt]{\tiny{1}}}}
\rput(1.5,0){\rnode{2}{\pscirclebox[framesep=1pt]{\tiny{2}}}}
\rput(2.7,0){\rnode{3}{\pscirclebox[framesep=1pt]{\tiny{3}}}}
\rput(2.7,2){\rnode{4}{\pscirclebox[framesep=1pt]{\tiny{4}}}}
\rput(3.9,2.4){\rnode{5}{\pscirclebox[framesep=1pt]{\tiny{5}}}}
\rput(8.2,0){\rnode{6}{\pscirclebox[framesep=1pt]{\tiny{6}}}}
\rput(3.9,4.4){\rnode{7}{\pscirclebox[framesep=1pt]{\tiny{7}}}}
\rput(9.4,0.4){\rnode{8}{\pscirclebox[framesep=1pt]{\tiny{8}}}}
\rput(8.2,2){\rnode{9}{\pscirclebox[framesep=1pt]{\tiny{9}}}}
\rput(6.5,3.8){\rnode{10}{\pscirclebox[framesep=0.3pt]{\tiny{10}}}}
\rput(5.1,4.8){\rnode{11}{\pscirclebox[framesep=0.3pt]{\tiny{11}}}}
\rput(9.4,2.4){\rnode{12}{\pscirclebox[framesep=0.3pt]{\tiny{12}}}}
\rput(7.7,4.2){\rnode{13}{\pscirclebox[framesep=0.3pt]{\tiny{13}}}}
\rput(6.5,5.8){\rnode{14}{\pscirclebox[framesep=0.3pt]{\tiny{14}}}}
\rput(10.6,2.8){\rnode{15}{\pscirclebox[framesep=0.3pt]{\tiny{15}}}}
\rput(9.4,4.4){\rnode{16}{\pscirclebox[framesep=0.3pt]{\tiny{16}}}}  
\psset{nodesep=0pt}

\ncline{->}{0}{1}
\ncline{->}{1}{2}
\ncline{->}{2}{3}
\ncline{->}{3}{4}
\ncline{->}{4}{5}
\ncline{->}{4}{6}
\ncline{->}{5}{7}
\ncline{->}{5}{8}
\ncline{->}{6}{8}
\ncline{->}{6}{9}
\nccurve[angleA=330,angleB=180]{->}{7}{10}
\ncline{->}{7}{11}
\ncline{->}{8}{12}
\ncline{->}{9}{12}
\nccurve[angleA=300,angleB=150]{->}{10}{12}
\ncline{->}{10}{13}
\ncline{->}{10}{14}
\nccurve[angleA=330,angleB=180]{->}{11}{13}
\ncline{->}{12}{15}
\ncline{->}{12}{16}
\nccurve[angleA=300,angleB=150]{->}{13}{15}
\nccurve[angleA=300,angleB=150]{->}{14}{16}

\end{pspicture}}
\caption{A quiver of sections on the smooth toric Fano fourfold $J_1$}
  \label{fig:4fold81}
  \end{figure}

\noindent As $\vert Q_0 \vert = 17$ and $\vert \Sigma(1) \vert = 8$, we let $\{\mathbf{e}_i \mid i \in Q_0\} \cup \{\mathbf{e}_\rho \mid \rho \in \Sigma(1)\}$ be the standard basis of $\ZZ^{17+8}$ and define the lattice points $c_a := \mathbf{e}_{\mathbf{h}(a)} - \mathbf{e}_{\mathbf{t}(a)} + \mathbf{e}_{\div(a)}$ for each arrow $a \in Q_1$. The map $\pi$ is then given by the matrix $C \colon \ZZ^{50} \rightarrow \ZZ^{17+8}$ where the columns of $C$ are given by $c_a, a \in Q_1$, and the semigroup $\NN(Q)$ is given by the lattice points generated by positive linear combinations of the $c_a$. Our choice of bases for $\Pic(X)$ and $\Wt(Q)$ imply that the lattice maps $\deg$ and $\pic$ are given by the matrices:
\[
\deg \colon \left[\begin{smallmatrix}
1& 1& 1& 1&  0& 0& 0& 0 \\
0& 0& 0& 1&  1& 1& 0& 0 \\
1& 0& 0& 0&  0& 0& 1& 0 \\
1& 0& 0& -1& 0& 0& 0& 1 
\end{smallmatrix} \right], \phantom{a}
\pic \colon \left[ \begin{smallmatrix}
0& 0& 1& 1& 2& 2& 2& 3& 2& 3& 3& 3& 3& 3& 4& 3& 4 \\
0& 0& 0& 1& 1& 2& 1& 2& 2& 1& 2& 3& 2& 3& 2& 3& 2 \\
0& 0& 1& 1& 1& 1& 2& 1& 2& 2& 2& 1& 2& 2& 2& 2& 2 \\
0& 1& 1& 1& 1& 1& 2& 1& 2& 2& 1& 1& 2& 1& 1& 2& 2 
\end{smallmatrix} \right] \]

\noindent Fix $\theta$ to be the weight that assigns $-6$ to the vertex $0$ in the quiver, $1$ to the vertices $\{11,12,\ldots,16\}$ and $0$ to every other vertex. We note that $\pic(\theta)$ is the ample line bundle $L = \Ocal_X(20D_2+15D_5+11D_6+9D_7)$. For this choice of $\theta$, not only does $\pi_2 \left(\NN(Q) \cap (\pi_1)^{-1}(\theta)\right)$ surject onto $\NN^{\Sigma(1)} \cap \deg^{-1}(L)$, but $Y_\theta$ is isomorphic to $X$. As $\theta_i \geq 0$ for $i > 0$, $\theta$ is in the same closed GIT-chamber for $Y_\theta$ as $\vartheta$, and so they are in the same open chamber if $\theta$ is generic. To check that $\theta$ is generic, it is enough to check that for each torus-invariant point on $Y_\theta$, the corresponding representation is $\theta$-stable. The list below gives the $17$ maximal cones in the fan for $Y_\theta$ -- recall that each maximal cone corresponds to a torus-invariant point and the point is in the intersection of the divisors labelled by the rays of the cone. 
\noindent \begin{center}
\begin{tabular}{ccccc}
$\{\rho_0 ,\rho_1 ,\rho_3 ,\rho_4 \}$ & 
$\{\rho_0 ,\rho_1 ,\rho_3 ,\rho_5 \}$ &
$\{\rho_0 ,\rho_1 ,\rho_4 ,\rho_5 \}$ &
$\{\rho_0 ,\rho_2 ,\rho_3 ,\rho_4 \}$ &
$\{\rho_0 ,\rho_2 ,\rho_3 ,\rho_5 \}$ \\
$\{\rho_0 ,\rho_2 ,\rho_4 ,\rho_5 \}$ &
$\{\rho_1 ,\rho_2 ,\rho_4 ,\rho_5 \}$ &
$\{\rho_1 ,\rho_2 ,\rho_4 ,\rho_6 \}$ &
$\{\rho_1 ,\rho_2 ,\rho_5 ,\rho_6 \}$ &
$\{\rho_1 ,\rho_3 ,\rho_4 ,\rho_7 \}$ \\
$\{\rho_1 ,\rho_3 ,\rho_5, \rho_7 \}$ &
$\{\rho_1 ,\rho_4 ,\rho_6 ,\rho_7 \}$ &
$\{\rho_1 ,\rho_5 ,\rho_6 ,\rho_7 \}$ &
$\{\rho_2 ,\rho_3 ,\rho_4 ,\rho_7 \}$ &
$\{\rho_2 ,\rho_3 ,\rho_5 ,\rho_7 \}$ \\
\multicolumn{5}{c}{
$\{\rho_2 ,\rho_4 ,\rho_6 ,\rho_7 \}$ \phantom{a}
$\{\rho_2 ,\rho_5 ,\rho_6 ,\rho_7 \}$}\\
\end{tabular}
\end{center}
For the representation $(V,\phi)$ corresponding to the torus-invariant point with rays $\{\rho_{i_1}, \rho_{i_2},$ $\rho_{i_3}, \rho_{i_4}\}$, the map $\phi_a$ is $0$ if for any $x_i \in \{x_{i_1},x_{i_2},x_{i_3},x_{i_4}\}$, $x_i$ divides $\div(a)$, whilst $\phi_a = 1$ otherwise. For example, consider the maximal cone $\{\rho_0 ,\rho_1 ,\rho_3 ,\rho_4 \}$. The corresponding representation $V = (V,\phi)$ has $\phi_a = 0$ for 
\[ a \in \left\lbrace \begin{array}{c}
a_2,\ a_3,\ a_5,\ a_6,\ a_7,\ a_9,\ a_{10},\ a_{12},\ a_{13},\ a_{14},\ a_{17},\ a_{18},\ a_{19},\ a_{22},\ a_{23},\ a_{24},\ a_{26},\\
a_{28},\ a_{30},\ a_{32},\ a_{33},\ a_{35},\ a_{36},\ a_{38},\ a_{40},\ a_{42},\ a_{45},\ a_{47}
\end{array} \right\rbrace \]
and is displayed in Figure \ref{fig:4fold81}. Specifying a subrepresentation $(V',\phi')$ of $V$ is equivalent to setting $\phi'_a = \phi_a$ for all $a \in Q_1$ and choosing a subset $I \subset Q_0$ so that $V'_i = \CC$ for $i \in I$, and $V'_i = 0$ otherwise. In our example, for any subrepresentation $V'$ with $V'_0 = \CC$, we have $V' = V$ as there is a non-zero map from $V'_0$ to every other $V'_i$. It is also clear from Figure \ref{fig:4fold81} that for any $i \in Q_0$, there is a non zero map from $V_i$ to $V_{j}$ for some $j \in \{11,12, \ldots,16\}$. As a result, the corresponding nonzero proper subrepresentation $V'$ of $V$ must have $V'_{j} = \CC$ and so $\theta(V') > 0$ by the choice of $\theta$. By considering the subrepresentations of the representation corresponding to each of the $17$ torus-invariant points on $Y_\theta$, we see that $\theta$ is generic -- the calculations for this example can be found in the file \cite{Prna3}. Therefore, \eqref{eq:J1ExactSequenceOfSheaves} is a resolution of $\Ocal_\Delta$ by Propositions \ref{prop:NonNefEmbedding} and \ref{prop:EmbeddingGivesCokernel}, so the collection $\Lcal$ on $J_1$ is full by Proposition \ref{prop:GenerationResolutionDiagonal}. 
\end{example}

\section{Full Strong Exceptional Collections on Toric Varieties}

\noindent In this section we present the main theorems of this paper.

For a divisorial contraction $(f,\phi) \colon (X, \Sigma_{X}) \rightarrow (Y, \Sigma_{Y})$ the Frobenius morphism can be used to give examples of when the pushforward of a line bundle $L$ via $f$ and the image of $L$ under the map $\gamma$ from (\ref{gamma}) are equal. Note that for a toric variety $X$, the canonical bundle is $\omega_X = -\sum_{\rho \in \Sigma_X(1)} D_\rho$. 

\begin{lemma}\label{fgam}
Fix an integer $m > 0$ and let $(f,\phi) \colon (X, \Sigma_{X}) \rightarrow (Y, \Sigma_{Y})$ be a torus-equivariant extremal birational contraction between smooth $n$-dimensional projective toric varieties. Let $\sigma \subset \Sigma_{X}$ be a maximal cone such that $\phi(\sigma)$ is a cone in $\Sigma_{Y}$, and $\textbf{w} = \textbf{0}$ or $(-1, \ldots, -1)^t \in \ZZ^{\Sigma_X(1)}$. Then for any $\textbf{v} \in P^n_m$,
\begin{equation}
f_* \Ocal_{X} (D^X_{\textbf{v},\textbf{w},\sigma} ) = \Ocal_{Y} (D^Y_{\textbf{v},\textbf{w},\phi(\sigma)})
\end{equation}
and
\begin{equation}
\mkD (\Ocal_Y )_m = \lbrace f_* L_X \ \vert \ L_X \in \mkD ( \Ocal_X )_m \rbrace,
\end{equation}
\begin{equation}
\mkD (\omega_Y )_m = \lbrace f_* L_X \ \vert \ L_X \in \mkD (\omega_X )_m \rbrace.
\end{equation} 
In particular, the maps $f_*$ and $\gamma$ coincide for $\Ocal_{X} (D^X_{\textbf{v},\textbf{w},\sigma} )$.  
\end{lemma}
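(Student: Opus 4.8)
The plan is to reduce the pushforward identity to a direct comparison of Thomsen's quotient data on $X$ and on $Y$, and then to read off the two $\mkD$-equalities by letting $\mathbf{v}$ range over $P^n_m$. Since $f$ is a torus-equivariant extremal divisorial contraction between smooth toric varieties, $\Sigma_X$ is the star subdivision of $\Sigma_Y$ at a single smooth cone $\tau\subset\Sigma_Y$, with a new exceptional ray $\rho_E$ whose generator is $u_{\rho_E}=\sum_{\rho\in\tau(1)}u_\rho$; write $E:=D_{\rho_E}$. The hypothesis that $\sigma$ is a maximal cone of $\Sigma_X$ with $\phi(\sigma)\in\Sigma_Y$ forces $\rho_E\notin\sigma(1)$ and $\sigma(1)=\phi(\sigma)(1)$, so that $A^X_\sigma=A^Y_{\phi(\sigma)}$ and, for $\mathbf{w}=\mathbf{0}$ or $(-1,\dots,-1)^t$, the restriction $\mathbf{w}_\sigma$ on $X$ agrees with its counterpart on $Y$. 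First I would feed this into the defining equation for Thomsen's data. After a change of basis making the generators of $\sigma$ standard, that equation reads $m\,q^X_\rho+r^X_\rho=s_\rho+w_\rho$ with $0\le r^X_\rho<m$, where $s_\rho:=u^t_\rho(\mathbf{v}-\mathbf{w}_\sigma)$; the rows of $A^X$ indexed by $\rho\neq\rho_E$ are exactly the rows of $A^Y$, so by uniqueness of quotient and remainder I obtain $q^{X}_\rho(\mathbf{v},\mathbf{w},\sigma)=q^{Y}_\rho(\mathbf{v},\mathbf{w},\phi(\sigma))$ for every $\rho\neq\rho_E$ and every $\mathbf{v}\in P^n_m$. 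Thus $D^X_{\mathbf{v},\mathbf{w},\sigma}$ and $D^Y_{\mathbf{v},\mathbf{w},\phi(\sigma)}$ have identical coefficients away from $\rho_E$.

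Next I would pass from ``agreeing off $\rho_E$'' to the pushforward statement via the projection formula. Writing $a_\rho:=q^Y_\rho$ for the coefficients of $D^Y:=D^Y_{\mathbf{v},\mathbf{w},\phi(\sigma)}$, the toric pullback formula gives $f^*D^Y=\sum_{\rho\neq\rho_E}a_\rho D^X_\rho+\big(\sum_{\rho\in\tau(1)}a_\rho\big)E$, because the support function of $D^Y$ is linear on the smooth cone $\tau$ and $u_{\rho_E}=\sum_{\rho\in\tau(1)}u_\rho$. Combining this with the first step, $D^X_{\mathbf{v},\mathbf{w},\sigma}=f^*D^Y+\delta E$ where $\delta:=q^X_{\rho_E}-\sum_{\rho\in\tau(1)}q^X_\rho$. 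The projection formula then yields $f_*\Ocal_X(D^X_{\mathbf{v},\mathbf{w},\sigma})\cong\Ocal_Y(D^Y)\otimes f_*\Ocal_X(\delta E)$, and a local computation on the affine chart $U_\tau$ — where the inequality cut out by $\rho_E$ is implied by those cut out by $\tau(1)$ precisely when $\delta\geq 0$ — shows that $f_*\Ocal_X(\delta E)=\Ocal_Y$ as soon as $\delta\geq 0$. Hence the whole identity reduces to the single inequality $\delta\geq 0$.

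Establishing $\delta\geq 0$ is the crux and the main obstacle, and it is where the two allowed values of $\mathbf{w}$ must be handled together. Using $u_{\rho_E}=\sum_{\rho\in\tau(1)}u_\rho$ we have $s_{\rho_E}=\sum_{\rho\in\tau(1)}s_\rho$, so $q^X_{\rho_E}=\big\lfloor(\sum_{\rho\in\tau(1)}s_\rho+w_{\rho_E})/m\big\rfloor$ while $q^X_\rho=\lfloor(s_\rho+w_\rho)/m\rfloor$. Writing $s_\rho+w_\rho=m\,q^X_\rho+\epsilon_\rho$ with $0\leq\epsilon_\rho\leq m-1$ and summing over $\rho\in\tau(1)$, a short manipulation identifies $\delta=\big\lfloor(\sum_{\rho\in\tau(1)}\epsilon_\rho+c)/m\big\rfloor$, where $c:=w_{\rho_E}-\sum_{\rho\in\tau(1)}w_\rho$ records the discrepancy of the chosen twist: $c=0$ when $\mathbf{w}=\mathbf{0}$, and $c=|\tau(1)|-1\geq 0$ when $\mathbf{w}=(-1,\dots,-1)^t$. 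In both cases the argument of the floor is nonnegative, so $\delta\geq 0$. This is essentially the superadditivity of the floor function, and it is the step I expect to require the most care, since the naive comparison fails unless one tracks the constant $c$ coming from the canonical twist in the $\omega_X$ case. Granting $\delta\geq 0$, the second step completes the displayed identity $f_*\Ocal_X(D^X_{\mathbf{v},\mathbf{w},\sigma})=\Ocal_Y(D^Y_{\mathbf{v},\mathbf{w},\phi(\sigma)})$.

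Finally I would read off the remaining assertions. Letting $\mathbf{v}$ range over $P^n_m$ and invoking Thomsen's theorem together with the independence of $(F_m)_*$ on $Y$ from the choice of maximal cone, the displayed identity with $\mathbf{w}=\mathbf{0}$ gives $\mkD(\Ocal_Y)_m=\{f_*L\mid L\in\mkD(\Ocal_X)_m\}$, and with $\mathbf{w}=(-1,\dots,-1)^t$ (so that $\Ocal_X(\sum_\rho w_\rho D_\rho)=\omega_X$, and likewise on $Y$) gives $\mkD(\omega_Y)_m=\{f_*L\mid L\in\mkD(\omega_X)_m\}$. As a consistency check one also has the clean global identities $f_*\Ocal_X=\Ocal_Y$ and $f_*\omega_X=\omega_Y$, which follow from $\omega_X=f^*\omega_Y+(|\tau(1)|-1)E$ together with $f_*\Ocal_X(jE)=\Ocal_Y$ for $j\geq 0$, and which are compatible with the commutation $f\circ F^X_m=F^Y_m\circ f$ of the Frobenius morphisms with $f$. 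For the concluding ``in particular'', the first step shows that $D^Y_{\mathbf{v},\mathbf{w},\phi(\sigma)}$ is exactly $D^X_{\mathbf{v},\mathbf{w},\sigma}$ with its $\rho_E$-coordinate deleted; this deletion is by definition the effect of $\beta$, hence of $\gamma$, in \eqref{gamma}. Therefore $f_*\Ocal_X(D^X_{\mathbf{v},\mathbf{w},\sigma})=\gamma\big(\Ocal_X(D^X_{\mathbf{v},\mathbf{w},\sigma})\big)$, so $f_*$ and $\gamma$ coincide on these line bundles.
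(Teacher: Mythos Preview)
Your argument is correct and is essentially a self-contained, fully worked-out version of the proof the paper only sketches: the paper simply cites \cite[Lemma~6.1]{Ueha} for the case $\mathbf{w}=\mathbf{0}$, notes that $f_*\omega_X\cong\omega_Y$ so the same proof covers $\mathbf{w}=(-1,\dots,-1)^t$, and remarks that Thomsen's algorithm exhibits the coincidence of $f_*$ and $\gamma$. Your reduction via the projection formula to the inequality $\delta\geq 0$, and the verification of that inequality through the floor computation with the constant $c=w_{\rho_E}-\sum_{\rho\in\tau(1)}w_\rho$, is precisely the content of Uehara's argument, now made explicit for both values of $\mathbf{w}$ simultaneously rather than by a separate appeal to $f_*\omega_X\cong\omega_Y$.
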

\begin{proof}
The result \cite[Lemma 6.1]{Ueha} gives the case $\textbf{w} = \textbf{0}$. Noting that $f_* (\omega_{X}) \cong \omega_{Y}$ \cite[Theorem 9.3.12]{CoLiSc}, the proof can also be applied to $\textbf{w} = (-1, \ldots , -1 )^t$.  The algorithm to compute $F_m (L )$ demonstrates the equality between $f_*$ and $\gamma$ for the line bundles considered.   
\end{proof}

\begin{proposition}\label{prop:picMapFull}
With the same assumptions as in Lemma \ref{fgam}, choose a collection of line bundles $\Lcal \subset \mkD_m \cup \mkD(\omega_X)_m$. If $\Lcal$ generates $\Dx$ then the non-isomorphic line bundles in the image of $\gamma(\Lcal)$ generate $\Dcal^b(Y)$.
\end{proposition}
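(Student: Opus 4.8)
The plan is to reduce the statement to Lemma~\ref{pushgen} (Uehara's generation-descent lemma) via Lemma~\ref{fgam}, which tells us that on the special line bundles under consideration the pushforward $f_*$ agrees with the Picard-lattice map $\gamma$. First I would observe that $f\colon X \to Y$ is a proper (indeed projective) morphism of smooth projective toric varieties, so Lemma~\ref{pushgen} applies once we know $\Ocal_Y$ is a direct summand of $\mathbf{R}f_*\Ocal_X$. For a torus-equivariant birational contraction this is standard: $f$ has connected fibres and $Y$ has rational singularities (it is smooth), so $\mathbf{R}f_*\Ocal_X \cong \Ocal_Y$; in particular $\Ocal_Y$ is a summand. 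Hence Lemma~\ref{pushgen} gives that $\mathbf{R}f_*\Lcal$ generates $\Dcal^b(Y)$ whenever $\Lcal$ generates $\Dx$.

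The next step is to identify $\mathbf{R}f_*\Lcal$ with the collection of line bundles $\gamma(\Lcal)$ on $Y$. Since $\Lcal \subset \mkD_m \cup \mkD(\omega_X)_m$, every $L \in \Lcal$ is of the form $\Ocal_X(D^X_{\mathbf{v},\mathbf{w},\sigma})$ with $\mathbf{w} = \mathbf{0}$ or $\mathbf{w} = (-1,\dots,-1)^t$, for a maximal cone $\sigma$ that we may choose so that $\phi(\sigma)$ is a cone in $\Sigma_Y$ (possible because $f$ is a divisorial contraction, so only finitely many maximal cones of $\Sigma_X$ are subdivided; pick $\sigma$ among the unaffected ones — and Thomsen's construction is independent of the choice of $\sigma$). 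Then Lemma~\ref{fgam} gives $f_*\Ocal_X(D^X_{\mathbf{v},\mathbf{w},\sigma}) = \Ocal_Y(D^Y_{\mathbf{v},\mathbf{w},\phi(\sigma)})$, and moreover that this pushforward coincides with $\gamma(L)$. It remains to check that the higher direct images $\mathbf{R}^i f_*L$ vanish for $i>0$: this follows because $L$ is (a twist built from) a Frobenius-pushforward summand of $\Ocal_X$ or $\omega_X$, and $f_*\Ocal_X = \Ocal_Y$, $f_*\omega_X = \omega_Y$ with higher direct images vanishing (the latter by relative Kawamata–Viehweg / the toric computation in \cite[Theorem~9.3.12]{CoLiSc}); alternatively one argues directly from Thomsen's formula that $(F_m)_*$ commutes with $f_*$, so $\mathbf{R}f_*(F_m)_*L = (F_m)_* \mathbf{R}f_*L$ is again a sum of line bundles concentrated in degree $0$. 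Thus $\mathbf{R}f_*\Lcal = \bigoplus_{L \in \Lcal}\gamma(L)$, the direct sum of the line bundles in $\gamma(\Lcal)$ (with multiplicity).

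Finally, generation is insensitive to repeated summands: the smallest triangulated subcategory closed under cones, shifts and summands generated by a set of objects depends only on the underlying set, so $\langle \bigoplus_{L\in\Lcal}\gamma(L)\rangle = \langle \{\gamma(L) : L \in \Lcal\}\rangle$, and hence the non-isomorphic line bundles among $\gamma(\Lcal)$ generate $\Dcal^b(Y)$. Assembling: $\Lcal$ generates $\Dx$ $\Rightarrow$ (Lemma~\ref{pushgen}) $\mathbf{R}f_*\Lcal$ generates $\Dcal^b(Y)$ $\Rightarrow$ (Lemma~\ref{fgam}) $\gamma(\Lcal)$ generates $\Dcal^b(Y)$.

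I expect the main obstacle to be the bookkeeping needed to guarantee that Lemma~\ref{fgam} genuinely applies to \emph{every} member of $\Lcal$ simultaneously — i.e. that one can pick a single maximal cone $\sigma$ (or work cone-by-cone using the independence of $(F_m)_*$ from $\sigma$) compatible with $\phi$, and that the twists defining $\mkD(\omega_X)_m$ versus $\mkD_m$ are both covered by the two allowed values of $\mathbf{w}$. The vanishing $\mathbf{R}^{>0}f_*L = 0$ is the other point requiring a line of justification, though it is routine given $f_*\Ocal_X = \Ocal_Y$, $f_*\omega_X=\omega_Y$ and the compatibility of Thomsen's splitting with $f_*$; everything else is formal.
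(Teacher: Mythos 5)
Your argument is correct and follows essentially the same route as the paper: the paper's proof is exactly the combination of $\mathbf{R}f_*\Ocal_X=\Ocal_Y$, $\mathbf{R}f_*\omega_X=\omega_Y$, the commutation $F_m^Y\circ f=f\circ F_m^X$ (which gives the degree-zero concentration of $\mathbf{R}f_*L$ for summands $L$ of the Frobenius pushforwards), and then Lemmas \ref{pushgen} and \ref{fgam}. Your ``alternative'' justification of the vanishing of higher direct images is precisely the paper's argument, and the remaining bookkeeping you flag (choice of cone, the two values of $\mathbf{w}$, discarding repeated summands) is handled by Lemma \ref{fgam} exactly as you describe.
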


\begin{proof}
Note that $\RR f_* \Ocal_{X} = \Ocal_{Y}$ and $\RR f_* \omega_{X} = \omega_{Y}$ \cite[Theorem 9.3.12]{CoLiSc}, so by the equality $F_m^{Y} \circ f = f \circ F_m^{X}$ we have $\RR f_* F_m^{X} \Ocal_{X} = F_m^{Y} \Ocal_{Y}$ and $\RR f_* F_m^{X} \omega_{X} = F_m^{Y} \omega_{Y}$. The result then follows by Lemmas \ref{pushgen} and \ref{fgam}.
\end{proof}

\begin{remark}
A collection of line bundles $\Lcal$ on $X$ is full strong exceptional if and only if the dual collection $\Lcal^{-1} := \{ L^{-1} \mid L \in \Lcal\}$ is full strong exceptional. In the following theorem when we choose $\Lcal \subset \mkD_m \cup \mkD(\omega_X)_m$ and use \emph{\textbf{method 2}} to show that $\Lcal$ is full, we actually compute the $S_{X \times X}$-module chain complex using $\Lcal^{-1}$, as $\Lcal^{-1}$ will be an effective collection whilst $\Lcal$ will not be effective.
\end{remark}

\begin{theorem}\label{thm:FSECfourfolds}
Let $X$ be a smooth toric Fano fourfold. There exists a full strong exceptional collection comprising of line bundles for $X$. A database of these collections can be found in \cite{Prna1}. 
\end{theorem}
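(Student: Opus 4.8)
The plan is to reduce the theorem to a finite verification on the $29$ birationally maximal smooth toric Fano fourfolds of Figure \ref{fig:poset}, and then to propagate the collections down the poset of torus-invariant divisorial contractions. First I would run the computer search underlying Proposition \ref{prop:SECflowDownFourfolds}: for each birationally maximal fourfold $X$, using the non-vanishing cohomology cone construction of Section \ref{subsect:EMS-StrExc} together with the preimage description of Section \ref{subsect:CohConesBlowUps}, I would look for a collection $\Lcal = \{L_0,\dots,L_r\}$ with $r+1 = \rank K_0(X)$ that is strong exceptional on $X$ and whose image stays strong exceptional along every chain of contractions out of $X$; Lemma \ref{nefsec} gives a convenient supply of candidates. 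Whenever possible I would arrange that $\Lcal$, or its dual $\Lcal^{-1}$, lies inside $\mkD_m \cup \mkD(\omega_X)_m$ for a suitable $m$, so that Lemma \ref{fgam} and Proposition \ref{prop:picMapFull} will apply to its pushforwards.

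The heart of the proof is showing that each such $\Lcal$ on a birationally maximal $X$ generates $\Dx$, for which I would use one of the two methods of Sections 4--5. In \emph{Method 1} one computes $\mkD_m^{gen} = \bigcup_{0 \le i \le 4}\mkD_m(\omega_X^{-i})$ for $m$ large enough, and, using the Koszul-type exact sequences of line bundles coming from the primitive collections of $X$, shows inductively that every line bundle of $\mkD_m^{gen}$ lies in $\langle \Lcal \rangle$; Proposition \ref{prop:Method1} then yields fullness. In \emph{Method 2} one forms $Y = \tot(\omega_X)$, writes down the quiver of sections $Q'$ and the superpotential-type data $\mathcal{J}$, builds the conjectural cellular chain complex $S^\bullet$ of $\Pic(X\times X)$-graded $S_{X\times X}$-modules, restricts it to $X$, and checks by machine that it is exact up to $B_{X\times X}$-saturation; one then verifies the embedding hypothesis of Proposition \ref{prop:EmbeddingGivesCokernel} — via the Minkowski-sum criterion of Proposition \ref{prop:SumPolytopeNefEmbedding} when $\Lcal$ is nef, or via Proposition \ref{prop:NonNefEmbedding} (choosing a generic $\theta$ with $\pic(\theta)$ ample and testing $\theta$-stability at the torus-fixed points of $Y_\theta$) when it is not — so that the cokernel of $d_1$ is $\Ocal_\Delta$ and Proposition \ref{prop:GenerationResolutionDiagonal} applies.

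With the birationally maximal cases settled, I would propagate. Given any smooth toric Fano fourfold $X'$ and a chain $X \to \cdots \to X'$ of contractions from a birationally maximal $X$ in Figure \ref{fig:poset}, Proposition \ref{prop:SECflowDownFourfolds} already produces a strong exceptional collection $\Lcal_{X'}$ as the image of $\Lcal$; and since $\Lcal$ was chosen inside $\mkD_m \cup \mkD(\omega_X)_m$, Lemma \ref{fgam} identifies the Picard-lattice map $\gamma$ with the pushforward $f_*$, so Proposition \ref{prop:picMapFull} shows $\Lcal_{X'}$ generates $\Dcal^b(X')$. The finitely many fourfolds not reached in this way (those for which no convenient Frobenius collection descends) I would treat directly by Method 1 or Method 2 on $X'$ itself. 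All resulting collections are recorded in \cite{Prna1}, and executing the associated \emph{Macaulay2} routines verifies, in each of the $124$ cases, that the chosen collection is strong exceptional and full.

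The step I expect to be the main obstacle is the generation check on the birationally maximal fourfolds, and in particular Method 2: one must \emph{guess} the higher terms $S_2, S_3, S_4$ of the bimodule resolution from the toric cell complex of the rolled-up helix algebra — a structure only conjectured to exist (cf. the conjecture following \eqref{eq:ExactSeqClSMod}) — assign consistent signs in the absence of a genuine incidence function (Remark \ref{rem:cellularMapsMultipleAppearances}), and then confirm $B_{X\times X}$-exactness computationally; when this fails one falls back on Method 1, which itself requires locating enough line-bundle exact sequences to exhaust the sometimes large set $\mkD_m^{gen}$. Managing the combinatorial and computational size of these verifications uniformly across all $124$ varieties is where the real work lies.
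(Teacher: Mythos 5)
Your proposal follows essentially the same route as the paper: verify strong exceptionality via the non-vanishing cohomology cones, prove fullness on the birationally maximal fourfolds by Method 1 or Method 2 (with the nef/non-nef dichotomy for the diagonal-resolution argument), choose the maximal collections inside $\mkD_m \cup \mkD(\omega_X)_m$ so that Lemma \ref{fgam} and Proposition \ref{prop:picMapFull} push fullness down the contraction poset, and treat the handful of fourfolds not reached this way (in the paper: $D_6$, $H_{10}$, $M_1$, $M_2$, $M_3$) directly. The only cosmetic difference is that the paper first disposes of $\PP^4$ and the $28$ product fourfolds via Be\u\i linson and Lemma \ref{lem:FSECproducts} (and uses a helix twist for $R_3$ to land back in $\mkD_m \cup \mkD(\omega_X)_m$), rather than running the two methods on the maximal products as you suggest.
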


\begin{proof}
Firstly, if $X$ is the product of smooth toric Fano varieties of a lower dimension, then it has a full strong exceptional collection of line bundles by Lemma \ref{lem:FSECproducts} and \cite{King,Ueha}. These varieties, along with $\PP^4$, account for $29$ of the $124$ fourfolds. Now consider the $26$ birationally-maximal smooth toric Fano fourfolds that are not products. Given a collection of line bundles on each of these varieties, we can check if it is strong exceptional by using the method outlined in Section \ref{subsect:EMS-StrExc}, which has been implemented in the computer package \emph{QuiversToricVarieties} in \emph{Macaulay2} \cite{Prna1,M2}. Once we have a strong exceptional collection, we can use \emph{\textbf{method 1}} or \emph{\textbf{method 2}} to show that it is full; preference has been given to using \emph{\textbf{method 2}}, but when the chain complex of $S_{X \times X}$-modules cannot be computed we use \emph{\textbf{method 1}}.

The collections of line bundles on the following maximal varieties are shown to be full using \emph{\textbf{method 2}}:
\[ E_1,\ I_5,\ I_8,\ I_{15},\ M_5,\ J_1,\ J_2,\ K_1,\ K_2,\ R_1,\ R_3,\ U_2,\ U_7,\ U_8,\ V^4,\ \tilde{V}^4  \]
For all of these varieties except $J_1$ (see Example \ref{ex:J1FanoFourfoldGeneration}) and $V^4$ (see Example \ref{ex:4fold117}), the collections chosen are collections of nef line bundles. The collections on the rest of the maximal fourfolds are shown to be full using \emph{\textbf{method 1}}. The collections for all of the $26$ maximal fourfolds are chosen from $\mkD_m \cup \mkD(\omega_X)_m$ for some $m > 0$, except for $R_1, \ R_2, \ R_3, \ \tilde{V}^4,$ and $V^4$. Note however that for $R_3$, the given collection determines a new full strong exceptional collection chosen from $\mkD_m \cup \mkD(\omega_X)_m$, as shown in Example \ref{ex:FSECthreadsR3}.

For a full strong exceptional collection $\Lcal_X$ chosen from $\mkD_m \cup \mkD(\omega_X)_m$ on a maximal fourfold and a chain of divisorial contractions $X \rightarrow X_1 \rightarrow \cdots \rightarrow X_t$, Proposition \ref{prop:picMapFull} implies that the distinct line bundles $\Lcal_{X_t}$ in the image of $\gamma(\Lcal)$ generates $\Dcal^b(X_t)$, where $\gamma \colon \Pic(X) \rightarrow \Pic(X_t)$. Therefore, it is enough to show that $\Lcal_{X_t}$ is strong exceptional to obtain a full strong exceptional collection on $X_t$. To check that $\Lcal_{X_t}$ is strong exceptional, we can check that $\Lcal$ avoids the pullback of the non-vanishing cohomology cones for $X_t$ via $\gamma$ as detailed in Section \ref{subsect:CohConesBlowUps} -- again, this calculation has been implemented into \emph{QuiversToricVarieties}. By performing these calculations, we obtain full strong exceptional collections of line bundles for $64$ of the Fano fourfolds.

The remaining fourfolds $D_6, \ H_{10}, \ M_1, \ M_2$ and $M_3$ are non-maximal and do not obtain a full strong exceptional collection from a maximal fourfold. There is a divisorial contraction $H_{10} \rightarrow D_6$ and we can choose a strong exceptional collection $\Lcal_{H_{10}} \subset \mkD_m \cup \mkD(\omega_{H_{10}})_m$ such that we get a strong exceptional collection $\Lcal_{D_6}$ on $D_6$ via the map $\gamma$. We use \emph{\textbf{method 2}} to show that $\Lcal_{H_{10}}$ is full, and hence we also obtain a full strong exceptional collection on $D_6$. For varieties $M_1$ and $M_3$ we use \emph{\textbf{method 2}} to show generation; the collection on $M_1$ contains non-nef line bundles (see Example \ref{ex:4fold76}) whilst all of the line bundles chosen on $M_3$ are nef. For variety $M_2$, our chosen strong exceptional collection is shown to be full using \emph{\textbf{method 1}}.      

The calculations in \emph{Macaulay2} and \emph{Sage} \cite{sage} that are required for this proof can be found in the file \cite{Prna3}.
\end{proof}

\begin{remark}
The construction above provides a new proof that there exist full strong exceptional collections on $n$-dimensional toric Fano smooth varieties for $n \leq 3$. In particular, resolutions of $\Ocal_\Delta$ using $S_{X \times X}-$modules have been constructed for each birationally maximal Fano threefold $X$. 
\end{remark}

\begin{example}\label{ex:FSECthreadsR3}
The $106^{th}$ smooth toric Fano fourfold $X := R_3$ has ray generators 
\[
u_0 = \begin{tiny}
\begin{bmatrix}
1 \\ 0 \\ 0 \\0
\end{bmatrix} \end{tiny}, 
u_1 = \begin{tiny} \begin{bmatrix}
0 \\ 1 \\ 0 \\ 0
\end{bmatrix} \end{tiny},
u_2 = \begin{tiny} \begin{bmatrix}
0 \\ -1 \\ 1 \\ -1
\end{bmatrix} \end{tiny},
u_3 = \begin{tiny} \begin{bmatrix}
0 \\ 0 \\ 1 \\ 0
\end{bmatrix} \end{tiny},
u_4 = \begin{tiny} \begin{bmatrix}
0 \\ 0 \\ 0 \\ 1
\end{bmatrix} \end{tiny},
u_5 = \begin{tiny} \begin{bmatrix}
0 \\ 0 \\ -1 \\ 0
\end{bmatrix} \end{tiny},
u_6 = \begin{tiny} \begin{bmatrix}
1 \\ 0 \\ 0 \\ -1
\end{bmatrix} \end{tiny},
u_7 = \begin{tiny} \begin{bmatrix}
-1 \\ 0 \\ 0 \\ 1
\end{bmatrix} \end{tiny},\]
\[u_8 = \begin{tiny} \begin{bmatrix}
-1 \\ 0 \\ 0 \\ 0
\end{bmatrix} \end{tiny}\] for its fan $\Sigma_X$. We take the corresponding divisors $D_2, D_5, D_6, D_7, D_8$ to be a basis for $\Pic(X)$. The collection of nef line bundles on $X$
\[ \begin{small} \Lcal = \left\lbrace
\begin{array}{c|}
 \Ocal_X ( iD_2 +jD_5 +D_6 + D_7 +D_8),\ \Ocal_X ((j-1)D_5 + (i-1)D_7 +(i-1)D_8), \\ \Ocal_X ((i-1)D_2 +2D_5 +j D_7 +j D_8),\ 
\Ocal_X (D_2 + D_5 +i D_7 +jD_8), \\ \Ocal_X (D_2 +2 D_5 +(i-1) D_6 + D_7+j D_8), \ \Ocal_X (2D_2 +i D_5 +j D_7 +2D_8), \\
\Ocal_X (2D_2 +(j-i+1) D_5 +(1-j+i) D_6 + D_7 +jD_8) \\
\end{array} \ 1 \leq i \leq j \leq 2 \right\rbrace \end{small}\] 
is strong exceptional and is shown to be full using \emph{\textbf{method 2}}, but $\Lcal^{-1} \nsubseteq \mkD_m \cup \mkD(\omega_X)_m$. However, following Bridgeland and Stern \cite{BrSt}, we can use $\Lcal$ to define a \emph{helix} $\mathbb{H}_\Lcal$ for $X$:

\begin{definition}
A sequence of coherent sheaves $\mathbb{H} = (E_i)_{i \in \ZZ}$ on $X$ is a \emph{helix} if
\begin{itemize}
\item for each $i \in \ZZ$ the \emph{thread} $(E_{i+1}, \ldots , E_{i+k} )$ is a full exceptional collection,
\item for each $i \in \ZZ$, we have $E_{i-k} = E_i \otimes \omega_X$. 
\end{itemize}
\end{definition}
\noindent If $\{E_0, \ldots , E_{k-1}\}$ is a full exceptional collection then by \cite[Remark 3.2]{BrSt} each thread of $\mathbb{H}$ is a full exceptional collection. Therefore, as $\Lcal$ is full then the sequence $\mathbb{H}_\Lcal$ constructed from $\Lcal$ is indeed a helix. We can choose a thread in $\mathbb{H}_\Lcal$ and twist it by the line bundle $\Ocal_X(-D_5-D_7-D_8)$ to obtain the following full strong exceptional collection:
\[ \begin{small} \Lcal' = \left\lbrace
\begin{array}{c|}
\Ocal_X ( jD_5 + iD_7 + iD_8),\ \Ocal_X (D_2 + iD_7 +jD_8), \\ \Ocal_X (D_2 +jD_5 +D_6+i D_8),\ 
\Ocal_X (D_2 + D_5 +i D_7 +jD_8), \\ \Ocal_X (2D_2 +i D_5 + jD_7+ D_8), \ \Ocal_X (2D_2 +i D_5 + D_6 +jD_8), \\
\Ocal_X (2D_2 +(i+1) D_5 +j D_6 +(i-j+1)D_8) \\
\end{array} \ 0 \leq i \leq j \leq 1 \right\rbrace \end{small}\]
\noindent Now $(\Lcal')^{-1}$ is a non-nef collection that is contained in $\mkD_m \cup \mkD(\omega_X)_m$ for some $m > 0$, which we can use to obtain a full strong exceptional collection on the divisorial contraction $M_4$ via the method outlined in the proof of Theorem \ref{thm:FSECfourfolds}.   
\end{example}

The full strong exceptional collections on each smooth toric Fano variety $X$ determine tilting bundles on the total space of $\omega_X$.

\begin{theorem}
Let $Y = \text{tot}(w_X)$ be the total space of the canonical bundle on an $n$-dimensional smooth toric Fano variety $X$, for $n \leq 4$. Then $Y$ has a tilting bundle that decomposes as a direct sum of line bundles.
\end{theorem}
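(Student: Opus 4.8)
The plan is to show that the pullback of the tilting bundle on $X$ is a tilting bundle on $Y$. Concretely, let $\Lcal=\{L_0,\dots,L_r\}$ be the full strong exceptional collection of line bundles on $X$ furnished by Theorem~\ref{thm:FSECfourfolds} (for $\dim X\le 3$ by \cite{King,Ueha}, recorded in the same database \cite{Prna1}), set $\Tcal:=\bigoplus_{i}L_i$, and let $\pi\colon Y=\tot(\omega_X)\to X$ be the bundle map. I claim $\pi^*\Tcal=\bigoplus_i\pi^*L_i$ is a tilting bundle; it is a direct sum of line bundles by construction, so what must be verified is that $\Ext^k_Y(\pi^*\Tcal,\pi^*\Tcal)=0$ for $k\ne 0$ and that $\pi^*\Tcal$ generates $\Dcal(\Qcoh Y)$.

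First I would compute the self-extensions. The morphism $\pi$ is affine, since $Y$ is the relative spectrum of $\Sym^\bullet\omega_X^{-1}$, so $\mathbf{R}\pi_*=\pi_*$ and $\pi_*\Ocal_Y=\bigoplus_{m\ge 0}\omega_X^{-m}$. As the summands of $\pi^*\Tcal$ are locally free, the projection formula then yields
\[
\mathbf{R}\Hom_Y(\pi^*\Tcal,\pi^*\Tcal)\;\cong\;\mathbf{R}\Gamma\Bigl(X,\;\Tcal^\vee\otimes\Tcal\otimes\textstyle\bigoplus_{m\ge 0}\omega_X^{-m}\Bigr)\;\cong\;\bigoplus_{m\ge 0}\mathbf{R}\Hom_X(\Tcal,\Tcal\otimes\omega_X^{-m}),
\]
so it suffices to prove $\Ext^k_X(L_i,L_j\otimes\omega_X^{-m})=0$ for all $k>0$, all $0\le i,j\le r$ and all $m\ge 0$. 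For $m=0$ this is precisely the strong exceptionality of $\Lcal$. For $m\ge 1$ the relevant line bundle is $\Ocal_X(D_j-D_i-mK_X)$; since $-K_X$ is ample and the set $\{D_j-D_i\}$ is finite, there is an $m_0$ such that $D_j-D_i-mK_X$ is ample for all $i,j$ and all $m\ge m_0$, and for those $m$ the higher cohomology vanishes by the standard vanishing theorem for ample line bundles on the smooth projective toric variety $X$ (see \cite{CoLiSc}). Only the finitely many intermediate twists $1\le m<m_0$ then remain, and these are verified directly via the non-vanishing cohomology cone method of Section~\ref{subsect:EMS-StrExc}; this additional vanishing is part of the data that the collections in \cite{Prna1} are selected to satisfy.

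For generation, I would use that $\pi_*=\mathbf{R}\pi_*$ is exact and faithful, hence conservative on $\Dcal(\Qcoh Y)$. If $\Fcal\in\Dcal(\Qcoh Y)$ satisfies $\Hom_Y^\bullet(\pi^*\Tcal,\Fcal)=0$, then adjunction gives $\Hom_X^\bullet(\Tcal,\mathbf{R}\pi_*\Fcal)=0$; since $\Tcal$, being a tilting bundle on the smooth projective variety $X$, is a compact generator of $\Dcal(\Qcoh X)$, this forces $\mathbf{R}\pi_*\Fcal=0$, and conservativity then gives $\Fcal=0$. Thus $\pi^*\Tcal$ is a generator, and it is compact because it is a vector bundle on the Noetherian scheme $Y$; since $Y$ is smooth, its thick closure is $\Dcal^b(\coh Y)$, so $\pi^*\Tcal$ is a tilting bundle in the sense of the paper. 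Its endomorphism algebra $\End_Y(\pi^*\Tcal)\cong\bigoplus_{m\ge 0}\Hom_X(\Tcal,\Tcal\otimes\omega_X^{-m})$ is the rolled-up helix algebra of $A=\End_X(\Tcal)$; it is module-finite over the anticanonical ring $\bigoplus_{m\ge 0}H^0(X,\omega_X^{-m})$, hence Noetherian, which gives the derived equivalence. Finally, when $X$ is a product of lower-dimensional toric Fano varieties, Lemma~\ref{lem:FSECproducts} supplies $\Lcal$, and the twisted $\Ext$-vanishing on $X_1\times X_2$ follows from that on the factors by the Künneth formula.

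The main obstacle is the $\Ext$-vanishing for the intermediate anticanonical twists $1\le m<m_0$: it does not follow formally from strong exceptionality (the line bundles $L_i^{-1}\otimes L_j$ need be neither nef nor anti-nef), and it is the reason the collections in \cite{Prna1} must be chosen with care. Establishing it is the finite computation indicated above; the remaining ingredients — the projection-formula reduction, the eventual ampleness of $D_j-D_i-mK_X$, and the generation argument via conservativity of $\pi_*$ — are formal.
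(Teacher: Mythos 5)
Your proposal is correct and follows essentially the same route as the paper: pull back the tilting bundle along $\pi$, reduce the $\Ext$-vanishing via adjunction and the projection formula to the vanishing of $H^{k}(X,L_i^{-1}\otimes L_j\otimes\omega_X^{-m})$ for $k>0$, handle all sufficiently large twists by eventual positivity (the paper uses nefness plus Demazure vanishing where you use ampleness) and the finitely many intermediate twists by the non-vanishing cohomology cone computations in \cite{Prna1}, while your conservativity-of-$\pi_*$ argument for generation is exactly the argument the paper invokes by citing the proof of \cite[Theorem 3.6]{BrSt}.
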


\begin{proof}
Let $\pi : Y \rightarrow X$ be the bundle map and $\{ L_0, \ldots , L_r \}$ be a full strong exceptional collection of line bundles on $X$. The pullback $\pi^* (E)$ of the bundle $E := \bigoplus_{i = 0}^r L_i$ generates $\Dcal^b(Y)$ (see the proof for \cite[Theorem 3.6]{BrSt}) so it enough to show that $\Hom^i(\pi^* (E) , \pi^* (E)) = 0$ for $i > 0$. By adjunction and the projection formula this is equivalent to showing that 
\begin{equation}
\bigoplus_{i \neq j} \Hom^k (L_i , L_j \otimes \pi_* (\Ocal_Y)) \cong \bigoplus_{i \neq j} H^k (X ,L_i^{-1} \otimes L_j \otimes \pi_* (\Ocal_Y)) = 0 \text{ for } k \neq 0.
\end{equation}
Now $$\pi_* (\Ocal_Y) = \bigoplus_{p \geq 0} \omega_X^{-p}$$ and as $\omega_X^{-1}$ is ample, there is some positive integer $T$ such that for all $t \geq T$, $L_i^{-1} \otimes L_j \otimes \omega_X^{-t}$ is nef for all $0 \leq i, j \leq r$, in which case $H^k(X,L_i^{-1} \otimes L_j \otimes \omega_X^{-t}) = 0$ for $k > 0$ by Demazure vanishing. Hence $\pi^*(E)$ is a tilting bundle if
\begin{equation}
H^k (X ,L_i^{-1} \otimes L_j \otimes \omega_X^{-t}) = 0 \text{ for } k \neq 0, \ 0 \leq i \neq j \leq r,\ 0 \leq t < T.
\end{equation}
The non-vanishing cohomology cones in $\Pic(X)$ can then be used to show that the line bundle $L_i^{-1} \otimes L_j \otimes \omega_X^{-t}$ has vanishing higher cohomology for $0 \leq i \neq j \leq r,\ 0 \leq t < T$, as implemented in \emph{QuiversToricVarieties}.

Now let $X =: X_0 \rightarrow X_1 \rightarrow \cdots X_d$ be a chain of divisorial contractions between smooth toric Fano varieties and assume that the collection $\Lcal$ determines a full strong exceptional collection on each $X_k, \ 0 \leq k \leq d$ via the divisorial contractions, as detailed in Theorem \ref{thm:FSECfourfolds}. For each variety $X_k$ with collection $\Lcal_{X_k}$, we have an integer $T_{k} \geq 0$ such that $L_i^{-1} \otimes L_j \otimes \omega_{X_k}^{-T_k}$ is nef for all $L_i,L_j \in \Lcal_{X_k}$. Define $T = \max(T_0, \ldots , T_d)$. Then we can check simultaneously that each $\Lcal_{X_k}$ determines a tilting bundle on $\tot(\omega_{X_k})$ by considering whether the line bundles $L_i^{-1} \otimes L_j \otimes \omega_X^{-t}$ avoid the preimage in $\Pic(X)$ of all non-vanishing cohomology cones for $X_1, \ldots , X_d$ via the Picard lattice maps, for all $0 \leq i \neq j \leq r, \ 0 \leq t < T$. Again, this calculation can be performed in \emph{QuiversToricVarieties}.   
\end{proof}

\begin{remark}
Using the full strong exceptional collections of line bundles $\Lcal_X$ on a smooth toric Fano variety $X$ given by King \cite{King},  Uehara \cite{Ueha} and Theorem \ref{thm:FSECfourfolds}, we find that 
\begin{enumerate}
\item the minimal $T$ such that $L_i^{-1} \otimes L_j \otimes \omega_X^{-T}$ in nef for all smooth toric Fano surfaces and all $L_i,L_j \in \Lcal_X$ is $T=1$;
\item the minimal $T$ such that $L_i^{-1} \otimes L_j \otimes \omega_X^{-T}$ in nef for all smooth toric Fano threefolds and all $L_i,L_j \in \Lcal_X$ is $T=2$;
\item the minimal $T$ such that $L_i^{-1} \otimes L_j \otimes \omega_X^{-T}$ in nef for all smooth toric Fano fourfolds and all $L_i,L_j \in \Lcal_X$ is $T=3$.
\end{enumerate}
\end{remark}

\section{Appendix A}
\noindent The tables below contain details on the construction of the full strong exceptional collections of line bundles $\Lcal$ on smooth $n$-dimensional toric Fano varieties, for $2 \leq n \leq 4$. If $X = \PP^n$ then the full strong exceptional collection is provided by Be\u\i linson \cite{Beil}, whilst if $X$ is a product of smooth toric Fano varieties then a full strong exceptional collection is given for $X$ by Lemma \ref{lem:FSECproducts}. 

A \emph{maximal} variety is a variety that is birationally maximal, as explained in Section \ref{subsect:ToricGeometry}. The sets $\mkD_m$ and $\mkD(\omega_X)_m$ are the Frobenius pushforwards of $\Ocal_X$ and $\omega_X$ respectively as defined in Section \ref{frob}, for some integer $m > 0$. \textbf{\emph{Method 1}} and \textbf{\emph{method 2}} are described in Section \ref{subsect:Method 1} and Section \ref{subsect:Method 2} respectively, and are used where stated to show that the collection $\Lcal$ is full. The description ``collection from ($j$)" for an $n$-dimensional variety $X$ means that there is a chain of torus-invariant divisorial contractions $X_0 \rightarrow X_1 \rightarrow \cdots \rightarrow X_0 := X$ between smooth $n$-dimensional toric Fano varieties, where $X_0$ is the $j^{th}$ $n$-dimensional variety. The full strong exceptional collection on $X$ is then given by the non-isomorphic line bundles in the image of the full strong exceptional collection for $X_0$ under the induced Picard lattice map $\gamma_{(0 \rightarrow t)} \colon \Pic(X_0) \rightarrow \Pic(X_t)$ (see Section \ref{subsect:CohConesBlowUps} and Theorem \ref{thm:FSECfourfolds} for details).        
\subsection{Toric del Pezzo Surfaces}

$$ $$
\begin{longtable}{|c||l||c|}

\hline
\endhead
\hline
\endfoot

\hspace{1cm} &  Variety  & \hspace{1.0cm} Details of the Full Strong Exceptional Collection \hspace{1.0cm} \\ \hline
\hline
(0) &  $\PP^2$ & Be\u\i linson's collection \\ \hline
(1) & $\PP^{1} \times \PP^{1}$ & product of smooth toric Fano varieties \\ \hline
(2) & $S_1$, $Bl_1 (\PP^2)$ & collection from (4) \\ \hline
(3) & $S_2$, $Bl_2 (\PP^2)$ & collection from (4)\\ \hline
(4) & $S_3$, $Bl_3 (\PP^2)$ & Maximal variety. $\Lcal = \mkD_m$. Generation: \textbf{\emph{Method 2}} \\ \hline
\end{longtable}
$$ $$
\subsection{Smooth Toric Fano Threefolds}
$$ $$
\begin{longtable}{|c||l||c|}

\hline
\endhead
\hline
\endfoot

\hspace{1cm} & Variety  & \hspace{1.0cm} Details of the Full Strong Exceptional Collection \hspace{1.0cm} \\ \hline
\hline 
(0) & $\PP^3$ & Be\u\i linson's collection \\ 
\hline 
(1) & $\Bcal_1$ & collection from (10) \\ 
\hline 
(2) & $\Bcal_2$ & collection from (17) \\ 
\hline 
(3) & $\Bcal_3$ & collection from (17) \\ 
\hline 
(4) & $\Bcal_4$, $\PP^2 \times \PP^1$ & product of smooth toric Fano varieties \\ 
\hline 
(5) & $\Ccal_1$ & collection from (17) \\ 
\hline 
(6) & $\Ccal_2$ & collection from (17) \\ 
\hline 
(7) & $\Ccal_3$, $\PP^1 \times \PP^1 \times \PP^1$ & product of smooth toric Fano varieties \\ 
\hline 
(8) & $\Ccal_4$, $S_1 \times \PP^1$ & product of smooth toric Fano varieties \\ 
\hline 
(9) & $\Ccal_5$ & collection from (17) \\ 
\hline 
(10) & $\Dcal_1$ & Maximal variety. $\Lcal \subset \mkD_m$. Generation: \textbf{\emph{Method 2}} \\ 
\hline 
(11) & $\Dcal_2$ & collection from (17) \\ 
\hline 
(12) & $\Ecal_1$ & collection from (17) \\ 
\hline 
(13) & $\Ecal_2$ & collection from (17) \\ 
\hline 
(14) & $\Ecal_3$, $S_2 \times \PP^1$ & product of smooth toric Fano varieties \\ 
\hline
(15) & $\Ecal_4$ & collection from (17) \\ 
\hline 
(16) & $\Fcal_1$, $S_3 \times \PP^1$ & Maximal, product of smooth toric Fano varieties \\ 
\hline 
(17) & $\Fcal_2$ & Maximal variety. $\Lcal = \mkD_m$. Generation: \textbf{\emph{Method 2}} \\ 
\hline 
\end{longtable}

$$ $$
\subsection{Smooth Toric Fano Fourfolds}

$$ $$

\begin{longtable}{|c||l||c|}

\hline
\endhead
\hline
\endfoot

\hspace{1cm} & Variety & \hspace{0.5cm} Details of the Full Strong Exceptional Collection \hspace{0.5cm} \\ \hline
\hline 
(0) & $\PP^4$ & Be\u\i linson's collection \\ \hline
(1) & $B_1$ & collection from (10) \\ \hline
(2) & $B_2$ & collection from (65) \\ \hline
(3) & $B_3$ & collection from (114) \\ \hline
(4) & $B_4$, $\PP^1 \times \PP^3$ & product of smooth toric Fano varieties \\ \hline
(5) & $B_5$ & collection from (114) \\ \hline
(6) & $C_1$ & collection from (100) \\ \hline
(7) & $C_2$ & collection from (114) \\ \hline
(8) & $C_3$ & collection from (80) \\ \hline
(9) & $C_4$, $\PP^2 \times \PP^2$ & product of smooth toric Fano varieties \\ \hline
(10) & $E_1$ & Maximal variety. $\Lcal \subset \mkD_m$. Generation: \textbf{\emph{Method 2}} \\ \hline
(11) & $E_2$ & collection from (65) \\ \hline
(12) & $E_3$ & collection from (109) \\ \hline
(13) & $D_1$ & collection from (100) \\ \hline
(14) & $D_2$ & collection from (100) \\ \hline
(15) & $D_3$ & collection from (101) \\ \hline
(16) & $D_4$ & collection from (65) \\ \hline
(17) & $D_5$, $\PP^1 \times \Bcal_1$ & product of smooth toric Fano varieties \\ \hline
(18) & $D_6$ & collection from (109) \\ \hline
(19) & $D_7$ & collection from (115) \\ \hline
(20) & $D_8$ & collection from (109) \\ \hline
(21) & $D_9$ & collection from (107) \\ \hline
(22) & $D_{10}$ & collection from (114) \\ \hline
(23) & $D_{11}$ & collection from (114) \\ \hline
(24) & $D_{12}$, $\PP^1 \times \Bcal_2$ & product of smooth toric Fano varieties \\ \hline
(25) & $D_{13}$, $\PP^1 \times \PP^1 \times \PP^2$ & product of smooth toric Fano varieties \\ \hline
(26) & $D_{14}$, $\PP^1 \times \Bcal_3$ & product of smooth toric Fano varieties \\ \hline
(27) & $D_{15}$, $S_1 \times \PP^2$ & product of smooth toric Fano varieties \\ \hline
(28) & $D_{16}$ & collection from (47) \\ \hline
(29) & $D_{17}$ & collection from (114) \\ \hline
(30) & $D_{18}$ & collection from (100) \\ \hline
(31) & $D_{19}$ & collection from (114) \\ \hline
(32) & $G_1$ & collection from (81) \\ \hline
(33) & $G_2$ & collection from (75) \\ \hline
(34) & $G_3$ & collection from (82) \\ \hline
(35) & $G_4$ & collection from (80) \\ \hline
(36) & $G_5$ & collection from (82) \\ \hline
(37) & $G_6$ & collection from (114) \\ \hline
(38) & $H_1$ & collection from (100) \\ \hline
(39) & $H_2$ & collection from (101) \\ \hline
(40) & $H_3$ & collection from (100) \\ \hline
(41) & $H_4$ & collection from (109) \\ \hline
(42) & $H_5$ & collection from (109) \\ \hline
(43) & $H_6$ & collection from (101) \\ \hline
(44) & $H_7$ & collection from (100) \\ \hline
(45) & $H_8$, $S_2 \times \PP^2$ & product of smooth toric Fano varieties \\ \hline
(46) & $H_9$ & collection from (109) \\ \hline
(47) & $H_{10}$ & Non-maximal. $\Lcal \subset \mkD_m \cup \mkD (\omega_X)_m$. Generation: \textbf{\emph{Method 2}} \\ \hline
(48) & $L_1$ & collection from (108) \\ \hline
(49) & $L_2$ & collection from (108) \\ \hline
(50) & $L_3$ & collection from (109) \\ \hline
(51) & $L_4$ & collection from (109) \\ \hline
(52) & $L_5$, $\PP^1 \times \Ccal_1$ & product of smooth toric Fano varieties \\ \hline
(53) & $L_6$, $\PP^1 \times \Ccal_2$ & product of smooth toric Fano varieties \\ \hline
(54) & $L_7$, $S_1 \times S_1$ & product of smooth toric Fano varieties \\ \hline
(55) & $L_8$, $\PP^1 \times \PP^1 \times \PP^1 \times \PP^1$ & product of smooth toric Fano varieties \\ \hline
(56) & $L_9$, $S_1 \times \PP^1 \times \PP^1$ & product of smooth toric Fano varieties \\ \hline
(57) & $L_{10}$ & collection from (114) \\ \hline
(58) & $L_{11}$, $\PP^1 \times \Ccal_5$ & product of smooth toric Fano varieties \\ \hline
(59) & $L_{12}$ & collection from (114) \\ \hline
(60) & $L_{13}$ & collection from (115) \\ \hline
(61) & $I_1$ & Maximal variety. $\Lcal \subset \mkD_m$. Generation: \textbf{\emph{Method 1}} \\ \hline
(62) & $I_2$ & Maximal variety. $\Lcal \subset \mkD_m$. Generation: \textbf{\emph{Method 1}} \\ \hline
(63) & $I_3$ & Maximal variety. $\Lcal \subset \mkD_m$. Generation: \textbf{\emph{Method 1}} \\ \hline
(64) & $I_4$ & Maximal variety. $\Lcal \subset \mkD_m$. Generation: \textbf{\emph{Method 1}} \\ \hline
(65) & $I_5$ & Maximal variety. $\Lcal \subset \mkD_m$. Generation: \textbf{\emph{Method 2}} \\ \hline
(66) & $I_6$ & collection from (114) \\ \hline
(67) & $I_7$, $\PP^1 \times \Dcal_1$ & Maximal, product of smooth toric Fano varieties \\ \hline
(68) & $I_8$ & Maximal variety. $\Lcal \subset \mkD_m \cup \mkD (\omega_X)_m$. Generation: \textbf{\emph{Method 2}} \\ \hline
(69) & $I_9$ & collection from (115) \\ \hline
(70) & $I_{10}$ & collection from (110) \\ \hline
(71) & $I_{11}$ & collection from (107) \\ \hline
(72) & $I_{12}$ & collection from (114) \\ \hline
(73) & $I_{13}$, $\PP^1 \times \Dcal_2$ & product of smooth toric Fano varieties \\ \hline
(74) & $I_{14}$ & collection from (109) \\ \hline
(75) & $I_{15}$ & Maximal variety. $\Lcal \subset \mkD_m$. Generation: \textbf{\emph{Method 2}} \\ \hline
(76) & $M_1$ & Non-maximal. $\Lcal \nsubseteq \mkD_m \cup \mkD (\omega_X)_m$. Generation: \textbf{\emph{Method 2}} \\ \hline
(77) & $M_2$ & Non-maximal. $\Lcal \nsubseteq \mkD_m \cup \mkD (\omega_X)_m$. Generation: \textbf{\emph{Method 1}} \\ \hline
(78) & $M_3$ & Non-maximal. $\Lcal \subset \mkD_m \cup \mkD (\omega_X)_m$. Generation: \textbf{\emph{Method 2}} \\ \hline
(79) & $M_4$ & collection from (106) \\ \hline
(80) & $M_5$ & Maximal variety. $\Lcal \subset \mkD_m \cup \mkD (\omega_X)_m$. Generation: \textbf{\emph{Method 2}} \\ \hline
(81) & $J_1$ & Maximal variety. $\Lcal \subset \mkD_m \cup \mkD (\omega_X)_m$. Generation: \textbf{\emph{Method 2}} \\ \hline
(82) & $J_2$ & Maximal variety. $\Lcal \subset \mkD_m \cup \mkD (\omega_X)_m$. Generation: \textbf{\emph{Method 2}} \\ \hline
(83) & $Q_1$ & collection from (108) \\ \hline
(84) & $Q_2$ & collection from (109) \\ \hline
(85) & $Q_3$ & collection from (108) \\ \hline
(86) & $Q_4$ & collection from (110) \\ \hline
(87) & $Q_5$ & collection from (109) \\ \hline
(88) & $Q_6$, $\PP^1 \times \Ecal_1$ & product of smooth toric Fano varieties \\ \hline
(89) & $Q_7$ & collection from (114) \\ \hline
(90) & $Q_8$, $\PP^1 \times \Ecal_2$ & product of smooth toric Fano varieties \\ \hline
(91) & $Q_9$ & collection from (110) \\ \hline
(92) & $Q_{10}$, $S_1 \times S_2$ & product of smooth toric Fano varieties \\ \hline
(93) & $Q_{11}$, $\PP^1 \times \PP^1 \times S_2$ & product of smooth toric Fano varieties \\ \hline
(94) & $Q_{12}$ & collection from (114) \\ \hline
(95) & $Q_{13}$ & collection from (108) \\ \hline
(96) & $Q_{14}$ & collection from (109) \\ \hline
(97) & $Q_{15}$, $\PP^1 \times \Ecal_4$ & product of smooth toric Fano varieties \\ \hline
(98) & $Q_{16}$ & collection from (115) \\ \hline
(99) & $Q_{17}$ & collection from (114) \\ \hline
(100) & $K_1$ & Maximal variety. $\Lcal \subset \mkD_m$. Generation: \textbf{\emph{Method 2}} \\ \hline
(101) & $K_2$ & Maximal variety. $\Lcal \subset \mkD_m$. Generation: \textbf{\emph{Method 2}} \\ \hline
(102) & $K_3$ & collection from (109) \\ \hline
(103) & $K_4$, $\PP^2 \times S_3$ & product of smooth toric Fano varieties \\ \hline
(104) & $R_1$ & Maximal variety. $\Lcal \nsubseteq \mkD_m \cup \mkD (\omega_X)_m$. Generation: \textbf{\emph{Method 2}} \\ \hline
(105) & $R_2$ & Maximal variety. $\Lcal \nsubseteq \mkD_m \cup \mkD (\omega_X)_m$. Generation: \textbf{\emph{Method 1}} \\ \hline
(106) & $R_3$ & Maximal variety. See Remark (\ref{ex:FSECthreadsR3}). Generation: \textbf{\emph{Method 2}} \\ \hline
(107) & & Maximal variety. $\Lcal \subset \mkD_m$. Generation: \textbf{\emph{Method 1}} \\ \hline
(108) & $U_1$ & Maximal variety. $\Lcal = \mkD_m$. Generation: \textbf{\emph{Method 1}} \\ \hline
(109) & $U_2$ & Maximal variety. $\Lcal = \mkD_m$. Generation: \textbf{\emph{Method 2}} \\ \hline
(110) & $U_3$ & Maximal variety. $\Lcal = \mkD_m$. Generation: \textbf{\emph{Method 1}} \\ \hline
(111) & $U_4$, $S_1 \times S_3$ & product of smooth toric Fano varieties \\ \hline
(112) & $U_5$, $\PP^1 \times PP^1 \times S_3$ & product of smooth toric Fano varieties \\ \hline
(113) & $U_6$, $\PP^1 \times \Fcal_2$ & Maximal, product of smooth toric Fano varieties \\ \hline
(114) & $U_7$ & Maximal variety. $\Lcal = \mkD_m$. Generation: \textbf{\emph{Method 2}} \\ \hline
(115) & $U_8$ & Maximal variety. $\Lcal = \mkD_m$. Generation: \textbf{\emph{Method 2}} \\ \hline
(116) & ${\widetilde{V}}^{4}$ & Maximal variety. $\Lcal \nsubseteq \mkD_m \cup \mkD (\omega_X)_m$. Generation: \textbf{\emph{Method 2}} \\ \hline
(117) & $V^{4}$ & Maximal variety. $\Lcal \nsubseteq \mkD_m \cup \mkD (\omega_X)_m$. Generation: \textbf{\emph{Method 2}} \\ \hline
(118) & $S_2\times S_2$ & product of smooth toric Fano varieties \\ \hline
(119) & $S_2\times S_3$ & product of smooth toric Fano varieties \\ \hline
(120) & $S_3\times S_3$ & Maximal, product of smooth toric Fano varieties \\ \hline
(121) & $Z_1$ & collection from (123)\\ \hline
(122) & $Z_2$ & Maximal variety. $\Lcal \subset \mkD_m \cup \mkD (\omega_X)_m$. Generation: \textbf{\emph{Method 1}} \\ \hline
(123) & $W$ & Maximal variety. $\Lcal \subset \mkD_m \cup \mkD (\omega_X)_m$. Generation: \textbf{\emph{Method 1}}  \\ \hline

\end{longtable}

\thispagestyle{empty}
\begin{figure}
\begin{sideways}
\begin{tikzpicture}[every path/.style={>=latex},every node/.style={draw,font =\tiny,inner sep=1.5pt, minimum size=0.1cm}]
  \node             (0) at (0,0)  { $0$ };
   \pgfmathtruncatemacro{\S}{1};
   \pgfmathtruncatemacro{\T}{9};
   \tikzstyle{every node}=[draw,shape=circle];
   \foreach \n in {\S,...,\T}
  {  \pgfmathtruncatemacro{\nn}{\n-\S};
        \node [rectangle,draw,fill=white,inner sep=1.5pt,minimum size=1.8mm] (\n) at ( \nn*2.5-10,1)  {\tiny \n};
    };
   \pgfmathtruncatemacro{\S}{10};
   \pgfmathtruncatemacro{\T}{37};
   \foreach \n in {\S,...,\T}
  {  \pgfmathtruncatemacro{\nn}{\n-\S};
        \node [rectangle,draw,fill=white,inner sep=1.5pt,minimum size=1.8mm] (\n) at ( \nn*0.74 -10,4)  {\tiny \n};
    };
   \pgfmathtruncatemacro{\S}{38};
   \pgfmathtruncatemacro{\T}{82};
   \foreach \n in {\S,...,\T}
  {  \pgfmathtruncatemacro{\nn}{\n-\S};
        \node [rectangle,fill=white,inner sep=1.5pt,minimum size=1.8mm] (\n) at ( \nn*0.43 -10,8)  {\tiny \n};
    };
       \pgfmathtruncatemacro{\S}{83};
   \pgfmathtruncatemacro{\T}{107};
   \foreach \n in {\S,...,\T}
  {  \pgfmathtruncatemacro{\nn}{\n-\S};
        \node [rectangle,draw,fill=white,inner sep=1.2pt,minimum size=1.8mm] (\n) at ( \nn*0.8 -10,12)  {\tiny \n};
    };
       \pgfmathtruncatemacro{\S}{108};
   \pgfmathtruncatemacro{\T}{118};
   \foreach \n in {\S,...,\T}
  {  \pgfmathtruncatemacro{\nn}{\n-\S};
        \node [rectangle,draw,fill=white,inner sep=1.2pt,minimum size=1.8mm] (\n) at ( \nn*2 -10,15)  {\tiny \n};
    };
  \node[rectangle,draw,fill=white,inner sep=1.2pt,minimum size=1.8mm] (119) at (0,16) { \tiny $119$ };
  \node[rectangle,draw,fill=white,inner sep=1.2pt,minimum size=1.8mm] (120) at (0,17)  { \tiny $120$ };
  \node[rectangle,draw,fill=white,inner sep=1.2pt,minimum size=1.8mm] (121) at (9.44,8) { \tiny $121$ };
  \node[rectangle,draw,fill=white,inner sep=1.2pt,minimum size=1.8mm] (122) at (10,8) { \tiny $122$ };
  \node[rectangle,draw,fill=white,inner sep=1.2pt,minimum size=1.8mm] (123) at (10,12) { \tiny $123$ };

\pgfmathtruncatemacro{\S}{38};
   \pgfmathtruncatemacro{\T}{82};
   \foreach \n in {\S,...,\T}
  {  \pgfmathtruncatemacro{\nn}{\n-\S};
        \node [inner sep=0pt,minimum size=0mm] (a\n) at ( \nn*0.43 -10,8.15)  {};
    };
\pgfmathtruncatemacro{\S}{38};
   \pgfmathtruncatemacro{\T}{82};
   \foreach \n in {\S,...,\T}
  {  \pgfmathtruncatemacro{\nn}{\n-\S};
        \node [inner sep=0pt,minimum size=0mm] (b\n) at ( \nn*0.43 -10,7.85)  {};
    };    
    
\draw[->] (3) edge (0);
\draw[->] (5) edge (0);
\draw[->] (7) edge (0);
\draw[->] (10) edge (1);
\draw[->] (10) edge (2);
\draw[->] (11) edge (2);
\draw[->] (11) edge (3);
\draw[->] (12) edge (3);  
\draw[->] (12) edge (4);
\draw[->] (12) edge (5);
\draw[->] (14) edge (6);
\draw[->] (16) edge (2);
\draw[->] (18) edge (8);
\draw[->] (20) edge (3);
\draw[->] (20) edge (7);
\draw[->] (22) edge (3);  
\draw[->] (22) edge (5);
\draw[->] (23) edge (5);
\draw[->] (23) edge (7);
\draw[->] (24) edge (4);
\draw[->] (26) edge (4);
\draw[->] (27) edge (9);
\draw[->] (28) edge (8);
\draw[->] (29) edge (5);  
\draw[->] (30) edge (6);
\draw[->] (31) edge (5);
\draw[->] (31) edge (7);
\draw[->] (33) edge (6);
\draw[->] (33) edge (7);
\draw[->] (34) edge (8);
\draw[->] (35) edge (7);
\draw[->] (35) edge (8); 
\draw[->] (36) edge (8);
\draw[->] (36) edge (9);
\draw[->] (37) edge (9);
\draw[->] (38) edge (14);
\draw[->] (39) edge (15);
\draw[->] (40) edge (13);
\draw[->] (40) edge (17);
\draw[->] (b41) edge (20);  
\draw[->] (b41) edge (21);
\draw[->] (b42) edge (18);
\draw[->] (b42) edge (24);
\draw[->] (b42) edge (28);
\draw[->] (43) edge (15);
\draw[->] (43) edge (21);
\draw[->] (b44) edge (14);
\draw[->] (b44) edge (17);  
\draw[->] (b44) edge (30);
\draw[->] (b45) edge (25);
\draw[->] (b45) edge (27);
\draw[->] (46) edge (12);
\draw[->] (b46) edge (20);
\draw[->] (b46) edge (24);
\draw[->] (b46) edge (31);
\draw[->] (b47) edge (21);  
\draw[->] (b47) edge (28);
\draw[->] (49) edge (19);
\draw[->] (50) edge (18);
\draw[->] (51) edge (20);
\draw[->] (51) edge (22);
\draw[->] (51) edge (23);
\draw[->] (53) edge (24);
\draw[->] (53) edge (26); 
\draw[->] (54) edge (27);
\draw[->] (56) edge (25);
\draw[->] (57) edge (22);
\draw[->] (57) edge (29);
\draw[->] (58) edge (26);
\draw[->] (b59) edge (23);
\draw[->] (b59) edge (29);
\draw[->] (b59) edge (31);  
\draw[->] (60) edge (19);
\draw[->] (61) edge (16);
\draw[->] (b62) edge (13);
\draw[->] (b62) edge (18);
\draw[->] (b63) edge (15);
\draw[->] (b63) edge (20);
\draw[->] (64) edge (22);
\draw[->] (b65) edge (11);  
\draw[->] (b65) edge (16);
\draw[->] (b65) edge (22);
\draw[->] (66) edge (22);
\draw[->] (66) edge (23);
\draw[->] (b67) edge (17);
\draw[->] (b67) edge (24);
\draw[->] (68) edge (20);
\draw[->] (68) edge (28);  
\draw[->] (68) edge (35);
\draw[->] (b69) edge (19);
\draw[->] (b69) edge (26);
\draw[->] (b70) edge (18);
\draw[->] (b70) edge (27);
\draw[->] (b70) edge (36);
\draw[->] (b71) edge (21);
\draw[->] (b71) edge (24); 
\draw[->] (72) edge (23);
\draw[->] (72) edge (27);
\draw[->] (72) edge (31);
\draw[->] (72) edge (37);
\draw[->] (b73) edge (24);
\draw[->] (b73) edge (25);
\draw[->] (b73) edge (26);
\draw[->] (b74) edge (12);  
\draw[->] (b74) edge (22);
\draw[->] (b74) edge (26);
\draw[->] (b74) edge (29);
\draw[->] (75) edge (30);
\draw[->] (75) edge (31);
\draw[->] (75) edge (33);
\draw[->] (78) edge (34);
\draw[->] (78) edge (36);  
\draw[->] (79) edge (34);
\draw[->] (80) edge (35);
\draw[->] (80) edge (37);
\draw[->] (81) edge (32);
\draw[->] (81) edge (34);
\draw[->] (82) edge (34);
\draw[->] (82) edge (36);
\draw[->] (83) edge (49);  
\draw[->] (84) edge (41);
\draw[->] (84) edge (51);
\draw[->] (85) edge (48);
\draw[->] (85) edge (52);
\draw[->] (86) edge (50);
\draw[->] (87) edge (42);
\draw[->] (87) edge (50);
\draw[->] (87) edge (53);
\draw[->] (88) edge (53);
\draw[->] (89) edge (54);
\draw[->] (90) edge (52);
\draw[->] (90) edge (56);
\draw[->] (91) edge (50);
\draw[->] (91) edge (54);
\draw[->] (91) edge (70);
\draw[->] (92) edge (a45);  
\draw[->] (92) edge (54);
\draw[->] (92) edge (56);
\draw[->] (93) edge (55);
\draw[->] (93) edge (56);
\draw[->] (94) edge (57);
\draw[->] (94) edge (59);
\draw[->] (94) edge (66);
\draw[->] (95) edge (49);  
\draw[->] (95) edge (52);
\draw[->] (95) edge (60);
\draw[->] (96) edge (a46);
\draw[->] (96) edge (51);
\draw[->] (96) edge (53);
\draw[->] (96) edge (59);
\draw[->] (96) edge (74);
\draw[->] (97) edge (53);  
\draw[->] (97) edge (56);
\draw[->] (97) edge (58);
\draw[->] (97) edge (73);
\draw[->] (98) edge (58);
\draw[->] (98) edge (60);
\draw[->] (98) edge (69);
\draw[->] (99) edge (54);
\draw[->] (99) edge (59); 
\draw[->] (99) edge (72);
\draw[->] (100) edge (a38);
\draw[->] (100) edge (a40);
\draw[->] (100) edge (a44);
\draw[->] (101) edge (a39);
\draw[->] (101) edge (a43);
\draw[->] (102) edge (a41);
\draw[->] (102) edge (a42);  
\draw[->] (102) edge (a46);
\draw[->] (102) edge (a47);
\draw[->] (103) edge (a45);
\draw[->] (104) edge (78);
\draw[->] (105) edge (77);
\draw[->] (105) edge (79);
\draw[->] (106) edge (76);
\draw[->] (106) edge (79);
\draw[->] (107) edge (71);  
\draw[->] (107) edge (73);
\draw[->] (108) edge (83);
\draw[->] (108) edge (85);
\draw[->] (108) edge (95);
\draw[->] (109) edge (84);
\draw[->] (109) edge (87);
\draw[->] (109) edge (96);
\draw[->] (109) edge (102);  
\draw[->] (110) edge (86);
\draw[->] (110) edge (91);
\draw[->] (111) edge (92);
\draw[->] (111) edge (103);
\draw[->] (112) edge (93);
\draw[->] (113) edge (88);
\draw[->] (113) edge (90);
\draw[->] (113) edge (97);  
\draw[->] (114) edge (89);
\draw[->] (114) edge (94);
\draw[->] (114) edge (99);
\draw[->] (115) edge (98);
\draw[->] (118) edge (92);
\draw[->] (118) edge (93);
\draw[->] (119) edge (111);
\draw[->] (119) edge (112); 
\draw[->] (119) edge (118);
\draw[->] (120) edge (119);
\draw[->] (121) edge (37);
\draw[->] (122) edge (35);
\draw[->] (123) edge (121);

\draw[->] (83) edge (a49);    

\end{tikzpicture}

\end{sideways}
\label{fig:poset}\caption{The torus-invariant divisorial contractions between the smooth toric Fano fourfolds}
\end{figure}
\newpage
 
\section{Appendix B}

\noindent The two examples below prove that the non-nef collections of line bundles on the smooth toric Fano fourfolds $M_1$ and $V^4$ are full strong exceptional, using \emph{\textbf{method 2}}. 
\begin{example}\label{ex:4fold76}
Let $X=M_1$ be the $76^{th}$ smooth toric Fano fourfold. The primitive generators for the rays of $X$ are
\[u_0 = \begin{tiny}
\begin{bmatrix}
1 \\ 0 \\ 0 \\ 0
\end{bmatrix} \end{tiny} ,
u_1 = \begin{tiny}\begin{bmatrix}
0 \\ 1 \\ 0 \\ 0
\end{bmatrix} \end{tiny},
u_2 = \begin{tiny}\begin{bmatrix}
-1 \\ -1 \\ 1 \\ 1
\end{bmatrix} \end{tiny},
u_3 = \begin{tiny}\begin{bmatrix}
0 \\ 0 \\ 1 \\ 0
\end{bmatrix} \end{tiny},
u_4 = \begin{tiny}\begin{bmatrix}
0 \\ 0 \\ -1 \\ 0
\end{bmatrix} \end{tiny},
u_5 = \begin{tiny}\begin{bmatrix}
0 \\ 0 \\ 0 \\ 1
\end{bmatrix} \end{tiny},
u_6 = \begin{tiny}\begin{bmatrix}
0 \\ 0 \\ 0 \\ -1
\end{bmatrix} \end{tiny},
u_7 = \begin{tiny}\begin{bmatrix}
-1 \\ 0 \\ 0 \\ 0
\end{bmatrix} \end{tiny}
\]

\noindent The collection of line bundles on $X$ 
\[ \Lcal = \left\lbrace
\begin{array}{c|}
\Ocal_X(kD_2  +iD_6 +jD_7), \Ocal_X(jD_2+D_4+iD_6+D_7),\\
\Ocal_X((k-1)D_2+ (j-1)D_4 +(1+i-j)D_6), \\
\Ocal_X((k+1)D_4 +(k+1)D_6 +(k+1)D_7)
\end{array}
\begin{array}{c}
\phantom{a} 0 \leq i \leq j \leq 1 \\
0 \leq k \leq 1
\end{array}
\right\rbrace \]
is strong exceptional and contains the non-nef line bundles $\{ \Ocal_X(-D_2+D_6), \Ocal_X(-D_2 +D_4),$ $\Ocal_X(-D_2+D_4+D_6),\Ocal_X(D_7), \Ocal_X(D_6+D_7),\Ocal_X(D_4+D_7),\Ocal_X(D_2)\}$.
We obtain the chain complex of $\Pic(X \times X)-$graded $(S_{X \times X})-$modules
\[0\rightarrow (S_{X \times X})^{10} \stackrel{d_4}{\rightarrow} (S_{X \times X})^{43} \stackrel{d_3}{\rightarrow} (S_{X \times X})^{76} \stackrel{d_2}{\rightarrow} (S_{X \times X})^{60} \stackrel{d_1}{\rightarrow} (S_{X \times X})^{17} \] from this collection, which is exact up to saturation by $B_X$ \cite{Prna1,M2}. The table in Figure \ref{fig:4fold76arrows} lists the arrows in the quiver of sections $Q$ corresponding to $\Lcal$.

\begin{figure}
\begin{small}
\begin{tabular}{|c|c|c| c |c|c|c| c |c|c|c| c |c|c|c|}
\cline{1-3}
a &  $\mathbf{t}(a),\mathbf{h}(a)$  & $\div(a)$ & 
\multicolumn{1}{c}{}  \\ \cline{1-3} \cline{5-7} \cline{9-11} \cline{13-15} 
1&0,1&$x_5$& &16&\phantom{aa}2,10\phantom{aa} &\phantom{a}$x_0$\phantom{a}& &31&\phantom{aa}6,13\phantom{aa}&\phantom{a}$x_0$\phantom{a}& &46&\phantom{aa}11,15\phantom{aa}&\phantom{a}$x_0$\phantom{a} \\ 
\cline{1-3} \cline{5-7} \cline{9-11} \cline{13-15}
2&0,2&$x_3$& &17&3,7&$x_1$& &32&7,9&$x_5$& &47&12,14&$x_3$ \\ 
\cline{1-3} \cline{5-7} \cline{9-11} \cline{13-15}
3&0,3&$x_7$& &18&3,7&$x_2$& &33&7,10&$x_3$& &48&12,15&$x_4$ \\ 
\cline{1-3} \cline{5-7} \cline{9-11} \cline{13-15}
4&0,4&$x_1$& &19&3,9&$x_6$& &34&7,12&$x_6$& &49&13,14&$x_5$ \\ 
\cline{1-3} \cline{5-7} \cline{9-11} \cline{13-15}
5&0,4&$x_2$& &20&3,10&$x_4$& &35&7,13&$x_4$& &50&13,15&$x_6$ \\ 
\cline{1-3} \cline{5-7} \cline{9-11} \cline{13-15}
6&0,5&$x_6$& &21&4,5&$x_5$& &36&8,11&$x_1$& &51&14,15&$x_1$ \\ 
\cline{1-3} \cline{5-7} \cline{9-11} \cline{13-15}
7&0,6&$x_4$& &22&4,6&$x_3$& &37&8,11&$x_2$& &52&14,15&$x_2$ \\ 
\cline{1-3} \cline{5-7} \cline{9-11} \cline{13-15}
8&0,7&$x_0$& &23&4,7&$x_7$& &38&8,14&$x_0$& &53&14,16&$x_0x_4x_5$ \\ 
\cline{1-3} \cline{5-7} \cline{9-11} \cline{13-15}
9&1,5&$x_1$& &24&5,8&$x_3$& &39&9,12&$x_1$& &54&14,16&$x_0x_3x_6$ \\ 
\cline{1-3} \cline{5-7} \cline{9-11} \cline{13-15}
10&1,5&$x_2$& &25&5,9&$x_7$& &40&9,12&$x_2$& &55&14,16&$x_4x_6x_7$ \\ 
\cline{1-3} \cline{5-7} \cline{9-11} \cline{13-15}
11&1,8&$x_4$& &26&5,11&$x_4$& &41&9,14&$x_4$& &56&15,16&$x_0x_3x_5$ \\ 
\cline{1-3} \cline{5-7} \cline{9-11} \cline{13-15}
12&1,9&$x_0$& &27&5,12&$x_0$& &42&10,13&$x_1$& &57&15,16&$x_1x_3x_5x_7$ \\ 
\cline{1-3} \cline{5-7} \cline{9-11} \cline{13-15}
13&2,6&$x_1$& &28&6,8&$x_5$& &43&10,13&$x_2$& &58&15,16&$x_2x_3x_5x_7$ \\ 
\cline{1-3} \cline{5-7} \cline{9-11} \cline{13-15}
14&2,6&$x_2$& &29&6,10&$x_7$& &44&10,14&$x_6$& &59&15,16&$x_4x_5x_7$ \\ 
\cline{1-3} \cline{5-7} \cline{9-11} \cline{13-15}
15&2,8&$x_6$& &30&6,11&$x_6$& &45&11,14&$x_7$& &60&15,16&$x_3x_6x_7$ \\ 
\cline{1-3} \cline{5-7} \cline{9-11} \cline{13-15}
\end{tabular}
\end{small}
\caption{The arrows in a quiver of sections for the smooth toric Fano fourfold $M_1$}
\label{fig:4fold76arrows}
\end{figure}

\noindent As $\vert Q_0 \vert = 17$ and $\vert \Sigma(1) \vert = 8$, we let $\{\mathbf{e}_i \mid i \in Q_0\} \cup \{\mathbf{e}_\rho \mid \rho \in \Sigma(1)\}$ be the standard basis of $\ZZ^{17+8}$ and define the lattice points $c_a := \mathbf{e}_{\mathbf{h}(a)} - \mathbf{e}_{\mathbf{t}(a)} + \mathbf{e}_{\div(a)}$ for each arrow $a \in Q_1$. The map $\pi$ is then given by the matrix $C \colon \ZZ^{60} \rightarrow \ZZ^{17+8}$ where the columns of $C$ are given by $c_a, a \in Q_1$, and the semigroup $\NN(Q)$ is given by the lattice points generated by positive linear combinations of the $c_a$. Our choice of bases for $\Pic(X)$ and $\Wt(Q)$ imply that the lattice maps $\deg$ and $\pic$ are given by the matrices:
\[
\deg \colon \left[\begin{smallmatrix}
1& 1& 1& -1&  0& -1& 0& 0 \\
0& 0& 0& 1&  1& 0& 0& 0 \\
0& 0& 0& 0&  0& 1& 1& 0 \\
1& 0& 0& 0& 0& 0& 0& 1 
\end{smallmatrix} \right], \phantom{a}
\pic \colon \left[ \begin{smallmatrix}
0&-1&-1& 0& 1& 0& 0& 1&-1& 0& 0& 0& 1& 1& 0& 1& 0 \\
0& 0& 1& 0& 0& 0& 1& 0& 1& 0& 1& 1& 0& 1& 1& 1& 2 \\
0& 1& 0& 0& 0& 1& 0& 0& 1& 1& 0& 1& 1& 0& 1& 1& 2 \\
0& 0& 0& 1& 0& 0& 0& 1& 0& 1& 1& 0& 1& 1& 1& 1& 2 
\end{smallmatrix} \right] \]
\noindent Fix $\theta$ to be the weight that assigns $-2$ to the vertex $0$ in the quiver, $1$ to the vertices $15$ and $16$ and $0$ to every other vertex. We note that $\pic(\theta)$ is the ample line bundle $L = \Ocal_X(D_2+3D_4+3D_6+3D_7)$. For this choice of $\theta$, not only does $\pi_2 \left(\NN(Q) \cap (\pi_1)^{-1}(\theta)\right)$ surject onto $\NN^{\Sigma(1)} \cap \deg^{-1}(L)$, but $Y_\theta$ is isomorphic to $X$. As $\theta_i \geq 0$ for $i > 0$, $\theta$ is in the same closed GIT-chamber for $Y_\theta$ as $\vartheta$, and so they are in the same open chamber if $\theta$ is generic. To check that $\theta$ is generic, it is enough to check that for each torus-invariant point on $Y_\theta$, the corresponding representation is $\theta$-stable. There are $17$ maximal cones in the fan for $Y_\theta$ -- recall that each maximal cone corresponds to a torus-invariant point. 

For each quiver that describes a torus-invariant representation in $Y_\theta$, we need to specify a path from the vertex $0$ to vertices $15$ and $16$, and a path from every other vertex to the vertex $15$ or $16$. Examples of these paths are given in Figure \ref{fig:4fold76paths} and are computed in \cite{Prna3}. As a result, every torus-invariant $\theta$-semistable representation of $Q$ is $\theta$-stable, so $\theta$ is generic and the collection $\Lcal$ on $X$ is full by Propositions \ref{prop:NonNefEmbedding}, \ref{prop:EmbeddingGivesCokernel} and \ref{prop:GenerationResolutionDiagonal}.

\begin{figure}
\begin{small}
\begin{tabular}{|c|c|c|}
\hline 
Torus-Invariant & ($0 \rightarrow 15$, via $\{i_1, \ldots ,i_{j_1}\}$), & (vertex $i$, $i \rightarrow 15$ or $i \rightarrow 16$,  \\ 
Point & ($0 \rightarrow 16$, via $\{i_1, \ldots ,i_{j_2}\}$) & via vertices $\{i_1, \ldots ,i_{j_3}\}$)\\
\hline 
$\{\rho_0,\rho_1,\rho_3,\rho_5\}$ & $(a_3a_{18}a_{34}a_{48}, \{3,7,12\})$, & $
       (1, a_{10}a_{25}a_{40}a_{48}, \{5,9,12\}), (2,a_{14}a_{29}a_{43}a_{50}, \{6,10,13\}),$
        \\
       &$(a_{3}a_{19}a_{41}a_{55},\{3,9,14\})$ & $(4,a_{23}a_{34}a_{48}, \{7,12\}), (8,a_{37}a_{45}a_{52}, \{11,14\})$      \\
\hline 
$\{\rho_0,\rho_1,\rho_3,\rho_6\}$ & $(a_1a_{10}a_{25}a_{40}a_{48}a_{59},$   & $(2,a_{14}a_{28}a_{37}a_{45}a_{52},\{6,8,11,14\}), (3,a_{18}a_{32}a_{40}a_{48}, \{7,9,$
        \\
& $\{1,5,9,12,15\})$  & $12\}),(4,a_{21}a_{25}a_{40}a_{48}, \{5,9,12\}), (10,a_{43}a_{49}a_{52}, \{13,14\}) $ \\ 
\hline 
$\{\rho_0,\rho_1,\rho_4,\rho_5\}$ & $(0,a_{2}a_{14}a_{29}a_{43}a_{50}a_{60},$  & $(1,a_{10}a_{24}a_{37}a_{45}a_{52}, \{5,8,11,14\}),
(3,a_{18}a_{33}a_{43}a_{50}, \{7,10,$
        \\
       & $ \{2,6,10,13,15\})$ & $13\}),(4,a_{22}a_{29}a_{43}a_{50}, \{6,10,13\}), (9,a_{40}a_{47}a_{52}, \{12,14\})$ \\
\hline 
$\{\rho_0,\rho_1,\rho_4,\rho_6\}$ & $(0,a_1a_{10}a_{24}a_{37}a_{45}a_{52}a_{58},$  
 & $(2,a_{14}a_{29}a_{43}a_{49}a_{52}, \{6,10,13,14\}), (3,a_{18}a_{32}a_{40}a_{47}a_{52}, $  \\ 
 & $\{1,5,8,11,14,15\})$ & $\{7,9,12,14\}), (4,a_{21}a_{24}a_{37}a_{45}a_{52}), \{5,8,11,14\})$\\
\hline 
$\{\rho_0,\rho_2,\rho_3,\rho_5\}$ & $(0,a_3a_{17}a_{34}a_{48},\{3,7,12\}),$ 
        & $(1,a_9a_{25}a_{39}a_{48},\{5,9,12\}), (2,a_{13}a_{29}a_{42}a_{50},\{6,10,13\}),$
        \\ 
       & $(0,a_3a_{19}a_{41}a_{55},\{3,9,14\})$ & $(4,a_{23}a_{34}a_{48},\{7,12\}), (8,a_{36}a_{45}a_{51},\{11,14\})$ \\
\hline 
$\vdots$ & $\vdots$ & $\vdots$ \\ 
\hline 
$\{\rho_2,\rho_4,\rho_6,\rho_7\}$ & $(0,a_1a_9a_{24}a_{36}a_{46}a_{56},$  & $(2,a_{13}a_{28}a_{36}a_{46},\{6,8,11\}), (3,a_{17}a_{32}a_{39}a_{47}a_{51},\{7,9,12,$ \\
& $ \{1,5,8,11,15\})$ & $14\}),(4,a_{21}a_{24}a_{36}a_{46},\{5,8,11\}), (10,a_{42}a_{49}a_{51},\{13,14\})$ \\ 
\hline 
\end{tabular}
\end{small}
\caption{Paths in the quiver associated to each torus-invariant representation in $Y_\theta \cong M_1$}
\label{fig:4fold76paths}
\end{figure}
\end{example}

\begin{example}\label{ex:4fold117}
Let $X=V^4$ be the $117^{th}$ smooth toric Fano fourfold. The primitive generators are
\[u_0 = \begin{tiny}
\begin{bmatrix}
1 \\ 0 \\ 0 \\ 0
\end{bmatrix} \end{tiny} ,
u_1 = \begin{tiny}\begin{bmatrix}
-1 \\ 0 \\ 0 \\ 0
\end{bmatrix} \end{tiny},
u_2 = \begin{tiny}\begin{bmatrix}
0 \\ 1 \\ 0 \\ 0
\end{bmatrix} \end{tiny},
u_3 = \begin{tiny}\begin{bmatrix}
0 \\ -1 \\ 0 \\ 0
\end{bmatrix} \end{tiny},
u_4 = \begin{tiny}\begin{bmatrix}
0 \\ 0 \\ 1 \\ 0
\end{bmatrix} \end{tiny},
u_5 = \begin{tiny}\begin{bmatrix}
0 \\ 0 \\ -1 \\ 0
\end{bmatrix} \end{tiny},
u_6 = \begin{tiny}\begin{bmatrix}
0 \\ 0 \\ 0 \\ 1
\end{bmatrix} \end{tiny},
u_7 = \begin{tiny}\begin{bmatrix}
0 \\ 0 \\ 0 \\ -1
\end{bmatrix} \end{tiny},\] \[ 
u_8 = \begin{tiny}\begin{bmatrix}
1 \\ 1 \\ 1 \\ 1
\end{bmatrix} \end{tiny}
\]

\noindent The strong exceptional collection of line bundles $\Lcal$ is given by the columns of the matrix $\pic$ below, where we choose the divisors $\{D_1,D_3,D_4,D_6,D_8\}$ as a basis of $\Pic(X)$. This collection contains the non-nef line bundle $\Ocal_X(D_8)$.
We obtain the chain complex of $\Pic(X \times X)-$graded $(S_{X \times X})-$modules
\[0\rightarrow (S_{X \times X})^{18} \stackrel{d_4}{\rightarrow} (S_{X \times X})^{78} \stackrel{d_3}{\rightarrow} (S_{X \times X})^{124} \stackrel{d_2}{\rightarrow} (S_{X \times X})^{87} \stackrel{d_1}{\rightarrow} (S_{X \times X})^{23} \] from this collection, which is exact up to saturation by $B_X$ \cite{Prna1,M2}. The lattice maps $\deg$ and $\pic$ are given by the matrices:
\[
\deg \colon \left[\begin{smallmatrix}
1&  1& 0&  0&  0& 0& 0& 0& 0 \\
0&  0& 1&  1&  0& 0& 0& 0& 0 \\
0&  0& 0&  0&  1& 1& 0& 0& 0 \\
0&  0& 0&  0&  0& 0& 1& 1& 0 \\
-1& 0& -1& 0&  0& 1& 0& 1& 1 \\
\end{smallmatrix}  \right]
, \phantom{a}
\pic \colon \left[ \begin{smallmatrix}
0& 0& 1& 1& 1& 1& 1& 2& 1& 1& 1& 2& 2& 2& 1& 2& 2& 2& 1& 2& 2& 2& 2 \\
0& 0& 1& 1& 1& 1& 2& 1& 1& 2& 2& 1& 1& 2& 2& 1& 2& 2& 2& 1& 2& 2& 2 \\
0& 0& 1& 1& 1& 2& 1& 1& 2& 1& 2& 1& 2& 1& 2& 2& 1& 2& 2& 2& 1& 2& 2 \\
0& 0& 1& 1& 2& 1& 1& 1& 2& 2& 1& 2& 1& 1& 2& 2& 2& 1& 2& 2& 2& 1& 2 \\
0& 1& 0& 1& 1& 1& 0& 0& 2& 1& 1& 1& 1& 0& 1& 1& 0& 0& 2& 2& 1& 1& 0 \\
\end{smallmatrix} \right]\]
\noindent Fix $\theta$ to be the weight that assigns $-9$ to vertex $0$, $1$ to vertices $\{14,\ 15,\ldots , 22\}$ and $0$ to all other vertices. We note that $\pic(\theta)$ is the ample line bundle $L = \Ocal_X(16D_1 + 16D_3+16D_4+16D_6+8D_8)$. For this choice of $\theta$, not only does $\pi_2 \left(\NN(Q) \cap (\pi_1)^{-1}(\theta)\right)$ surject onto $\NN^{\Sigma(1)} \cap \deg^{-1}(L)$, but $Y_\theta$ is isomorphic to $X$. As $\theta_i \geq 0$ for $i > 0$, $\theta$ is in the same closed GIT-chamber for $Y_\theta$ as $\vartheta$ and so they are in the same open chamber if $\theta$ is generic. For each quiver that describes a torus-invariant representation in $Y_\theta$, we need to specify paths from the vertex $0$ to the vertices $\{14,\ 15,\ldots , 22\}$, and a path from every other vertex to one of the vertices in $\{14,\ 15,\ldots , 22\}$ to show that $\theta$ is generic. These paths, as well as the other necessary computations for this example are found in \cite{Prna3}. As a result, every torus-invariant $\theta$-semistable representation of $Q$ is $\theta$-stable, so $\theta$ is generic and the collection $\Lcal$ on $X$ is full by Propositions \ref{prop:NonNefEmbedding}, \ref{prop:EmbeddingGivesCokernel} and \ref{prop:GenerationResolutionDiagonal}.
\end{example}

\bibliography{TiltingBundles}

\newcommand{\etalchar}[1]{$^{#1}$}
\def\cprime{$'$}
\begin{thebibliography}{{{\O}b}07}

\bibitem[Bae88]{Baer}
Dagmar Baer.
\newblock Tilting sheaves in representation theory of algebras.
\newblock {\em Manuscripta Math.}, 60(3):323--347, 1988.

\bibitem[Bat82a]{Bat3}
V.~V. Batyrev.
\newblock Boundedness of the degree of multidimensional toric {F}ano varieties.
\newblock {\em Vestnik Moskov. Univ. Ser. I Mat. Mekh.}, (1):22--27, 76--77,
  1982.

\bibitem[Bat82b]{Bat2}
V.~V. Batyrev.
\newblock Toroidal fano 3-folds.
\newblock {\em Math. USSR-Izv.}, 19:13--25, 1982.

\bibitem[Bat99]{Bat1}
V.~V. Batyrev.
\newblock On the classification of toric {F}ano {$4$}-folds.
\newblock {\em J. Math. Sci. (New York)}, 94(1):1021--1050, 1999.
\newblock Algebraic geometry, 9.

\bibitem[BCQ13]{BCQV}
R.~{Bocklandt}, A.~{Craw}, and A.~{Quintero Velez}.
\newblock {Geometric Reid's recipe for dimer models}.
\newblock Available as a preprint at \url{http://arxiv.org/abs/1305.0156},
  2013.

\bibitem[Be{\u\i}78]{Beil}
A.~A. Be{\u\i}linson.
\newblock Coherent sheaves on {${\bf P}^{n}$} and problems in linear algebra.
\newblock {\em Funktsional. Anal. i Prilozhen.}, 12(3):68--69, 1978.

\bibitem[Bon90]{Bond}
A.~I. Bondal.
\newblock Helices, representations of quivers and {K}oszul algebras.
\newblock In {\em Helices and vector bundles}, volume 148 of {\em London Math.
  Soc. Lecture Note Ser.}, pages 75--95. Cambridge Univ. Press, Cambridge,
  1990.

\bibitem[Bon06]{Bond1}
Alexey Bondal.
\newblock Derived categories of toric varieties.
\newblock In {\em Convex and Algebraic geometry, Oberwolfach conference
  reports, EMS Publishing House}, volume~3, pages 284--286, 2006.

\bibitem[BS10]{BrSt}
Tom Bridgeland and David Stern.
\newblock Helices on del {P}ezzo surfaces and tilting {C}alabi-{Y}au algebras.
\newblock {\em Adv. Math.}, 224(4):1672--1716, 2010.

\bibitem[BT09]{BeTi}
Alessandro Bernardi and Sofia Tirabassi.
\newblock Derived categories of toric {F}ano 3-folds via the {F}robenius
  morphism.
\newblock {\em Matematiche (Catania)}, 64(2):117--154, 2009.

\bibitem[CLS11]{CoLiSc}
David~A. Cox, John~B. Little, and Henry~K. Schenck.
\newblock {\em Toric varieties}, volume 124 of {\em Graduate Studies in
  Mathematics}.
\newblock American Mathematical Society, Providence, RI, 2011.

\bibitem[CMR04]{CoMR1}
L.~Costa and R.~M. Mir{\'o}-Roig.
\newblock Tilting sheaves on toric varieties.
\newblock {\em Math. Z.}, 248(4):849--865, 2004.

\bibitem[CMT07]{CrMT}
Alastair Craw, Diane Maclagan, and Rekha~R. Thomas.
\newblock Moduli of {M}c{K}ay quiver representations. {I}. {T}he coherent
  component.
\newblock {\em Proc. Lond. Math. Soc. (3)}, 95(1):179--198, 2007.

\bibitem[Cox95]{Cox}
David~A. Cox.
\newblock The homogeneous coordinate ring of a toric variety.
\newblock {\em J. Algebraic Geom.}, 4(1):17--50, 1995.

\bibitem[CQV12]{CrQV}
Alastair Craw and Alexander Quintero~V{\'e}lez.
\newblock Cellular resolutions of noncommutative toric algebras from
  superpotentials.
\newblock {\em Adv. Math.}, 229(3):1516--1554, 2012.

\bibitem[CS08]{CrSm}
Alastair Craw and Gregory~G. Smith.
\newblock Projective toric varieties as fine moduli spaces of quiver
  representations.
\newblock {\em Amer. J. Math.}, 130(6):1509--1534, 2008.

\bibitem[DLM09]{DLMi}
Arijit Dey, Micha{\l} Laso{\'n}, and Mateusz Micha{\l}ek.
\newblock Derived category of toric varieties with {P}icard number three.
\newblock {\em Matematiche (Catania)}, 64(2):99--116, 2009.

\bibitem[Efi10]{Efim}
Alexander~I. Efimov.
\newblock Maximal lengths of exceptional collections of line bundles.
\newblock Available as a preprint at
  \url{http://arxiv.org/pdf/1010.3755v2.pdf}, 2010.

\bibitem[EMS00]{EiMuSt}
David Eisenbud, Mircea Musta{\c{t}}{\v{a}}, and Mike Stillman.
\newblock Cohomology on toric varieties and local cohomology with monomial
  supports.
\newblock {\em J. Symbolic Comput.}, 29(4-5):583--600, 2000.
\newblock Symbolic computation in algebra, analysis, and geometry (Berkeley,
  CA, 1998).

\bibitem[Ful93]{Fult}
William Fulton.
\newblock {\em Introduction to toric varieties}, volume 131 of {\em Annals of
  Mathematics Studies}.
\newblock Princeton University Press, Princeton, NJ, 1993.
\newblock The William H. Roever Lectures in Geometry.

\bibitem[GS]{M2}
Daniel~R. Grayson and Michael~E. Stillman.
\newblock Macaulay2, a software system for research in algebraic geometry.
\newblock Available at \url{http://www.math.uiuc.edu/Macaulay2/}.

\bibitem[Hil98]{Hill}
Lutz Hille.
\newblock Toric quiver varieties.
\newblock In {\em Algebras and modules, {II} ({G}eiranger, 1996)}, volume~24 of
  {\em CMS Conf. Proc.}, pages 311--325. Amer. Math. Soc., Providence, RI,
  1998.

\bibitem[HP06]{HiPe}
Lutz Hille and Markus Perling.
\newblock A counterexample to {K}ing's conjecture.
\newblock {\em Compos. Math.}, 142(6):1507--1521, 2006.

\bibitem[Huy06]{Huyb}
D.~Huybrechts.
\newblock {\em Fourier-{M}ukai transforms in algebraic geometry}.
\newblock Oxford Mathematical Monographs. The Clarendon Press, Oxford
  University Press, Oxford, 2006.

\bibitem[Kaw06]{Kawa}
Yujiro Kawamata.
\newblock Derived categories of toric varieties.
\newblock {\em Michigan Math. J.}, 54(3):517--535, 2006.

\bibitem[Kin94]{King1}
A.~D. King.
\newblock Moduli of representations of finite-dimensional algebras.
\newblock {\em Quart. J. Math. Oxford Ser. (2)}, 45(180):515--530, 1994.

\bibitem[Kin97]{King}
Alastair King.
\newblock Tilting bundles on some rational surfaces.
\newblock Available as a preprint at
  \url{http://www.maths.bath.ac.uk/~masadk/papers}, 1997.

\bibitem[KN09]{KrNi}
Maximilian Kreuzer and Benjamin Nill.
\newblock Classification of toric {F}ano 5-folds.
\newblock {\em Adv. Geom.}, 9(1):85--97, 2009.

\bibitem[LM11]{LaMi}
Micha{\l} Laso{\'n} and Mateusz Micha{\l}ek.
\newblock On the full, strongly exceptional collections on toric varieties with
  {P}icard number three.
\newblock {\em Collect. Math.}, 62(3):275--296, 2011.

\bibitem[{{\O}b}07]{Obro}
M.~{{\O}bro}.
\newblock {An algorithm for the classification of smooth Fano polytopes}.
\newblock {\em ArXiv e-prints}, April 2007.

\bibitem[Oda88]{Oda}
Tadao Oda.
\newblock {\em Convex bodies and algebraic geometry}, volume~15 of {\em
  Ergebnisse der Mathematik und ihrer Grenzgebiete (3) [Results in Mathematics
  and Related Areas (3)]}.
\newblock Springer-Verlag, Berlin, 1988.
\newblock An introduction to the theory of toric varieties, Translated from the
  Japanese.

\bibitem[PNa]{Prna3}
Nathan Prabhu-Naik.
\newblock Generationtoricfanofourfolds - a file for macaulay2.
\newblock Available at \url{http://people.bath.ac.uk/npn22/papers.html}.

\bibitem[PNb]{Prna2}
Nathan Prabhu-Naik.
\newblock Strongexccollsmaxfourfolds - a file for macaulay2.
\newblock Available at \url{http://people.bath.ac.uk/npn22/papers.html}.

\bibitem[PN15]{Prna1}
Nathan Prabhu-Naik.
\newblock Quiverstoricvarieties: A package to construct quivers of sections on
  complete toric varieties.
\newblock Available as a preprint at \url{http://arxiv.org/abs/1501.05861},
  2015.

\bibitem[S{\etalchar{+}}15]{sage}
W.\thinspace{}A. Stein et~al.
\newblock {\em {S}age {M}athematics {S}oftware ({V}ersion 6.1.1)}.
\newblock The Sage Development Team, 2015.
\newblock {\tt http://www.sagemath.org}.

\bibitem[Sat00]{Sato}
Hiroshi Sato.
\newblock Toward the classification of higher-dimensional toric {F}ano
  varieties.
\newblock {\em Tohoku Math. J. (2)}, 52(3):383--413, 2000.

\bibitem[Tho00]{Thom}
Jesper~Funch Thomsen.
\newblock Frobenius direct images of line bundles on toric varieties.
\newblock {\em J. Algebra}, 226(2):865--874, 2000.

\bibitem[Ueh14]{Ueha}
Hokuto Uehara.
\newblock Exceptional collections on toric {F}ano threefolds and birational
  geometry.
\newblock {\em Internat. J. Math.}, 25(7):1450072, 32, 2014.

\bibitem[VdB04]{VdBe}
Michel Van~den Bergh.
\newblock Three-dimensional flops and noncommutative rings.
\newblock {\em Duke Math. J.}, 122(3):423--455, 2004.

\bibitem[WW82]{WaWa}
Keiichi Watanabe and Masayuki Watanabe.
\newblock The classification of {F}ano {$3$}-folds with torus embeddings.
\newblock {\em Tokyo J. Math.}, 5(1):37--48, 1982.

\end{thebibliography}
\bibliographystyle{alpha}
\end{document}